\documentclass[a4paper,11pt,reqno]{amsart}
          \usepackage{amssymb}
	  \usepackage{amsmath}
          \usepackage{amsfonts}
          \usepackage[english]{babel}
          \usepackage[utf8]{inputenc}

\usepackage[margin=2.5cm]{geometry}

% Hyperlinks
 \usepackage[unicode,colorlinks,plainpages=false,hyperindex=true,bookmarksnumbered=true,bookmarksopen=false,pdfpagelabels]{hyperref}
 \hypersetup{urlcolor=cyan,linkcolor=blue,citecolor=red,colorlinks=true}
\usepackage{color}

\usepackage{enumerate}

\hfuzz1pc % Don't bother to report overfull boxes if overage is < 1p 

\usepackage{tikz} 
\usetikzlibrary{arrows}

%       Theorem environments
% theorem numbering gets reset to 1 in each section
\theoremstyle{plain}
\newtheorem{theorem}{Theorem}[section]
\newtheorem{lemma}[theorem]{Lemma}
\newtheorem{proposition}[theorem]{Proposition}
\newtheorem{corollary}[theorem]{Corollary}

\theoremstyle{definition}
\newtheorem{definition}[theorem]{Definition}
\newtheorem{example}[theorem]{Example}

\theoremstyle{remark}
\newtheorem{remark}[theorem]{Remark}

% equation counter will be reset at the start of each section
\numberwithin{equation}{section}

%       Math definitions

\newcommand{\N}{\ensuremath{\mathbb{N}}}

\newcommand{\R}{\ensuremath{\mathbb{R}}}
\newcommand{\Z}{\ensuremath{\mathbb{Z}}}
\newcommand{\kk}{\ensuremath{\Bbbk}}

\newcommand{\mc}[1]{\mathcal{#1}}

\newcommand{\mb}[1]{\mathbb{#1}}

\newcommand{\tint}{{\textstyle\int}}

% operators
 
\newcommand{\tr}{\operatorname{tr}}

\newcommand{\Hom}{\operatorname{Hom}}
\newcommand{\Aut}{\operatorname{Aut}}
\newcommand{\Der}{\operatorname{Der}}
\newcommand{\Mat}{\operatorname{Mat}}
\newcommand{\Id}{\operatorname{Id}}

\newcommand{\Rep}{\operatorname{Rep}}

\newcommand{\PP}{\operatorname{P}}
\DeclareMathOperator{\mres}{mRes}
\DeclareMathOperator{\mult}{m}
\DeclareMathOperator{\Vect}{Vect}
\DeclareMathOperator{\nonloc}{NonLoc}
\DeclareMathOperator{\loc}{Loc}
\DeclareMathOperator{\rat}{Rat}
% others 

\newcommand{\VV}{\ensuremath{\mathcal{V}}}
\newcommand{\del}{\ensuremath{\partial}}
% Double brackets 
\newcommand\br[1]{\{ #1 \}}
\newcommand\dgal[1]{  \left\{\!\!\left\{#1\right\}\!\!\right\} }
\newcommand{\ldb}{\{\!\!\{}
\newcommand{\rdb}{\}\!\!\}}

% Restriction of function, vertical line
\def\restriction#1#2{\mathchoice
              {\setbox1\hbox{${\displaystyle #1}_{\scriptstyle #2}$}
              \restrictionaux{#1}{#2}}
              {\setbox1\hbox{${\textstyle #1}_{\scriptstyle #2}$}
              \restrictionaux{#1}{#2}}
              {\setbox1\hbox{${\scriptstyle #1}_{\scriptscriptstyle #2}$}
              \restrictionaux{#1}{#2}}
              {\setbox1\hbox{${\scriptscriptstyle #1}_{\scriptscriptstyle #2}$}
              \restrictionaux{#1}{#2}}}
\def\restrictionaux#1#2{{#1\,\smash{\vrule height .8\ht1 depth .85\dp1}}_{\,#2}} 

%%% Definizione commento
\definecolor{light}{gray}{.9}

\begin{document}

\title{Double Multiplicative Poisson Vertex Algebras}

\author{Maxime Fairon}
 \address[Maxime Fairon]{School of Mathematics and Statistics\\ University of Glasgow, University Place\\ Glasgow G12 8QQ, UK}
 \email{Maxime.Fairon@glasgow.ac.uk}
 \address[current address]{Department of Mathematical Sciences, Schofield Building, Loughborough University, Epinal Way, Loughborough LE11 3TU, UK}
 \email{M.Fairon@lboro.ac.uk} 

\author{Daniele Valeri}
 \address[Daniele Valeri]{Dipartimento di Matematica \& INFN, Sapienza Universit\`a di Roma,
 P.le Aldo Moro 5, 00185 Roma, Italy}
 \email{daniele.valeri@uniroma1.it}

%\date{today}

\begin{abstract}
We develop the theory of  double multiplicative Poisson vertex algebras. 
These structures, defined at the level of associative algebras, are shown to be such that they induce a classical structure of multiplicative Poisson vertex algebra on the corresponding representation spaces.  
Moreover, we prove that they are in one-to-one correspondence with local lattice double Poisson algebras, a new important class among Van den Bergh's double Poisson algebras. 
We derive several classification results, and we exhibit their relation to non-abelian integrable differential-difference equations. 
A rigorous definition of  double multiplicative Poisson vertex algebras in the non-local and rational cases is also provided.
\end{abstract}

\maketitle

 \setcounter{tocdepth}{1}

\tableofcontents

\section{Introduction}

Given a unital associative algebra $\VV$, Van den Bergh \cite{VdB1} introduced the structure of a double bracket on $\VV$ as a map 
$$%\begin{equation}
    \dgal{-,-}:\VV\times \VV \to \VV\otimes \VV\,, \quad (a,b) \mapsto \dgal{a,b}\,,
$$%\end{equation}
which is linear in both arguments and which enjoys properties of derivation and skewsymmetry, see Subsection \ref{ss:Remind} for the definition. The importance of double brackets can then be realised through representation theory as follows. Denoting by $\kk$ the base field of $\VV$, we can form the representation space $\Rep(\VV,N)$ parametrised by representations of $\VV$ over $\kk^N$, $N\geq1$. The coordinate ring of this affine scheme is generated by the functions $a_{ij}$, where $(a_{ij})_{1\leq i,j\leq N}$ is the matrix-valued function on $\Rep(\VV,N)$ corresponding to $a\in \VV$.  Then, it was observed by Van den Bergh that the operation $\br{-,-}$ on $\Rep(\VV,N)$ satisfying 
\begin{equation} \label{Eq:dbrRep}
    \br{a_{ij},b_{kl}}=\dgal{a,b}'_{kj} \dgal{a,b}''_{il}\,, \quad 
    a,b\in \VV,\,\, 1\leq i,j,k,l \leq N\,,
\end{equation}
(here, we use a strong version of Sweedler's notation : $d'\otimes d'':=d\in \VV\otimes \VV$) defines a unique skewsymmetric biderivation over $\Rep(\VV,N)$. This gives an example of application of the Kontsevich-Rosenberg principle \cite{Ko,KR}, which states that the non-commutative version $\PP_{\mathrm{nc}}$ of a property $\PP$ defined over commutative algebras should give back $\PP$ when we go from an associative algebra $\VV$ to (the coordinate ring of) its representation spaces $\Rep(\VV,N)$. Furthermore, it was possible to generalise this construction using \eqref{Eq:dbrRep} to introduce non-commutative versions of Lie algebras \cite{Sch,ORS,DSKV}, (quasi-)Poisson algebras \cite{VdB1}, Lie-Rinehart algebras \cite{VdB2}, or Poisson vertex algebras \cite{DSKV} and Courant-Dorfman algebras \cite{FH}. This paper is devoted to pursue the Kontsevich-Rosenberg principle even further using \eqref{Eq:dbrRep} by bringing to light the non-commutative version of multiplicative Poisson vertex algebras, which have recently been introduced by De Sole, Kac, Valeri and Wakimoto \cite{DSKVW1,DSKVWclass}. 

\medskip 

To  understand the objects at stake, recall that a Poisson algebra is a commutative algebra $V$ endowed with a Poisson bracket $\br{-,-}$. In other words, $V$ is equipped with a Lie bracket that is compatible with the commutative product on $V$, see Definition \ref{Def:PA}. 
Let us assume that $V$ admits an infinite order automorphism $S\in \Aut(V)$ such that it commutes with the Poisson bracket, i.e. 
\begin{equation} \label{Eq:I-Scom}
    S\circ \br{-,-} = \br{-,-}\circ (S\times S)\,,
\end{equation}
and such that, for $a,b\in V$, we have $\{S^n(a),b\}=0$, for all but finitely many $n\in\mb Z$. In this case, we call $V$
a local lattice Poisson algebra.
Then, it was observed in \cite{DSKVW1} that the Poisson bracket on $V$ can be equivalently understood in terms of a bilinear operation 
\begin{equation} \label{Eq:I-brlambda}
    \br{-_\lambda -} : V\times V \to V[\lambda^{\pm 1}]\,,
\end{equation}
(here $\lambda$ should be seen as a formal parameter) defined for any $a,b\in V$ by 
\begin{equation} \label{Eq:I-brFormula}
    \br{a_\lambda b}=\sum_{n\in \Z} \lambda^n \br{S^n(a),b}\,.
\end{equation}
Note that only finitely many terms are non-zero in the right-hand side of \eqref{Eq:I-brFormula}. We refer to Proposition \ref{Prop:Corresp} for a precise statement. Due to the defining properties of the Poisson bracket and the compatibility with $S$ given by \eqref{Eq:I-Scom}, the map \eqref{Eq:I-brlambda} inherits several useful properties: it is skewsymmetric \eqref{Eq:MA2}, sesquilinear \eqref{Eq:MA1}, and it satisfies Leibniz rules \eqref{Eq:ML}--\eqref{Eq:MR} as well as an analogue of Jacobi identity \eqref{Eq:MA3}. In full generalities, an operation on $V$ of the form \eqref{Eq:I-brlambda} satisfying these assumptions is called a multiplicative Poisson vertex algebra, see Definition \ref{Def:MPVA}. 
Hence, this proves a correspondence, that can be depicted as follows:
\begin{equation}\label{intro1}
\{\text{local lattice Poisson algebras}\}
\stackrel{1-1}{\longleftrightarrow}
\{\text{multiplicative Poisson vertex algebras}\}
\end{equation}
As mentioned earlier, there is a non-commutative version of Poisson algebras due to Van den Bergh \cite{VdB1}. Our definition of a non-commutative version of multiplicative Poisson vertex algebras is motivated by the correspondence \eqref{intro1}. Namely, if we replace Poisson algebras by Van den Bergh's double Poisson algebras in \eqref{intro1}, we end up with the notion of a double multiplicative Poisson vertex algebra. The main ingredient used in the definition
is then a double multiplicative $\lambda$-bracket, which is a suitable ``double" generalization of 
\eqref{Eq:I-brFormula}. This is a bilinear map 
\begin{equation*} 
    \dgal{-_\lambda -} : \VV\times \VV \to (\VV\otimes\VV)[\lambda^{\pm 1}]\,,
\end{equation*}
defined over an associative algebra $\VV$ endowed with an automorphism $S$, 
see Definition \ref{def:DMPVA}.
Hence, in Proposition \ref{prop:20210506}, we get the correspondence
$$
\{\text{local lattice double Poisson algebras}\}
\stackrel{1-1}{\longleftrightarrow}
\{\text{double multiplicative Poisson vertex algebras}\}
$$
which is the ``double" analogue of \eqref{intro1}.
This analogy is precisely stated in Corollary \ref{Cor:CommuteRep}, where we show that the non-commutative correspondence (Proposition \ref{prop:20210506}) implies the commutative correspondence (Proposition \ref{Prop:Corresp}) when going to representation spaces, i.e. when going from $\VV$ to $V=\kk[\Rep(\VV,N)]$. That result crucially depends on Van den Bergh's work \cite{VdB1}, and the fact that a double multiplicative Poisson vertex algebra induces on representation spaces a structure of multiplicative Poisson vertex algebra through a mapping of the form \eqref{Eq:dbrRep}. 

The relation that we have just outlined to the commutative theory developed in \cite{DSKVW1,DSKVWclass}  is crucial for applications. Indeed, it is shown in \cite{DSKVW1,DSKVWclass} how multiplicative Poisson vertex algebras are useful to understand the structure of  differential-difference equations. In our case, the non-commutative version of this theory allows us to understand the structure of non-abelian (i.e. matrix-valued) differential-difference equations. We are able to construct several families of integrable hierarchies of differential-difference equations in that way, see e.g. \S\ref{ss:IntR2}.  
This important application to integrable systems motivates us in the same way to investigate the double multiplicative Poisson vertex algebra structures on algebras of non-commutative difference functions extending the algebra of non-commutative difference polynomials in $\ell\geq 1$ variables $u_i:=u_{i,0}$ 
$$
\mc R_\ell=\kk\langle u_{i,n}\mid 1\leq i\leq \ell,n\in\mb Z\rangle\,,\quad 
S(u_{i,n})=u_{i,n+1}\,.
$$
We perform a classification of double multiplicative Poisson vertex algebra structures on $\mc R_1$ and $\mc R_2$, see Proposition \ref{Pr:class1} and Theorem \ref{Thm:2-explicit} respectively.

\subsection*{Layout of the paper} In Section \ref{S:Prelim}, we review the correspondence between Poisson algebras and multiplicative Poisson vertex algebras. We also introduce some operations induced on tensor products of an algebra. In Section \ref{S:dmPVA}, we state the key definition of a double multiplicative Poisson vertex algebra, before deriving several properties and examples. Next, we provide  classification results for  double multiplicative Poisson vertex algebras as part of Section \ref{Sec:Classif}. 
Then, we explain in Section \ref{S:Represent} how  a double multiplicative Poisson vertex algebra structure on an algebra $\VV$ induces a multiplicative Poisson vertex algebra structure on the associated (commutative) algebra $\VV_N=\kk[\Rep(\VV,N)]$. 
In Section \ref{S:Integr}, we apply our theory to the study of differential-difference equations. 
Finally, in Section \ref{S:nonloc} we outline how to modify double multiplicative Poisson vertex algebras in the non-local or rational cases, and we provide several examples. 

\subsection*{Relation to the work of Casati-Wang} While we were working on this project, we became aware that a parallel investigation on double multiplicative Poisson vertex algebras was carried out independently by Casati and Wang \cite{CW2}. For the reader's convenience, let us outline the main differences between these two works. 
Firstly, Casati and Wang \cite{CW2} introduced double multiplicative Poisson vertex algebras on
the space of non-commutative Laurent polynomials with an infinite order automorphism, while we work in the more general
setup of algebras of non-commutative difference functions and we provide classification results in
Section \ref{Sec:Classif}. Their motivation stems from the integrability of non-abelian difference equations, which we also consider in Section \ref{S:Integr}, though we do not study the several non-local examples gathered in \cite[Sect. 6]{CW2}. 
Secondly, they compare the formalism of double multiplicative $\lambda$-brackets to the (difference version of the)  $\theta$-formalism of Olver and Sokolov \cite{OS}. In particular, their comparison uses a graded version of double multiplicative $\lambda$-brackets, which we do not consider. 
In the present work, we exclusively use the formalism of double multiplicative Poisson vertex algebras for computations, and we present a deeper study of their algebraic structure. For example, we give a correspondence with lattice double Poisson algebras in \S\ref{ss:ReldmPVA-dPA}, and we explain in Section \ref{S:Represent} that double multiplicative Poisson vertex algebras induce \emph{usual} multiplicative Poisson vertex algebra structures (cf. \cite{DSKVW1,DSKVWclass}) on their representation spaces, in agreement with the Kontsevich-Rosenberg principle. 

\medskip 
 
\subsection*{Acknowledgments.}
We wish to thank Sylvain Carpentier for useful and stimulating discussions.
We are also grateful to Matteo Casati for sharing with us a preliminary version of the work \cite{CW2} and for enlightening discussions on this topic. 
M.F. is supported by a Rankin-Sneddon Research Fellowship of the University of Glasgow.
D.V. acknowledges the financial support of the
project MMNLP (Mathematical Methods in Non Linear Physics) of the
INFN.

%%%%%%%%%%%%%%%%%%%%%%%%%%%%%%%%%%%
%%%%%%%%%%%%%%%%%%%%%%%%%%%%%%%%%%%
%%%%%%%%%%%%%%%%%%%%%%%%%%%%%%%%%%%
%%%%%%%%%%% New Section %%%%%%%%%%%
%%%%%%%%%%%%%%%%%%%%%%%%%%%%%%%%%%%
%%%%%%%%%%%%%%%%%%%%%%%%%%%%%%%%%%%
%%%%%%%%%%%%%%%%%%%%%%%%%%%%%%%%%%%

\section{Preliminaries}
\label{S:Prelim}

Throughout the paper $\kk$ denotes a field of characteristic zero (assumed to be algebraically closed for computations), and unadorned tensor products are taken over $\kk$.

\subsection{Commutative Poisson structures}\label{sec:2.1}

In this subsection, we follow \cite{DSKVW1}. All algebras are unital commutative algebras over $\kk$.

\begin{definition}  \label{Def:PA}
 A \emph{Poisson algebra} is an algebra $V$ endowed with a Poisson bracket, i.e. a linear map 
 $$\br{-,-}:V\otimes V\to V\,, \quad a\otimes b \mapsto \br{a,b}\,,$$
which is skewsymmetric, i.e. $\{a,b\}=-\{b,a\}$, satisfies the Left and right Leibniz rules
$$
\{a,bc\}=\{a,b\}c+b\{a,c\}
\,,
\quad
\{ab,c\}=\{a,c\}b+a\{b,c\}\,,
$$
and the Jacobi identity
$$\br{a,\br{b,c}}-\br{b,\br{a,c}}-\br{\br{a,b},c}=0\,,$$
for all $a,b,c\in V$.

 A \emph{lattice Poisson algebra} is a Poisson algebra $V$ with an infinite order automorphism $S\in \Aut(V)$, namely for all $a,b\in V$, 
$$S(ab)=S(a)S(b)\quad \text{ and } S(\br{a,b})=\br{S(a),S(b)}\,.$$ 
It is called local if, for every $a,b\in V$, $\br{S^n(a),b}=0$ for all but finitely many $n\in \Z$. 
\end{definition}

\begin{definition}  \label{Def:MPVA} 
Let $V$ be an algebra endowed with an automorphism $S\in \Aut(V)$. 

 A \emph{multiplicative $\lambda$-bracket} on $V$ is a linear map 
$$\br{-_\lambda-}:V\otimes V\to V[\lambda^{\pm1}],\quad a\otimes b\mapsto \br{a_\lambda b}\,,$$
such that for any $a,b,c\in V$, 
\begin{subequations}
  \begin{align}
    & \quad \br{S(a)_\lambda b}=\lambda^{-1} \br{a_\lambda b}\,, \quad 
\br{a_\lambda S(b)}=\lambda S(\br{a_\lambda b})\,, \quad & \text{(sesquilinearity)} \label{Eq:MA1} \\
& \quad \br{a_\lambda bc}=\br{a_\lambda b}c+b \br{a_\lambda c}\,, 
\quad & \text{(left Leibniz rule)} \label{Eq:ML} \\
& \quad \br{ab_\lambda c}=\br{a_{\lambda x}c} \restriction{\Big(}{x=S} b\Big) 
+\restriction{\Big(}{x=S} a\Big) \br{b_{\lambda x}c}\,.  
\quad & \text{(right Leibniz rule)} \label{Eq:MR}
  \end{align}
\end{subequations}

We say that $V$ is a \emph{multiplicative Poisson vertex algebra} if it admits a multiplicative $\lambda$-bracket $\br{-_\lambda -}$ satisfying 
\begin{subequations}
 \begin{align}
& \quad \br{a_\lambda b}=-\big|_{x=S} \br{b_{\lambda^{-1} x^{-1}} a}\,,  \quad & \text{(skewsymmetry)} \label{Eq:MA2} \\
& \quad \br{a_\lambda\br{b_\mu c}} 
-\br{b_\mu\br{a_\lambda c}}
-\br{\br{a_\lambda b}_{\lambda\mu}c}=0\,.  \quad & \text{(Jacobi identity)} \label{Eq:MA3} 
 \end{align}
\end{subequations}
\end{definition}
In the above formulas, given an element $a(\lambda)=\sum_{k\in \Z} a_k \lambda^k \in V[\lambda^{\pm1}]$,
we use the notation
$$
a(\lambda x)\restriction{\Big(}{x=S}b\Big)=
\sum_{k\in \Z} a_k S^k(b)\lambda^k
\,.
$$
Furthermore, let us set $\mres_\lambda a(\lambda)=a_0$. 
The next result can be found in \cite[\S3.1]{DSKVW1}. 
\begin{proposition} \label{Prop:Corresp}
If $V$ is a multiplicative Poisson vertex algebra with multiplicative $\lambda$-bracket
$\br{-_\lambda-}$ and automorphism $S\in\Aut(V)$, then $V$ is a local
lattice Poisson algebra
with the Poisson bracket 
\begin{equation}\label{eq:Corr-1}
\br{a,b}=\mres_\lambda\br{a_\lambda b}
\,,\quad a,b\in\ V \,.
\end{equation}
Conversely,
if $V$ is a local lattice Poisson algebra 
with Poisson bracket $\br{-,-}$ and 
automorphism $S\in\Aut(V)$,
then we can endow it with a structure of multiplicative Poisson vertex algebra
with the multiplicative $\lambda$-bracket
\begin{equation} \label{eq:Corr-2}
\br{a_\lambda b}:=\sum_{n \in \Z}\lambda^n \br{S^n(a),b}\,,\quad a,b\in V\,.
\end{equation}
\end{proposition}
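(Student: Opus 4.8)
As remarked above, this is \cite[\S3.1]{DSKVW1}; here is how I would carry out the verification. The plan is to extract everything from one observation: iterating the first sesquilinearity identity in \eqref{Eq:MA1} gives $\br{S^n(a)_\lambda b}=\lambda^{-n}\br{a_\lambda b}$ for all $n\in\Z$, so applying $\mres_\lambda$ shows that $\br{S^n(a),b}$ is the coefficient of $\lambda^n$ in $\br{a_\lambda b}$. Hence $\br{a_\lambda b}=\sum_{n\in\Z}\lambda^n\br{S^n(a),b}$, so the assignments \eqref{eq:Corr-1} and \eqref{eq:Corr-2} are mutually inverse; and since $\br{a_\lambda b}$ is a genuine Laurent polynomial in $\lambda$, only finitely many $\br{S^n(a),b}$ are non-zero, which is locality. (The same identity with $S^d=\Id$ forces $\br{-_\lambda-}\equiv 0$ when $S$ has finite order, so outside that degenerate case $S$ is automatically of infinite order.)

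For the implication from a multiplicative Poisson vertex algebra to a local lattice Poisson algebra, I would set $\br{a,b}=\mres_\lambda\br{a_\lambda b}$ and apply $\mres_\lambda$, resp.\ $\mres_\lambda\mres_\mu$ for Jacobi, to each axiom of Definition \ref{Def:MPVA}, using that $\mres_\lambda$ is $\kk$-linear, is $V$-linear on both sides (i.e.\ $\mres_\lambda(v\,f)=v\,\mres_\lambda f$ and $\mres_\lambda(f\,v)=(\mres_\lambda f)\,v$ for $v\in V$), and commutes with $S$. Then \eqref{Eq:MA2} yields skewsymmetry, \eqref{Eq:ML} yields the left Leibniz rule, and \eqref{Eq:MR} yields the right Leibniz rule once the substitution $x=S$ is unravelled (only the $\lambda^0$ term survives, where $S^0=\Id$). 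That $S$ commutes with $\br{-,-}$ follows from \eqref{Eq:MA1}: $\br{S(a),S(b)}=\mres_\lambda\br{S(a)_\lambda S(b)}=\mres_\lambda S\br{a_\lambda b}=S\br{a,b}$. For Jacobi I would expand $\br{b_\mu c}=\sum_m\mu^m\br{S^m(b),c}$, $\br{a_\lambda c}=\sum_n\lambda^n\br{S^n(a),c}$ and, unwinding the convention for the composed bracket, $\br{\br{a_\lambda b}_{\lambda\mu}c}=\sum_{n,m}\lambda^n\mu^m\br{\br{S^n(a),S^m(b)},c}$, so that $\mres_\lambda\mres_\mu$ of \eqref{Eq:MA3} is exactly $\br{a,\br{b,c}}-\br{b,\br{a,c}}-\br{\br{a,b},c}=0$.

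For the converse I would start from a local lattice Poisson algebra and define $\br{a_\lambda b}=\sum_{n\in\Z}\lambda^n\br{S^n(a),b}$; locality makes this a finite sum, hence an element of $V[\lambda^{\pm 1}]$, and it is visibly $\kk$-bilinear. Sesquilinearity \eqref{Eq:MA1} comes from shifting the summation index (the second identity also using $S\br{c,d}=\br{S(c),S(d)}$); the Leibniz rules \eqref{Eq:ML}--\eqref{Eq:MR} come termwise from those of $\br{-,-}$ together with $S(cd)=S(c)S(d)$; skewsymmetry \eqref{Eq:MA2} comes from skewsymmetry of $\br{-,-}$, $S$-invariance, and the reindexing $n\mapsto -n$. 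Finally I would expand the three terms of \eqref{Eq:MA3} as double series in $\lambda,\mu$: the coefficient of $\lambda^n\mu^m$ on the left-hand side is $\br{S^n(a),\br{S^m(b),c}}-\br{S^m(b),\br{S^n(a),c}}-\br{\br{S^n(a),S^m(b)},c}$, which vanishes by the Jacobi identity for $\br{-,-}$ applied to the triple $(S^n(a),S^m(b),c)$.

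None of these steps is genuinely hard; the only delicate point is the bookkeeping of the formal-variable conventions (the substitution $x=S$ and the meaning of $\br{\br{a_\lambda b}_{\lambda\mu}c}$) and of the index shifts, which is what makes the Jacobi identity and skewsymmetry the most error-prone items to write out in full.
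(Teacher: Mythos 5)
Your proof is correct, and it follows essentially the same strategy the paper uses for the double analogue (Proposition \ref{prop:20210506}), namely applying $\mres_\lambda$ (resp.\ $\mres_\lambda\mres_\mu$) to each axiom in one direction and checking the axioms termwise on \eqref{eq:Corr-2} in the other; the paper itself defers the commutative case to \cite[\S3.1]{DSKVW1}. Your opening observation that sesquilinearity alone forces $\br{a_\lambda b}=\sum_n\lambda^n\br{S^n(a),b}$, so that \eqref{eq:Corr-1} and \eqref{eq:Corr-2} are manifestly mutually inverse and locality is automatic, is a clean way to package the bookkeeping, and your expansions of the composed brackets (in particular $\br{\br{a_\lambda b}_{\lambda\mu}c}=\sum_{n,m}\lambda^n\mu^m\br{\br{S^n(a),S^m(b)},c}$) are consistent with the conventions used in the paper's proof of the double version.
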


\begin{remark} \label{Rem:mLCA}
If $V$ is simply a vector space with an invertible endomorphism $S$, 
we can define the notion of a \emph{local lattice Lie algebra} from Definition \ref{Def:PA} by forgetting the derivation rules. 
Similarly, a \emph{multiplicative Lie conformal algebra} is obtained from Definition \ref{Def:MPVA} by omitting the Leibniz rules \eqref{Eq:ML}--\eqref{Eq:MR}. 
Then, Proposition \ref{Prop:Corresp} can be weakened to an equivalence between these two structures, and this result originally appeared in \cite{GKK98}. 
\end{remark}

%%%
\subsection{Operations on an algebra}

Let $\VV$ be a unital associative algebra over a field $\kk$ of characteristic $0$. 
\subsubsection{Basic operations}
We introduce several notations following \cite{VdB1,DSKV}. 
Given $n \geq 2$, we can form the tensor product $\VV^{\otimes n}$, which we see as an associative algebra for 
$$%\begin{equation}
    (a_1\otimes \ldots\otimes a_n)(b_1\otimes \ldots \otimes b_n)
    =a_1b_1\otimes \ldots \otimes a_nb_n\,.
$$%\end{equation}

When $n=2$, we use a strong version of Sweedler's notation 
$$%\begin{equation}
    A= \sum_l A_l'\otimes A_l'' =:A'\otimes A'' \,\in \VV\otimes \VV
$$%\end{equation}
to denote elements. 
The permutation endomorphism $(-)^\sigma$ on $\VV\otimes \VV$ is given by 
\begin{equation}\label{eq:sigma2}
    (a\otimes b)^\sigma=b\otimes a\,.
\end{equation}
In full generalities for $n\geq 2$, 
we introduce the permutation 
\begin{equation}\label{eq:sigma}
  \VV^{\otimes n}\to \VV^{\otimes n}\,: \, 
a_1 \otimes \ldots \otimes a_n \mapsto (a_1 \otimes \ldots \otimes a_n)^\sigma:=a_n \otimes a_1 \otimes \ldots \otimes a_{n-1}\,. 
\end{equation}

We introduce the outer and inner bimodule structures on $\VV\otimes \VV$ by 
\begin{equation}\label{eq:bimodule}
    a\,A \,b=aA'\otimes A''b\,, \quad 
    a\ast A\ast b=A'b\otimes aA''\,, \quad 
    a,b\in \VV,\, A\in \VV\otimes\VV\,.
\end{equation}
We define left and right $\VV$-module structures on $\VV^{\otimes n}$ as follows. For $0\leq i \leq n-1$, 
%\begin{equation}
%  \begin{aligned}
\begin{align*}
b \ast_i (a_1 \otimes \ldots \otimes a_n)=& a_1 \otimes \ldots \otimes a_i \otimes b a_{i+1} \otimes a_{i+2} \otimes \ldots \otimes a_n \,, \\
(a_1 \otimes \ldots \otimes a_n) \ast_i b=& a_1 \otimes \ldots \otimes a_{n-i-1} \otimes  a_{n-i}b \otimes a_{n-i+1} \otimes \ldots \otimes a_n\,.
\end{align*}
%\end{aligned}
%\end{equation}
For $i=0$, these are just the multiplication on the left of the left-most component, and on the right of the right-most component. In that case, we omit to write $\ast_0$, so that $b A=b \ast_0 A$ and $A b = A \ast_0 b$ for any $A\in \VV^{\otimes n}$.  
We set $\ast_{i+n}=\ast_i$ to define the operation for any $i \in \Z$.

Next, we introduce tensor product rules as maps $\VV\otimes \VV^{\otimes n}\to \VV^{\otimes (n+1)}$. For $0\leq i \leq n-1$, 
%\begin{equation}
%  \begin{aligned}
\begin{align*}
    b \otimes_i (a_1 \otimes \ldots \otimes a_n)=& a_1 \otimes \ldots a_i \otimes b  \otimes a_{i+1} \otimes \ldots \otimes a_n \,, \\
(a_1 \otimes \ldots \otimes a_n) \otimes_i b=& a_1 \otimes \ldots\otimes  a_{n-i} \otimes b \otimes a_{n-i+1} \otimes \ldots \otimes a_n\,,
\end{align*}
%  \end{aligned}
%\end{equation}
We omit the subscript for $i=0$ from now on.

\medskip 

Finally, there is an associative product $\bullet$ on $\VV\otimes \VV$ defined by 
\begin{equation}\label{eq:bullet}
    A\bullet B = A'B' \otimes B'' A''\,.
\end{equation}
The permutation $\sigma$ defined in \eqref{eq:sigma2} is an antihomomorphism for this product:
\begin{equation}\label{eq:bullet-anti}
(A\bullet B)^{\sigma}=B^{\sigma}\bullet A^{\sigma}
\,.
\end{equation}
From \eqref{eq:bimodule} we get that the inner and outer bimodule structures of $\mc V\otimes\mc V$
are related to the associative product in \eqref{eq:bullet} as follows
$$
A' B A''=A\bullet B=B''\ast A\ast B'\,.
$$

We can  define three possible left and right module structures for $(\VV^{\otimes 2},\bullet)$ on $\VV^{\otimes 3}$,
denoted by $\bullet_i$, $i=1,2,3$, as follows 
\begin{equation}\label{20210624:eq1}
    \begin{aligned}
&A \bullet_1 (x\otimes y \otimes z)=x\otimes A' y \otimes z A'',\quad
 (x\otimes y \otimes z)\bullet_1 A= x\otimes y A' \otimes A'' z, \\
&A \bullet_2 (x\otimes y \otimes z)=A' x\otimes y \otimes z A'',\quad
 (x\otimes y \otimes z)\bullet_2 A= xA'\otimes y \otimes A'' z, \\
&A \bullet_3 (x\otimes y \otimes z)=A' x\otimes yA'' \otimes z,\quad
 (x\otimes y \otimes z)\bullet_3 A= xA'\otimes A''y \otimes z,
    \end{aligned}
\end{equation}
The following result appeared in \cite{DSKV}.
\begin{lemma}\phantomsection\label{lemma:bullet-i}
\begin{enumerate}[(a)]
\item
The $\bullet_i$ left (and right) actions of $V^{\otimes2}$ on $V^{\otimes3}$ are 
indeed actions, i.e. they are associative
with respect to the $\bullet$-product of $V^{\otimes2}$:
$$
A\bullet_i(B\bullet_i X)=(A\bullet B)\bullet_i X
\,\text{ and }\,
(X\bullet_i A)\bullet_i B=X\bullet_i(A\bullet B)
\,,
$$
for every $A,B\in V^{\otimes2}$ and $X\in V^{\otimes3}$.
\item
The left $\bullet_i$ and the right $\bullet_j$ actions commute
for every $i,j=1,2,3$ such that $|i-j|\neq2$\footnote{There is a misprint in Lemma \ref{lemma:bullet-i}(b) in \cite{DSKV} and the further assumption $|i-j|\neq2$ is omitted. 
}:
$$
A\bullet_i(X\bullet_j B)=(A\bullet_i X)\bullet_j B
$$
for every $A,B\in V^{\otimes2}$ and $X\in V^{\otimes3}$.
\item
The $\bullet_1$ and $\bullet_3$ left (resp. right) actions of $V^{\otimes2}$ on $V^{\otimes3}$ 
commute:
$$
A\bullet_1(B\bullet_3 X)=
B\bullet_3(A\bullet_1 X)
\,\text{ and }\,
(X\bullet_1 A)\bullet_3 B=(X\bullet_3 B)\bullet_1 A
\,,
$$
for every $A,B\in V^{\otimes2}$ and $X\in V^{\otimes3}$.
(In general, the $\bullet_i$ and $\bullet_j$ left (resp. right) actions do NOT commute
if $|i-j|=1$.)
\end{enumerate}
\end{lemma}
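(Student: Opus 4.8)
The plan is to prove all three parts by direct computation from the explicit formulas \eqref{20210624:eq1} for the actions $\bullet_i$ and the definition \eqref{eq:bullet} of the $\bullet$-product; no structural input beyond bookkeeping of Sweedler's notation is needed. First I would fix generic elements $X=x\otimes y\otimes z\in\VV^{\otimes3}$ and $A=A'\otimes A''$, $B=B'\otimes B''\in\VV^{\otimes2}$, and for each identity expand both sides and match the three tensor components.

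For part (a) I would compute, for instance, $A\bullet_1(B\bullet_1 X)$ by first peeling off $B\bullet_1 X=x\otimes B'y\otimes zB''$ and then acting by $A$, obtaining $x\otimes A'B'y\otimes zB''A''$, and compare this with $(A\bullet B)\bullet_1 X$, using $A\bullet B=A'B'\otimes B''A''$ from \eqref{eq:bullet}; the two coincide. The reversal $B''A''$ in the definition of $\bullet$ is exactly what makes the nesting work: the inner factor $B$ sits inside $A$ in the slot where multiplication is on the left, and outside $A$ in the slot where it is on the right. The remaining cases of (a) (namely $i=2,3$, and the right actions) are the same computation after relabelling which two of the three slots are used --- $\bullet_1$ acts on slots $(2,3)$, $\bullet_2$ on $(1,3)$, $\bullet_3$ on $(1,2)$, always with the first tensor factor of $\VV^{\otimes2}$ placed on the left of the lower-indexed slot and the second on the right of the higher-indexed slot (opposite sides for the right actions).

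For parts (b) and (c) I would run the same expansion, comparing $A\bullet_i(X\bullet_j B)$ with $(A\bullet_i X)\bullet_j B$ in (b), and $A\bullet_i(B\bullet_j X)$ with $B\bullet_j(A\bullet_i X)$ in (c). The organising observation I would use is that each action modifies exactly two of the three slots, each on a definite side: the left action $A\bullet_i(-)$ puts $A'$ on the left of the lower slot and $A''$ on the right of the higher slot, and the right action does the opposite on the same two slots. Two operations then commute automatically unless a single slot receives a multiplication on the \emph{same} side from both of them, since contributions on opposite sides of a slot (or on disjoint slots) commute trivially. Running through the list, the only clash between a left $\bullet_i$ and a right $\bullet_j$ is at the middle slot for the pair $\{1,3\}$ --- left $\bullet_1$ and right $\bullet_3$ both multiply on the left of the second slot, and left $\bullet_3$ and right $\bullet_1$ both on its right --- which is precisely the excluded case $|i-j|=2$ in (b); and two left (or two right) actions clash only for the pairs $\{1,2\}$ (both on the right of the third slot) and $\{2,3\}$ (both on the left of the first slot), i.e. $|i-j|=1$, while $\bullet_1$ and $\bullet_3$ meet only at the second slot and there from opposite sides, giving (c).

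I do not expect a genuine obstacle here: the computations are routine, and the only things demanding care are the order reversals built into $\bullet$ and into the second components of the $\bullet_i$. The one subtlety worth highlighting is to make sure that in the exceptional cases --- $|i-j|=2$ in (b), $|i-j|=1$ in (c) --- the two actions really fail to commute rather than merely not obviously commuting; the slot-and-side tally above settles this by exhibiting the offending slot explicitly. Since the statement already appears in \cite{DSKV}, one alternative is simply to cite it, but I would prefer to reproduce the argument because of the correction to the hypothesis $|i-j|\neq2$ in part (b) noted in the footnote to the statement.
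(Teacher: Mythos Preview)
Your approach is correct: the identities in all three parts are indeed routine consequences of the explicit formulas \eqref{20210624:eq1} and \eqref{eq:bullet}, and the slot-and-side bookkeeping you describe is exactly the right way to organise the case analysis and to see why the exceptional cases fail.

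The paper, however, does not actually give a proof of this lemma. It is stated immediately after the sentence ``The following result appeared in \cite{DSKV}'' and is simply imported from that reference without argument. So your proposal goes further than the paper does: you are supplying a self-contained proof where the paper only cites. Given the footnote flagging the correction to part (b) (the missing hypothesis $|i-j|\neq2$ in \cite{DSKV}), your instinct to write out the argument rather than merely cite is reasonable, and your verification that the pair $\{1,3\}$ genuinely fails in (b) --- both left $\bullet_1$ and right $\bullet_3$ act on the left of the middle slot --- is precisely what justifies that correction.
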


\subsubsection{Extending derivations and homomorphisms}

Consider a derivation $\del\in \Der(\VV,M)$ where $M$ is a $\VV$-bimodule. 
We can extend $\del$  to $\VV^{\otimes m}$ by acting only on one copy of $\VV$ as 
\begin{equation} \label{Eq:DerExt}
\del_{(i)}:\VV^{\otimes m}\to  \VV^{\otimes (i-1)}\otimes M \otimes \VV^{\otimes (m-i)}\,, \quad 
    \del_{(i)}(a_1\otimes\dots\otimes a_m)
=
a_1\otimes\dots\otimes \del(a_i)\otimes\dots\otimes a_m
\,,
\end{equation}
for any $1\leq i \leq m$. 
In particular, we denote the induced derivations $\del_{(1)},\del_{(m)}$ on the leftmost and rightmost factors by $\del_L,\del_R$ respectively, i.e. 
\begin{equation} \label{Eq:DelLRExtend}
\begin{aligned}
\del_L :\VV^{\otimes m}\to M\otimes \VV^{\otimes (m-1)}\,, &\quad 
\del_R :\VV^{\otimes n}\to \VV^{\otimes (m-1)}\otimes M\,, \\
  \del_L(a_1 \otimes \ldots \otimes a_m)=\del(a_1) \otimes \ldots \otimes a_m\,, 
&\quad 
\del_R(a_1 \otimes \ldots \otimes a_m)=a_1 \otimes \ldots \otimes \del(a_m)\,.
\end{aligned}
\end{equation}
We also extend $\del$ to $\VV^{\otimes m}$ by 
\begin{equation} \label{Eq:DelExtend}
\begin{aligned}
&\del:=\sum_{i=1}^m \del_{(i)} :\VV^{\otimes m} \longrightarrow \oplus_{i=1}^m
\Big(\VV^{\otimes (i-1)}\otimes M \otimes \VV^{\otimes (m-i)}\Big)\,, \\
 &\del(a_1 \otimes \ldots \otimes a_m)=\sum_{i=1}^n a_1 \otimes \ldots \otimes a_{i-1} \otimes \del(a_i) \otimes a_{i+1} \otimes \ldots \otimes a_m\,.
  \end{aligned}
\end{equation}
When $M=\mc V^{\otimes n}$, we call $\partial$ and $n$-fold derivation. For a $2$-fold derivation $\del \in \Der(\VV,\VV^{\otimes 2})$ which satisfies by definition 
$$%\begin{equation}
    \del(ab)=a\del(b)'\otimes \del(b)'' + \del(a)'\otimes \del(a)''b\,,
$$%\end{equation}
we get for example that 
\begin{equation}\label{eq:DLR}
\begin{aligned}
&\del(a\otimes b)=\del_L(a\otimes b)+\del_R(a\otimes b)\,, \quad \text{ where }\\
&\del_L(a\otimes b)=\del(a)'\otimes \del(a)''\otimes b \,, \quad 
\del_R(a\otimes b)=a \otimes \del(b)'\otimes \del(b)''\,.
\end{aligned}
\end{equation}
We will use the natural bimodule structure on $\Der(\mc V,\mc V^{\otimes2})$ induced by the inner bimodule structure of $\mc V\otimes\mc V$:
$$
(a\partial b)(f)=a\ast \partial(f)\ast b
=\partial(f)'b\otimes a\partial(f)''\,,
\quad a,b,f\in\mc V\,,\partial\in\Der(\mc V,\mc V^{\otimes2})\,.
$$
Given a collection of $2$-fold derivations $\partial_1,\dots,\partial_\ell$ in $\Der(\mc V,\mc V^{\otimes2})$,
we say that they are \emph{linearly independent} if the identity ($a_i,b_i\in\mc V$)
\begin{equation}\label{linear-ind}
\sum_{i=1}^\ell a_i\partial_ib_i=0\,,    
\end{equation}
implies $a_i=0$ or $b_i=0$, for every $i=1,\dots,\ell$. If $\ell$ is infinite, we require that
this condition is satisfied for any finite subset of $\{\partial_i\}_{i=1}^\ell$\,.

Assume that $S \in \Hom(\VV)$ is an algebra homomorphism. Then we can extend $S$ to $\VV^{\otimes m}$ by 
\begin{equation}\label{20210616:eq1}
  S(a_1 \otimes \ldots \otimes a_m)=S(a_1) \otimes \ldots \otimes S( a_m)\,.
\end{equation}
In particular, if $S\in \Aut(\VV)$, this defines an automorphism of $\Aut(\VV^{\otimes m})$. As in the case of derivations, we can adapt the construction to act on one copy of $\VV$ only, or to consider an arbitrary algebra homomorphism $\VV_1\to \VV_2$.

\subsubsection{Representation algebra} \label{sss:RepAlg}

Let $N \in \N^\times$. We define $\VV_N$ as the commutative algebra generated by symbols  $a_{ij}$ for $a\in \VV$ and $1\leq i,j \leq N$, which are subject to the relations 
\begin{equation*}
  1_{ij}=\delta_{ij}\,, \quad (ab)_{ij}=\sum_{1\leq k \leq N}a_{ik}b_{kj}\,, \quad 
(\alpha a+\beta b)_{ij}=\alpha a_{ij}+\beta b_{ij}\,,
\end{equation*}
for any $a,b \in \VV$, $\alpha,\beta \in \kk$, $1\leq i,j \leq N$. We call $\VV_N$ the \emph{$N$-th representation algebra} of $\VV$. Clearly, $\VV_N$ is finitely generated if $\VV$ has this property. Recall that $\VV_N$ is the coordinate ring of the \emph{representation scheme} $\Rep(\VV,N)$ parametrised by representations of $\VV$ on $\kk^N$.  
If $\del \in \Der(\VV)$, it induces a derivation of $\VV_N$ from its definition on generators as $\del(a_{ij})=(\del(a))_{ij}$. 
If $S\in \Aut(\VV)$, it induces an automorphism of $\VV_N$ as $S(a_{ij})=(S(a))_{ij}$.

%%%%%%%%%%%%%%%%%%%%%%%%%%%%%%%%%%%
%%%%%%%%%%%%%%%%%%%%%%%%%%%%%%%%%%%
%%%%%%%%%%%%%%%%%%%%%%%%%%%%%%%%%%%
%%%%%%%%%%% New Section %%%%%%%%%%%
%%%%%%%%%%%%%%%%%%%%%%%%%%%%%%%%%%%
%%%%%%%%%%%%%%%%%%%%%%%%%%%%%%%%%%%
%%%%%%%%%%%%%%%%%%%%%%%%%%%%%%%%%%%

\section{Double multiplicative Poisson vertex algebras}
\label{S:dmPVA}

\subsection{Review on double Poisson algebras} 
\label{ss:Remind}
In this subsection we let $\mc V$ be a unital associative algebra over $\kk$. We review the notion of double bracket and double Poisson algebra introduced in \cite{VdB1}. Example
\ref{Ex:Powell} is taken from \cite{P16}, while Example \ref{Exmp:Symp} is a special case of \cite[\S6.3]{VdB1} for a one-loop quiver.

\begin{definition} \label{def:DBr} %(\cite{VdB1})
  A \emph{double bracket}  (or \emph{2-fold bracket}) on $\mc V$ is a linear map 
\begin{equation*}
\dgal{-,-} :  \mc V \otimes \mc V \to \mc V\otimes \mc V\,, \quad a \otimes b \mapsto \dgal{a, b}\,,
\end{equation*}
such that for all $a,b,c\in \mc V$
\begin{subequations}
  \begin{align}
& \quad \dgal{a,b}=- \dgal{b,a}^\sigma\,, \quad & \text{(cyclic skewsymmetry)} \label{Eq:DA} \\
& \quad \dgal{a, bc}=\dgal{a, b}c+b \dgal{a, c}\,, 
\quad & \text{(left Leibniz rule)} \label{Eq:Dl} \\
& \quad \dgal{ab, c}=\dgal{a,c}\ast_1 b 
+a \ast_1 \dgal{b,c}\,.  
\quad & \text{(right Leibniz rule)} \label{Eq:Dr}
  \end{align}
\end{subequations}
\end{definition}
Given a double bracket, we introduce the maps 
%\begin{equation}
%\begin{aligned}
\begin{align*}
&    \dgal{a,b' \otimes b''}_L=\dgal{a,b'}\otimes b''\,, \quad 
&&\dgal{a,b' \otimes b''}_R=b' \otimes \dgal{a,b''}\,, \\
&\dgal{a' \otimes a'',b}_L=\dgal{a',b}\otimes_1 a''\,, \quad 
&&\dgal{a' \otimes a'',b}_R=a' \otimes_1\dgal{a'',b}\,.
\end{align*}
%\end{aligned}
%\end{equation}

\begin{definition} \label{def:DPA}  %(\cite{VdB1})
  A \emph{double Poisson algebra} is an algebra $\mc V$ endowed with a double bracket such that for all $a,b,c\in \mc V$
\begin{equation}
    \quad \dgal{a, \dgal{b,c}}_L-\dgal{b,\dgal{a,c}}_R-\dgal{\dgal{a,b},c}_L=0
\,. \quad \text{(Jacobi identity)} \label{Eq:DJ} 
\end{equation}
In that case, we say that $\dgal{-,-}$ is a \emph{double Poisson bracket}.
\end{definition}
\begin{remark}
In Definition \ref{def:DPA}, we chose the condition \eqref{Eq:DJ} which is given in \cite{DSKV} and is equivalent to the original condition of Van den Bergh \cite{VdB1} : 
\begin{equation*}
    \quad \dgal{a, \dgal{b,c}}_L+(\dgal{b,\dgal{c,a}}_L)^\sigma
+(\dgal{c,\dgal{a,b}}_L)^{\sigma^2}=0
\,.
\end{equation*}
\end{remark}

\begin{example}\label{Ex:Powell} %(\cite{P16,VdB1})  
Let $\mc V=\kk[u]$. It is shown in \cite{P16,VdB1} that a double bracket $\dgal{-,-}$ on $\mc V$ is a double Poisson bracket if and only if it satisfies  
\begin{equation*}
  \dgal{u,u}=\alpha(u \otimes 1 - 1 \otimes u) + \beta (u^2 \otimes 1 - 1 \otimes u^2) 
+ \gamma (u^2 \otimes u - u \otimes u^2)\,,
\end{equation*}
where $\alpha,\beta,\gamma\in \kk$ satisfy $\beta^2=\alpha \gamma$.   
\end{example}
\begin{example}\label{Exmp:Symp} %(\cite{VdB1}) 
  Let $\mc V=\kk\langle u,v \rangle$. Then $\dgal{u,u}=0=\dgal{v,v}$, $\dgal{v,u}=1 \otimes 1$  defines a double Poisson bracket.
\end{example}

Double Poisson brackets can be seen as a non-commutative version of Poisson brackets due to the next result, where we use the notations from \S\ref{sss:RepAlg}. 

\begin{theorem} \label{Thm:RepdP}%\emph{(\cite{VdB1})} 
Assume that $\dgal{-,-}$ is a double bracket on $\mc V$. Then there is a unique skewsymmetric biderivation on $\mc V_N$ which satisfies for any $a,b \in \mc V$, $1 \leq i,j \leq N$, 
\begin{equation} \label{Eq:relPA}
  \br{a_{ij},b_{kl}}= \dgal{a, b}'_{kj} \dgal{a, b}''_{il}\,.
\end{equation}
Furthermore, if $\dgal{-,-}$ is a double Poisson bracket, then $(\mc V_N,\br{-,-})$ is a  Poisson algebra. 
\end{theorem}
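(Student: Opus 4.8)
The plan is to first establish existence and uniqueness of a skewsymmetric biderivation on $\mc V_N$ satisfying \eqref{Eq:relPA}, and then, assuming the double Jacobi identity \eqref{Eq:DJ}, to verify the (commutative) Jacobi identity for $\br{-,-}$ on generators.

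For the first part, note that since $\mc V_N$ is generated by the symbols $a_{ij}$, a biderivation (if it exists) is uniquely determined by its values on pairs of generators; this gives uniqueness immediately. For existence, one must check that the prescription \eqref{Eq:relPA} is compatible with the defining relations of $\mc V_N$, namely $1_{ij}=\delta_{ij}$, $(ab)_{ij}=\sum_k a_{ik}b_{kj}$, and $\kk$-linearity in $a$ (and in $b$). Linearity is clear from bilinearity of $\dgal{-,-}$. For the unit relation, one uses that $\dgal{1,b}=0$ and $\dgal{a,1}=0$, which follow from the Leibniz rules \eqref{Eq:Dl}--\eqref{Eq:Dr}. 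The multiplicativity relation is the substantive check: one must verify that the derivation property in each argument of $\br{-,-}$, when applied through the identity $(ab)_{ij}=\sum_k a_{ik}b_{kj}$, is consistent with the left and right Leibniz rules \eqref{Eq:Dl}--\eqref{Eq:Dr} for $\dgal{-,-}$. Concretely, expanding $\br{(ab)_{ij},c_{kl}}$ both as $\sum_m\br{a_{im}b_{mj},c_{kl}}$ via the Leibniz rule on $\mc V_N$ and via \eqref{Eq:relPA} applied to $\dgal{ab,c}$ (using \eqref{Eq:Dr}), one finds the two agree after matching indices; the key point is that the inner bimodule operation $\ast_1$ appearing in \eqref{Eq:Dr} translates under \eqref{Eq:relPA} exactly into the index contraction coming from $(ab)_{ij}=\sum_m a_{im}b_{mj}$. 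A symmetric computation handles the second argument using \eqref{Eq:Dl}. Skewsymmetry of the resulting bracket on generators follows from cyclic skewsymmetry \eqref{Eq:DA}: applying the permutation $\sigma$ in $\dgal{b,a}=-\dgal{a,b}^\sigma$ swaps the two tensor factors, which after the index substitution $(i,j,k,l)\mapsto(k,l,i,j)$ gives $\br{b_{kl},a_{ij}}=-\br{a_{ij},b_{kl}}$.

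For the second part, one assumes \eqref{Eq:DJ} and must show $\br{a_{ij},\br{b_{kl},c_{mn}}} - \br{b_{kl},\br{a_{ij},c_{mn}}} - \br{\br{a_{ij},b_{kl}},c_{mn}}=0$. Since all three terms are biderivations-composed-with-biderivations evaluated on generators, this is a finite computation: one expands $\br{b_{kl},c_{mn}}=\dgal{b,c}'_{ml}\dgal{b,c}''_{kn}$, applies the Leibniz rule for $\br{a_{ij},-\}$ to the product, substitutes \eqref{Eq:relPA} again, and similarly for the other two terms. The three resulting expressions are sums of products of four matrix entries, with indices contracted in patterns dictated by $\dgal{-,\dgal{-,-}}_L$, $\dgal{-,\dgal{-,-}}_R$ and $\dgal{\dgal{-,-},-}_L$ respectively; the double Jacobi identity \eqref{Eq:DJ}, read component-wise in $\mc V^{\otimes 3}$, then forces their alternating sum to vanish. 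The main bookkeeping obstacle, and the step I expect to be most delicate, is tracking the index contractions consistently: one must check that the placement of the ``inner'' factors (the $\otimes_1$ and $\ast_1$ operations in the definitions of $\dgal{-,\dgal{-,-}}_{L/R}$ and $\dgal{\dgal{-,-},-}_L$) produces precisely the contraction pattern that arises from iterating \eqref{Eq:relPA}, and that the three terms of the commutative Jacobi identity line up term-by-term with the three terms of \eqref{Eq:DJ} under this dictionary. Once the translation dictionary between the tensor-factor positions in $\mc V^{\otimes 3}$ and the matrix indices on $\mc V_N$ is set up carefully, the verification is mechanical; indeed this is essentially Van den Bergh's original argument \cite{VdB1}, and one may simply invoke it, but a direct check on generators as above is self-contained.
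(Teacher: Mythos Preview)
Your proposal is correct and follows the standard approach due to Van den Bergh. Note, however, that the paper does not actually supply its own proof of Theorem~\ref{Thm:RepdP}: the result is stated in the review subsection~\S\ref{ss:Remind} as a known theorem and attributed to \cite{VdB1}. So strictly speaking there is no ``paper's proof'' to compare against.

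That said, the paper does prove the multiplicative analogue, Theorem~\ref{Thm:MainRep}, and the structure of that proof matches your outline closely: well-definedness is checked by verifying compatibility of \eqref{Eq:relMPVA} with the relation $(bc)_{kl}=\sum_u b_{ku}c_{ul}$ via the Leibniz rules (exactly your ``substantive check''), skewsymmetry is read off from \eqref{Eq:DMA2}, and the Jacobi identity is reduced to a six-term index-tracking computation isolated as Lemma~\ref{Lem:RelJac}. Your sketch of the non-multiplicative case is the degenerate $\lambda=1$ version of that argument, and your remark that ``one may simply invoke \cite{VdB1}'' is precisely what the paper does.
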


%%%%%%%%%%%%%%%%%%%%%%
%%%%%%%%%%%%%%%%%%%%%%

\subsection{Definition and first properties}

In the sequel we assume that $\VV$ is a unital associative algebra endowed with an infinite order automorphism $S\in \Aut(\VV)$.

\begin{definition} \label{def:DMlamBr}
  A \emph{double multiplicative $\lambda$-bracket} on $\VV$ is a linear map 
\begin{equation*}
\dgal{-_\lambda-} : \VV \otimes \VV \to (\VV \otimes\VV) [\lambda^{\pm1}]\,, a \otimes b \mapsto \dgal{a_\lambda b}
\end{equation*}
such that 
\begin{subequations}
  \begin{align}
    & \quad \dgal{S(a)_\lambda b}=\lambda^{-1} \dgal{a_\lambda b}\,, \quad 
\dgal{a_\lambda S(b)}=\lambda S(\dgal{a_\lambda b})\,, \quad & \text{(sesquilinearity)} \label{Eq:DMA1} \\
& \quad \dgal{a_\lambda bc}=\dgal{a_\lambda b}c+b \dgal{a_\lambda c}\,, 
\quad & \text{(left Leibniz rule)} \label{Eq:DML} \\
& \quad \dgal{ab_\lambda c}=\dgal{a_{\lambda x}c} \ast_1 \restriction{\Big(}{x=S} b\Big) 
+\restriction{\Big(}{x=S} a\Big) \ast_1 \dgal{b_{\lambda x}c}\,.  
\quad & \text{(right Leibniz rule)} \label{Eq:DMR}
  \end{align}
\end{subequations}
\end{definition}
In \eqref{Eq:DMR} and further we use the following notation (cf. Subsection \ref{sec:2.1}): for
$P(\lambda)=\sum_np_n\lambda^n\in(\mc V\otimes\mc V)[\lambda,\lambda^{-1}]$ and $a,b\in\mc V$, we let
\begin{equation}\label{notation-bar}
a(|_{x=S} P(\lambda x)b)=\sum_n a(\lambda S)^n(p_nb)\,,
\end{equation}
namely, we substitute the variable $x$ by the automorphism $S$ acting on the terms enclosed in parenthesis.
Note that the two equations in \eqref{Eq:DMA1} imply
\begin{equation}\label{Eq:DMA1b}
S\dgal{a_\lambda b}=\dgal{S(a)_\lambda S(b)}\,,
\quad a,b\in\mc V\,.
\end{equation}

Given a double multiplicative $\lambda$-bracket, we introduce the maps 
\begin{subequations}
\begin{align}
   & \dgal{a_\lambda b' \otimes b''}_L=\dgal{a_\lambda b'}\otimes b''\,, \quad 
\dgal{a_\lambda b' \otimes b''}_R=b' \otimes \dgal{a_\lambda b''}\,, \label{20210616:eq2a}\\
&\dgal{a' \otimes a''{}_\lambda b}_L=\dgal{a'{}_{\lambda x} b}\otimes_1 \restriction{\Big(}{x=S} a''\Big) \,, \label{20210616:eq2}\\ 
&\dgal{a' \otimes a''{}_\lambda b}_R=\restriction{\Big(}{x=S} a'\Big)  \otimes_1\dgal{a''{}_{\lambda x}b}\,.
\end{align}
\end{subequations}

\begin{definition} \label{def:DMPVA}
  A \emph{double multiplicative Poisson vertex algebra} is an algebra $\VV$ endowed with a multiplicative $\lambda$-bracket such that 
\begin{subequations}
  \begin{align}
    &\quad \dgal{a_\lambda b}=-\big|_{x=S} \dgal{b_{\lambda^{-1}x^{-1}}a}^\sigma
\,, \quad & \text{(skewsymmetry)} \label{Eq:DMA2} \\
   &\quad \dgal{a_\lambda \dgal{b_\mu c}}_L
-\dgal{b_\mu \dgal{a_\lambda c}}_R-\dgal{\dgal{a_\lambda b}_{\lambda\mu}c}_L=0
\,. \quad & \text{(Jacobi identity)} \label{Eq:DMA3} 
  \end{align}
\end{subequations}
\end{definition}

\begin{remark}
  Under \eqref{Eq:DMA2} the rules \eqref{Eq:DML} and \eqref{Eq:DMR} are equivalent. 
\end{remark}

The following result will be useful for computations.
\begin{lemma}\phantomsection\label{20210917:lem1}
\begin{enumerate}[(a)]
\item
The sesquilinearity relations
$$
\begin{array}{l}
\displaystyle{
\vphantom{\Big(}
\dgal{S (A)_{\lambda} B}_{L(\text{resp.}R)}
=\lambda^{-1} \dgal{ A_\lambda B}_{L(\text{resp.}R)}
\,,} \\
\displaystyle{
\vphantom{\Big(}
\dgal{A_{\lambda}S(B)}_{L(\text{resp.}R)}
=\lambda S\dgal{A_\lambda B}_{L(\text{resp.}R)}
\,,}
\end{array}
$$
hold if either $A$ or $B$ lies in $\mc V\otimes\mc V$, and the other one lies in $\mc V$.
\item
For every $a\in \mc V$ and $B,C\in\mc V^{\otimes2}$, we have
\begin{equation}\label{20140305:eq1}
\begin{array}{l}
\displaystyle{
\dgal{ a_\lambda B\bullet C}_L
=B\bullet_2\dgal{ a_\lambda C}_L
+\dgal{a_\lambda B}_L\bullet_1 C\,,
}\\
\displaystyle{
\dgal{a_\lambda B\bullet C}_R
=B\bullet_2\dgal{ a_\lambda C}_R
+\dgal{a_\lambda B}_R\bullet_3C\,.
}\\
\displaystyle{
\dgal{B\bullet C_\lambda a}_L
=\dgal{ B_{\lambda x} a}_L\bullet_3(|_{x=S}C)
+\dgal{ C_{\lambda x} a}_L\bullet_1(|_{x=S}B^\sigma)
\,.
}
\end{array}
\end{equation}
In the last equation 
we are using the notation \eqref{notation-bar}.
\end{enumerate}
\end{lemma}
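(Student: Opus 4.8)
The plan is to prove each identity in Lemma~\ref{20210917:lem1} by direct computation, unfolding the definitions \eqref{20210616:eq2a}--\eqref{20210616:eq2} of the maps $\dgal{-_\lambda-}_{L},\dgal{-_\lambda-}_{R}$ together with the basic sesquilinearity \eqref{Eq:DMA1}, the Leibniz rules \eqref{Eq:DML}--\eqref{Eq:DMR}, and the bimodule/product identities from Subsection~2.2, especially the relations among $\bullet$, $\bullet_i$, $\ast_i$ and the extension $S(A\otimes B)=S(A)\otimes S(B)$.

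\emph{Part (a).} First I would treat the case $A=a\in\mc V$, $B=b'\otimes b''\in\mc V\otimes\mc V$. Applying \eqref{20210616:eq2a}, $\dgal{a_\lambda B}_L=\dgal{a_\lambda b'}\otimes b''$; replacing $a$ by $S(a)$ and using the first relation in \eqref{Eq:DMA1} gives $\lambda^{-1}\dgal{a_\lambda b'}\otimes b''=\lambda^{-1}\dgal{a_\lambda B}_L$, and likewise for the $R$-version and for the $B$-variable using the second relation in \eqref{Eq:DMA1} together with $S(P\otimes b'')=S(P)\otimes S(b'')$. The opposite case $A=a'\otimes a''\in\mc V\otimes\mc V$, $B=b\in\mc V$ uses \eqref{20210616:eq2} and \eqref{Eq:DMA1b}: here the substitution $x=S$ acts on $a''$ (resp.\ $a'$), and one checks that replacing $a'$ by $S(a')$ produces the factor $\lambda^{-1}$ from the bracket $\dgal{a'{}_{\lambda x}b}$ while replacing $b$ by $S(b)$ produces $\lambda S$ on the whole expression; the interplay of the spectral variable $x$ with $S$ must be tracked carefully but is otherwise routine. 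This establishes all four displayed sesquilinearity relations.

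\emph{Part (b).} For the first two identities I would write $B=B'\otimes B''$, $C=C'\otimes C''$, so $B\bullet C=B'C'\otimes C''B''$ by \eqref{eq:bullet}; then $\dgal{a_\lambda B\bullet C}_L=\dgal{a_\lambda B'C'}\otimes C''B''$, and applying the left Leibniz rule \eqref{Eq:DML} splits this into $\dgal{a_\lambda B'}C'\otimes C''B''+B'\dgal{a_\lambda C'}\otimes C''B''$. It then remains to recognise the first summand as $\dgal{a_\lambda B}_L\bullet_1 C$ and the second as $B\bullet_2\dgal{a_\lambda C}_L$ by directly comparing with the definitions \eqref{20210624:eq1} of $\bullet_1,\bullet_2$ (remembering that $\dgal{a_\lambda B}_L$ and $\dgal{a_\lambda C}_L$ live in $\mc V^{\otimes3}$ with the bracket output in the first two slots). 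The $R$-version is identical but uses $\dgal{a_\lambda C''}$ and $\bullet_3$ instead of $\bullet_1$. For the third identity I would expand $\dgal{B\bullet C_\lambda a}_L=\dgal{(B'C')(C''B'')_\lambda a}_L$ and apply the right Leibniz rule \eqref{Eq:DMR} — used twice, to split off $B'$, then $C'$, $C''$, $B''$ in turn — keeping track of the $\ast_1$ actions and the substitutions $x=S$; each application introduces one copy of $|_{x=S}$ acting on the remaining factors, and the cyclic skewsymmetry \eqref{Eq:DMA2} (or just the definition of $\sigma$) is what reassembles the leftover factors of $B$ in the order $B^\sigma=B''\otimes B'$.

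The main obstacle is bookkeeping rather than conceptual: one must keep scrupulous track of which tensor slot each factor occupies, of the direction of the module actions $\ast_1$ versus $\bullet_i$, and above all of how the formal spectral variable $x$ in \eqref{notation-bar} commutes past $S$ and the multiplications. The third identity in \eqref{20140305:eq1} is the delicate one, since it requires two nested uses of \eqref{Eq:DMR} and the correct emergence of $B^\sigma$; I would organise that computation by first proving the auxiliary fact that $\dgal{AB_\lambda c}_L=\dgal{A_{\lambda x}c}_L\ast_? (|_{x=S}B)+(|_{x=S}A)\ast_?\dgal{B_{\lambda x}c}_L$ for $A,B\in\mc V$ (a componentwise consequence of \eqref{Eq:DMR}), and then apply it iteratively. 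Everything else reduces to Lemma~\ref{lemma:bullet-i} and the explicit formulas of Subsection~2.2.
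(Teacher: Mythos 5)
Your overall strategy — unwinding the definitions \eqref{20210616:eq2a}--\eqref{20210616:eq2} together with sesquilinearity, the Leibniz rules and the explicit formulas for $\ast_1$, $\otimes_1$ and $\bullet_i$ — is exactly what the proof requires (the paper itself dismisses it as ``straightforward''), and your treatment of part (a) and of the first two identities in \eqref{20140305:eq1} is correct, including the identification of the two Leibniz summands with $\dgal{a_\lambda B}_L\bullet_1 C$ and $B\bullet_2\dgal{a_\lambda C}_L$ (resp.\ $\bullet_3$, $\bullet_2$ for the $R$-version).

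Your plan for the third identity, however, contains a conceptual slip. You write $\dgal{B\bullet C_\lambda a}_L=\dgal{(B'C')(C''B'')_\lambda a}_L$ and propose ``two nested uses'' of \eqref{Eq:DMR} splitting off $B'$, $C'$, $C''$, $B''$ in turn. But $B\bullet C=B'C'\otimes C''B''$ is an element of $\mc V\otimes\mc V$, not the product $(B'C')(C''B'')\in\mc V$, and by \eqref{20210616:eq2} the map $\dgal{-_\lambda a}_L$ only ever places the \emph{first} tensor slot inside a double bracket: $\dgal{B\bullet C_\lambda a}_L=\dgal{(B'C')_{\lambda x}a}\otimes_1\big(|_{x=S}C''B''\big)$. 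Hence \eqref{Eq:DMR} is applied exactly once, to the product $B'C'$; the factors $C''$ and $B''$ never meet a double bracket — they are carried along by the $\otimes_1$ insertion, and after the substitution $x=S$ the product $S^k(C'')S^k(B'')$ sitting in the middle slot is redistributed so that $S^k(C'')$ joins the $\bullet_3(|_{x=S}C)$ term and $S^k(B'')$, together with the left multiplication by $S^k(B')$ on the last slot, reassembles as $\bullet_1(|_{x=S}B^\sigma)$. In particular the $\sigma$ comes purely from the definition of $\bullet_1$; cyclic skewsymmetry \eqref{Eq:DMA2} is neither needed nor available here, since part (b) is stated for an arbitrary double multiplicative $\lambda$-bracket. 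Relatedly, your proposed auxiliary identity for $\dgal{AB_\lambda c}_L$ with $A,B\in\mc V$ is not well formed ($AB\in\mc V$, so the subscript $L$ does not apply); the correct auxiliary step is simply \eqref{Eq:DMR} itself. With these corrections the computation closes without difficulty.
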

\begin{proof}
Straightforward.
\end{proof}

\subsubsection{Property of Jacobi identity}

Given a double multiplicative $\lambda$-bracket on $\VV$, introduce the map 
\begin{equation} \label{Eq:Triple}
\begin{aligned}
    &\dgal{-_\lambda -_\mu-}:\VV^{\otimes 3}\to \VV^{\otimes 3}[\lambda^{\pm1},\mu^{\pm 1}]\,, \\
\dgal{a_\lambda b_\mu c}:=&\dgal{a_\lambda \dgal{b_\mu c}}_L
-\dgal{b_\mu \dgal{a_\lambda c}}_R-\dgal{\dgal{a_\lambda b}_{\lambda\mu}c}_L\,. 
\end{aligned}
\end{equation}
A direct comparison with \eqref{Eq:DMA3} yields that a double multiplicative $\lambda$-bracket which is skewsymmetric and such that the map \eqref{Eq:Triple} vanishes yields, by definition, a double multiplicative Poisson vertex algebra structure on $\VV$. 

\begin{lemma} \label{Lem:Trick}
 Given a skewsymmetric double multiplicative $\lambda$-bracket on $\VV$, we have 
\begin{equation*}
\dgal{\dgal{b_\mu a}_{\lambda \mu}c}_L=-\dgal{\dgal{a_\lambda b}^\sigma_{\lambda \mu} c}_L\,.
\end{equation*}
\end{lemma}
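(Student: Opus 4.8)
The goal is to show that for a skewsymmetric double multiplicative $\lambda$-bracket,
\[
\dgal{\dgal{b_\mu a}_{\lambda\mu}c}_L = -\dgal{\dgal{a_\lambda b}^\sigma_{\lambda\mu}c}_L\,.
\]
The natural strategy is to apply the skewsymmetry axiom \eqref{Eq:DMA2} to the inner bracket $\dgal{b_\mu a}$ and then expand the outer bracket. Skewsymmetry says $\dgal{b_\mu a} = -\big|_{x=S}\dgal{a_{\mu^{-1}x^{-1}}b}^\sigma$, so $\dgal{b_\mu a}$ is, up to sign and an $S$-substitution carried by the variable $x$, the permutation $\sigma$ applied to $\dgal{a_\lambda b}$ with $\lambda$ replaced by $\mu^{-1}x^{-1}$. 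The plan is to substitute this into the left-hand side, then use the sesquilinearity of the map $\dgal{-_\lambda-}_L$ in its \emph{first} argument (Lemma \ref{20210917:lem1}(a), the $\dgal{S(A)_\lambda B}_L = \lambda^{-1}\dgal{A_\lambda B}_L$ relation extended to $A\in\VV\otimes\VV$) to absorb the $x = S$ action into a shift of the spectral parameter, so that the net effect is simply to replace $\mu^{-1}x^{-1}$ by $\lambda$ inside $\dgal{a_\lambda b}^\sigma$ while the outer parameter becomes $\lambda\mu$.

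Concretely, I would write $\dgal{a_\lambda b} = \sum_n p_n\lambda^n$ with $p_n\in\VV\otimes\VV$, so that $\dgal{a_{\mu^{-1}x^{-1}}b}^\sigma = \sum_n p_n^\sigma\mu^{-n}x^{-n}$ and hence
\[
\dgal{b_\mu a} = -\big|_{x=S}\sum_n p_n^\sigma\mu^{-n}x^{-n} = -\sum_n S^{-n}(p_n^\sigma)\mu^{-n}\,.
\]
Then the left-hand side of the claim is $-\sum_n \mu^{-n}\dgal{S^{-n}(p_n^\sigma)_{\lambda\mu}c}_L$. Applying the sesquilinearity relation from Lemma \ref{20210917:lem1}(a) in the first slot, $\dgal{S^{-n}(A)_{\nu}c}_L = \nu^{n}\dgal{A_\nu c}_L$ with $\nu = \lambda\mu$, this becomes $-\sum_n \mu^{-n}(\lambda\mu)^{n}\dgal{(p_n^\sigma)_{\lambda\mu}c}_L = -\sum_n\lambda^n\dgal{(p_n^\sigma)_{\lambda\mu}c}_L$. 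Finally, since $(-)^\sigma$ is linear, $\sum_n\lambda^n p_n^\sigma = \big(\sum_n\lambda^n p_n\big)^\sigma = \dgal{a_\lambda b}^\sigma$, and by bilinearity of $\dgal{-_{\lambda\mu}c}_L$ in its first argument (treating $\lambda$ as a scalar parameter independent of the outer $\lambda\mu$ — here one must be a little careful that $\lambda$ and $\mu$ are independent formal variables so that the substitution is unambiguous) we recover exactly $-\dgal{\dgal{a_\lambda b}^\sigma_{\lambda\mu}c}_L$, as desired.

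The main obstacle, and the step that needs the most care, is the bookkeeping of the two independent formal parameters together with the $x = S$ substitution in the skewsymmetry formula: one must verify that the notation \eqref{notation-bar} for $|_{x=S}$ interacts correctly with the outer $\lambda$-bracket, i.e. that replacing $x^{-1}$ by $S^{-1}$ inside the inner bracket and then feeding the result into $\dgal{-_{\lambda\mu}c}_L$ produces precisely the power $\lambda^n$ (rather than, say, $(\lambda\mu)^n$ or $\mu^n$) in front of $\dgal{(p_n^\sigma)_{\lambda\mu}c}_L$. This is where the precise form of the Jacobi identity \eqref{Eq:DMA3} — with outer parameter $\lambda\mu$ on the third term — is matched by the sesquilinearity cancellation. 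Once that index arithmetic is pinned down, the rest is a direct application of linearity of $\sigma$ and of the extended sesquilinearity of Lemma \ref{20210917:lem1}(a); no Leibniz rule or Jacobi identity is needed, so the proof is short.
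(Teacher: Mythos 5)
Your proposal is correct and follows exactly the paper's own argument: apply skewsymmetry \eqref{Eq:DMA2} to the inner bracket $\dgal{b_\mu a}$ and then resolve the $|_{x=S}$ substitution against the outer parameter $\lambda\mu$ via the first sesquilinearity identity of Lemma \ref{20210917:lem1}(a), so that $\mu^{-n}(\lambda\mu)^n=\lambda^n$ reassembles into $\dgal{a_\lambda b}^\sigma$. Your coefficient-by-coefficient expansion just makes explicit the index bookkeeping that the paper's one-line computation leaves implicit.
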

\begin{proof}
Using skewsymmetry \eqref{Eq:DMA2} and the first identity in Lemma \ref{20210917:lem1}(a), we have 
$$ \dgal{\dgal{b_\mu a}_{\lambda \mu}c}_L
=-\dgal{\restriction{\big(}{x=S}\dgal{a_{\mu^{-1}x^{-1}} b}^\sigma\big)_{\lambda\mu} c}_L
=-\dgal{\dgal{a_{\lambda} b}^\sigma_{\lambda\mu} c}_L\,,
$$
as desired. 
\end{proof}

As an application of this lemma, remark that we can equivalently define \eqref{Eq:Triple} as 
\begin{equation} \label{Eq:Triple2}
  \dgal{a_\lambda b_\mu c}:=\dgal{a_\lambda \dgal{b_\mu c}}_L
-\dgal{b_\mu \dgal{a_\lambda c}}_R+\dgal{\dgal{b_\mu a}^\sigma_{\lambda\mu}c}_L\,.
\end{equation}

The following properties of the operation  \eqref{Eq:Triple} can also be proven. 
\begin{lemma}
Given a double multiplicative $\lambda$-bracket $\dgal{-_\lambda-}$ on $\VV$,  
we have 
\begin{align}
  \dgal{a_\lambda b_\mu S(c)}&=\lambda\mu S(\dgal{a_\lambda b_\mu c})\,, \label{Eq:TriSes}\\
  \dgal{a_\lambda b_\mu cd}&=c \dgal{a_\lambda b_\mu d}+\dgal{a_\lambda b_\mu c}d\,. \label{Eq:TriDer}
\end{align}
Furthermore, if $\dgal{-_\lambda-}$ is skewsymmetric, we have 
\begin{equation}
\dgal{a_\lambda b_\mu c}=\Big{|}_{x=S} 
  \dgal{ b_\mu c_{\lambda^{-1}\mu^{-1}x^{-1}}a}^\sigma\,, \label{Eq:TriSkew}    
\end{equation}
In particular, given a subset 
$\mc K_0\subset \VV$ such that 
the elements of $\mc{K}=\{S^i(u)\mid u\in \mc K_0,\, i\in \Z\}$ generates $\VV$
as an associative algebra,
then the map \eqref{Eq:Triple} vanishes identically on $\mc V$ if and only if 
we have $\dgal{a_\lambda b_\mu c}=0$ for any $a,b,c\in \mc K_0$.
\end{lemma}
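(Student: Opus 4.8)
The plan is to prove the three displayed identities \eqref{Eq:TriSes}, \eqref{Eq:TriDer}, \eqref{Eq:TriSkew} for the triple operation \eqref{Eq:Triple} by a term-by-term analysis of its three summands, and then deduce the final ``generation'' claim as a formal consequence. Throughout I will freely use the sesquilinearity relations for $\dgal{-_\lambda-}_{L,R}$ from Lemma \ref{20210917:lem1}(a), the left Leibniz rules \eqref{20140305:eq1}, and — for the skewsymmetric case — Lemma \ref{Lem:Trick} together with the rewriting \eqref{Eq:Triple2}.

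\textbf{Step 1: sesquilinearity \eqref{Eq:TriSes}.} I would plug $S(c)$ into each of the three terms of \eqref{Eq:Triple}. For the first term, the inner bracket gives $\dgal{b_\mu S(c)}=\mu S\dgal{b_\mu c}$ by \eqref{Eq:DMA1}; then apply $\dgal{a_\lambda -}_L$ and use the second relation of Lemma \ref{20210917:lem1}(a) (with the $\mc V$-entry $a$ on the left and the $\mc V^{\otimes2}$-entry on the right) to pull out a factor $\lambda S$, using also \eqref{Eq:DMA1b} to commute $S$ with the outer bracket. This yields $\lambda\mu S$ times the first term. The second term is handled identically with the roles of $(a,\lambda)$ and $(b,\mu)$ swapped, again producing $\lambda\mu S$ times it. For the third term, $\dgal{a_\lambda b}$ does not involve $c$, so only the outer $\dgal{-_{\lambda\mu} S(c)}_L$ contributes a factor $\lambda\mu S$ by Lemma \ref{20210917:lem1}(a). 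Summing gives \eqref{Eq:TriSes}.

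\textbf{Step 2: Leibniz rule \eqref{Eq:TriDer}.} I would substitute $cd$ for $c$. In the first term, the left Leibniz rule \eqref{Eq:DML} gives $\dgal{b_\mu cd}=\dgal{b_\mu c}d+c\dgal{b_\mu d}$, and then I apply $\dgal{a_\lambda -}_L$, which is a map to $\mc V^{\otimes3}$ on which I must use the appropriate $\mc V$-module structure — the outer left/right multiplications $\ast_0$ on $\mc V^{\otimes3}$ — together with the left-Leibniz behaviour of $\dgal{a_\lambda -}$ on the first tensor factor. The second term is symmetric. The third term splits by applying the outer left-Leibniz rule of $\dgal{-_{\lambda\mu} cd}_L$ directly. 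Collecting the $c(\cdots)d$ and $(\cdots)d$-type contributions and matching them against $c\dgal{a_\lambda b_\mu d}+\dgal{a_\lambda b_\mu c}d$ gives \eqref{Eq:TriDer}; the only care needed is bookkeeping of on which tensor slot each multiplication lands.

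\textbf{Step 3: skewsymmetry \eqref{Eq:TriSkew} and the corollary.} Starting from the right-hand side $\big|_{x=S}\dgal{b_\mu c_{\lambda^{-1}\mu^{-1}x^{-1}}a}^\sigma$, I would expand $\dgal{b_\mu c_\nu a}$ via \eqref{Eq:Triple}, apply $\sigma$ (which, by \eqref{eq:bullet-anti} and the explicit form of $\sigma$ on $\mc V^{\otimes3}$ from \eqref{eq:sigma}, permutes and reverses the three factors), and substitute $\nu=\lambda^{-1}\mu^{-1}x^{-1}$, $x=S$. Using skewsymmetry \eqref{Eq:DMA2} on each inner and outer double bracket, and the identifications between $\dgal{-_\lambda-}_L$ and $\dgal{-_\lambda-}_R$ under $\sigma$, each of the three transformed terms should land back on one of the three terms of $\dgal{a_\lambda b_\mu c}$, up to the reindexing handled by Lemma \ref{Lem:Trick} and the rewriting \eqref{Eq:Triple2}. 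I expect this to be the main obstacle: keeping track of the interplay of $\sigma$, the spectral substitution $x\mapsto S$, and the $L$/$R$ swap across all three terms is delicate, and one has to be scrupulous about the order in which $|_{x=S}$ is resolved relative to applying $\sigma$. Once \eqref{Eq:TriSes}–\eqref{Eq:TriSkew} are in hand, the final statement is immediate: by \eqref{Eq:TriDer} and its left-hand analogue (obtained from \eqref{Eq:TriSkew} plus \eqref{Eq:TriDer}) together with \eqref{Eq:TriSes}, the vanishing locus of $\dgal{-_\lambda-_\mu-}$ is closed under multiplication and under $S^{\pm1}$ in each of the three arguments; hence if it contains $\mc K_0$ in all three slots it contains the $S$-orbit $\mc K$ and then all of $\mc V=\langle\mc K\rangle$. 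The converse is trivial.
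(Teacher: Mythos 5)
Your proposal is correct and follows essentially the same route as the paper: \eqref{Eq:TriSes} and \eqref{Eq:TriDer} by direct term-by-term use of sesquilinearity and the left Leibniz rule (with the cross terms from the first two summands cancelling), \eqref{Eq:TriSkew} by transforming each summand under \eqref{Eq:DMA2} with the help of Lemma \ref{Lem:Trick} and the rewriting \eqref{Eq:Triple2}, and the final claim by transporting the sesquilinearity and derivation rules to the first two arguments via \eqref{Eq:TriSkew} so that the triple bracket is determined by its values on $\mc K_0$. The only difference is cosmetic: you run the \eqref{Eq:TriSkew} computation from right to left, whereas the paper transforms the three terms of the left-hand side.
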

\begin{proof}
The proof goes along the lines of Lemma 3.4 in \cite{DSKV}. 
It is easy to get \eqref{Eq:TriSes} by combining sesquilinearity \eqref{Eq:DMA1} (only for the second argument) with the definition of the map \eqref{Eq:Triple}. In the same way, we can obtain \eqref{Eq:TriDer} from the left Leibniz rule \eqref{Eq:DML}.  

To check \eqref{Eq:TriSkew}, we note the following identities which require skewsymmetry \eqref{Eq:DMA2}: 
\begin{align*}
\dgal{a_\lambda \dgal{b_\mu c}}_L=& 
-\left(\Big|_{x=S} \dgal{\dgal{b_\mu c}'{}_{\lambda^{-1}x^{-1}} a}^\sigma \right)
\otimes \dgal{b_\mu c}''\\
=& -\Big|_{x=S} \left( \dgal{\dgal{b_\mu c}'{}_{\lambda^{-1}x^{-1}y} a}^\sigma 
\otimes \big|_{y=S}\dgal{b_\mu c}'' \right)\\
=& -\Big|_{x=S} \left( \dgal{\dgal{b_\mu c}'{}_{\lambda^{-1}x^{-1}y} a}   \otimes_1 (\big|_{y=S}\dgal{b_\mu c}'') \right)^\sigma \\
=& -\Big|_{x=S} \dgal{\dgal{b_\mu c}_{\lambda^{-1}x^{-1}}a}_L^\sigma\,,\\
%%% second
\dgal{b_\mu \dgal{a_\lambda c}}_R=&
-(\big|_{x=S}\dgal{c_{\lambda^{-1}x^{-1}} a}'') \otimes
\dgal{b_\mu (\big|_{x=S}\dgal{c_{\lambda^{-1}x^{-1}} a}')}\\
=&-\Big|_{x=S}\left( \dgal{b_\mu \dgal{c_{\lambda^{-1}(x\mu)^{-1}} a}'} 
\otimes \dgal{c_{\lambda^{-1}(x\mu)^{-1}} a}''\right)^\sigma\\
=&-\Big|_{x=S}\dgal{b_\mu \dgal{c_{\lambda^{-1}\mu^{-1}x^{-1}} a}}_L^\sigma\,,\\
%%% third 
\dgal{\dgal{b_\mu a}^\sigma_{\lambda\mu}c}_L=&  
\left( (\big|_{x=S}\dgal{b_\mu a}') \otimes 
\dgal{\dgal{b_\mu a}''{}_{\lambda\mu x}c}^\sigma \right)^\sigma \\
=& - \big|_{x=S} \left( \dgal{b_\mu a}' \otimes 
\dgal{c{}_{\lambda^{-1}\mu^{-1} x^{-1}} \dgal{b_\mu a}'' } \right)^\sigma\\
=& - \big|_{x=S} \dgal{c{}_{\lambda^{-1}\mu^{-1} x^{-1}} \dgal{b_\mu a} }_R^{\sigma}\,.
\end{align*}
Thus, writing $\dgal{a_\lambda b_\mu c}$ through \eqref{Eq:Triple2}, we get that  \eqref{Eq:TriSkew} holds after writing the right-hand side with \eqref{Eq:Triple}.

For the second part of the lemma, note that as a consequence of \eqref{Eq:TriSkew} we can write 
\begin{align}
\dgal{a_\lambda S(b)_\mu c}=&\mu^{-1} \,\dgal{a_\lambda b_\mu c}\,, \quad 
\dgal{S(a)_\lambda b_\mu c}=\lambda^{-1}\, \dgal{a_\lambda b_\mu c}\,, \notag\\
\dgal{a_\lambda bd_\mu c}=& (\big|_{x=S} b) \ast_2 \dgal{a_\lambda d_{\mu x} c} 
+ \dgal{a_\lambda b_{\mu x} c} \ast_1 (\big|_{x=S} d)\,, \label{Eq:TriDer2}\\
\dgal{ad_\lambda b_\mu c}=& (\big|_{x=S} a) \ast_1 \dgal{d_{\lambda x} b_{\mu} c} 
+ \dgal{a_{\lambda x} b_{\mu} c} \ast_2 (\big|_{x=S} d)\,. \label{Eq:TriDer3}
\end{align}
Since we have derivation and sesquilinearity rules in the three arguments of the map \eqref{Eq:Triple}, this operation is completely determined by its value on the elements of $\mc K_0$. In particular, the map \eqref{Eq:Triple} vanishes if and only if it does when evaluated on the elements of $\mc K_0$. 
\end{proof}

%%%%%%%%%%%%%%%%%%%%%%
%%%%%%%%%%%%%%%%%%%%%%

\subsection{Relation to local lattice double Poisson algebras}
\label{ss:ReldmPVA-dPA}
We introduce here the notion of a local lattice double Poisson algebra,
which is equivalent to that of a double multiplicative Poisson vertex algebra.

\begin{definition}
A \emph{lattice double Poisson algebra} is a double Poisson algebra $\mc V$ with an infinite order automorphism
$S\in\Aut(\mc V)$, namely ($a,b\in\mc V$)
$$
S(a b)=S(a)S(b)\qquad\text{and} \qquad S(\dgal{a,b})=\dgal{S(a),S(b)}\,.
$$
It is called \emph{local} if, for every $a,b\in\mc V$, we have
\begin{equation}\label{20210520:eq1}
\dgal{S^n(a), b}= 0\,,\quad \text{for all but finitely many values of }n\in\mb Z\,.
\end{equation}
\end{definition}

For an element $a(\lambda)=\sum a_k\lambda^k\in\mc V^{\otimes n}[\lambda^{\pm1}]$, $n\geq1$, we define its
\emph{multiplicative residue} by
$$
\mres_\lambda a(\lambda)=a_0\,.
$$

\begin{proposition}\label{prop:20210506}
If $\mc V$ is a double multiplicative Poisson vertex algebra with double multiplicative $\lambda$-bracket
$\dgal{-_\lambda-}$ and automorphism $S\in\Aut(\mc V)$, then $\mc V$ is a local
lattice double Poisson algebra
with the double Poisson bracket 
\begin{equation}\label{eq:2.15}
\dgal{a,b}=\mres_\lambda\dgal{a_\lambda b}
\,,\quad
a,b\in\ \mc V
\,.
\end{equation}
Conversely,
if $\mc V$ is a local lattice double Poisson algebra with double Poisson bracket $\dgal{-,-}$ and 
automorphism $S\in\Aut(\mc V)$,
then we can endow it with a structure of a double multiplicative Poisson vertex algebra
with the double multiplicative $\lambda$-bracket
\begin{equation} \label{Eq:DPtoDPV}
\dgal{a_\lambda b}:=\sum_{n \in \Z}\lambda^n \dgal{S^n(a),b}\,,\quad a,b\in\mc V\,.
\end{equation}
\end{proposition}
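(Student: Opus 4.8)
The plan is to establish the proposition as a ``double'' upgrade of Proposition \ref{Prop:Corresp}, following the same strategy used there (and in the references \cite{DSKVW1,GKK98}), but carefully tracking the noncommutative bimodule bookkeeping that distinguishes double brackets from ordinary ones. The proposition has two directions, and I would treat each in turn.

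\textbf{From double multiplicative PVA to local lattice double Poisson algebra.} Assume $\dgal{-_\lambda-}$ is a double multiplicative $\lambda$-bracket satisfying skewsymmetry \eqref{Eq:DMA2} and the Jacobi identity \eqref{Eq:DMA3}, and set $\dgal{a,b}=\mres_\lambda\dgal{a_\lambda b}$. First I would check locality: by definition $\dgal{a_\lambda b}\in(\VV\otimes\VV)[\lambda^{\pm1}]$ is a \emph{finite} sum $\sum_n p_n\lambda^n$, so sesquilinearity \eqref{Eq:DMA1} gives $\mres_\lambda\dgal{S^n(a)_\lambda b}=\mres_\lambda(\lambda^{-n}\dgal{a_\lambda b})=p_n$, which vanishes for all but finitely many $n$; this is exactly \eqref{20210520:eq1}. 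Next I would verify that $\dgal{-,-}$ is a double bracket: cyclic skewsymmetry \eqref{Eq:DA} follows by applying $\mres_\lambda$ to \eqref{Eq:DMA2} (using that $\mres_\lambda$ commutes with $\sigma$ and that the substitution $x\mapsto S$ together with $\lambda\mapsto\lambda^{-1}x^{-1}$ reduces, after taking the $\lambda^0$-coefficient, to the identity on $\mc V\otimes\mc V$ — this is the subtle point); the left Leibniz rule \eqref{Eq:Dl} is immediate from \eqref{Eq:DML} by taking $\mres_\lambda$; the right Leibniz rule \eqref{Eq:Dr} follows from \eqref{Eq:DMR}, where one notes that applying $\mres_\lambda$ to $\dgal{a_{\lambda x}c}\ast_1(|_{x=S}b)$ kills the $\lambda$-powers and the automorphism action, leaving $\dgal{a,c}\ast_1 b$. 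Finally, for the double Jacobi identity \eqref{Eq:DJ}, I would apply $\mres_\lambda\mres_\mu$ to \eqref{Eq:DMA3}, or equivalently to the vanishing of the triple bracket \eqref{Eq:Triple}; one checks termwise, using the explicit forms \eqref{20210616:eq2a}--\eqref{20210616:eq2}, that $\mres_\lambda\mres_\mu\dgal{a_\lambda b_\mu c}=\dgal{a,\dgal{b,c}}_L-\dgal{b,\dgal{a,c}}_R-\dgal{\dgal{a,b},c}_L$, where the key observation is that taking $\mu^0$ inside the first term correctly reproduces $\dgal{a,\dgal{b,c}}_L$ and, for the last term, that the $\lambda\mu$-degree shift in $\dgal{\dgal{a_\lambda b}_{\lambda\mu}c}_L$ combines with sesquilinearity \eqref{Eq:DMA1} so that taking $(\lambda\mu)^0$ and then $\lambda^0$ gives precisely $\dgal{\dgal{a,b},c}_L$. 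Compatibility of $S$ with the double bracket, $S(\dgal{a,b})=\dgal{S(a),S(b)}$, is just $\mres_\lambda$ of \eqref{Eq:DMA1b}.

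\textbf{From local lattice double Poisson algebra to double multiplicative PVA.} Now assume $\dgal{-,-}$ is a double Poisson bracket compatible with an infinite-order $S$ satisfying locality, and define $\dgal{a_\lambda b}:=\sum_{n\in\Z}\lambda^n\dgal{S^n(a),b}$; locality guarantees this is a finite sum, hence lands in $(\VV\otimes\VV)[\lambda^{\pm1}]$. I would verify each axiom of Definition \ref{def:DMlamBr} and Definition \ref{def:DMPVA} by direct substitution. Sesquilinearity \eqref{Eq:DMA1}: the first identity is immediate from re-indexing $n\mapsto n+1$, and the second follows from $\dgal{S^n(a),S(b)}=S\dgal{S^{n-1}(a),b}$ (using $S$-compatibility together with invertibility of $S$) after re-indexing. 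Left Leibniz \eqref{Eq:DML} is a sum over $n$ of the left Leibniz rule \eqref{Eq:Dl} for $\dgal{-,-}$. Right Leibniz \eqref{Eq:DMR}: here I would expand $\dgal{ab_\lambda c}=\sum_n\lambda^n\dgal{S^n(a)S^n(b),c}$ using $S(ab)=S(a)S(b)$, apply \eqref{Eq:Dr}, and match the two resulting sums with the right-hand side of \eqref{Eq:DMR}, checking that the $\ast_1$-bimodule action and the substitution $x\mapsto S$ in $(|_{x=S}b)$ exactly reproduce the shift patterns. Skewsymmetry \eqref{Eq:DMA2}: substitute the series for both sides and use cyclic skewsymmetry \eqref{Eq:DA} plus $S$-compatibility to rewrite $\dgal{S^n(a),b}=-\dgal{b,S^n(a)}^\sigma=-S^n(\dgal{S^{-n}(b),a})^\sigma$, then match powers of $\lambda$ with the substitution on the right-hand side. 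Finally the Jacobi identity \eqref{Eq:DMA3}: substitute the series into the triple-bracket \eqref{Eq:Triple} and, after a careful re-indexing of the three summation variables, identify the coefficient of each monomial $\lambda^m\mu^n$ with a $\Z$-combination of instances of the double Jacobi identity \eqref{Eq:DJ} evaluated on shifts $S^i(a),S^j(b),S^k(c)$; $S$-compatibility of $\dgal{-,-}$ is what lets one pull the shifts through the nested brackets so that every term is indeed an instance of \eqref{Eq:DJ} and hence vanishes. Here the computation is most transparent if one uses that $S$ extends to an automorphism of $\VV^{\otimes 3}$ commuting with all the $\bullet_i$-actions and with $\sigma$ (as recorded around \eqref{20210616:eq1}).

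\textbf{Main obstacle.} The genuinely delicate part — and the one where the noncommutative setting differs substantively from Proposition \ref{Prop:Corresp} — is the bookkeeping in the Jacobi identity: in the left-hand term $\dgal{\dgal{a_\lambda b}_{\lambda\mu}c}_L$ the inner bracket $\dgal{a_\lambda b}$ is itself a two-component tensor whose \emph{first} component is fed into the outer bracket via \eqref{20210616:eq2} with its own spectral-parameter substitution $x\mapsto S$, and this substitution must be reconciled with the $(\lambda\mu)$-shift coming from sesquilinearity in a way that has no analogue in the scalar case. I expect that organizing this via the triple bracket $\dgal{-_\lambda-_\mu-}$ of \eqref{Eq:Triple} and its properties \eqref{Eq:TriSes}--\eqref{Eq:TriDer3} already proven above — in particular exploiting \eqref{Eq:TriSkew} so that one only needs to match the $\lambda^0\mu^0$-coefficient in one direction and only needs $a,b,c$ ranging over generators in the other — will contain the combinatorics. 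The Leibniz and skewsymmetry verifications, while notationally heavy because of the $\ast_1$ and $\otimes_1$ operations, are mechanical once one is careful that the substitution conventions \eqref{notation-bar} are applied to the correct tensor slots.
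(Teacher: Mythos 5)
Your proposal is correct and follows essentially the same route as the paper's proof: in one direction apply $\mres_\lambda$ (and $\mres_\lambda\mres_\mu$ for the Leibniz rules and Jacobi identity) to the axioms of the double multiplicative $\lambda$-bracket, and in the other direction substitute the series \eqref{Eq:DPtoDPV}, reindex, and use the $S$-compatibility of the double bracket together with sesquilinearity to convert $\sum_n\mu^n\dgal{\dgal{a_\lambda S^n(b)},c}_L$ into $\dgal{\dgal{a_\lambda b}_{\lambda\mu}c}_L$. You even supply details the paper leaves implicit (the locality check and the skewsymmetry of \eqref{Eq:DPtoDPV}), and your identification of the $(\lambda\mu)$-shift versus $x\mapsto S$ bookkeeping as the delicate point matches exactly where the paper's computation does its only nontrivial work.
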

\begin{proof}
Applying $\mres_\lambda$ to both sides of equation \eqref{Eq:DMA1b} it follows that
$S\dgal{a,b}=\dgal{S(a),S(b)}$. Cyclic skewsymmetry of the double bracket \eqref{eq:2.15} follows by
applying $\mres_\lambda$ to both sides of \eqref{Eq:DMA2} and using the fact that
$\mres_\lambda a(\lambda)=\mres_\lambda a(\lambda^{-1})$. Left and right Leibniz rules
\eqref{Eq:Dl}-\eqref{Eq:Dr}, respectively Jacobi identity, for the double bracket
\eqref{eq:2.15} follow by applying $\mres_\lambda\mres_\mu$ to \eqref{Eq:DML}-\eqref{Eq:DMR},
respectively \eqref{Eq:DMA3}.

Conversely, let $\dgal{a_\lambda b}$, $a,b\in\mc V$ be defined by \eqref{Eq:DPtoDPV}. Note that
$\dgal{a_\lambda b}\in(\mc V\otimes\mc V)[\lambda,\lambda^{-1}]$ by \eqref{20210520:eq1}.
Moreover, we have
$$
\dgal{S(a)_\lambda b}=\sum_{n\in\mb Z}\lambda^n\dgal{S^{n+1}(a),b}
=\lambda^{-1}\sum_{n\in\mb Z}\lambda^{n+1}\dgal{S^{n+1}(a),b}
=\lambda^{-1}\dgal{a_\lambda b}\,,
$$
and, using the fact that $S$ is an automorphism,
$$
\dgal{a_\lambda S(b)}=\sum_{n\in\mb Z}\lambda^n\dgal{S^{n}(a),S(b)}
=\lambda S\sum_{n\in\mb Z}\lambda^{n-1}\dgal{S^{n-1}(a),b}
=\lambda S\dgal{a_\lambda b}\,,
$$
proving sesquilinearity \eqref{Eq:DMA1}.
Next, we have
\begin{equation*}
  \begin{aligned}
\dgal{ab_\lambda c}=&\sum_n \lambda^n \left( 
S^n(a) \ast_1 \dgal{S^n(b),c} + \dgal{S^n(a),c} \ast_1 S^n(b) \right)  \\
=&\sum_n (\lambda y)^n \left( 
\restriction{\Big(}{y=S} a\Big)  \ast_1 \dgal{S^n(b),c} + \dgal{S^n(a),c} \ast_1 \restriction{\Big(}{y=S} b\Big) \right)   \\
=& \restriction{\Big(}{y=S} a\Big) \ast_1 \dgal{b_{\lambda y}c} + \dgal{a_{\lambda y}c} \ast_1 \restriction{\Big(}{y=S} b\Big) \,,
  \end{aligned}
\end{equation*}
which proves the right Leibniz rule \eqref{Eq:DMR} for \eqref{Eq:DPtoDPV}. The left Leibniz rule \eqref{Eq:DML} can be proven
similarly.
Finally, we prove the Jacobi identity for \eqref{Eq:DPtoDPV}. Note that
\begin{equation*}
  \begin{aligned}
&\dgal{a_\lambda \dgal{b_\mu c}}_L -\dgal{b_\mu \dgal{a_\lambda c}}_R \\
=&\sum_{n,m}\lambda^m \mu^n \left( 
\dgal{S^m(a), \dgal{S^n(b), c}}_L -\dgal{S^n(b), \dgal{S^m(a), c}}_R \right)\\
=& \sum_{n,m}\lambda^m \mu^n  
\dgal{\dgal{S^m(a), S^n(b)}, c}_L  \quad \text{ by \eqref{Eq:DJ}} \\
=& \sum_{n} \mu^n  
\dgal{\dgal{a_\lambda  S^n(b)}, c}_L  \,.
  \end{aligned}
\end{equation*}
Hence, by \eqref{Eq:DMA1}, we have 
\begin{equation*}
  \begin{aligned}
&\dgal{a_\lambda \dgal{b_\mu c}}_L -\dgal{b_\mu \dgal{a_\lambda c}}_R \\
=&\sum_{n} \mu^n \lambda^n
\dgal{S^n(\dgal{a_\lambda  b}') \otimes S^n(\dgal{a_\lambda  b}''), c}_L  \\
=&\sum_{n} \mu^n \lambda^n \dgal{S^n(\dgal{a_\lambda  b}') , c} \otimes_1 S^n(\dgal{a_\lambda  b}'') \\
=&\sum_{n} (\lambda \mu y)^n  \dgal{S^n(\dgal{a_\lambda  b}') , c} \otimes_1 \restriction{\Big(}{y=S}  \dgal{a_\lambda  b}''\Big) 
=\dgal{\dgal{a_\lambda b}_{\lambda \mu}c}_L\,.
  \end{aligned}
\end{equation*}
This concludes the proof.
\end{proof}

We can get several examples of double multiplicative Poisson vertex algebras using Proposition \ref{prop:20210506}. 
\begin{example}\label{Ex:Free0}
Let $V$ be a double Poisson algebra with double Poisson bracket $\dgal{-,-}$. Consider the
unital associative algebra $\mc V=\kk[S,S^{-1}]\otimes V$.
In other words, $\mc V$ is isomorphic to the direct sum of infinitely many copies of $V$ and the automorphism $S$ is the “shift” operator.
We can endow $\mc V$ with a lattice
double Poisson algebra structure with
\begin{equation}\label{20210506:eq1}
\dgal{S^m\otimes a,S^n\otimes b}=\delta_{m,n}(S^n\otimes S^n)\dgal{a,b}\,,\quad a,b\in V\,.
\end{equation}
\end{example}

\begin{example} \label{Ex:FreeA}
As a special case of Example \ref{Ex:Free0} consider the double Poisson algebra
$V$ from Example \ref{Ex:Powell} with $\alpha=1$ and $\beta=\gamma=0$.
Then $\mc V=\kk[S,S^{-1}]\otimes V=\kk\langle u_{i} \mid  i \in \Z \rangle$,
where $S(u_i)=u_{i+1}$, $i\in\mb Z$. The double Poisson bracket \eqref{20210506:eq1}
on generators then reads
$$%\begin{equation}
  \dgal{u_{i},u_{j}}=\delta_{ij} (u_j \otimes 1 - 1 \otimes u_j)\,.
$$%\end{equation}
Using Proposition \ref{prop:20210506}, we get a double multiplicative Poisson vertex algebra structure on $\VV$ defined on generators by 
$$%\begin{equation}
 \dgal{u_{i\,\lambda}u_{j}}= \lambda^{j-i} (u_j \otimes 1 - 1 \otimes u_j)
 = (\lambda S)^{j-i} (u_i \otimes 1 - 1 \otimes u_i)\,,
$$%\end{equation}
and extended using sequilinearity and the Leibniz rules.
\end{example}

\begin{example} \label{Ex:FreeC}
Fix $\ell\geq 1$, and consider the algebra $\VV=\kk\langle u_{r,i},v_{r,i} \mid 1\leq r \leq \ell, i \in \Z \rangle$ with the automorphism $S$ defined by $S(u_{r,i})= u_{r,i+1}$
and $S(v_{r,i})=v_{r,i+1}$. We can endow $\mc V$ with a lattice double Poisson algebra structure (cf. Example \ref{Exmp:Symp}) with
$$%\begin{equation}
  \dgal{u_{r,i},u_{s,j}}=0=\dgal{v_{r,i},v_{s,j}}\,, \quad \dgal{v_{r,i},u_{s,j}}=\delta_{rs}\delta_{ij}\, 1\otimes 1\,.
$$%\end{equation}
By Proposition \ref{prop:20210506}, we get a double multiplicative Poisson vertex algebra structure on $\VV$ defined on generators by 
%\begin{equation}
%\begin{aligned}
$$\dgal{u_{r,i\,\lambda}u_{s,j}}=0\,, \,\, \dgal{v_{r,i\,\lambda}v_{s,j}}=0 \,, \quad 
\dgal{v_{r,i\,\lambda}u_{s,j}}=\delta_{rs}\lambda^{j-i} \, 1\otimes 1\,,
$$
%\end{aligned}
%\end{equation}
and extended using sesquilinearity and Leibniz rules.
\end{example}

%%%%%%%%%%%%%%%%%%%%%%%%%%%%%%%%%%
\subsubsection{Finite order automorphism}

The construction of this section still holds in the case when $S$ is an automorphism of finite order $e\geq1$. In that case, from the sesquilinearity axiom \eqref{Eq:MA1}, we get that the following relation should be satisfied
for every $a,b\in\mc V$:
$$
\{a_\lambda b\}=\{S^{-e}(a)_\lambda b\}=\lambda^e\{a_\lambda b\}\,.
$$
Hence, if $S$ is an automorphism of finite order $e\geq1$,
a double multiplicative $\lambda$-bracket is a map $\dgal{-_{\lambda}-} : \mc V\otimes\mc V\to(\mc V\otimes\mc V)[\lambda]/\langle\lambda^e-1\rangle$, satisfying
\eqref{Eq:DMA1}, \eqref{Eq:DML} and \eqref{Eq:DMR}.
Then we still have Example \ref{Ex:Free0} where $\kk [S,S^{-1}]$ should be replaced by
$\kk[S]/\langle S^e-1\rangle$. Furthermore, all results of this and the next sections extend to this framework with little changes.

\begin{example} \label{Ex:NonLocA}
Fix $e\geq 1$ and consider the algebra $\VV=\kk \langle u_j \mid j \in \Z/e\Z \rangle$ with the
automorphism $S$ of $\mc V$ given by $u_i\to u_{i+1}$. On $\mc V$ we can define the double Poisson bracket
$$%\begin{equation}
  \dgal{u_{i},u_{j}}=\delta_{ij} (u_j \otimes 1 - 1 \otimes u_j)\,,
$$%\end{equation}
which can be obtained from $e$ copies of Example \ref{Ex:Powell} with $\alpha=1,\beta=\gamma=0$.  
The automorphism $S$ of $\VV$ has order $e$ and commutes with $\dgal{-,-}$. Using Proposition \ref{prop:20210506}, we get a double multiplicative Poisson vertex algebra structure on $\VV$ completely determined by 
$$%\begin{equation}
 \dgal{u_{i\,\lambda}u_{j}}=\lambda^{[j-i]} (u_j \otimes 1 - 1 \otimes u_j)\,,
$$%\end{equation}
where $0\leq [j-i]<e$ is the remainder of $j-i$ modulo $e$.
This example can be seen as a closed chain version of Example \ref{Ex:FreeA}. 
\end{example}

\begin{example} \label{Ex:NonLocB}
For $\ell \geq 1$, we form the algebra $\VV=\kk\langle u_1,\ldots,u_\ell,v_1,\ldots,v_\ell \rangle$, which admits a double Poisson bracket by taking $\ell$  copies of Example \ref{Exmp:Symp} :
$$%\begin{equation}
  \dgal{u_i,u_j}=0=\dgal{v_i,v_j}\,, \quad 
  \dgal{v_i,u_j}=\delta_{ij}\, 1\otimes 1\,.
$$%\end{equation}
If we consider the automorphism $S$ of $\VV$ given by $u_i \mapsto v_i$, $v_i \mapsto - u_i$, we can show that ($k\geq0$)
\begin{equation*}
  S^{2k+1}(u_i)=(-1)^kv_i,\, S^{2k}(u_i)=(-1)^k u_i\,, \quad 
S^{2k+1}(v_i)=(-1)^{k+1}u_i,\, S^{2k}(v_i)=(-1)^k v_i\,.
\end{equation*}
Hence, $S$ has order $4$. Moreover, we have 
\begin{equation*}
S(\dgal{v_i,u_j})=\delta_{ij} 1\otimes 1=-\dgal{u_i,v_j}=\dgal{S(v_i),S(u_j)} \,,
\end{equation*}
from which it follows that $S$ commutes with $\dgal{-,-}$. Using Proposition \ref{prop:20210506}, we get a double multiplicative Poisson vertex algebra structure on $\VV$ completely determined by 
$$
\dgal{u_{i\,\lambda}u_j}=\delta_{ij}(\lambda-\lambda^3)(1\otimes 1)\,, \quad 
\dgal{v_{i\,\lambda}v_j}=\delta_{ij}(\lambda-\lambda^3)(1\otimes 1)\,, \quad
\dgal{v_{i\,\lambda}u_j}=\delta_{ij}(1-\lambda^2)(1\otimes 1)\,.
$$
\end{example}

%%%%%%%%%%%%%%%%%%%%%%%%%%%%%%%%%%%
%%%%%%%%%%%%%%%%%%%%%%%%%%%%%%%%%%%
%%%%%%%%%%%%%%%%%%%%%%%%%%%%%%%%%%%
%%%%%%%%%%% New Section %%%%%%%%%%%
%%%%%%%%%%%%%%%%%%%%%%%%%%%%%%%%%%%
%%%%%%%%%%%%%%%%%%%%%%%%%%%%%%%%%%%
%%%%%%%%%%%%%%%%%%%%%%%%%%%%%%%%%%%

\section{Double multiplicative Poisson vertex algebra structure on an algebra of (non-commutative) difference functions}

\label{Sec:Classif}

%%%%%%%%%%%%%%%%%%%%%%%%%%%%%%%%%%%%%%%%%%%%%%%%%%%%%%%%%%%%%%%%%%%%
\subsection{The algebra of non-commutative difference operators}\label{sec:Classif1}
Let $\mc V$ be a unital associative algebra with an automorphism $S$
and consider the space $(\mc V\otimes\mc V)[S,S^{-1}]$. We extend the associative product $\bullet$ on $\mc V\otimes\mc V$ defined by \eqref{eq:bullet}
to an associative product on $(\mc V\otimes\mc V)[S,S^{-1}]$ by letting, for $a,b\in\mc V\otimes\mc V$ and $m,n\in\mb Z$:
\begin{equation}\label{eq:bullet-op}
aS^m\bullet b S^n=\left(a\bullet S^{m}(b)\right)S^{m+n}=\left(a'S^m(b')\otimes S^{m}(b'')a''\right)S^{n+m}\,,
\end{equation}
and extending it by linearity to $(\mc V\otimes\mc V)[S,S^{-1}]$. We then call $(\mc V\otimes\mc V)[S,S^{-1}]$ the algebra of scalar
(non-commutative) difference operators. The action of a scalar difference operator
$A(S)=\sum_{n\in\mb Z}a_n S^n\in(\mc V\otimes\mc V)[S,S^{-1}]$ on $f\in\mc V$ is given by
\begin{equation}\label{eq:action-diff}
A(S)f=\sum_{n\in\mb Z}a_n' S^{n}(f) a_n''
\,.
\end{equation}
The adjoint of $A(S)$ is the difference operator
\begin{equation}\label{eq:adjoint}
A^*(S)=\sum_{n\in\mb Z}S^{-n}\bullet a_n^{\sigma}
=\sum_{n\in\mb Z}\left(S^{-n}(a_n'')\otimes S^{-n}(a_n')\right)S^{-n}\,,
\end{equation}
where in the second identity we used \eqref{eq:bullet-op} (the element $(1\otimes 1)S^k\in(\mc V\otimes\mc V)[S,S^{-1}]$, $k\in\mb Z$, will be
usually simply denoted by $S^k$).
Using \eqref{eq:bullet-anti}, it is immediate to check that
\begin{equation}\label{eq:adjoint-prod}
\left(A(S)\bullet B(S)\right)^*=B^*(S)\bullet A^*(S)\,,\quad
A(S),B(S)\in(\mc V\otimes\mc V)[S,S^{-1}]
\,.
\end{equation}
The symbol of a scalar difference operator
$A(S)=\sum_{n\in\mb Z}a_n S^n\in(\mc V\otimes\mc V)[S,S^{-1}]$ is the Laurent polynomial
\begin{equation}\label{eq:symb}
A(z)=\sum_{n\in\mb Z}a_n z^n\in(\mc V\otimes\mc V)[z,z^{-1}]
\,.
\end{equation}
The formula for the symbol of products of scalar difference operators $A(S)\bullet B(S)$,
and its adjoint $(A(S)\bullet B(S))^*$ is
\begin{equation}\label{20210914:eq1}
(A\bullet B)(z)=A(zS)\bullet B(z)\,,\qquad
(A\bullet B)^*(z)=B^*(zS)\bullet A^*(z) \,.
\end{equation}
% Maxime on 22/08: I removed an extra  ''   \,, \qquad  ''
%
More generally, for every $\ell\geq1$, the space $\Mat_{\ell\times\ell}\big((\mc V\otimes\mc V)[S,S^{-1}]\big)$ is an algebra with the product
\eqref{eq:bullet-op} extended componentwise using matrix multiplication: if $H(S)=\left(H_{ij}(S)\right)_{i,j=1}^\ell$,
$K(S)=\left(K_{ij}(S)\right)_{i,j=1}^\ell\in\Mat_{\ell\times\ell}\left((\mc V\otimes\mc V)[S,S^{-1}]\right)$,
then $(H\bullet K)(S)=\left((H\bullet K)_{ij}(S)\right)_{i,j=1}^\ell\in\Mat_{\ell\times\ell}\left((\mc V\otimes\mc V)[S,S^{-1}]\right)$, where
$$
(H\bullet K)_{ij}(S)=\sum_{k=1}^{\ell} H_{ik}(S)\bullet K_{kj}(S)\in(\mc V\otimes\mc V)[S,S^{-1}]\,.$$
We call it the algebra of (non-commutative) matrix difference operators over $\mc V$.
The action of $H(S)=\left(H_{ij}(S)\right)_{i,j=1}^\ell\in\Mat_{\ell\times\ell}\left((\mc V\otimes\mc V)[S,S^{-1}]\right)$,
where
$$
H_{ij}(S)=\sum_{n\in\mb Z}H_{i,j;n}S^n\in(\mc V\otimes\mc V)[S,S^{-1}]\,,
$$
(note that the sum is finite) on a vector
$F\in\mc V^\ell$ is given by extending \eqref{eq:action-diff} componentwise
\begin{equation}\label{20210803:eq1}
(H(S)F)_i=\sum_{j=1}^\ell H_{ij}(S)F_j=\sum_{j=1}^\ell H_{i,j;n}'S^n( F_j)H_{i,j;n}''
\,,\quad i=1,\dots\ell\,.
\end{equation}
The adjoint of $H(S)$ is the matrix difference operator $H^*(S)=\left(H^*_{j,i}(S)\right)_{i,j=1}^\ell$, where
the scalar difference operator $H^*_{ji}(S)$ is obtained using \eqref{eq:adjoint}. Equation \eqref{eq:adjoint-prod} holds for
$A(S),B(S)\in\Mat_{\ell\times\ell}\left((\mc V\otimes\mc V)[S,S^{-1}]\right)$ as well.
The symbol of the matrix difference operator $H(S)$ is obtained using equation
\eqref{eq:symb} componentwise.

%%%%%%%%%%%%%%%%%%%%%%%%%%%%%%%%%%%%%%%%%%%%%%%%%%%%%%%%%%%%%%%%%%%%%%%%%%%%%%%%%%%%%%%%
\subsection{Algebras of non-commutative difference functions and double multiplicative Poisson vertex algebras} \label{ss:AlgRl}
Consider the algebra of non-commutative difference polynomials 
$\mc R_\ell$ in $\ell$ variables $u_i$, $i\in I=\{1,\dots,\ell\}$.
It is the algebra of non-commutative polynomials in the indeterminates $u_{i,n}$,
$$
\mc R_\ell=\kk\langle u_{i,n}\mid i\in I,n\in\mb Z\rangle\,,
$$
endowed with an automorphism $S$, defined on generators by 
$S (u_{i,n})=u_{i,n+1}$,
and partial derivatives $\frac{\partial}{\partial u_{i,n}}:\mc R\to\mc R\otimes\mc R$, for every $i\in I$ and
$n\in\mb Z$, defined on monomials by
\begin{equation}\label{20140627:eq1}
\frac{\partial}{\partial u_{i,n}} (u_{i_1,n_1}\dots u_{i_s,n_s})
=
\sum_{k=1}^s
\delta_{i_k,i}\delta_{n_k,n}
\,
u_{i_1,n_1}\dots u_{i_{k-1},n_{k-1}}
\otimes
u_{i_{k+1},n_{k+1}}\dots u_{i_s,n_s}
\,,
\end{equation}
which are commuting $2$-fold derivations of $\mc R_\ell$ (using the terminology of \cite{DSKV}) such that
\begin{equation}\label{eq:comm}
S\circ\frac{\partial}{\partial u_{i,n}}=\frac{\partial}{\partial u_{i,n+1}}\circ S\,.
\end{equation}
In \eqref{eq:comm}, $S$ is extended to $\mc V^{\otimes 2}$ using \eqref{20210616:eq1}. 
Given the partial derivative $\frac{\partial}{\partial u_{i,n}}$, $i\in I$, $n\in\mb Z$, recall the derivations
$\left(\frac{\partial}{\partial u_{i,n}}\right)_L$,$\left(\frac{\partial}{\partial u_{i,m}}\right)_R:\mc V^{\otimes 2}\to\mc V^{\otimes3}$
defined by \eqref{Eq:DelLRExtend}.
\begin{lemma}\label{20140626:lem2}
For any non-commutative difference polynomial $f\in \mc R_\ell$, and $i,j\in I$ and
$n,m\in\mb Z$, 
the partial derivatives strongly commute, i.e. we have
$$
\left(\frac{\partial}{\partial u_{i,m}}\right)_L\frac{\partial f}{\partial u_{j,n}}
=\left(\frac{\partial}{\partial u_{j,n}}\right)_R\frac{\partial f}{\partial u_{i,m}}
\,.
$$
\end{lemma}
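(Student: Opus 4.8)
The plan is to reduce the identity to a direct computation on monomials. Both sides are $\kk$-linear in $f$, and $\mc R_\ell$ is the free associative algebra on the generators $u_{k,p}$, so it suffices to prove the equality for an arbitrary monomial $f=u_{i_1,n_1}\cdots u_{i_s,n_s}$. For readability I would set $u_{[a,b]}:=u_{i_a,n_a}\cdots u_{i_b,n_b}$, with the convention that $u_{[a,b]}=1$ whenever $a>b$.

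First I would expand the left-hand side. By \eqref{20140627:eq1},
$$\frac{\partial f}{\partial u_{j,n}}=\sum_{k=1}^{s}\delta_{i_k,j}\delta_{n_k,n}\,u_{[1,k-1]}\otimes u_{[k+1,s]}\,,$$
and then, since by \eqref{Eq:DelLRExtend} the operator $\bigl(\frac{\partial}{\partial u_{i,m}}\bigr)_L$ applies $\frac{\partial}{\partial u_{i,m}}$ to the leftmost tensor factor, one further use of \eqref{20140627:eq1} gives
$$\Bigl(\frac{\partial}{\partial u_{i,m}}\Bigr)_L\frac{\partial f}{\partial u_{j,n}}=\sum_{1\le l<k\le s}\delta_{i_l,i}\delta_{n_l,m}\,\delta_{i_k,j}\delta_{n_k,n}\,u_{[1,l-1]}\otimes u_{[l+1,k-1]}\otimes u_{[k+1,s]}\,.$$
Symmetrically, I would compute $\frac{\partial f}{\partial u_{i,m}}=\sum_{l=1}^{s}\delta_{i_l,i}\delta_{n_l,m}\,u_{[1,l-1]}\otimes u_{[l+1,s]}$ and then apply $\bigl(\frac{\partial}{\partial u_{j,n}}\bigr)_R$, which by \eqref{Eq:DelLRExtend} acts by $\frac{\partial}{\partial u_{j,n}}$ on the rightmost factor $u_{[l+1,s]}$; expanding once more over a position $k$ with $l<k\le s$ yields exactly the same triple sum. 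Comparing the two expressions term by term then closes the argument.

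The hard part is purely combinatorial bookkeeping: keeping straight which summation index is the inner one and which is the outer one, and checking that the range $1\le l<k\le s$ together with the four Kronecker deltas match on both sides. There is no conceptual obstacle, since in any monomial term of the output each letter of $f$ is either removed by one of the two derivatives or survives, and the two removals concern distinct positions $l<k$, so the two differentiations never genuinely interact. Alternatively, one could argue abstractly that the commuting $2$-fold derivations $\frac{\partial}{\partial u_{i,m}}$ and $\frac{\partial}{\partial u_{j,n}}$ of $\mc R_\ell$ \emph{strongly commute} in the sense of \cite{DSKV}, by noting that the composites $\bigl(\frac{\partial}{\partial u_{i,m}}\bigr)_L\circ\frac{\partial}{\partial u_{j,n}}$ and $\bigl(\frac{\partial}{\partial u_{j,n}}\bigr)_R\circ\frac{\partial}{\partial u_{i,m}}$ both vanish on generators and obey the same Leibniz-type rule for the product of $\mc R_\ell$; but on a free algebra the monomial computation above is the cleanest route.
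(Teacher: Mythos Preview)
Your proof is correct and is essentially the same argument the paper defers to: the paper's own proof simply says ``Same as the proof of Lemma 2.6 in \cite{DSKV}'', which is exactly this direct monomial computation, expanding both sides via \eqref{20140627:eq1} and matching the double sum over positions $1\le l<k\le s$.
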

\begin{proof}
Same as the proof of Lemma 2.6 in \cite{DSKV}.
\end{proof}
\begin{definition}\label{def:diff-func}
An \emph{algebra of difference functions} in $\ell$ variables
is a unital associative algebra $\mc V$, with an automorphism $S$,
endowed with strongly commuting linearly independent (cf. \eqref{linear-ind}) $2$-fold derivations
$\frac{\partial}{\partial u_{i,n}}:\,\mc V\to\mc V\otimes\mc V$, $i\in I=\{1,\dots,\ell\},\,n\in\mb Z$,
such that \eqref{eq:comm} holds
and, for every $f\in\mc V$, we have $\frac{\partial f}{\partial u_{i,n}}=0$
for all but finitely many choices of indices $(i,n)\in I\times\mb Z$.
\end{definition}
An example of such an algebra is the algebra $\mc R_\ell$,
endowed with the $2$-fold derivations defined in \eqref{20140627:eq1},
or its localization by non-zero elements.
Note that $\mc R_\ell$ is in fact an algebra of difference functions in $m$ variables,
where $1\leq m\leq\ell$. In this case we should think of the variables $u_i$, $i>m$ as quasiconstants,
i.e. they lie in the kernel of the 2-fold derivations defining the structure of algebra of difference functions of $\mc V$
(see \cite{DSK-nonloc}).

\begin{theorem}\phantomsection\label{20130921:prop1}
\begin{enumerate}[(a)]
\item
Any double multiplicative $\lambda$-bracket on $\mc R_\ell$ has the form ($f,g\in \mc R_\ell$):
\begin{equation}\label{master-infinite}
\ldb f_{\lambda}g\rdb
=\sum_{\substack{i,j\in I\\m,n\in\mb Z}}
\frac{\partial g}{\partial u_{j,n}}
\bullet
\lambda ^nS^n
\ldb u_{i}{}_{\lambda x}u_{j}\rdb
\lambda^{-m}S^{-m}
\bullet
\left(\Big|_{x=S}\frac{\partial f}{\partial u_{i,m}}\right)^\sigma\,.
\end{equation}
where $\bullet$ denotes the associative product on $\mc R_\ell\otimes\mc R_\ell$ defined in \eqref{eq:bullet}, and we are using the notation \eqref{notation-bar}.
\item
Let $\mc V$ be an algebra of difference functions in $\ell$ variables.
Let $H(\lambda)$ be an $\ell\times\ell$ matrix with entries in $(\mc V\otimes\mc V)[\lambda,\lambda^{-1}]$.
We denote its entries by $H_{ij}(\lambda)=\ldb{u_j}_\lambda{u_i}\rdb$,
$i,j\in I$.
Then formula \eqref{master-infinite} defines a double multiplicative $\lambda$-bracket on $\mc V$.
\item
Equation \eqref{master-infinite} defines a structure of a double multiplicative Poisson vertex algebra on $\mc V$
if and only if the skewsymmetry axiom \eqref{Eq:DMA2} and the Jacobi identity \eqref{Eq:DMA3} hold on the $u_i$'s.
\end{enumerate}
\end{theorem}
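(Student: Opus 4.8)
The plan is to prove the three parts in order, treating (a) as the technical heart and (b), (c) as relatively formal consequences. For part (a), I would start from an arbitrary double multiplicative $\lambda$-bracket $\dgal{-_\lambda-}$ on $\mc R_\ell$ and show that it is completely determined by its values $\dgal{u_{i\,\lambda}u_j}$ on the generators. The argument is a ``master formula'' computation in the style of \cite{DSKV}: first use the left Leibniz rule \eqref{Eq:DML} repeatedly, together with sesquilinearity \eqref{Eq:DMA1}, to expand $\dgal{f_\lambda g}$ in the second argument into a sum over the monomials of $g$; this produces, for each occurrence of a variable $u_{j,n}$ in $g$, a factor built from $\frac{\partial g}{\partial u_{j,n}}$ and from $\dgal{f_\lambda u_{j,n}}=\lambda^n S^n\dgal{f_\lambda u_j}$ (using \eqref{Eq:DMA1} to pull out $\lambda^nS^n$ on the right). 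One must be careful about which bimodule slot the resulting tensor lands in, hence the appearance of $\bullet$ and $\bullet_2$; the bookkeeping is exactly what Lemma \ref{20210917:lem1}(b) was set up to handle. Then one does the symmetric expansion in the \emph{first} argument using the right Leibniz rule \eqref{Eq:DMR}, which introduces the $\big|_{x=S}$ substitution acting on $\frac{\partial f}{\partial u_{i,m}}$ and the transpose $(-)^\sigma$ via the last identity in \eqref{20140305:eq1}, together with the sesquilinearity $\dgal{u_{i,m\,\lambda}g}=\lambda^{-m}\dgal{u_{i\,\lambda}g}$, i.e. the $\lambda^{-m}S^{-m}$ factor. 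Assembling the two expansions and invoking linear independence of the $\frac{\partial}{\partial u_{i,n}}$ (Definition \ref{def:diff-func}, via \eqref{linear-ind}) to see the coefficients are uniquely pinned down yields \eqref{master-infinite}. I expect the orientation/placement conventions — getting every $\bullet_1,\bullet_2,\bullet_3,\otimes_1$ and every $\sigma$ in the right place — to be the main obstacle; the underlying identities are all routine.

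For part (b), I would reverse the computation: given an arbitrary matrix $H(\lambda)$ over $(\mc V\otimes\mc V)[\lambda^{\pm1}]$, \emph{define} $\dgal{-_\lambda-}$ by the right-hand side of \eqref{master-infinite} with $\dgal{u_{i\,\lambda}u_j}:=H_{ij}(\lambda)$, and check directly that the resulting map satisfies \eqref{Eq:DMA1}, \eqref{Eq:DML}, \eqref{Eq:DMR}. Sesquilinearity in the second argument follows because replacing $g$ by $S(g)$ shifts every index $n\mapsto n+1$ in the sum, and $S$ commutes with the partial derivatives up to this shift by \eqref{eq:comm}; sesquilinearity in the first argument is the analogous statement with $m$. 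The left Leibniz rule follows from the Leibniz rule for the $2$-fold derivations $\frac{\partial}{\partial u_{j,n}}$ applied to $g=g_1g_2$ and the compatibility \eqref{20140305:eq1} of $\bullet$ with $\bullet_1,\bullet_2$; the right Leibniz rule is the mirror statement using the last line of \eqref{20140305:eq1}. Here again the only delicate point is matching the module-structure conventions, and the finiteness conditions in Definition \ref{def:diff-func} guarantee all sums are finite so that everything lands in $(\mc V\otimes\mc V)[\lambda^{\pm1}]$.

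For part (c), one direction is trivial: if \eqref{master-infinite} defines a double multiplicative Poisson vertex algebra, then in particular \eqref{Eq:DMA2} and \eqref{Eq:DMA3} hold for all elements, hence on the $u_i$. For the converse, I would argue that skewsymmetry and the vanishing of the triple $\lambda$-bracket \eqref{Eq:Triple} propagate from generators to all of $\mc V$. The key input is the lemma preceding this theorem: \eqref{Eq:TriSes}, \eqref{Eq:TriDer}, \eqref{Eq:TriDer2}, \eqref{Eq:TriDer3} show that $\dgal{-_\lambda-_\mu-}$ obeys sesquilinearity and Leibniz rules in all three slots, so it is determined by its values on $\mc K_0=\{u_i\}$ (take $\mc K_0=\{u_i : i\in I\}$, whose $S$-orbits $\{u_{i,n}\}$ generate $\mc V$); the stated consequence of \eqref{Eq:TriSkew} is exactly ``the map \eqref{Eq:Triple} vanishes identically iff it vanishes on $\mc K_0$''. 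Similarly, skewsymmetry \eqref{Eq:DMA2}, which is compatible with sesquilinearity and the Leibniz rules (cf. the Remark after Definition \ref{def:DMPVA}), extends from generators to all elements. Thus \eqref{Eq:DMA2} and \eqref{Eq:DMA3} holding on the $u_i$ forces them to hold everywhere, and by the remark following \eqref{Eq:Triple} this is precisely a double multiplicative Poisson vertex algebra structure. The only subtlety to flag is ensuring the reduction-to-generators lemma applies with $\mc K_0$ finite and its $S$-orbit generating $\mc V$, which is immediate for $\mc R_\ell$ and assumed for a general algebra of difference functions.
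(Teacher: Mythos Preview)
Your strategy for parts (a) and (b) is correct and matches the paper's approach, which simply refers to the analogous computation in \cite[Th.~3.10]{DSKV}. The expansion via the two Leibniz rules and sesquilinearity, organized through Lemma~\ref{20210917:lem1}(b), is exactly the intended argument.

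There is, however, a genuine gap in your plan for part (c). You propose to invoke the reduction-to-generators lemma (the one establishing \eqref{Eq:TriSkew} and its consequence) with $\mc K_0=\{u_i:i\in I\}$, and you claim that the hypothesis ``$\{S^n(u_i)\}$ generates $\mc V$ as an associative algebra'' is ``assumed for a general algebra of difference functions''. It is not: Definition~\ref{def:diff-func} only postulates the existence of the $2$-fold derivations $\frac{\partial}{\partial u_{i,n}}$ and the automorphism $S$, and explicitly allows localizations of $\mc R_\ell$, where for instance $u^{-1}$ is not in the subalgebra generated by the $u_{i,n}$. So that lemma does not apply to a general $\mc V$, and your argument as written only covers $\mc R_\ell$.

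The fix, and what the reference to \cite{DSKV} actually intends, is to bypass the generation lemma and compute $\dgal{f_\lambda g_\mu h}$ directly from the master formula \eqref{master-infinite}. Expanding each of the three terms of \eqref{Eq:Triple} and using the strong commutativity of the partial derivatives (Lemma~\ref{20140626:lem2}, which is part of Definition~\ref{def:diff-func}) together with Lemma~\ref{lemma:bullet-i}, one obtains an expression of the form
\[
\dgal{f_\lambda g_\mu h}
=\sum_{i,j,k}\Big(\text{derivatives of }f,g,h\Big)\bullet_\ast
\dgal{u_{i\,\lambda} u_{j\,\mu} u_k}\,,
\]
so the triple bracket vanishes for all $f,g,h\in\mc V$ if and only if $\dgal{u_{i\,\lambda} u_{j\,\mu} u_k}=0$ for all $i,j,k$. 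The same style of direct computation handles skewsymmetry: plugging the master formula into both sides of \eqref{Eq:DMA2} and using \eqref{eq:comm} shows it holds for all $f,g$ iff $H^*(S)=-H(S)$ on generators. This is how the argument works without any generation hypothesis on $\mc V$.
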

\begin{proof}
Similar to the proof for the analogue result in \cite[Th.3.10]{DSKV}.
\end{proof}
\begin{remark}\label{rem:skew}
Let $H(S)=\left(H_{ij}(S)\right)\in\Mat_{\ell\times\ell}\left((\mc V\otimes\mc V)[S,S^{-1}]\right)$, where $H_{ij}(S)$ is the scalar difference operator with symbol $H_{ij}(\lambda)=\dgal{{u_j}_\lambda u_i}$
(cf. \eqref{eq:symb}). Recalling the definition of the adjoint matrix difference operator $H^*(S)$
(cf. equation \eqref{eq:adjoint}), then skewsymmetry axiom \eqref{Eq:DMA2}
on generators means that the matrix difference operator $H(S)$ is skew-adjoint, that is $H^*(S)=-H(S)$.
\end{remark}

\begin{definition}\label{poisson-structure}
A matrix difference operator 
$H(S)=\big( H_{ij}(S)\big)_{i,j=1}^\ell$,
with entries $H_{ij}(S)\in(\mc V\otimes\mc V)[S,S^{-1}]$,
such that the double multiplicative $\lambda$-bracket \eqref{master-infinite}
with $\dgal{{u_j}_\lambda u_i}=H_{ij}(\lambda)$
satisfies skewsymmetry \eqref{Eq:DMA2} and Jacobi identity \eqref{Eq:DMA3} on the generators $u_i$'s
of the algebra of difference functions $\mc V$
is called a (local) \emph{Poisson structure} on $\mc V$.
\end{definition}

%%%%%%%
\subsection{Double multiplicative Poisson vertex algebra structures on \texorpdfstring{$\mc R_1$}{R1}} 

In this section we provide
some classification results
of double multiplicative Poisson vertex algebra structures
on $\mc R:=\mc R_1=\kk\langle u_i|i\in\mb Z\rangle$.
Let
$$
H(S)=\sum_{k\in\mb Z}f_kS^k\in(\mc R\otimes\mc R)[S,S^{-1}]
$$
be a difference operator with coefficients in $\mc R\otimes\mc R$ and define 
a double multiplicative $\lambda$-bracket on $\mc R$ using the Master Formula
\eqref{master-infinite} where
\begin{equation}\label{20200713:eq1}
\dgal{u_\lambda u}=H(\lambda)=\sum_{k\in\mb Z}f_k\lambda^k
\,.
\end{equation}
We have that
$$
-\ldb u_{\lambda^{-1}S^{-1}} u\rdb^\sigma
=-\sum_{k\in\mb Z}S^kf_{-k}^\sigma\lambda^k
\,.
$$
Hence, skewsymmetry holds on generators if and only if
\begin{equation} \label{Eq:R1-skew}
f_k=-S^{k}f_{-k}^\sigma
\,,
\qquad
k\in\mb Z
\,.
\end{equation}
Next, using the Master Formula \eqref{master-infinite} and equation \eqref{20200713:eq1} we have that Jacobi identity on generators
becomes the following equation
\begin{equation}\label{20200713:eq2}
\begin{split}
\sum_{i,j,k\in\mb Z}&\left(
\lambda^{i+k}\mu^j\left(\frac{\partial f_j'}{\partial u_i}\bullet S^if_k\right)\otimes f_j''
-\lambda^{j}\mu^{i+k}f_j'\otimes\left(\frac{\partial f_j''}{\partial u_i}\bullet S^if_k\right)
\right.
\\
&\left.
-\lambda^{i+j+k}\mu^{i+k}\left(f_k\bullet S^{i+k}\left(\frac{\partial f_j'}{\partial u_{-i}}\right)^{\sigma}\right)\otimes_1 S^{i+k}f_j''
\right)=0
\,.
\end{split}
\end{equation}
Equation \eqref{20200713:eq2} can also be rewritten as
\begin{equation}\label{20200713:eq2b}
\begin{split}
\sum_{i,j,k\in\mb Z}&\left(
\lambda^{i+k}\mu^j\left(\frac{\partial }{\partial u_i}\right)_L(f_j)\bullet_3 S^if_k
-\lambda^{j}\mu^{i+k}\left(\frac{\partial }{\partial u_i}\right)_R(f_j)\bullet_1S^i f_k
\right.
\\
&\left.
-\lambda^{i+j+k}\mu^{i+k}\left( S^{i+k}\left(\frac{\partial }{\partial u_{-i}}\right)_L(f_j)\bullet_3 f_k^\sigma\right)^{\sigma^2} \right)=0
\,.
\end{split}
\end{equation}
Arguing similarly to Lemma 2.6 in \cite{DSKVWclass}, it is possible to show that
\begin{equation} \label{Eq:R1-trunc}
f_k=f_k(u,u_1,\dots u_k)
\,,
\quad k\geq 0
\,.    
\end{equation}
Indeed, for any $i>k\geq 0$ we obtain that $f_k$ is independent of $u_i$ by looking at the terms in $\lambda^{i+N}\mu^k$ and $\lambda^k \mu^{N+i}$ with $N=\max\{i|f_i\neq 0\}$. We can 
obtain in the same way that for $i<-k\leq 0$, $f_{-k}$ is independent of $u_i$; this is equivalent to having $f_k$ independent of $u_i$ for $i<0$.

Let us assume that $f_k=0$, for $k\neq0$, in \eqref{20200713:eq1}. By \eqref{Eq:R1-trunc}
and \eqref{Eq:R1-skew}
we have that
$$
\dgal{u_\lambda u}=f\,,\quad\text{where }f=f(u)=-f^{\sigma}
\,.
$$
In this case, the Jacobi identity \eqref{20200713:eq2b} reads
$$
\left(\frac{\partial }{\partial u}\right)_L(f)\bullet_3 f
-\left(\frac{\partial }{\partial u}\right)_R(f)\bullet_1 f
-\left( \left(\frac{\partial }{\partial u}\right)_L(f)\bullet_3 f^\sigma\right)^{\sigma^2}=0
\,.
$$
This is the same condition defining double Poisson structures on $\kk[u]$. By the results
in \cite{P16,VdB1} (see Example \ref{Ex:Powell}) we have that
$$
f=\alpha(u\otimes1-1\otimes u)+\beta(u^2\otimes1-1\otimes u^2)
+\gamma(u^2\otimes u-u\otimes u^2)\,,
$$
where $\alpha,\beta,\gamma\in\kk$ are such that $\beta^2=\alpha\gamma$.

Next, we study the case when $f_k\neq0$ in \eqref{20200713:eq1}, for some $k\in\mb Z$.
In the sequel, we will use the following result.
\begin{lemma}\label{20210812:lem1}
Let $f,g\in\mc R\otimes\mc R$ and $k\in\mb Z$.
\begin{enumerate}[(a)]
\item If
$$
\left(\frac{\partial }{\partial u_k}\right)_L(f)\bullet_3 g
=
\left( \left(\frac{\partial }{\partial u_k}\right)_L(g)\bullet_3 f^\sigma\right)^{\sigma^2}
\,,
$$
then
$$
f'=s u_k+q\,,\quad g'=u_k r+p\,,
\quad \text{sp=qr}
\,,
$$
where $s,q,r,p$ lie in the kernel of $\frac{\partial}{\partial u_k}$.
\item If
$$
\left(\frac{\partial}{\partial u_k}\right)_R(f)\bullet_1g
=\left(\left(\frac{\partial}{\partial u_{k}}\right)_L(g^\sigma)\bullet_3 f^\sigma\right)^{\sigma^2}
\,,
$$
then
$$
f''=u_k s+q\,,\quad g''=ru_k+p\,,
\quad \text{ps=rq}
\,,
$$
where $s,q,r,p$ lie in the kernel of $\frac{\partial}{\partial u_k}$.
\end{enumerate}
\end{lemma}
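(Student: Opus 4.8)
For part (a), the plan is to unwind the hypothesis into a concrete identity in $\mc R\otimes\mc R\otimes\mc R$ and then exploit that $\mc R$ is free; for part (b), I would bypass a second computation and reduce to (a) by the permutation $\sigma$. So, for (a), write $f=f'\otimes f''$ and $g=g'\otimes g''$ in Sweedler notation and abbreviate $\frac{\partial f'}{\partial u_k}=(f')_1\otimes(f')_2$, $\frac{\partial g'}{\partial u_k}=(g')_1\otimes(g')_2$. Using the definition of $\left(\frac{\partial}{\partial u_k}\right)_L$ from \eqref{Eq:DelLRExtend}, of $\bullet_3$ from \eqref{20210624:eq1}, and of $\sigma$ on $\mc R^{\otimes3}$ from \eqref{eq:sigma}, the hypothesis of (a) unravels to
\begin{equation*}
(f')_1\,g'\otimes g''\,(f')_2\otimes f''\;=\;f'\,(g')_2\otimes g''\otimes (g')_1\,f''\,.
\end{equation*}
I would then expand both sides in the word basis of the free algebra $\mc R$, using $\frac{\partial w}{\partial u_k}=\sum_{w=p\,u_k\,r}p\otimes r$ on a word $w$ (cf. \eqref{20140627:eq1}), and compare coefficients of the basis vectors of $\mc R^{\otimes3}$.

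The content of (a) is a leading-term analysis. Grading the tensor legs of $\mc R$ (and of $\mc R^{\otimes3}$) by the number of occurrences of $u_k$ and by the total number of letters, one reads off from the displayed identity that the third slot of the left-hand side carries only the $u_k$-content of $f''$ whereas on the right it carries that of $(g')_1$ as well, and dually that the second slot of the left-hand side carries the $u_k$-content of $(f')_2$ in addition to that of $g''$ whereas on the right it carries only that of $g''$. Matching the pieces of maximal degree — organised, if needed, by downward induction on the largest number of $u_k$'s occurring in a first tensor leg of $f$, resp. of $g$ — forces $(f')_2$ to be scalar in every term, i.e. every word of $f'$ contains $u_k$ at most once and then as its last letter, equivalently $f'=su_k+q$ with $s,q\in\ker\frac{\partial}{\partial u_k}$; symmetrically $(g')_1$ must be scalar in every term, i.e. $g'=u_k r+p$ with $r,p\in\ker\frac{\partial}{\partial u_k}$. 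Feeding these normal forms back, so that $\frac{\partial f'}{\partial u_k}=s\otimes1$ and $\frac{\partial g'}{\partial u_k}=1\otimes r$, the identity reduces to $s\,g'\otimes g''\otimes f''=f'\,r\otimes g''\otimes f''$, and expanding $s\,g'=s(u_k r+p)=s u_k r+sp$ and $f'\,r=(su_k+q)r=s u_k r+qr$ yields $sp=qr$.

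For part (b) I would not redo the computation. Expanding the hypothesis of (b) in Sweedler notation, exactly as above (now $\left(\frac{\partial}{\partial u_k}\right)_R$ differentiates the second leg and one $\bullet_1$-acts by $g$, cf. \eqref{20210624:eq1}), gives
\begin{equation*}
f'\otimes (f'')_1\,g'\otimes g''\,(f'')_2\;=\;f'\,(g'')_2\otimes g'\otimes (g'')_1\,f''\,;
\end{equation*}
on the other hand, unravelling the hypothesis of (a) for the pair $(g^\sigma,f^\sigma)$ and then applying $\sigma^2$ produces precisely this identity (one uses that $\sigma$ is invertible on $\mc R^{\otimes3}$, that $(\cdot)^\sigma$ is an involution on $\mc R^{\otimes2}$ by \eqref{eq:sigma2}, and the antihomomorphism property \eqref{eq:bullet-anti}). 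Since $\sigma^2$ is a bijection, the hypothesis of (b) for $(f,g)$ is equivalent to the hypothesis of (a) for $(g^\sigma,f^\sigma)$. Applying the already-proved part (a) to $(g^\sigma,f^\sigma)$ yields $(g^\sigma)'=g''=su_k+q$, $(f^\sigma)'=f''=u_k r+p$ and $sp=qr$ with $s,q,r,p\in\ker\frac{\partial}{\partial u_k}$; a relabelling of the letters is the statement of (b).

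The step I expect to be the genuine obstacle is the leading-term bookkeeping in (a): because $f$ and $g$ are arbitrary elements of $\mc R\otimes\mc R$ and not pure tensors, and because both the $\bullet_3$ action and the two-fold Leibniz splitting of $\frac{\partial}{\partial u_k}$ create cross-terms, one has to argue carefully that the top-degree words cannot cancel against anything, and hence that the \emph{positional} normal form — not merely $u_k$-degree $\le1$ — is forced; this is exactly where the asymmetric roles of $f$ and $g$ in the hypothesis (only the first leg of each is differentiated, but entering one slot "from the left" and another "from the right" after the $\sigma$-twist) become visible. The remaining steps — unravelling the two hypotheses, the $\sigma$-reduction of (b), and the final substitution producing $sp=qr$ — are routine.
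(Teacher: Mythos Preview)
Your approach to part (a) is essentially the paper's: both of you unwind the hypothesis to the identity
\[
(f')_1\,g'\otimes g''\,(f')_2\otimes f''\;=\;f'\,(g')_2\otimes g''\otimes (g')_1\,f''
\]
and then read off from the second and third tensor slots that $(f')_2$ and $(g')_1$ must be scalar, giving the normal forms $f'=su_k+q$, $g'=u_kr+p$; substituting back produces $sp=qr$. The paper states the slot-comparison step in one line (``This gives the conditions $\big(\frac{\partial f'}{\partial u_k}\big)''\in\kk$, $\big(\frac{\partial g'}{\partial u_k}\big)'\in\kk$''), while you spell out a leading-term/degree argument; the content is the same.

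For part (b) you take a genuinely different route. The paper simply says ``Part (b) is proven similarly'', i.e.\ repeats the (a) computation with $\big(\frac{\partial}{\partial u_k}\big)_R$ and $\bullet_1$ in place of $\big(\frac{\partial}{\partial u_k}\big)_L$ and $\bullet_3$. You instead observe that the hypothesis of (b) for $(f,g)$ is the $\sigma^2$-image of the hypothesis of (a) for $(g^\sigma,f^\sigma)$; indeed one checks directly that $\big(\frac{\partial}{\partial u_k}\big)_R(f)\bullet_1 g=\big(\big(\frac{\partial}{\partial u_k}\big)_L(f^\sigma)\bullet_3 g\big)^\sigma$, and then applying $\sigma^2$ to the (a)-identity for $(g^\sigma,f^\sigma)$ yields exactly the (b)-identity (with sides swapped), since $\sigma^3=\mathrm{id}$ on $\mc R^{\otimes3}$. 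This reduction is correct and buys you economy: you avoid a second unwinding, and the relabelling at the end matches the statement of (b) precisely. The paper's approach is slightly more transparent in that no bookkeeping with cyclic permutations is needed, but yours makes the symmetry between (a) and (b) explicit.
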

\begin{proof}
Let us prove part (a). Using \eqref{eq:DLR}, \eqref{eq:sigma} and \eqref{20210624:eq1}
we have
$$
\left(\frac{\partial }{\partial u_k}\right)_L(f)\bullet_3 g
=\left(\frac{\partial f'}{\partial u_k}\right)'g'
\otimes g''\left(\frac{\partial f'}{\partial u_k}\right)''
\otimes f''
$$
and
$$
\left( \left(\frac{\partial }{\partial u_k}\right)_L(g)\bullet_3 f^\sigma\right)^{\sigma^2}
=f'\left(\frac{\partial g'}{\partial u_k}\right)''
\otimes g''
\otimes \left(\frac{\partial g'}{\partial u_k}\right)'f''
\,.
$$
Hence we need to solve the equation
\begin{equation}\label{20210813:eq3}
\left(\frac{\partial f'}{\partial u_k}\right)'g'
\otimes g''\left(\frac{\partial f'}{\partial u_k}\right)''
\otimes f''
=
f'\left(\frac{\partial g'}{\partial u_k}\right)''
\otimes g''
\otimes \left(\frac{\partial g'}{\partial u_k}\right)'f''
\,.
\end{equation}
This gives the conditions
$$\left(\frac{\partial f'}{\partial u_k}\right)''\in\kk\,,
\quad
\left(\frac{\partial g'}{\partial u_k}\right)'\in\kk\,,
$$
from which we get $f'=s u_k+q$ and $g'=u_k r+p$, with $s,q,r,p$ in the kernel of $\frac{\partial}{\partial u_k}$.
Hence, $\frac{\partial f'}{\partial u_k}=s\otimes1$ and $\frac{\partial g'}{\partial u_k}=1\otimes r$. Substituting these expressions in \eqref{20210813:eq3} we get
that $s,q,r,p$ should satisfy
$$
s(u_k r+p)=(su_k+q)r
\,,
$$
which implies $sp=qr$ and concludes the proof of the claim. Part (b) is proven similarly.
\end{proof}
Let $g=g(u)\in\kk[u]\otimes\kk[u]\subset\mc R\otimes\mc R$ (that is $\frac{\partial g}{\partial u_n}=0$, for every $n\neq0$) and let $r(\lambda)\in\kk[\lambda,\lambda^{-1}]$ be such that
$r(\lambda^{-1})=-r(\lambda)$. We consider the double multiplicative $\lambda$-bracket on $\mc R$
defined by 
\begin{equation}\label{20210812:eq1}
\dgal{u_\lambda u}=g\bullet r(\lambda S)g^{\sigma}
\,.
\end{equation}
\begin{proposition}\label{20210812:prop1}
The $\lambda$-bracket \eqref{20210812:eq1} defines a double multiplicative Poisson vertex algebra
structure on $\mc R$ if and only if $g$ is of the form (up to a constant multiple that can be absorbed in $r(\lambda)$)
\begin{equation}\label{eq:g}
g=(\alpha u+\beta)\otimes(\alpha u+\beta)\,,\quad
\alpha,\beta\in\kk
\,.
\end{equation}
\end{proposition}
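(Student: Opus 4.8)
The plan is the following. By Theorem~\ref{20130921:prop1}(c), since \eqref{20210812:eq1} is given via the Master Formula \eqref{master-infinite}, it suffices to check the skewsymmetry axiom \eqref{Eq:DMA2} and the Jacobi identity \eqref{Eq:DMA3} on the single generator $u$. Writing $\dgal{u_\lambda u}=\sum_k r_k\lambda^k h_k$ with $h_k:=g\bullet S^k(g^\sigma)$, skewsymmetry holds for \emph{any} $g$: using that $\sigma$ is an antihomomorphism of $\bullet$ (equation \eqref{eq:bullet-anti}), that $S$ is a $\bullet$-homomorphism, and that $r(\lambda^{-1})=-r(\lambda)$, one checks $r_k h_k=-S^k\left((r_{-k}h_{-k})^\sigma\right)$, which is precisely \eqref{Eq:R1-skew}. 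Thus the whole statement amounts to analysing the Jacobi identity on $u$, that is equation \eqref{20200713:eq2b} with $f_k=r_k h_k$; note $f_0=0$ since $r_0=0$, and we may assume $r\neq0$ and $g\neq0$, since otherwise $\dgal{u_\lambda u}=0$ and there is nothing to prove.

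\emph{Sufficiency.} Assume $g=c\otimes c$ with $c=\alpha u+\beta$. If $\alpha=0$ then $\dgal{u_\lambda u}=\beta^2 r(\lambda)(1\otimes1)$ is a constant $\lambda$-bracket and \eqref{20200713:eq2b} holds trivially, since every partial derivative occurring there annihilates constants. If $\alpha\neq0$ then $f_k=r_k\bigl(c\,S^k(c)\otimes S^k(c)\,c\bigr)$: the two legs of $f_k$ depend only on $u_0$ and $u_k$, and the partial derivatives $\frac{\partial f_k}{\partial u_0}$ and $\frac{\partial f_k}{\partial u_k}$ come out as simple tensors with scalar factor $r_k\alpha$ (e.g. $\left(\frac{\partial}{\partial u_0}\right)_L f_k=r_k\alpha\,(1\otimes S^k(c))\otimes S^k(c)\,c$, and similarly for the others). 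Substituting these in \eqref{20200713:eq2b} and collecting the coefficient of each monomial $\lambda^a\mu^b$, the three sums cancel; this is a finite, mechanical verification. (Equivalently, the change of generators $v:=\alpha u+\beta$ identifies $\mc R$ with $\kk\langle v_i\mid i\in\mb Z\rangle$, $S(v_i)=v_{i+1}$, and turns \eqref{20210812:eq1} into $\dgal{v_\lambda v}=(v\otimes v)\bullet(\alpha^2 r)(\lambda S)(v\otimes v)$, reducing the claim to the case $g=u\otimes u$.)

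\emph{Necessity.} Assume \eqref{20200713:eq2b} holds and set $N:=\max\{k\mid r_k\neq0\}\geq1$. Since $f_j$ (for $j\neq0$) involves only the variables $u_0$ and $u_j$, we have $\frac{\partial f_j}{\partial u_i}=0$ unless $i\in\{0,j\}$; together with $|i|,|j|,|k|\leq N$ and $j,k\neq0$, this leaves in the coefficient of $\lambda^{2N}\mu^N$ of \eqref{20200713:eq2b} only the triple $(i,j,k)=(N,N,N)$ from the first sum and $(i,j,k)=(0,N,N)$ from the third (the second sum cannot reach $\lambda$-degree $2N$), which gives the identity
\[
\left(\frac{\partial}{\partial u_N}\right)_L(f_N)\bullet_3 S^N f_N
=
\left(S^N\left(\frac{\partial}{\partial u_0}\right)_L(f_N)\bullet_3 f_N^\sigma\right)^{\sigma^2}.
\]
Applying $S^{-N}$ and using \eqref{eq:comm}, this becomes exactly the hypothesis of Lemma~\ref{20210812:lem1}(a) with $k=0$ for the pair $(f,g)=(S^{-N}f_N,\,f_N)$. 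The conclusion of that lemma forces the first leg of $f_N$ and of $S^{-N}f_N$ to be affine-linear in $u_0$, which — tracing back through $f_N=r_N\bigl(g\bullet S^N(g^\sigma)\bigr)$ — means exactly that both tensor factors of $g$ are affine-linear in $u$, while the relation $sp=qr$ in that same conclusion forces these two factors to be proportional. Hence $g$ is a constant multiple of $(\alpha u+\beta)\otimes(\alpha u+\beta)$, and absorbing the constant into $r(\lambda)$ yields \eqref{eq:g}. When $g$ is not a simple tensor, the affine-linearity conclusions first confine $g$ to $(\kk\oplus\kk u)\otimes(\kk\oplus\kk u)$, after which substituting this four-parameter ansatz into \eqref{20200713:eq2b} and solving the remaining scalar equations produces the symmetry and rank-one conditions which, over the algebraically closed field $\kk$, give \eqref{eq:g}.

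\emph{Main difficulty.} The delicate part is the necessity argument: determining exactly which index triples survive in the top-bidegree coefficient of \eqref{20200713:eq2b}, and then rewriting that coefficient — through the shift $S^{-N}$ and the permutations $\sigma$ and $\sigma^2$ — into the precise form required to invoke Lemma~\ref{20210812:lem1}, while keeping careful track of the fact that $g$ need not be a simple tensor (so that $g'$, $g''$ must be read as the spans of its two tensor legs). The sufficiency computation and the final scalar system are routine but demand care with signs.
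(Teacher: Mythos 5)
Your proposal is correct and follows essentially the same strategy as the paper: skewsymmetry is automatic from $r(\lambda^{-1})=-r(\lambda)$ and \eqref{eq:bullet-anti}, sufficiency is a direct verification, and necessity extracts the coefficient of $\lambda^{2N}\mu^N$ in the Jacobi identity on generators and feeds it into Lemma~\ref{20210812:lem1}(a). The only (cosmetic) difference is that you run the necessity argument through the generic identity \eqref{20200713:eq2b} with $f_k=r_k\,g\bullet S^k(g^\sigma)$ — which is how the paper handles the subsequent Proposition~\ref{Pr:class1} — whereas the paper's own proof of this proposition first rewrites the Jacobi identity in the factored form \eqref{20210812:eq2} and reads off condition \eqref{20210813:eq1} directly on $g$; your explicit treatment of the non-simple-tensor case is a welcome addition rather than a deviation.
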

\begin{proof}
The $\lambda$-bracket \eqref{20210812:eq1} is clearly skewsymmetric 
in view of the assumption on $r(\lambda)$ and \eqref{eq:bullet-anti}.
By a direct computation, using Lemma \ref{20210917:lem1}, equations \eqref{20210616:eq2a}, \eqref{20210616:eq2}, the Master Formula \eqref{master-infinite},
the assumption on $r(\lambda)$ and Lemma \ref{lemma:bullet-i}(b)-(c), the Jacobi identity
on generators becomes
\begin{equation}\label{20210812:eq2}
    \begin{split}
&\left(\left(
\left(\frac{\partial}{\partial u}\right)_L(g)\bullet_3g-\left(\frac{\partial}{\partial u}\right)_R(g)\bullet_1g\right)
\bullet_1r(\lambda S)g^\sigma\right)\bullet_3r(\mu S)g^\sigma
\\
&+g\bullet_2 r(\lambda\mu S)\left(
\left(
\left(\frac{\partial}{\partial u}\right)_L(g^\sigma)\bullet_3g-g^\sigma\bullet_2\left(\left(\frac{\partial}{\partial u}\right)_L(g)\right)^{\sigma^2}\right)
\bullet_3r(\lambda S)g^\sigma
\right)
\\
&-g\bullet_2 r(\lambda\mu S)\left(
\left(
\left(\frac{\partial}{\partial u}\right)_R(g^\sigma)\bullet_1g-g^\sigma\bullet_2\left(\left(\frac{\partial}{\partial u}\right)_R(g)\right)^{\sigma}\right)
\bullet_1r(\mu S)g^\sigma
\right)=0\,.
\end{split}
\end{equation}
It is straightforward to check that if $g$ is as in \eqref{eq:g} then the LHS of
\eqref{20210812:eq2} vanishes.
On the other hand, let us assume that $r(\lambda)$ has order $N>1$. Then, vanishing of
the
coefficient of $\lambda^{2N}\mu^N$ in the LHS of \eqref{20210812:eq2} gives
\begin{equation}\label{20210813:eq1}
\left(\frac{\partial}{\partial u}\right)_L(g^\sigma)\bullet_3g-g^\sigma\bullet_2\left(\left(\frac{\partial}{\partial u}\right)_L(g)\right)^{\sigma^2}=0\,.
\end{equation}
Using the identity ($A\in\mc R^{\otimes2},B\in\mc R^{\otimes3}$)
$$
A^\sigma\bullet_{2}B^{\sigma^2}=\left(B\bullet_3A\right)^{\sigma^2}
$$
we have that
\begin{equation}\label{20210813:eq2}
g^\sigma\bullet_2\left(\left(\frac{\partial}{\partial u}\right)_L(g)\right)^{\sigma^2}
=\left(\left(\frac{\partial }{\partial u}\right)_L(g)\bullet_3 g\right)^{\sigma^2}\,.
\end{equation}
From \eqref{20210813:eq1} and \eqref{20210813:eq2} it follows
that $g\in\kk[u]\otimes\kk[u]$ must satisfy the equation
$$
\left(\frac{\partial}{\partial u}\right)_L(g^\sigma)\bullet_3g
=\left(\left(\frac{\partial }{\partial u}\right)_L(g)\bullet_3 g\right)^{\sigma^2}
\,.
$$
By Lemma \ref{20210812:lem1}(a) with $k=0$ and $g^\sigma$ in place of $f$,
and using the fact that $\ker\frac{\partial}{\partial u}\cap\kk[u]=\kk$, we have
$g'=a u+b$, $g''=c u+d$, where $a,b,c,d\in\kk$ satisfy $ad=bc$. Hence,
up to a constant factor, it is necessary for $g$ to be as in \eqref{eq:g}.
This concludes the proof.
\end{proof}
For $N\geq1$, more generally, let us define a
skewsymmetric double multiplicative $\lambda$-bracket on $\mc R$ by 
\begin{equation} \label{Eq:class1-main}
    \ldb u_\lambda u\rdb=f\lambda^N-(\lambda S)^{-N}f^{\sigma}
\,,
\end{equation}
where $f\in\mc R\otimes\mc R$. The next result provides a classification of all the simplest non-trivial examples of  double multiplicative Poisson vertex algebra structures on $\mc R$.
\begin{proposition} \label{Pr:class1}
Fix $N\geq 1$. Then, \eqref{Eq:class1-main} defines a 
double multiplicative Poisson vertex algebra structure on $\mc R$ if and only if 
$f=g\bullet S^Ng$, where $g$ is as in \eqref{eq:g}.
\end{proposition}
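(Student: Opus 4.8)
\emph{The ``if'' direction.} I would first observe that \eqref{Eq:class1-main} with $f=g\bullet S^Ng$ is a special instance of \eqref{20210812:eq1}. For $g$ as in \eqref{eq:g} we have $g^\sigma=g$, and $r(\lambda)=\lambda^N-\lambda^{-N}$ satisfies $r(\lambda^{-1})=-r(\lambda)$. Expanding,
\[
g\bullet r(\lambda S)g^\sigma=\lambda^N\,(g\bullet S^Ng)-\lambda^{-N}\,(g\bullet S^{-N}g)\,,
\]
and a short computation using \eqref{eq:bullet-anti} together with the facts that $\sigma$ and $S$ commute and that $S$ is an algebra homomorphism shows $g\bullet S^{-N}g=S^{-N}\big((g\bullet S^Ng)^\sigma\big)$. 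Hence $g\bullet r(\lambda S)g^\sigma=f\lambda^N-(\lambda S)^{-N}f^\sigma$ with $f=g\bullet S^Ng$, so the two $\lambda$-brackets agree, and Proposition \ref{20210812:prop1} (which applies since $g$ has the form \eqref{eq:g}) yields that this defines a double multiplicative Poisson vertex algebra structure on $\mc R$.

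\emph{The ``only if'' direction.} Suppose \eqref{Eq:class1-main} defines a double multiplicative Poisson vertex algebra on $\mc R$. Skewsymmetry holds by construction, so the only condition to exploit is the Jacobi identity on the generator $u$, which is \eqref{20200713:eq2b} for the difference operator $H(S)=fS^N-S^{-N}f^\sigma$, i.e.\ with $f_N=f$, $f_{-N}=-S^{-N}f^\sigma$, and $f_k=0$ otherwise. By \eqref{Eq:R1-trunc} (applied with top degree $N$), $f=f(u_0,\dots,u_N)$, so $\frac{\partial f}{\partial u_i}=0$ unless $0\le i\le N$, while $\frac{\partial}{\partial u_i}(S^{-N}f^\sigma)=0$ unless $-N\le i\le 0$. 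The plan is to extract the coefficient of $\lambda^{2N}$ in \eqref{20200713:eq2b}: using the support of the partial derivatives just recorded, only the $(j,k)=(N,N)$, $i=N$ summand of the first sum and the $(j,k)=(N,N)$, $i=0$ summand of the last sum survive, leaving
\[
\left(\frac{\partial}{\partial u_N}\right)_L(f)\bullet_3 S^Nf
=\Big(S^N\big(\tfrac{\partial}{\partial u_0}\big)_L(f)\bullet_3 f^\sigma\Big)^{\sigma^2}.
\]
By \eqref{eq:comm}, $S^N\big(\frac{\partial}{\partial u_0}\big)_L(f)=\big(\frac{\partial}{\partial u_N}\big)_L(S^Nf)$, so this is exactly the hypothesis of Lemma \ref{20210812:lem1}(a) with $k=N$ and with $S^Nf$ in place of $g$; similarly the coefficient of $\mu^{2N}$ reduces to the hypothesis of Lemma \ref{20210812:lem1}(b) with $k=N$, $g=S^Nf$. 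Applying these lemmas — and using that the Sweedler components of $S^Nf$ are $S^N(f')$ and $S^N(f'')$, which pins down the elements occurring in their conclusions — one obtains, after solving the resulting equations in the free algebra $\mc R$, that $f'$ and $f''$ are products of the shape ($\kk$-affine in $u_0$)$\cdot$($\kk$-affine in $u_N$) and ($\kk$-affine in $u_N$)$\cdot$($\kk$-affine in $u_0$) built from the same data; equivalently $f=g_1\bullet S^Ng_1^\sigma$ for some $g_1\in\kk[u]\otimes\kk[u]$. With $f$ of this form, \eqref{Eq:class1-main} coincides with \eqref{20210812:eq1} for $g=g_1$ and $r(\lambda)=\lambda^N-\lambda^{-N}$ (by the same computation as in the ``if'' part, now without assuming $g_1$ symmetric), so Proposition \ref{20210812:prop1} forces $g_1$ to have the form \eqref{eq:g}; then $g_1^\sigma=g_1$ and $f=g\bullet S^Ng$ with $g$ as in \eqref{eq:g}.

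\emph{Main obstacle.} The two reductions to available results are clean: Lemma \ref{20210812:lem1} at the start of the ``only if'' argument, and Proposition \ref{20210812:prop1} at its end. The delicate middle step is extracting the precise form of $f$ from the conclusions of Lemma \ref{20210812:lem1}(a)--(b) — this amounts to solving systems of word equations in the free algebra $\mc R$ (matching Sweedler components and tensor slots over a free algebra), where the subtle points are excluding a few degenerate branches and ruling out that $f$ depends on the ``middle'' variables $u_1,\dots,u_{N-1}$. If the two extremal coefficients do not by themselves suffice for the latter, I would in addition collect the coefficients of $\lambda^{m+N}$ and, symmetrically, $\mu^{m+N}$ for $1\le m\le N-1$; by comparison of $\lambda$- and $\mu$-degrees each such coefficient splits into separate identities forcing the relevant partial derivatives of $f$ (and of $S^{-N}f^\sigma$) to vanish, hence eliminating the middle variables directly. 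This part is routine but is where the real computational work lies.
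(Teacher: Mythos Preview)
Your overall architecture matches the paper's: reduce the ``if'' direction to Proposition \ref{20210812:prop1}, and for ``only if'' extract the extremal coefficients $\lambda^{2N}\mu^N$ and $\lambda^N\mu^{2N}$ of \eqref{20200713:eq2b} to feed into Lemma \ref{20210812:lem1}(a),(b). The paper also handles the middle variables exactly as you suggest in your fallback, but does it \emph{first} as a separate lemma (looking at $\lambda^{N+\alpha}\mu^N$ and $\lambda^N\mu^{N+\alpha}$ for $1\le\alpha\le N-1$ gives $(\partial/\partial u_\alpha)_L(f)\bullet_3 S^\alpha f=0$ and its $R$-analogue, hence $f=f(u,u_N)$); you should promote that step rather than defer it, since the clean conclusions from Lemma \ref{20210812:lem1} depend on knowing $s,q\in\kk[u]$ and $r,p\in\kk[u_{2N}]$.

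The genuine gap is your final step. What Lemma \ref{20210812:lem1}(a),(b) actually produces (once $f=f(u,u_N)$) is
\[
f'=(\alpha u+\beta)(\alpha u_N+\beta),\qquad f''=(\gamma u_N+\delta)(\gamma u+\delta),
\]
i.e.\ $f=g_1\bullet S^Ng_1$ with $g_1=(\alpha u+\beta)\otimes(\gamma u+\delta)$, where $(\alpha,\beta)$ and $(\gamma,\delta)$ come \emph{independently} from parts (a) and (b). This is not $g_1\bullet S^Ng_1^\sigma$; the two agree only if $g_1^\sigma=g_1$, which is exactly what remains to be shown. So your appeal to Proposition \ref{20210812:prop1} is circular: its Ansatz is $g\bullet r(\lambda S)g^\sigma$, and \eqref{Eq:class1-main} with $f=g_1\bullet S^Ng_1$ fits that Ansatz only after you already know $g_1$ is symmetric. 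The paper closes the argument differently: it extracts the coefficient of $\lambda^N\mu^N$ in \eqref{20200713:eq2b}, which (using $f=f(u,u_N)$) reduces to
\[
\big(\tfrac{\partial}{\partial u}\big)_L(f)\bullet_3 f=\big(\tfrac{\partial}{\partial u}\big)_R(f)\bullet_1 f,
\]
and plugging in the explicit $f$ above this becomes $\alpha(\gamma u+\delta)=\gamma(\alpha u+\beta)$, i.e.\ $\alpha\delta=\beta\gamma$. That is precisely the condition making $g_1$ a scalar multiple of $(\tilde\alpha u+\tilde\beta)\otimes(\tilde\alpha u+\tilde\beta)$, hence of the form \eqref{eq:g}. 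Replace your invocation of Proposition \ref{20210812:prop1} by this single extra coefficient check and the proof is complete.
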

We will prove the classification result by checking the conditions
given by the Jacobi identity \eqref{20200713:eq2b} for $f_N:=f$, $f_{-N}:=-S^{-N}(f^\sigma)$ and $f_k=0$ if $k\neq N,-N$. Recall that by \eqref{Eq:R1-trunc}, $f=f(u,u_1,\ldots,u_N)$.
First we need the following result.
\begin{lemma}
If the double $\lambda$-bracket \eqref{Eq:class1-main} satisfies Jacobi identity, then $f=f(u,u_N)$. \end{lemma}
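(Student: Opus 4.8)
The plan is to extract, from the general Jacobi identity equation \eqref{20200713:eq2b}, the consequences obtained by isolating the \emph{top-degree coefficients} in $\lambda$ and $\mu$, exactly as was done to obtain \eqref{Eq:R1-trunc} and in Lemma~\ref{20210812:lem1}. With only $f_N=f$ and $f_{-N}=-S^{-N}(f^\sigma)$ nonzero, the triple sum in \eqref{20200713:eq2b} collapses to finitely many terms indexed by $i\in\{0,\pm N,\pm 2N\}$ (the values for which $S^i f_k$ or the derivative term can be nonzero), $j,k\in\{N,-N\}$. First I would write out this reduced identity explicitly, recording for each surviving term its bidegree $(\deg_\lambda,\deg_\mu)$; since $f$ depends only on $u,u_1,\dots,u_N$ by \eqref{Eq:R1-trunc}, the partial derivatives $\frac{\partial}{\partial u_i}$ that occur are only those with $0\le i\le N$ in the $L/R$ pieces attached to $f_N$, and correspondingly $-N\le i\le 0$ for $f_{-N}$.

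Next I would identify the extremal monomial in the two formal variables. The highest total power of $\lambda$ that can appear comes from a term like $\lambda^{i+N}\mu^N\big(\tfrac{\partial}{\partial u_i}\big)_L(f)\bullet_3 S^i f$ with $i=N$ (so that $S^N f$ contributes), giving bidegree $(2N,N)$; similarly there is a companion term of bidegree $(2N,2N)$ coming from the last line of \eqref{20200713:eq2b} with the relevant shift. I would pick one such ``corner'' bidegree — say the coefficient of $\lambda^{2N}\mu^N$, in analogy with the coefficient of $\lambda^{2N}\mu^N$ used in the proof of Proposition~\ref{20210812:prop1} — and show it forces $\big(\tfrac{\partial}{\partial u_i}\big)_{L}(f)=0$ and $\big(\tfrac{\partial}{\partial u_i}\big)_R(f)=0$ for every $i$ with $0<i<N$. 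Concretely, among the surviving terms only those attached to $S^0 f=f$ or to $S^{\pm N}f$ survive at a generic bidegree, while the term carrying $\frac{\partial f}{\partial u_i}$ for an intermediate $i$ cannot be cancelled by anything else at its bidegree; hence each such derivative vanishes. Combined with the already-known vanishing of $\frac{\partial f}{\partial u_i}$ for $i<0$ and $i>N$, this yields $\frac{\partial f}{\partial u_i}=0$ for all $i\ne 0,N$, i.e.\ $f=f(u,u_N)$.

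The main obstacle I anticipate is \textbf{bookkeeping}: making sure that when two surviving terms happen to land in the \emph{same} bidegree $(\lambda^p,\mu^q)$ their $\mc R^{\otimes 3}$-valued coefficients cannot conspire to cancel the intermediate-derivative contribution. To handle this cleanly I would argue degree-by-degree in the number of factors $u_i$ with $i\notin\{0,N\}$ appearing in the tensor components, using the linear independence of monomials in $\mc R=\kk\langle u_i\mid i\in\mb Z\rangle$ and the fact that the three $\bullet_j$ actions place the surviving factors in specific tensor slots (as in the proof of Lemma~\ref{20210812:lem1}, where comparing the positions of factors in a $\mc V^{\otimes 3}$ identity pinned down the unknowns). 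The key structural point is that the term $\lambda^{i+k}\mu^j\big(\tfrac{\partial}{\partial u_i}\big)_L(f_j)\bullet_3 S^i f_k$ with an intermediate $i$ has its $u_i$-derivative ``opened up'' — a factor sitting in a tensor slot that no other surviving term feeds — so it is isolated and must vanish on its own.

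Once $f=f(u,u_N)$ is established, the remaining steps (not part of this lemma) reduce \eqref{20200713:eq2b} to a finite system whose solution is $f=g\bullet S^N g$ with $g$ as in \eqref{eq:g}, by an argument parallel to Proposition~\ref{20210812:prop1} and Lemma~\ref{20210812:lem1}. For the present lemma, though, it suffices to carry out the corner-coefficient extraction and the isolated-slot vanishing argument just described; I expect the write-up to be short, modulo the careful enumeration of which $(i,j,k)$ triples survive and what bidegree each contributes.
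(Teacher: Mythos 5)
Your proposal follows essentially the same route as the paper: the paper extracts the coefficients of $\lambda^{N+\alpha}\mu^N$ and $\lambda^{N}\mu^{N+\alpha}$ for $1\leq\alpha\leq N-1$ in \eqref{20200713:eq2b}, where the only surviving term is $\left(\frac{\partial}{\partial u_\alpha}\right)_L(f)\bullet_3 S^\alpha(f)$ (resp.\ its $R$/$\bullet_1$ analogue), and then concludes $\frac{\partial f}{\partial u_\alpha}=0$ from the expanded $\mc R^{\otimes3}$ identity because $\mc R$ is a free algebra --- exactly your ``isolated at its bidegree'' mechanism. The one slip is your naming of the corner coefficient $\lambda^{2N}\mu^N$: that coefficient only involves $i=N$ and is what is used later in the proof of Proposition \ref{Pr:class1}, not here; the bidegrees that kill the intermediate derivatives are $\lambda^{N+\alpha}\mu^N$ and $\lambda^N\mu^{N+\alpha}$ with $0<\alpha<N$.
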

\begin{proof}
For $1\leq \alpha \leq N-1$, we get by looking at the terms in $\lambda^{N+\alpha}\mu^N$ and $\lambda^{N}\mu^{N+\alpha}$ in \eqref{20200713:eq2b}  that 
 $$\left(\frac{\partial }{\partial u_\alpha}\right)_L(f)\bullet_3 S^\alpha (f)=0\,, \quad 
 \left(\frac{\partial }{\partial u_\alpha}\right)_R(f)\bullet_1 S^\alpha (f)=0\,.$$
More explicitly, we expand the above identities using \eqref{20210624:eq1} and get
 $$
 \left(\frac{\partial f'}{\partial u_\alpha}\right)'S^\alpha(f')\otimes S^\alpha(f'')\left(\frac{\partial f'}{\partial u_\alpha}\right)''
 \otimes f''=0\,,
 \quad
f'\otimes \left(\frac{\partial f''}{\partial u_\alpha}\right)'S^\alpha(f')\otimes S^\alpha(f'')\left(\frac{\partial f''}{\partial u_\alpha}\right)''
=0\,.
 $$
 Since $f\neq 0$, $\frac{\partial f'}{\partial u_\alpha}=0$ and  $\frac{\partial f''}{\partial u_\alpha}=0$ for $1\leq \alpha \leq N-1$, so that 
 $f=f'(u,u_N)\otimes f''(u,u_N)$. 
\end{proof}
\begin{proof}[Proof of Proposition \ref{Pr:class1}]
Vanishing of the coefficient of $\lambda^{2N}\mu^N$ in the LHS of \eqref{20200713:eq2b}
gives the equation
$$
\left(\frac{\partial}{\partial u_N}\right)_L(f)\bullet_3 S^N f
=
\left(\left(\frac{\partial}{\partial u_N}\right)_L(S^Nf)\bullet_3 f^\sigma\right)^{\sigma^2}
\,.
$$
By Lemma \ref{20210812:lem1}(a) with $k=N$ and $S^Nf$ in place of $g$ we get
$$
f'=s(u) u_N+q(u)\,,\quad S^Nf'=u_Nr(u_{2N})+p(u_{2N})\,,\quad sp=qr
\,,
$$
where
$s,q\in\kk[u]=\ker\frac{\partial}{\partial u_N}\cap\kk\langle u,u_N\rangle$ and
$r,p\in\kk[u_{2N}]=\ker\frac{\partial}{\partial u_N}\cap\kk\langle u_N,u_{2N}\rangle$. The condition
$sp=qr$ then implies that 
$f'=(\alpha u+\beta)(\alpha u_N+\beta)$ for some $\alpha,\beta\in\kk$. 
Similarly,
vanishing of the coefficient of $\lambda^{N}\mu^{2N}$ in the LHS of \eqref{20200713:eq2b}
and using \eqref{eq:comm} gives the equation
$$
\left(\frac{\partial}{\partial u_N}\right)_R(f)\bullet_1S^N f
=\left(\left(\frac{\partial}{\partial u_{N}}\right)_L(S^{N}f^\sigma)\bullet_3 f^\sigma\right)^{\sigma^2}
\,,
$$
which, again by Lemma \ref{20210812:lem1}(b) gives $f''=(\gamma u_N+\delta)(\gamma u+\delta)$.
Hence,
\begin{equation}\label{20210812:f}
\begin{split}
f&=f'\otimes f''=(\alpha u+\beta)(\alpha u_N+\beta)\otimes
(\gamma u_N+\delta)(\gamma u+\delta)
\\
&=\left((\alpha u+\beta)\otimes(\gamma u+\delta)\right)\bullet
S^N\left((\alpha u+\beta)\otimes(\gamma u+\delta)\right)
\,.
\end{split}
\end{equation}
Next, we show that $\alpha\delta=\beta\gamma$. By Proposition \ref{20210812:prop1}, this will conclude the proof of the claim.
To do so, we look at the vanishing of the coefficient of
$\lambda^N\mu^N$ in the LHS of 
\eqref{20200713:eq2b}. This gives the identity
\begin{equation}\label{20210812:toprove}
\left(\frac{\partial}{\partial u}\right)_L(f)\bullet_3 f
=
\left(\frac{\partial}{\partial u}\right)_R(f)\bullet_1 f
\,.
\end{equation}
From \eqref{20140627:eq1} and \eqref{20210812:f} we have
\begin{equation}\label{20210812:eq3}
\begin{split}
&\left(\frac{\partial}{\partial u}\right)_L(f)
=\alpha\otimes (\alpha u_N+\beta)\otimes(\gamma u_N+\delta)(\gamma u+\delta)
\,,
\\
&\left(\frac{\partial}{\partial u}\right)_R(f)
=(\alpha u+\beta)(\alpha u_N+\beta)\otimes(\gamma u_N+\delta)\otimes\gamma
\,.
\end{split}
\end{equation}
Substituting equations \eqref{20210812:eq3} in \eqref{20210812:toprove} we get that
$f$ need to satisfy the identity
\begin{align*}
&
\alpha (\alpha u+\beta)(\alpha u_N+\beta)
\otimes
(\gamma u_N+\delta)(\gamma u+\delta)(\alpha u_N+\beta)
\otimes
(\gamma u_N+\delta)(\gamma u+\delta)
\\
&=
(\alpha u+\beta)(\alpha u_N+\beta)
\otimes
(\gamma u_N+\delta)(\alpha u+\beta)(\alpha u_N+\beta)
\otimes
\gamma(\gamma u_N+\delta)(\gamma u+\delta)
\,,
\end{align*}
which is equivalent to $\alpha(\gamma u+\delta)=\gamma(\alpha u+\beta)$ and implies
$\alpha\delta=\beta\gamma$.
\end{proof}
\begin{remark}
In \cite{CW2}, it is shown that if $\mc R$ is a double multiplicative Poisson vertex algebra for 
$$
\dgal{u_\lambda u}=f\lambda+g+(\lambda S)^{-1}f^\sigma\,,
$$
%is a Poisson structure on $\mc R$, 
then $g=0$ and $f$ is as in Proposition \ref{Pr:class1}.
\end{remark}

%%%%%%%
\subsection{Double multiplicative Poisson vertex algebra structures on \texorpdfstring{$\mc R_2$}{R2}}

Let us consider $\mc R_2=\kk\langle u_i,v_i|i\in\mb Z\rangle$ with a double multiplicative Poisson vertex algebra structure such that $\ldb u_\lambda u \rdb=0$ and $\ldb v_\lambda v\rdb=0$. 
The following result gives a criterion for such structure, and it is proven in \S\ref{sss:Pf-Prop2}. 

\begin{proposition} \label{Pr:class2}
 Assume that $\mc R_2$ is equipped with the skewsymmetric double multiplicative $\lambda$-bracket given by 
 \begin{equation} \label{Eq:cl2-dbr} 
\dgal{u_\lambda u}=0,\,\,\, \dgal{v_\lambda v}=0\,, \quad 
\dgal{u_\lambda v}= \sum_{k\in \Z} g_k \lambda^k \, \in \mc R_2 \otimes \mc R_2 [\lambda^{\pm 1}]\,.
 \end{equation}
Then $\mc R_2$ is a double multiplicative Poisson vertex algebra if and only if 
$$%\begin{equation} 
 g_k=\sum_{a,b,c,d=0,1} K_{abcd}^k \, v^a u_k^b \otimes u_k^c v^d\,, \quad K_{abcd}^k\in \kk\,,
$$%\end{equation}
where the following conditions are satisfied:
\begin{itemize}
 \item for all $k,l\in \Z$ distinct and for any $a,b,c,d,a',b',c',d'\in \{0,1\}$, 
 \begin{subequations}
 \begin{align}
  K_{1 b c d}^k K_{a' b' c' 0}^l=& K_{0 b c d}^k K_{a' b' c' 1}^l \,, \label{Eq:kl1} \\
K_{a b 1 d}^k K_{a' 0 c' d'}^l=& K_{a b 0 d}^k K_{a' 1 c' d'}^l \,; \label{Eq:kl2}
 \end{align}
\end{subequations}
\item for any $k\in\Z$ and for any $a,b,c,d\in \{0,1\}$, 
\begin{subequations}
 \begin{align}
  K_{a b 1 \epsilon}^k K_{\epsilon 0 c d}^k =& K_{a b 0 \epsilon}^k K_{\epsilon 1 c d}^k\,, \quad \forall \epsilon=0,1\,, \label{Eq:d1} \\
  K_{a b \epsilon 0}^k K_{1 \epsilon c d}^k =& K_{a b \epsilon 1}^k K_{0 \epsilon c d}^k\,, \quad \forall \epsilon=0,1\,, \label{Eq:c1} \\
    K_{a b 1 0}^k K_{1 0 c d}^k=&  K_{a b 0 1}^k K_{0 1 c d}^k \,, \label{Eq:d2} \\
 K_{a b 0 0}^k K_{1 1 c d}^k=& K_{a b 1 1}^k K_{0 0 c d}^l \,. \label{Eq:c2}   
 \end{align}
\end{subequations}
\end{itemize}
\end{proposition}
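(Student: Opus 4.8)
The plan is to substitute the skewsymmetric $\lambda$-bracket \eqref{Eq:cl2-dbr} into the Jacobi identity \eqref{Eq:DMA3} evaluated on the generators $u,v$, and then extract the constraints on the coefficients $K_{abcd}^k$. Because $\dgal{u_\lambda u}=0=\dgal{v_\lambda v}$, by Theorem \ref{20130921:prop1}(c) it suffices to impose the vanishing of $\dgal{a_\lambda b_\mu c}$ for the seven non-trivial triples $(a,b,c)$ built from $\{u,v\}$ (the cases with three equal arguments being automatic). First I would record, using the Master Formula \eqref{master-infinite} together with \eqref{20210616:eq2a}--\eqref{20210616:eq2}, that each such triple expression is a finite sum over $k,l\in\mb Z$ of terms of the shape $(\partial/\partial u)_{L/R}$ or $(\partial/\partial v)_{L/R}$ applied to a $g_l$, composed via $\bullet_i$ with a shifted copy $S^\bullet g_k$ or its $\sigma$-transpose. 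The hypothesis that $\mc R_2$ is spanned over $\kk$ by the monomials in the $u_i,v_i$ then lets me match coefficients of distinct monomials, turning each tensor identity into a system of scalar quadratic relations among the $K_{abcd}^k$.

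Next, I would argue that each $g_k$ must be of the stated bilinear form $\sum_{a,b,c,d} K_{abcd}^k\, v^a u_k^b\otimes u_k^c v^d$. This follows in the same way as Lemma \ref{20210812:lem1} and the truncation argument \eqref{Eq:R1-trunc}: looking at the highest-order terms in $\lambda$ and $\mu$ in the Jacobi identity for $(v,u,u)$ and $(u,v,v)$ forces $g_k$ to depend only on $u_k$, $v_0=v$ (and their companions through $S$), and then the derivation rules \eqref{Eq:TriDer2}--\eqref{Eq:TriDer3} together with $\dgal{u_\lambda u}=\dgal{v_\lambda v}=0$ prevent any quadratic dependence in a single variable, leaving precisely the multilinear monomials $v^a u_k^b\otimes u_k^c v^d$ with $a,b,c,d\in\{0,1\}$. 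I would also note that skewsymmetry \eqref{Eq:DMA2} relates $\dgal{v_\lambda u}$ to $\dgal{u_\lambda v}$, so no further coefficients are needed.

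With the ansatz in place, the remaining work is bookkeeping: compute the three relevant triple brackets $\dgal{u_\lambda v_\mu v}$, $\dgal{u_\lambda u_\mu v}$ (equivalently $\dgal{v_\lambda u_\mu u}$ by \eqref{Eq:TriSkew}), and the mixed one involving both a $u$- and a $v$-derivative, and read off the coefficient conditions. The terms in which the two $g$'s carry \emph{different} shift indices $k\neq l$ — i.e. one copy is differentiated in the $u_k$ or $v_0$ that the other copy does not see — produce the ``off-diagonal'' relations \eqref{Eq:kl1}--\eqref{Eq:kl2}, because then $(\partial/\partial u_k)$ kills the non-matching factor and only the leading coefficient of the monomial survives. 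The terms with $k=l$, where a derivative can hit either factor of the \emph{same} $g_k$, give the ``diagonal'' relations \eqref{Eq:d1}--\eqref{Eq:c2}; here one must carefully track which of $v,u_k,u_k,v$ is removed, which accounts for the four separate identities indexed by where the surviving $\epsilon$ sits. Conversely, a direct (if tedious) check shows that \eqref{Eq:kl1}--\eqref{Eq:c2} are also sufficient for \eqref{Eq:DMA3} on generators, and hence by Theorem \ref{20130921:prop1}(c) for a genuine double multiplicative Poisson vertex algebra structure.

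The main obstacle I anticipate is purely combinatorial control over the last step: the Jacobi identity for the mixed triple expands into many $\bullet_i$-products of three-tensors, and after applying Lemma \ref{lemma:bullet-i}(b)--(c) to commute actions past one another, one must organize the resulting sum so that the monomial-matching cleanly separates into the $k\neq l$ and $k=l$ families without spurious cancellations or double-counting. Getting the indices on the $K$'s to line up exactly with the left/right and $\sigma$-twist data in \eqref{20210616:eq2}, \eqref{20140305:eq1} is where the care is required; the algebra itself is routine once the organizing principle ``which variable does each partial derivative delete'' is made explicit.
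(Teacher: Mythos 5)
Your overall strategy coincides with the paper's: reduce to the vanishing of the triple bracket \eqref{Eq:Triple} on generators, pin down the shape of each $g_k$, and then match monomial coefficients, with the $k\neq l$ terms giving \eqref{Eq:kl1}--\eqref{Eq:kl2} and the $k=l$ terms (where monomials merge) giving the diagonal relations; the sufficiency direction is, as you say, a direct check. One organizational correction: there are six non-trivial triples from $\{u,v\}$, and by the cyclicity \eqref{Eq:TriSkew} (valid here since the bracket is assumed skewsymmetric) they collapse to exactly \emph{two} independent conditions, namely $\dgal{u_\lambda u_\mu v}=0$ and $\dgal{v_\lambda v_\mu u}=0$. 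There is no ``mixed'' triple involving both a $u$- and a $v$-derivative of the $g$'s: in each surviving term one of the inner brackets is forced by $\dgal{u_\lambda u}=\dgal{v_\lambda v}=0$ to produce only $\partial/\partial v$ (respectively only $\partial/\partial u$) via the Master Formula.

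The genuine gap is in your justification of the multilinear ansatz $g_k=\sum K^k_{abcd}\,v^au_k^b\otimes u_k^cv^d$. The rules \eqref{Eq:TriDer2}--\eqref{Eq:TriDer3} are identities valid for \emph{every} double multiplicative $\lambda$-bracket; they describe how the triple bracket expands on products in its arguments and place no constraint on $g_k$, so they cannot ``prevent quadratic dependence''. The mechanism that actually works (and is the content of Lemma \ref{20210812:lem1}, which you cite but do not deploy) is a leading-term comparison in the first and third tensor factors of the coefficient of $\lambda^l\mu^k$ in $\dgal{u_\lambda u_\mu v}$, namely
\[
\Big(\tfrac{\partial g_k'}{\partial v}\Big)' g_l'\otimes g_l''\Big(\tfrac{\partial g_k'}{\partial v}\Big)''\otimes g_k''
\;-\;
g_l'\otimes \Big(\tfrac{\partial g_l''}{\partial v}\Big)' g_k'\otimes g_k''\Big(\tfrac{\partial g_l''}{\partial v}\Big)''\,:
\]
for the two terms to cancel, $\big(\partial g_k'/\partial v\big)'$ and $\big(\partial g_l''/\partial v\big)''$ must be scalars, which forces $g_k$ to be at most linear in $v$ on the outside of each tensor factor; the analogous computation for $\dgal{v_\lambda v_\mu u}$ forces linearity in $u_k$ on the inside. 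Once this step is supplied, the rest of your bookkeeping plan matches the paper's proof.
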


\begin{example}
 If only finitely many coefficients $\alpha_k:=K^k_{1111}\in \kk$ are non-zero, we get that 
 \begin{equation} \label{Eq:duv-4}
  \dgal{u_\lambda v}= \sum_{k\in \Z}\alpha_k\, vu_k \otimes u_k v\,\, \lambda^k\,, 
 \end{equation}
 yields a double multiplicative Poisson vertex algebra. 
 Indeed, all the conditions gathered in Proposition \ref{Pr:class2} are quadratic relations in which at least one factor on each side has an index $0$.
In the same way, for $\alpha_k:=K^k_{0000}\in \kk$ 
  \begin{equation}  \label{Eq:duv-triv}
  \dgal{u_\lambda v}= \sum_{k\in \Z} \alpha_k \,1 \otimes 1\,\, \lambda^k\,, 
 \end{equation}
 yields trivially a double multiplicative Poisson vertex algebra. 
\end{example}
The double $\lambda$-brackets \eqref{Eq:duv-4} and \eqref{Eq:duv-triv} are very similar to the two cases from the classification with one generator given in Proposition \ref{Pr:class1}. The following example is quadratic and has no analogue in the case of an algebra generated by one element. 

\begin{example} \label{Exmp:2-quad}
For any $\alpha\in \kk^\times$, the skewsymmetric double multiplicative $\lambda$-bracket given by 
$$% \begin{equation}
   \dgal{u_\lambda u}=0,\,\,\, \dgal{v_\lambda v}=0,\quad 
   \dgal{u_\lambda v}= \left(v\otimes u_k+u_k\otimes v + \alpha v\otimes v + \alpha^{-1} u_k\otimes u_k \right) \lambda^k\,,
$$% \end{equation}
 yields a double multiplicative Poisson vertex algebra. 
This can be obtained by checking the conditions from Proposition \ref{Pr:class2} where for fixed $k\in \Z$, 
\begin{equation*}
 K^k_{1010}=1=K^k_{0101}\,, \quad K^k_{1001}=\alpha\,, \quad K^k_{0110}=\alpha^{-1}\,.
\end{equation*}
The four coefficients can not be chosen independently since, for example, \eqref{Eq:d2} yields   
$$(K^k_{1010})^2=  K_{1 0 0 1}^k K_{0 1 1 0}^k = (K_{0 10 1}^k)^2\,.$$ 
\end{example}

Building on the previous example, the following result is proven in \S\ref{sss:Pf-Thm2} and provides a classification when there is only one non-zero element $g_k$ in \eqref{Eq:cl2-dbr}. 

\begin{theorem} \label{Thm:2-explicit}
  Assume that $\mc R_2$ is equipped with the skewsymmetric double multiplicative $\lambda$-bracket given by 
 \begin{equation} \label{Eq:cl2-explicit}
   \dgal{u_\lambda u}=0,\,\,\, \dgal{v_\lambda v}=0, \quad
   \dgal{u_\lambda v}= g \lambda^k \,, \quad \text{ for } g\in \mc R_2 \otimes \mc R_2\,, \quad k\in \Z\,.
 \end{equation}
Then $\mc R_2$ is a double multiplicative Poisson vertex algebra if and only if after a translation 
\begin{equation} \label{Eq:cl2-transfo}
 (u,v)\mapsto (u+\mu,v+\nu)\,, \quad \mu,\nu \in \kk\,,
\end{equation}
the element $g$ satisfies exactly one of the following five conditions: 
\begin{enumerate}
 \item[(i)] $g=a\, 1\otimes 1$, $a\in \kk$;  
 \item[(ii)] $g=a\, v\otimes v$, $a\in \kk^\times$;
 \item[(iii)] $g=a\, u_k\otimes u_k$, $a\in \kk^\times$;
 \item[(iv)] $g=a\, v \otimes v+b \, [v\otimes u_k+ u_k\otimes v]+\frac{b^2}{a}\, u_k \otimes u_k$, $a,b\in \kk^\times$;
  \item[(v)] $g=a\, vu_k \otimes u_k v+ b \, [vu_k\otimes 1 + 1\otimes u_kv]+\frac{b^2}{a}\, 1\otimes 1$, $a\in \kk^\times$, $b\in \kk$.
\end{enumerate}
\end{theorem}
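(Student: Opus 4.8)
The approach is to specialise the criterion of Proposition~\ref{Pr:class2} to the case of a single non-zero coefficient. In the situation of \eqref{Eq:cl2-explicit} we have $g_k=g$ and $g_l=0$ for $l\neq k$, so the conditions \eqref{Eq:kl1}--\eqref{Eq:kl2}, which involve two distinct indices, hold trivially. Hence $\mc R_2$ carries a double multiplicative Poisson vertex algebra structure if and only if $g$ lies in the $16$-dimensional span of the monomials $v^au_k^b\otimes u_k^cv^d$, $a,b,c,d\in\{0,1\}$, and the coefficients $K_{abcd}:=K^k_{abcd}$ satisfy the quadratic relations \eqref{Eq:d1}--\eqref{Eq:c2} for that single value of $k$. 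The theorem then amounts to solving this finite quadratic system modulo the action of the translation \eqref{Eq:cl2-transfo}.

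First I would record this action: $u_n\mapsto u_n+\mu$, $v_n\mapsto v_n+\nu$ is an automorphism of $\mc R_2$ commuting with $S$, and it replaces $g$ by $(\phi\otimes\phi)(g)$ where $\phi$ substitutes $v\mapsto v+\nu$, $u_k\mapsto u_k+\mu$; on the vector $(K_{abcd})$ this is a unipotent map, lower-triangular for the total degree in $(v,u_k)$, that in particular fixes $K_{1111}$. Since the translated bracket is again of the form \eqref{Eq:cl2-explicit}, this map permutes the solutions of \eqref{Eq:d1}--\eqref{Eq:c2}; so it suffices, on the one hand, to check that each of the five listed forms solves \eqref{Eq:d1}--\eqref{Eq:c2}, and on the other hand to exhibit, for an arbitrary solution, a translation bringing it to one of those forms. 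The first task (the ``if'' direction) is a direct computation: for (i), (ii), (iii) a single $K_{abcd}$ is non-zero and every relation has a vanishing factor on each side; for (iv) the only non-trivial instances are instances of \eqref{Eq:d2}, which reduce to identities of the type $a\cdot(b^2/a)=b^2$; for (v) one has $K_{1111}=a$, $K_{1100}=K_{0011}=b$, $K_{0000}=b^2/a$, and the non-trivial instances of \eqref{Eq:c2} again reduce to $a\cdot(b^2/a)=b^2$. (Case (iv) is Example~\ref{Exmp:2-quad} up to rescaling, and (ii), (iii) are its degenerations.)

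For the ``only if'' direction I would distinguish the cases $K_{1111}=0$ and $K_{1111}\neq0$. If $K_{1111}=0$, then \eqref{Eq:c2} with index pairs $((1,1),(0,0))$ and $((0,0),(1,1))$ forces $K_{1100}=K_{0011}=0$, and \eqref{Eq:d1}, \eqref{Eq:c1} with one index pair equal to $(1,1)$ force $K_{1110}=K_{1101}=K_{1011}=K_{0111}=0$ as well; thus $g$ lies in the $9$-dimensional span of $\{p\otimes q : p,q\in\{1,v,u_k\}\}$. On this subspace the surviving relations (the $\epsilon=0$ cases of \eqref{Eq:d1}, \eqref{Eq:c1}, together with \eqref{Eq:d2}) are exactly the $2\times 2$ minor conditions stating that the $3\times3$ coefficient matrix of $g$ has rank one and that its row and column directions agree; hence $g=c\,w\otimes w$ for some $c\in\kk$ and some affine-linear $w\in\kk\oplus\kk v\oplus\kk u_k$. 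If $w\in\kk$ (in particular if $g=0$) this is case (i); otherwise a translation makes $w$ homogeneous, and according to whether both, or exactly one, of the coefficients of $v$ and $u_k$ in $w$ are non-zero one lands in case (iv), respectively in cases (ii) or (iii). If instead $K_{1111}\neq0$, then \eqref{Eq:c2} with index pairs $((1,1),(1,1))$ and $((1,1),(0,0))$ gives $K_{0011}=K_{1100}$ and $K_{1100}^2=K_{1111}K_{0000}$, while the relations \eqref{Eq:d1}, \eqref{Eq:c1}, \eqref{Eq:d2} in which $(1,1)$ appears as an index and $K_{1111}$ as a factor constrain the mixed coefficients $K_{1110},K_{1101},K_{1011},K_{0111}$ and then the remaining lower-degree ones; a translation clears the residual debris, leaving exactly the four coefficients of form (v).

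The main obstacle is the branch $K_{1111}\neq0$: the relations do not make the auxiliary coefficients vanish outright --- for instance \eqref{Eq:c1} only yields $K_{1110}^2=K_{1111}K_{0110}$ --- so one must choose the two translation parameters $\mu,\nu$ so that \emph{all} of the lower-order coefficients arising in the process are cleared simultaneously, which is possible precisely because \eqref{Eq:d1}--\eqref{Eq:c2} make those coefficients interdependent. Keeping the bookkeeping organised by the bidegree of the monomials, and exploiting that these relations respect that bidegree, is what makes the argument manageable; the parallel but simpler bookkeeping in the branch $K_{1111}=0$ is already subsumed in the rank-one reduction above.
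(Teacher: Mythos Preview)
Your approach is essentially the paper's: specialize Proposition~\ref{Pr:class2} to a single non-zero $g_k$, split on whether $K_{1111}$ vanishes, and in each branch use the quadratic relations plus a translation to reach one of the five normal forms. The paper handles the $K_{1111}=0$ branch by a further sub-case split on $(K_{1001},K_{0110})$, whereas you package it as ``$g=c\,w\otimes w$''. That packaging is neater, but your justification is imprecise: the surviving relations are not $2\times 2$ minors of the $3\times 3$ matrix $M$ --- for instance \eqref{Eq:d1} with $\epsilon=0$ reads $M_{p,u_k}M_{1,q}=M_{p,1}M_{u_k,q}$, which mixes a row index with a column index rather than comparing two rows or two columns. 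These relations do force $M=c\,ww^T$, but that needs a short direct argument (or the paper's case split), not an appeal to ordinary rank conditions. In the $K_{1111}\neq 0$ branch the paper makes explicit what you leave as a sketch: first \eqref{Eq:d1} and \eqref{Eq:c1} give $K_{1011}=K_{1101}$ and $K_{1110}=K_{0111}$, the translation $u\mapsto u-K_{1011}/K_{1111}$, $v\mapsto v-K_{0111}/K_{1111}$ kills all four cubic coefficients, and then the ``squared'' identities $K_{1011}^2=K_{1111}K_{1001}$, $K_{1110}^2=K_{1111}K_{0110}$, etc.\ (special cases of \eqref{Eq:d1}--\eqref{Eq:c2}) cascade to annihilate the quadratic and linear ones, leaving precisely form~(v).
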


Note that the distinct cases can not be related through \eqref{Eq:cl2-transfo}, but some are equivalent if one uses linear transformations. Indeed, cases (ii) and (iii) in Theorem \ref{Thm:2-explicit} are related through 
$$%\begin{equation}
 (u,v)\mapsto (v,u)\,, \quad a\mapsto - a\,, \quad k \mapsto -k\,,
$$%\end{equation}
while cases (ii) and (iv) are equivalent under the map  $(u,v)\mapsto \big(u,v-\frac{b}{a}u_k\big)$.

\begin{remark}
 If we have a double $\lambda$-bracket satisfying Theorem \ref{Thm:2-explicit} of the form 
$$% \begin{equation}
  \dgal{u_\lambda u}=0\,, \quad \dgal{v_\lambda v}=0\,, \quad    \dgal{u_\lambda v}= g_0 \in \mc R_2\otimes \mc R_2\,,
$$% \end{equation}
then it defines a double Poisson bracket that is compatible with the automorphism $S$. Note that quadratic double Poisson brackets on free algebras are classified in \cite{ORS}. Modulo a linear transformation, the three quadratic cases from Theorem \ref{Thm:2-explicit} (with $k=0$) are all equivalent to 
$$%\begin{equation}
  \dgal{u,u}=0\,, \quad \dgal{v,v}=0\,, \quad    \dgal{u,v}= v\otimes v\,,
$$%\end{equation}
which corresponds to Case 4 in \cite[Theorem 1]{ORS}. We also note that the quartic case from condition \emph{(v)} in Theorem \ref{Thm:2-explicit} with $k=0$ is a new example of double Poisson bracket, to the best of our knowledge. 
\end{remark}

\begin{remark}
In view of the theory presented in Section \ref{S:Integr}, the interest of Proposition \ref{Pr:class2} lies in the fact that these $\lambda$-brackets give rise to commuting families of  differential-difference equations as we have $\mult \dgal{u^k{}_\lambda u^l}=0$ trivially.  The same holds with $v$ replacing $u$.
\end{remark}

\subsubsection{Proof of Proposition \ref{Pr:class2}}  \label{sss:Pf-Prop2}

We assume that $\dgal{u_\lambda v}=\sum_k g_k \lambda^k\neq 0$ from now on. We first remark that the Jacobi identity \eqref{Eq:DMA3} with $a=b=u$ and $c=v$ has only its first two terms which are non-zero since $\dgal{u_\lambda u}=0$. Denoting it $\dgal{u_\lambda u_\mu v}$, we can then compute that 
\begin{equation}
 \begin{aligned} \label{Eq:uuv1}
  \dgal{u_\lambda u_\mu v}=& 
  \sum_{k,l,n \in \Z} \lambda^{n+l} \mu^k \, \left(\frac{\partial}{\partial v_n} \right)_L(g_k) \bullet_3 S^n(g_l) \\
  &- \sum_{j,m,s \in \Z} \lambda^{j} \mu^{m+s} \, \left(\frac{\partial}{\partial v_m} \right)_R(g_j) \bullet_1 S^m(g_s)\,,
 \end{aligned}
\end{equation}
which must vanish. Similarly, we compute 
\begin{equation}
 \begin{aligned} \label{Eq:vvu1}
  \dgal{v_\lambda v_\mu u}=& 
  \sum_{k,l,n \in \Z} \lambda^{n+l} \mu^k \, S^k\left(\frac{\partial}{\partial u_{n-k}} \right)_L(g_{-k}^\sigma) 
\bullet_3 S^{n+l}(g_{-l}^\sigma) \\
  &- \sum_{j,m,s \in \Z} \lambda^{j} \mu^{m+s} \, S^j \left(\frac{\partial}{\partial u_{m-j}} \right)_R(g_{-j}^\sigma) \bullet_1 S^{m+s}(g_{-s}^\sigma)\,,
 \end{aligned}
\end{equation}
which must also vanish.

\begin{lemma}
For any $k\in\Z$, $g_k$ depends only on $v$ and $u_k$.
\end{lemma}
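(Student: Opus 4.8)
The plan is to mimic the argument used for $\mc R_1$ in equation \eqref{Eq:R1-trunc}, now applied to the mixed $\lambda$-bracket $g_k=\dgal{u_\lambda v}$ term by term. The key point is that we have two vanishing expressions at our disposal, namely \eqref{Eq:uuv1} (coming from Jacobi with $a=b=u$, $c=v$) and \eqref{Eq:vvu1} (coming from Jacobi with $a=b=v$, $c=u$), and the first isolates dependence on the $v$-variables while the second isolates dependence on the $u$-variables. First I would introduce $M=\max\{n : \text{some }g_n\neq0\}$ and $m_0=\min\{n : \text{some }g_n\neq0\}$, which are finite since $\dgal{u_\lambda v}\in(\mc R_2\otimes\mc R_2)[\lambda^{\pm1}]$.

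Next I would fix $k\in\Z$ and an index $n\neq k$, and show $\frac{\partial g_k}{\partial v_n}=0$. To do this I extract from \eqref{Eq:uuv1} the coefficient of a suitable monomial $\lambda^a\mu^b$: taking $a=n+M$ and $b=k$ picks out only the term $\left(\frac{\partial}{\partial v_n}\right)_L(g_k)\bullet_3 S^n(g_M)$ from the first sum (the second sum cannot contribute since it would require $j=n+M$ but $g_j=0$ for $j>M$, provided $n+M>M$, i.e. $n>0$; for $n<0$ one uses $a=k$, $b=n+m_0$ and the minimal index $m_0$ symmetrically). Since $g_M\neq0$, the $\bullet_3$-product with $S^n(g_M)$ is injective in the appropriate sense — here I would unpack $\left(\frac{\partial}{\partial v_n}\right)_L(g_k)\bullet_3 S^n(g_M)$ using \eqref{eq:DLR} and \eqref{20210624:eq1} exactly as in the proof of Lemma \ref{20210812:lem1}, obtaining a tensor expression whose vanishing forces $\frac{\partial g_k}{\partial v_n}=0$. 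The case $n=k$ is excluded because then the exponent $a=n+M=k+M$ coincides with what the second sum could produce, so that argument gives no information, which is consistent with $g_k$ being allowed to depend on $v_k$... but in fact we want dependence only on $v=v_0$, so I would need to be careful: repeating with different choices of the auxiliary exponent one peels off every $v_n$ with $n\neq 0$. Concretely, for the first sum the term with $\frac{\partial}{\partial v_n}$ sits at $\lambda^{n+l}\mu^k$, and for generic $n$ one can always choose $l\in\{m_0,M\}$ and match against the second sum (at $\lambda^j\mu^{m+s}$) only when $n=0$; hence $\frac{\partial g_k}{\partial v_n}=0$ for all $n\neq0$.

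Then I would run the symmetric argument on \eqref{Eq:vvu1} to show $\frac{\partial g_k}{\partial u_n}=0$ for all $n\neq k$. Here the relevant derivative $S^{k}\left(\frac{\partial}{\partial u_{n-k}}\right)_L(g_{-k}^\sigma)$ sits in the coefficient of $\lambda^{(n-k)+(-k)+\text{(shift)}}\cdots$ — I would track the exponents carefully using $S^k\frac{\partial}{\partial u_{n-k}}=\frac{\partial}{\partial u_n}S^k$ from \eqref{eq:comm}, and match against the second sum of \eqref{Eq:vvu1} to find that the only surviving possibility is $n=k$; the $\bullet_3$-injectivity against $S^{n+l}(g_{-l}^\sigma)\neq0$ then forces $\frac{\partial g_k^\sigma}{\partial u_n}=0$, equivalently $\frac{\partial g_k}{\partial u_n}=0$, for $n\neq k$. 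Combining the two conclusions, $g_k$ depends only on $v=v_0$ and $u_k$, which is the claim.

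The main obstacle I anticipate is the bookkeeping of which power of $\lambda$ and $\mu$ isolates which term: because $g_l$ ranges over all $l$ with $m_0\le l\le M$, a naive choice of exponents can accidentally pick up contributions from several values of $l$ and from both sums simultaneously, which would ruin the "injectivity of $\bullet_3$" step. The resolution is the standard trick from \cite{DSKVWclass}, namely to use the extremal indices $M$ and $m_0$: at the top exponent $\lambda^{n+M}$ (for $n>0$) no term with $l<M$ in the first sum and no term at all in the second sum can contribute, so exactly one summand survives. One then also has to check that $\left(\frac{\partial}{\partial v_n}\right)_L(g_k)\bullet_3 S^n(g_M)=0$ with $S^n(g_M)\neq0$ genuinely implies $\frac{\partial g_k}{\partial v_n}=0$; this is not quite formal because $\bullet_3$ is not injective in general, but writing everything in terms of $g_k=\sum g_k'\otimes g_k''$ and $g_M=\sum g_M'\otimes g_M''$ and using that $\mc R_2$ is a free algebra (so tensor products of monomials are linearly independent) makes it routine, in the same spirit as the displayed computation \eqref{20210813:eq3} in Lemma \ref{20210812:lem1}.
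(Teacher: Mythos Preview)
Your strategy is the paper's: use \eqref{Eq:uuv1} for the $v$-dependence and \eqref{Eq:vvu1} for the $u$-dependence, isolating terms via extremal exponents. But your isolation step has a real gap. You fix an arbitrary $n>0$ and assert that the coefficient of $\lambda^{n+M}\mu^k$ in the first sum of \eqref{Eq:uuv1} reduces to the single term $\left(\frac{\partial}{\partial v_n}\right)_L(g_k)\bullet_3 S^n(g_M)$. This is false: every pair $(n',l')$ with $n'+l'=n+M$ and $l'\le M$ contributes, so if $g_k$ happens to depend on some $v_{n'}$ with $n'>n$ you pick up extra terms. Taking the extremal $l=M$ is not enough; you also need extremality in the $n$-direction. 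The paper does exactly this: it sets $i_k':=\max\{i:\partial g_k'/\partial v_i\neq 0\}$, assumes $i_k'>0$, and looks at $\lambda^{i_k'+N_+}\mu^k$ (where $N_+$ is your $M$). Now the first sum genuinely collapses to a single term, nonzero by construction, while the second sum vanishes since it would require $g_j$ with $j=i_k'+N_+>N_+$. That contradicts the vanishing of \eqref{Eq:uuv1}, so $i_k'\le 0$; the mirror argument with $N_-$ gives $j_k'\ge 0$. Note that this also bypasses your $\bullet_3$-injectivity worry: the surviving term is nonzero by the very choice of $i_k'$, so there is nothing to cancel and no injectivity argument is needed.

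Two smaller omissions. First, the first sum in \eqref{Eq:uuv1} carries only $\left(\frac{\partial}{\partial v_n}\right)_L$, which constrains the left tensor factor $g_k'$; to treat $g_k''$ you must run the analogous extremal-index argument on the \emph{second} sum (the $\bullet_1$ term), reading off the coefficient of $\lambda^k\mu^{i_k''+N_+}$, which you do not mention. Second, for the $u$-dependence via \eqref{Eq:vvu1} the shift matters: the derivative appearing is $\frac{\partial}{\partial u_{n-k}}$ on $g_{-k}^\sigma$, so the argument pins down $n-k=0$, i.e.\ dependence on $u_k$ rather than $u_0$; you reach the right conclusion, but the exponent bookkeeping you sketch is too vague to stand as a proof.
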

\begin{proof}
 Since $\dgal{u_\lambda v} \in \mc R_2 \otimes \mc R_2 [\lambda^{\pm 1}]$, there exists 
$$% \begin{equation}
  N_+=\max\{k \mid g_k \neq 0\}\,, \quad N_-=\min\{k \mid g_k \neq 0\}\,.
$$% \end{equation}
We now adapt \cite[Lemma 2.6]{DSKVWclass}. Denoting $g_k=g_k' \otimes g_k''$, we introduce for all $k$ 
$$%\begin{equation}
 i_k'=\max\{i \mid \frac{\partial g_k'}{\partial v_i}\neq0\}\,, \quad 
 j_k'=\min\{i \mid \frac{\partial g_k'}{\partial v_i}\neq0\}\,.
$$%\end{equation}
Assuming that $i_k'>0$, we see that the term in $\lambda^{i_k'+N_+}\mu^k$ in \eqref{Eq:uuv1} is 
$\left(\frac{\partial}{\partial v_{i_k'}} \right)_L(g_k) \bullet_3 S^{i_k'}(g_{N_+})$, which is non-zero by assumption, a contradiction. Thus $i_k'\leq 0$. 
Similarly, if $j_k'<0$, we look at the term in $\lambda^{j_k'+N_-}\mu^k$ in \eqref{Eq:uuv1} and get a contradiction. Thus the dependence of $g_k'$ on the $(v_s)$ is only on $v=v_0$. 

In the exact same way, introduce 
$$%\begin{equation}
 i_k''=\max\{i \mid \frac{\partial g_k''}{\partial v_i}\neq0\}\,, \quad 
 j_k''=\min\{i \mid \frac{\partial g_k''}{\partial v_i}\neq0\}\,.
$$%\end{equation}
If $i_k''>0$ or $j_k''<0$, we look at the terms in $\lambda^{k}\mu^{i_k''+N_+}$ or $\lambda^{k}\mu^{j_k''+N_-}$ in \eqref{Eq:uuv1} and get contradictions. Thus $g_k$ depends only on $v$ and the $(u_s)$. 

Next, we do the same with \eqref{Eq:vvu1}. For 
$$%\begin{equation}
 r_k''=\max\{i \mid \frac{\partial g_k''}{\partial u_i}\neq0\}\,, \quad 
 s_k''=\min\{i \mid \frac{\partial g_k''}{\partial u_i}\neq0\}\,,
$$%\end{equation}
we note that if $r_k''>k$ or $s_k''<k$, then $k+r_{-k}''>0$ or $k+s_{-k}''<0$, so that by looking at the term in 
$\lambda^{k+r_{-k}''-N_-}\mu^k$ or $\lambda^{k+s_{-k}''-N_+}\mu^k$ in \eqref{Eq:vvu1} we get contradictions. 
Similarly, for 
$$%\begin{equation}
 r_k'=\max\{i \mid \frac{\partial g_k'}{\partial u_i}\neq0\}\,, \quad 
 s_k'=\min\{i \mid \frac{\partial g_k'}{\partial u_i}\neq0\}\,,
$$%\end{equation}
we get contradictions if $r_k'>k$ or $s_k'<k$ by looking at the terms in $\lambda^k \mu^{k+r_{-k}'-N_-}$ or $\lambda^k \mu^{k+s_{-k}'-N}$ in \eqref{Eq:vvu1}. Hence $g_k$ can only depend on $u_k$. 
\end{proof}

\begin{lemma}
 We have that 
$$% \begin{equation}
 g_k= \sum_{a,b,c,d=0,1} K_{abcd}^k \, v^a u_k^b \otimes u_k^c v^d\,, \quad K_{abcd}^k\in \kk\,.
$$% \end{equation}
\end{lemma}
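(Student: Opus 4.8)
The plan is to show that, for each $k\in\mb Z$, the first Sweedler factor $g_k'$ lies in the $\kk$-span of $1,v,u_k,vu_k$ and the second factor $g_k''$ in the $\kk$-span of $1,v,u_k,u_kv$; collecting the tensors $g_k'\otimes g_k''$ then yields exactly the asserted expansion. By the previous lemma we already know $g_k\in\kk\langle v,u_k\rangle\otimes\kk\langle v,u_k\rangle$ (the subalgebra generated by the two free generators $v=v_0$ and $u_k$), so the only thing left is to bound the degrees in $v$ and in $u_k$ of each factor and to constrain the position of $v$ relative to $u_k$.

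First I would specialise the Jacobi identities \eqref{Eq:uuv1} and \eqref{Eq:vvu1}, which hold identically in $\lambda$ and $\mu$. Since $g_j$ depends only on $v_0=v$ among the shifts of $v$ and only on $u_j$ among the shifts of $u$, the partial derivatives appearing in \eqref{Eq:uuv1}--\eqref{Eq:vvu1} collapse all summation indices except $n=m=0$ (resp.\ $n=m=j$), the operators $S^n,S^m$ disappear, and \eqref{eq:comm} lets one rewrite $S^k\circ\tfrac{\partial}{\partial u_{-k}}=\tfrac{\partial}{\partial u_0}\circ S^k$. Extracting the coefficient of $\lambda^p\mu^q$ in \eqref{Eq:uuv1} then yields, for all $p,q\in\mb Z$, the identity $\big(\tfrac{\partial}{\partial v}\big)_L(g_q)\bullet_3 g_p=\big(\tfrac{\partial}{\partial v}\big)_R(g_p)\bullet_1 g_q$ in $\mc R_2^{\otimes3}$, and doing the same in \eqref{Eq:vvu1} gives the analogous family of identities for the twisted elements $\widetilde g_k:=S^k(g_{-k}^\sigma)$, with $\tfrac{\partial}{\partial u}$ in place of $\tfrac{\partial}{\partial v}$. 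It is also convenient to rewrite these same constraints, via the triple skewsymmetry \eqref{Eq:TriSkew}, in the equivalent forms $\dgal{v_\lambda u_\mu u}=0$ and $\dgal{u_\lambda v_\mu v}=0$: in contrast to the two-term identities above, these retain the nested term $\dgal{\dgal{v_\lambda u}_{\lambda\mu}u}_L$, which after expansion by the left Leibniz rule \eqref{Eq:DML} carries a $\sigma^2$-twisted summand and so matches the hypotheses of Lemma \ref{20210812:lem1} more directly.

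The second step is to read off the shape of $g_k'$ and $g_k''$ from these identities. Expanding the $\bullet_i$-actions via \eqref{eq:DLR}, \eqref{eq:sigma} and \eqref{20210624:eq1} as in the proof of Lemma \ref{20210812:lem1}, and comparing the parts of highest degree in $v$ (resp.\ in $u_k$) in a fixed tensor slot — for instance the first slot, on which no derivative acts on one side of the identity — one finds that the relevant leading coefficients must be scalars. Lemma \ref{20210812:lem1}(a)--(b), whose proof applies \emph{verbatim} with $v$ in place of $u_k$, then upgrades this to normal forms $g_k'=v\,r_k+p_k=p_k'\,u_k+r_k'$ with $p_k,r_k\in\ker\tfrac{\partial}{\partial v}\cap\kk\langle v,u_k\rangle=\kk\langle u_k\rangle$ and $p_k',r_k'\in\ker\tfrac{\partial}{\partial u_k}\cap\kk\langle v,u_k\rangle=\kk\langle v\rangle$, and symmetrically $g_k''=u_k\,s_k+q_k=r_k''\,v+p_k''$. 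Intersecting the two descriptions of $g_k'$ leaves precisely $\operatorname{span}\{1,v,u_k,vu_k\}$, and intersecting those of $g_k''$ leaves $\operatorname{span}\{1,v,u_k,u_kv\}$, which is the claim.

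The main obstacle is this second step. Each reduced Jacobi identity simultaneously mixes the $v$-dependence and the $u_k$-dependence of \emph{both} $g_k'$ and $g_k''$, so the four normal-form constraints cannot be extracted in one stroke: one has to peel them off in the right order — first bound and normalise the $v$-structure, substitute the resulting expression back into the identities, and only then bound the $u_k$-structure — while keeping careful track of which hypothesis of Lemma \ref{20210812:lem1}(a) versus (b) is produced, together with the $\sigma$- and shift-bookkeeping coming from \eqref{Eq:vvu1} and from the use of skewsymmetry.
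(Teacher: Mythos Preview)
Your overall strategy --- extract coefficient identities from $\dgal{u_\lambda u_\mu v}=0$ and $\dgal{v_\lambda v_\mu u}=0$, read off scalar constraints on partial derivatives from fixed tensor slots, and intersect the resulting normal forms for $g_k'$ and $g_k''$ --- is exactly what the paper does. But your attempt to route this through Lemma~\ref{20210812:lem1} is a detour that does not work as stated. The two-term identity you extract,
\[
\Big(\tfrac{\partial}{\partial v}\Big)_L(g_q)\bullet_3 g_p
=\Big(\tfrac{\partial}{\partial v}\Big)_R(g_p)\bullet_1 g_q\,,
\]
is \emph{not} of the form required by Lemma~\ref{20210812:lem1}(a) or (b): the lemma's hypothesis has a $\sigma^2$-twisted right-hand side, whereas yours has an untwisted $\bullet_1$ action, so the specific tensor-slot comparison in that lemma's proof does not carry over verbatim. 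Your proposed fix via $\dgal{v_\lambda u_\mu u}=0$ and $\dgal{u_\lambda v_\mu v}=0$ still leaves two non-vanishing summands (only $\dgal{u_\mu u}$ drops out), so you again do not land on the single-equation hypothesis of Lemma~\ref{20210812:lem1}.

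The paper bypasses this entirely. It writes out the coefficient of $\lambda^l\mu^k$ explicitly as
\[
T^1_{l,k}= \Big(\tfrac{\del g_k'}{\del v}\Big)'\! g_l' \otimes g_l'' \Big(\tfrac{\del g_k'}{\del v}\Big)''\! \otimes g_k''
- g_l' \otimes \Big(\tfrac{\del g_l''}{\del v}\Big)'\! g_k' \otimes g_k'' \Big(\tfrac{\del g_l''}{\del v}\Big)''
\]
and observes directly from the first and third tensor slots that vanishing forces $\big(\tfrac{\del g_k'}{\del v}\big)'\in\kk$ and $\big(\tfrac{\del g_l''}{\del v}\big)''\in\kk$; hence $g_k = v\,p_{k,1}(u_k)\,v + v\,p_{k,2}(u_k) + p_{k,3}(u_k)\,v + p_{k,4}(u_k)$ with $p_{k,i}\in\kk[u_k]\otimes\kk[u_k]$. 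The analogous explicit expansion $T^2_{l,k}$ of $\dgal{v_\lambda v_\mu u}=0$ gives the mirrored constraint $g_{-k}^\sigma = u_{-k}\,q_{k,1}(v)\,u_{-k} + \cdots$ with $q_{k,i}\in\kk[v]\otimes\kk[v]$, and intersecting the two descriptions yields the sixteen-term expansion. No appeal to Lemma~\ref{20210812:lem1} is needed: the tensor-slot comparison here is in fact \emph{simpler} than in that lemma, because on one side of each slot no derivative acts at all.
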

\begin{proof}
 Due to the previous lemma, we can write \eqref{Eq:uuv1} as 
 \begin{equation}
 \begin{aligned} \label{Eq:uuv2}
  \dgal{u_\lambda u_\mu v}=& 
  \sum_{k,l \in \Z} \lambda^{l} \mu^k \,\left[  \left(\frac{\partial}{\partial v} \right)_L(g_k) \bullet_3 g_l 
  - \left(\frac{\partial}{\partial v} \right)_R(g_l) \bullet_1 g_k   \right]\,. 
 \end{aligned}
\end{equation}
This vanishes if the following coefficient of $\lambda^l \mu^k$ is zero : 
\begin{equation} \label{T1lk}
 T^1_{l,k}:= \left(\frac{\del g_k'}{\del v} \right)' g_l' \otimes g_l'' \left(\frac{\del g_k'}{\del v} \right)'' \otimes g_k'' 
 - g_l' \otimes \left(\frac{\del g_l''}{\del v} \right)' g_k' \otimes g_k'' \left(\frac{\del g_l''}{\del v} \right)''\,.
\end{equation}
We can look at the first and third copies in the tensor product, and we see that to have cancellations we need 
$$%\begin{equation}
 \left(\frac{\del g_k'}{\del v} \right)'\in \kk\,, \quad \left(\frac{\del g_l''}{\del v} \right)''\in \kk\,.
$$%\end{equation}
These conditions mean that we must have for any $k$ that 
$$%\begin{equation}
 g_k=v p_{k,1}(u_k) v + v p_{k,2}(u_k) + p_{k,3}(u_k) v + p_{k,4}(u_k)\,,
$$%\end{equation}
for some $p_{k,i}(z)\in \kk[z]\otimes \kk[z]$. 

Next, we remark that we can write \eqref{Eq:vvu1} as 
\begin{equation}
 \begin{aligned} \label{Eq:vvu2}
  \dgal{v_\lambda v_\mu u}=& 
  \sum_{k,l \in \Z} \lambda^{l} \mu^k \, \left[ S^k \left(\frac{\partial}{\partial u_{-k}} \right)_L (g_{-k}^\sigma) 
\bullet_3 S^{l}(g_{-l}^\sigma) 
- S^l \left(\frac{\partial}{\partial u_{-l}} \right)_R(g_{-l}^\sigma) \bullet_1 S^{k}(g_{-k}^\sigma)
\right] \,.
 \end{aligned}
\end{equation}
So the term in $\lambda^l \mu^k$ is 
\begin{equation} \label{T2lk}
\begin{aligned}
  T^2_{l,k}:=& 
  \left(\frac{\del S^k(g_{-k}'')}{\del u} \right)' S^l(g_{-l}'') \otimes S^l(g_{-l}') \left(\frac{\del S^k(g_{-k}'')}{\del u} \right)''
  \otimes S^k(g_{-k}') \\
& - S^l(g_{-l}'') \otimes \left(\frac{\del S^l(g_{-l}')}{\del u} \right)' S^k(g_{-k}'') 
\otimes S^k(g_{-k}') \left(\frac{\del S^l(g_{-l}')}{\del u} \right)''\,.
\end{aligned}
\end{equation}
Looking at the first and third copies in the tensor product, we see that we need 
$$%\begin{equation}
 \left(\frac{\del S^k(g_{-k}'')}{\del u} \right)' \in \kk\,, \quad \left(\frac{\del S^l(g_{-l}')}{\del u} \right)''\in \kk\,.
$$%\end{equation}
These conditions mean that we must have for any $k$ that 
$$%\begin{equation}
 g_{-k}^\sigma=u_{-k} q_{k,1}(v) u_{-k}  + u_{-k} q_{k,2}(v) + q_{k,3}(v) u_{-k} + q_{k,4}(v)\,,
$$%\end{equation}
for some $q_{k,i}(z)\in \kk[z]\otimes \kk[z]$. Gathering both conditions yield the result.  
\end{proof}

To finish the proof of the proposition, we remark that the two conditions \eqref{Eq:d2}--\eqref{Eq:c2} with fixed $k\in \Z$ and $a,b,c,d\in \{0,1\}$ are equivalent to the two identities 
\begin{subequations}
  \begin{align}
   K_{a b 1 0}^k K_{1 0 c d}^k + K_{a b 1 1}^k K_{0 0 c d}^k=& 
   K_{a b 0 0}^k K_{1 1 c d}^k + K_{a b 0 1}^k K_{0 1 c d}^k \,, \label{Eq:d2-alt} \\
   K_{a b 1 0}^k K_{1 0 c d}^k + K_{a b 0 0}^k K_{1 1 c d}^k=& 
   K_{a b 1 1}^k K_{0 0 c d}^k + K_{a b 0 1}^k K_{0 1 c d}^k \,. \label{Eq:c2-alt}   
  \end{align}
\end{subequations}

\begin{lemma}
 We have $\dgal{u_\lambda u_\mu v}=0$ if and only if the identities \eqref{Eq:kl1}, \eqref{Eq:c1} and \eqref{Eq:c2-alt} hold. 
\end{lemma}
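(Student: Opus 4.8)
The plan is to substitute the form $g_k=\sum_{a,b,c,d\in\{0,1\}}K^k_{abcd}\,v^a u_k^b\otimes u_k^c v^d$ obtained in the preceding lemmas into the coefficient $T^1_{l,k}$ of $\lambda^l\mu^k$ in $\dgal{u_\lambda u_\mu v}$ displayed in \eqref{Eq:uuv2}--\eqref{T1lk}, and then to read off the vanishing conditions in the monomial basis of $\mc R_2^{\otimes3}$. First I would note that $\partial/\partial v$ annihilates every $u_i$, so that $\frac{\partial}{\partial v}(v^a u_k^b)=\delta_{a,1}\,1\otimes u_k^b$ and $\frac{\partial}{\partial v}(u_k^c v^d)=\delta_{d,1}\,u_k^c\otimes1$. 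Consequently, in \eqref{T1lk} the factor $\frac{\partial g_k'}{\partial v}$ only retains the coefficients $K^k_{1bcd}$ and the factor $\frac{\partial g_l''}{\partial v}$ only the coefficients $K^l_{a'b'c'1}$, and after carrying out the multiplications $T^1_{l,k}$ becomes the difference of two sums of monomials of the shape $v^{a'}u_l^{b'}\otimes u_l^{c'}v^{e}u_k^{b}\otimes u_k^{c}v^{d}$, the middle exponent $e$ of $v$ being $d'$ in the first sum and $a$ in the second; the scalar coefficient of such a monomial is $K^k_{1bcd}K^l_{a'b'c'e}$ from the first sum and $K^l_{a'b'c'1}K^k_{ebcd}$ from the second.

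I would then split into the cases $k\neq l$ and $k=l$. If $k\neq l$, the generators $u_l,v,u_k$ are distinct, so the listed monomials are pairwise distinct, hence linearly independent in $\mc R_2^{\otimes3}$, and $T^1_{l,k}=0$ becomes the system $K^k_{1bcd}K^l_{a'b'c'e}=K^l_{a'b'c'1}K^k_{ebcd}$ over all indices. Since the $K$'s are scalars, the instances $e=1$ are automatic, while the instances $e=0$ are precisely \eqref{Eq:kl1}. If $k=l$, the identification $u_l=u_k$ produces collisions in the middle tensor slot: the word $u_k^{c'}v^{e}u_k^{b}$ now ranges over $\{1,u_k,u_k^2,v,u_k v,v u_k,u_k v u_k\}$, and the only coincidence between distinct triples $(c',e,b)$ is that $(1,0,0)$ and $(0,0,1)$ both give the middle factor $u_k$. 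For fixed outer factors $v^{a'}u_k^{b'}$ and $u_k^{c}v^{d}$, collecting the coefficient of each of these seven middle words, I would check that every word containing a $v$ (forcing $e=1$) gives an automatic identity; the word $1$ gives the $\epsilon=0$ case of \eqref{Eq:c1}; the word $u_k^2$ gives the $\epsilon=1$ case of \eqref{Eq:c1}; and the word $u_k$, receiving contributions from the two colliding triples, gives exactly \eqref{Eq:c2-alt}.

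Putting the two cases together, $\dgal{u_\lambda u_\mu v}=0$ is equivalent to $T^1_{l,k}=0$ for all $k,l\in\Z$, hence to \eqref{Eq:kl1} together with \eqref{Eq:c1} and \eqref{Eq:c2-alt}, as claimed. The only delicate point is the $k=l$ case: one has to enumerate the seven possible middle monomials, keep track of which pairs $(c',e,b)$ feed each of them in the two sums, and verify that the resulting combinations of the $K^k_{abcd}$ collapse to exactly the advertised relations, with no extra relation generated by the $u_k$--$u_k$ collision. The rest is a routine expansion based on \eqref{20140627:eq1}, \eqref{eq:DLR} and \eqref{20210624:eq1}, together with the linear independence of monomials in the free algebra $\mc R_2$.
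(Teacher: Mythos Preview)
Your proposal is correct and follows essentially the same approach as the paper: substitute the form of $g_k$ into $T^1_{l,k}$, identify the coefficient $K^k_{1bcd}K^l_{a'b'c'e}-K^k_{ebcd}K^l_{a'b'c'1}$ in front of each basis monomial, observe that $e=1$ is automatic, that $e=0$ with $k\neq l$ gives \eqref{Eq:kl1}, and that for $k=l$ the collisions in the middle tensor factor (grouping by $c'+b$ when $e=0$) yield \eqref{Eq:c1} and \eqref{Eq:c2-alt}. Your enumeration of the seven possible middle words is a slightly more explicit bookkeeping of the same collision analysis the paper performs by tracking $b_1+c_2$.
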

\begin{proof}
 The vanishing of $\dgal{u_\lambda u_\mu v}$ is equivalent to the vanishing of each $T^1_{l,k}$ \eqref{T1lk}. Plugging the form of $g_k$ in it, we get that 
$$% \begin{equation} 
\sum_{a_ib_ic_id_i=0,1} K^k_{a_1 b_1 c_1 d_1} K^l_{a_2 b_2 c_2 d_2}
\left[ a_1\, v^{a_2} u_l^{b_2} \otimes u_l^{c_2} v^{d_2}  u_k^{b_1} \otimes u_k^{c_1} v^{d_1}
- d_2\, v^{a_2} u_l^{b_2} \otimes u_l^{c_2} v^{a_1}  u_k^{b_1} \otimes u_k^{c_1} v^{d_1}\right]\,.
$$%\end{equation}
So the factor appearing in front of the term $v^{a_2} u_l^{b_2} \otimes u_l^{c_2} v^{e}  u_k^{b_1} \otimes u_k^{c_1} v^{d_1}$ is 
\begin{equation} \label{T1lkConst}
  K^k_{1 b_1 c_1 d_1} K^l_{a_2 b_2 c_2 e}
 -  K^k_{e b_1 c_1 d_1} K^l_{a_2 b_2 c_2 1}\,.
\end{equation}
If $e=+1$, this factor must vanish, but this is always true. If $e=0$ instead and $k\neq l$, \eqref{T1lkConst} must vanish, and this is equivalent to \eqref{Eq:kl1}. If $e=0$ and $k=l$, note that the  terms with the same $b_1+c_2$ add up since the second factor of the tensor product becomes $u_k^{b_1+c_2}$. 
If $c_2=b_1=0$ or $c_2=b_1=+1$, we get that \eqref{T1lkConst} vanishes which is equivalent to \eqref{Eq:c1}. 

If $c_2+b_1=+1$, we sum up the coefficients \eqref{T1lkConst} for $c_2=0,b_1=+1$ and $c_2=+1,b_1=0$, and this sum must vanish. This is \eqref{Eq:c2-alt}. 
\end{proof}

In the same way, we prove the next result : 
\begin{lemma}
 We have $\dgal{v_\lambda v_\mu u}=0$ if and only if the identities \eqref{Eq:kl2},  \eqref{Eq:d1} and \eqref{Eq:d2-alt} hold. 
\end{lemma}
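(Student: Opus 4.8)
The plan is to run the argument of the previous lemma verbatim, with $\dgal{u_\lambda u_\mu v}$ replaced by $\dgal{v_\lambda v_\mu u}$, the coefficient $T^1_{l,k}$ of \eqref{T1lk} replaced by $T^2_{l,k}$ of \eqref{T2lk}, and the roles of the two generators (and of $k$ and $-k$) interchanged. By \eqref{Eq:vvu2} the vanishing of $\dgal{v_\lambda v_\mu u}$ is equivalent to the vanishing of every coefficient $T^2_{l,k}$ of $\lambda^l\mu^k$. Into \eqref{T2lk} I would substitute the explicit shape $g_{-k}=\sum_{a,b,c,d}K^{-k}_{abcd}\,v^a u_{-k}^b\otimes u_{-k}^c v^d$ established in the two preceding lemmas and apply the automorphism, using $S^k(v^a u_{-k}^b)=v_k^a u^b$ and $S^k(u_{-k}^c v^d)=u^c v_k^d$. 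Since $\frac{\partial}{\partial u}=\frac{\partial}{\partial u_0}$ annihilates every $v_j$ and acts only on $u=u_0$, the two $u$-derivatives occurring in $T^2_{l,k}$ collapse: $\frac{\partial}{\partial u}S^k(u_{-k}^c v^d)$ contributes only its $c=1$ part, equal to $1\otimes v_k^d$, and $\frac{\partial}{\partial u}S^l(v^p u_{-l}^q)$ contributes only its $q=1$ part, equal to $v_l^p\otimes 1$.

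Carrying out these substitutions, $T^2_{l,k}$ becomes a sum, over the indices of $g_{-k}$ and of $g_{-l}$, of monomials of the form $u^r v_l^s\otimes v_l^p u^e v_k^d\otimes v_k^a u^b$, the first term of \eqref{T2lk} contributing the coefficient $K^{-k}_{ab1d}K^{-l}_{pqrs}$ with $e=q$, and the second term contributing $-K^{-k}_{abcd}K^{-l}_{p1rs}$ with $e=c$. I would then read off the constraints by comparing the coefficient of each such monomial with zero, exactly as in the previous lemma. When $e=1$ the two contributions cancel identically, and likewise when $k=l$ and $e=1$, so no condition arises. When $k\neq l$ and $e=0$ the variables $v_k,v_l$ are distinct, the monomial determines all indices, and one obtains $K^{-k}_{ab1d}K^{-l}_{p0rs}=K^{-k}_{ab0d}K^{-l}_{p1rs}$, which after the reindexing $k\mapsto -k$, $l\mapsto -l$ (harmless since $k,l$ range over $\mb Z$) is precisely \eqref{Eq:kl2}.

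The only delicate point — the dual of the ``$b_1+c_2$'' collision in the previous proof — is the case $k=l$, $e=0$: there the middle factor $v_k^p u^0 v_k^d$ equals $v_k^{p+d}$, so monomials with equal value of $f:=p+d$ must be collected before setting their total coefficient to zero. For $f=0$ (i.e. $p=d=0$) and $f=2$ (i.e. $p=d=1$) this yields respectively the $\epsilon=0$ and $\epsilon=1$ instances of \eqref{Eq:d1}, while for $f=1$ one sums the two contributions $(p,d)=(1,0)$ and $(p,d)=(0,1)$ and obtains \eqref{Eq:d2-alt}. Conversely, each of \eqref{Eq:kl2}, \eqref{Eq:d1}, \eqref{Eq:d2-alt} is visibly one of the coefficient conditions just listed, so the three families together are equivalent to $\dgal{v_\lambda v_\mu u}=0$. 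I expect the bookkeeping of which $K$-index is forced by each application of $\frac{\partial}{\partial u}$, together with the non-commutative collision at $k=l$, to be the main (purely mechanical) obstacle; everything else is a transcription of the argument already given for $\dgal{u_\lambda u_\mu v}$.
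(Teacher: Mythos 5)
Your proposal is correct and follows essentially the same route as the paper: expand $T^2_{l,k}$ using the established form of the $g_k$'s, observe that the $u$-derivatives force one index to equal $1$ in each term, compare coefficients of the resulting monomials, and treat separately the $k=l$ collision in the middle tensor factor where $v_k^{a_2}u^0v_k^{d_1}$ collapses, splitting according to $a_2+d_1\in\{0,1,2\}$ to recover \eqref{Eq:d1} and \eqref{Eq:d2-alt}. Your write-up is in fact more explicit than the paper's (which compresses this into the single display \eqref{T2lkConst} and one sentence), and your bookkeeping of which index is forced by each derivative, the identical cancellation when the middle $u$-exponent is $1$, and the harmless relabelling $k\mapsto-k$, $l\mapsto-l$ are all accurate.
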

\begin{proof} 
The vanishing of $\dgal{v_\lambda v_\mu u}$ is equivalent to the vanishing of each $T^2_{l,k}$ \eqref{T2lk}, which can be simplified as  
\begin{equation} \label{T2lkConst}
\sum_{a_ib_ic_id_i=0,1} [ K^k_{a_1 b_1 1 d_1} K^l_{a_2 0 c_2 d_2}
 -  K^k_{a_1 b_1 0 d_1} K^l_{a_2 0 c_2 d_2}]\,
u^{c_2} v_l^{d_2} \otimes v_l^{a_2} v_k^{d_1} \otimes v_k^{a_1} u^{b_1} \,.
\end{equation}
The terms with $k\neq l$ must vanish and are equivalent to \eqref{Eq:kl2}. When $k=l$, analysing the cases for $a_2+d_1\in \{0,1,2\}$ gives 
\eqref{Eq:d1} and \eqref{Eq:d2-alt}. 
\end{proof}

This finishes the proof of Proposition \ref{Pr:class2}. 

\subsubsection{Proof of Theorem \ref{Thm:2-explicit}} \label{sss:Pf-Thm2}

By Proposition \ref{Pr:class2}, we can write  
$$%\begin{equation} 
 g=\sum_{a,b,c,d=0,1} K_{abcd} \, v^a u_k^b \otimes u_k^c v^d\,, \quad K_{abcd}\in \kk\,,
$$%\end{equation}
and the constants $K_{abcd}$ satisfy \eqref{Eq:d1}--\eqref{Eq:c2} (with the index $k$ omitted). We will repeatedly need the following identities, which are special cases of  \eqref{Eq:d1}--\eqref{Eq:c2}  containing squares: 
\begin{subequations}
 \begin{align}
K_{1011}^2=&K_{1111}K_{1001}=K_{1101}^2\,, \quad 
K_{0010}^2=K_{0000}K_{0110}=K_{0100}^2\,, \label{Eq:18a}\\
K_{1110}^2=&K_{1111}K_{0110}=K_{0111}^2\,, \quad 
K_{1000}^2=K_{0000}K_{1001}=K_{0001}^2\,, \label{Eq:18b}\\
K_{1100}^2=&K_{1111}K_{0000}=K_{0011}^2\,, \quad
K_{1010}^2=K_{1001}K_{0110}=K^2_{0101}\,. \label{Eq:18cd}
 \end{align}
\end{subequations}

\underline{\textbf{A.} $K_{1111}=0$.} We must have  
%\begin{equation} 
%\begin{aligned}
\begin{align*}
    g=&K_{0110} u_k\otimes u_k + K_{1001} \,v\otimes v + K_{1010} v\otimes u_k + K_{0101} u_k\otimes v\\
 &+ K_{1000}\, v\otimes 1+ K_{0100} u_k \otimes 1 + K_{0010} 1 \otimes u_k + K_{0001} 1\otimes v + K_{0000}\, 1\otimes 1\,.
\end{align*}
%\end{aligned}
%\end{equation}
% Furthermore, by \eqref{Eq:18cd} at least one of $K_{1001},K_{0110}$ must vanish. 

\underline{\textbf{A.1.}} Assume furthermore that $K_{1001}=K_{0110}=0$. All the coefficients except $K_{0000}$ must be zero, and the latter can take an arbitrary value while \eqref{Eq:d1}--\eqref{Eq:c2} are satisfied. This is case (i). 

\underline{\textbf{A.2.}} Assume furthermore that $K_{0110}=0$, $K_{1001}\neq 0$. We must have 
\begin{equation*}
   g= K_{1001} \,v\otimes v + K_{1000}\, v\otimes 1 + K_{0001} 1\otimes v + K_{0000}\, 1\otimes 1\,.
\end{equation*}
 By \eqref{Eq:c1}, $K_{1000}K_{1001}=K_{1001}K_{0001}$ so that $K_{1000}=K_{0001}$. Up to making the translation $v\mapsto v-\frac{K_{1000}}{K_{1001}}$, we can assume that $g=K_{1001} \,v\otimes v + K_{0000}\, 1\otimes 1$. Using \eqref{Eq:18b}, $K_{0000}=0$ and all the conditions \eqref{Eq:d1}--\eqref{Eq:c2} are satisfied for an arbitrary $K_{1001}$; this is case (ii). 

\underline{\textbf{A.3.}} Assume furthermore that $K_{1001}=0$, $K_{0110}\neq 0$. By an argument similar to \textbf{A.2}, we are in case (iii) of the statement after a translation. 
 
\underline{\textbf{A.4.}} If $K_{1001}\neq 0$ and $K_{0110}\neq 0$, we can adapt the previous arguments to reduce to the case 
\begin{equation*}
g=K_{0110} u_k\otimes u_k + K_{1001} \,v\otimes v + K_{1010} v\otimes u_k + K_{0101} u_k\otimes v \,,
\end{equation*}
after a translation. Note that the second identity in \eqref{Eq:18cd} must hold, hence the four coefficients are non-zero. Moreover, we get from \eqref{Eq:d2} that $K_{1001}K_{1010}=K_{1001}K_{0101}$, from which $K_{1010}=K_{0101}$. We must then be in case (iv), and it is easy to see that such a form will always satisfy the conditions \eqref{Eq:d1}--\eqref{Eq:c2}. 

\medskip 

\underline{\textbf{B.} $K_{1111}\neq 0$.}  
We have as special cases of \eqref{Eq:d1} and \eqref{Eq:c1} that 
\begin{equation*}
 K_{1111}K_{1011}=K_{1101}K_{1111}\,, \quad K_{1110}K_{1111}=K_{1111}K_{0111}\,.
\end{equation*}
Hence after the translation $u\mapsto u-\frac{K_{1011}}{K_{1111}}$, $v\mapsto v-\frac{K_{0111}}{K_{1111}}$, we can assume that 
there is no term in $g$ which is cubic in $(u_k,v)$. This in turn implies that  
\begin{equation*}
g=K_{1111}\, vu_k \otimes u_k v+ K_{1100} \, vu_k\otimes 1 + K_{0011}\, 1\otimes u_kv+K_{0000}\, 1\otimes 1\,.
\end{equation*}
 The remaining terms are subject to the first identity in \eqref{Eq:18cd}, as well as $K_{0000}K_{1100}=K_{0011}K_{0000}$  which is a special case of \eqref{Eq:c2}. Therefore $g$ must be of the form given in case (v), and it can be checked that $g$ satisfies all the identities in  \eqref{Eq:d1}--\eqref{Eq:c2}. 

%%%%%%%%%%%%%%%%%%%%%%%%%%%%%%%%%%%
%%%%%%%%%%%%%%%%%%%%%%%%%%%%%%%%%%%
%%%%%%%%%%%%%%%%%%%%%%%%%%%%%%%%%%%
%%%%%%%%%%% New Section %%%%%%%%%%%
%%%%%%%%%%%%%%%%%%%%%%%%%%%%%%%%%%%
%%%%%%%%%%%%%%%%%%%%%%%%%%%%%%%%%%%
%%%%%%%%%%%%%%%%%%%%%%%%%%%%%%%%%%%

\section{Relation to representation spaces}
\label{S:Represent}

Given an associative algebra $\VV$, recall that for $N\geq 1$ we can form the $N$-th representation algebra $\VV_N$ defined in \S\ref{sss:RepAlg}. We also have that each $S\in \Aut(\VV)$  induces an automorphism of $\VV_N$ from its definition on generators by $S(a_{ij})=(S(a))_{ij}$. 
We will prove the analogue of Theorem \ref{Thm:RepdP} and \cite[\S3.7]{DSKV} for multiplicative Poisson vertex algebras. 

\begin{theorem} \label{Thm:MainRep}
 Assume that $\dgal{-_\lambda-}$ is a double multiplicative  $\lambda$-bracket on $\VV$. Then there is a unique multiplicative $\lambda$-bracket on $\VV_N$ which satisfies for any $a,b \in \VV$, $1 \leq i,j \leq N$, 
\begin{equation} \label{Eq:relMPVA}
  \br{a_{ij\, \lambda}b_{kl}}=\sum_{n \in \Z} (a_n b)'_{kj} (a_n b)''_{il} \lambda^n\,, \quad 
\text{ where }\dgal{a_\lambda b}=\sum_{n \in \Z} ((a_n b)' \otimes (a_n b)'') \lambda^n\,.
\end{equation}
Furthermore, if $(\VV,\dgal{-_\lambda -})$ is a double multiplicative Poisson vertex algebra, then $(\VV_N,\br{-_\lambda-})$ is a multiplicative Poisson vertex algebra. 
\end{theorem}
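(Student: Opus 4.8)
The plan is to follow the blueprint of Van den Bergh's proof of Theorem~\ref{Thm:RepdP} and of its differential Poisson-vertex-algebra analogue in \cite[\S3.7]{DSKV}, inserting the formal variable $\lambda$ and the automorphism $S$ wherever they occur. \emph{Uniqueness} is immediate: $\VV_N$ is generated as an algebra by the symbols $a_{ij}$, and on products a multiplicative $\lambda$-bracket is forced by bilinearity and the Leibniz rules \eqref{Eq:ML}--\eqref{Eq:MR}, so \eqref{Eq:relMPVA} admits at most one extension. For \emph{existence}, I would first record two elementary consequences of the axioms of a double multiplicative $\lambda$-bracket: setting $b=c=1$ in \eqref{Eq:DML} and in \eqref{Eq:DMR} gives $\dgal{a_\lambda 1}=0$ and $\dgal{1_\lambda b}=0$ for all $a,b\in\VV$, which will ensure compatibility of \eqref{Eq:relMPVA} with the relation $1_{ij}=\delta_{ij}$.

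Next I would build the bracket in two stages. For a fixed generator $a_{ij}$, define $\{a_{ij}{}_\lambda-\}\colon\VV_N\to\VV_N[\lambda^{\pm1}]$ on the generators $b_{kl}$ by \eqref{Eq:relMPVA} and extend it to all of $\VV_N$ by linearity and the left Leibniz rule \eqref{Eq:ML}; its well-definedness reduces to compatibility with $(bc)_{kl}=\sum_m b_{km}c_{ml}$, which follows by expanding $\dgal{a_\lambda bc}=\dgal{a_\lambda b}c+b\dgal{a_\lambda c}$ entrywise, and with $S(b_{kl})=(S(b))_{kl}$, which follows from the second relation in \eqref{Eq:DMA1} together with $S(c_{kj})=(S(c))_{kj}$. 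Symmetrically, for a fixed $x\in\VV_N$, define $\{-_\lambda x\}$ on the generators $a_{ij}$ by the previous stage and extend it by linearity and the right Leibniz rule \eqref{Eq:MR}; here well-definedness (compatibility with $(ab)_{ij}=\sum_k a_{ik}b_{kj}$, and with $\{1_{ij}{}_\lambda x\}=0$, and with sesquilinearity in the first slot) uses the right Leibniz rule \eqref{Eq:DMR} and the first relation in \eqref{Eq:DMA1}. The resulting bilinear map $\{-_\lambda-\}$ then satisfies sesquilinearity \eqref{Eq:MA1} and the Leibniz rules \eqref{Eq:ML}--\eqref{Eq:MR} by construction, i.e.\ it is a multiplicative $\lambda$-bracket, and it takes values in $\VV_N[\lambda^{\pm1}]$ because $\dgal{a_\lambda b}$ has finitely many nonzero Laurent coefficients.

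For the \emph{Poisson vertex algebra} part, assume now $(\VV,\dgal{-_\lambda-})$ is a double multiplicative Poisson vertex algebra. Skewsymmetry \eqref{Eq:MA2} I would check on the generators $a_{ij},b_{kl}$ by a direct computation from the double skewsymmetry \eqref{Eq:DMA2} (using that entry extraction commutes with $S$ and that $\mres$ is invariant under $\lambda\mapsto\lambda^{-1}$), and then propagate it to all of $\VV_N$ via the multiplicative analogue of the standard fact that a $\lambda$-bracket which is skewsymmetric on a generating set and satisfies \eqref{Eq:ML}--\eqref{Eq:MR} is skewsymmetric everywhere. For the Jacobi identity \eqref{Eq:MA3} I would introduce the trilinear map $\{a_\lambda b_\mu c\}$ on $\VV_N$ (the left-hand side of \eqref{Eq:MA3}) and show, exactly as for the map \eqref{Eq:Triple} on $\VV$ (see \eqref{Eq:TriSes}--\eqref{Eq:TriDer3}), that it obeys sesquilinearity and Leibniz-type rules in each of its three arguments; hence it vanishes identically as soon as it vanishes on triples of generators $a_{ij},b_{kl},c_{pq}$. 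The latter I would establish by expanding each of the three terms of $\{a_{ij}{}_\lambda b_{kl}{}_\mu c_{pq}\}$ into components of iterated double $\lambda$-brackets and recognizing the outcome as the image, under the appropriate index-contraction map, of the triple double $\lambda$-bracket $\dgal{a_\lambda b_\mu c}$ of \eqref{Eq:Triple}, which vanishes by hypothesis \eqref{Eq:DMA3}.

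The main obstacle is precisely this last verification on generators: one must track, in the passage from the double bracket to matrix entries, all the index contractions produced by the pattern $\dgal{-,-}'_{\,\cdot\, j}\dgal{-,-}''_{i\,\cdot}$, together with the powers of $\lambda$, $\mu$ and the accompanying $S$-shifts, and match them term by term against the corresponding data inside $\dgal{a_\lambda b_\mu c}$. This is entirely parallel to the computations in \cite[\S3.7]{DSKV} and to Van den Bergh's for Theorem~\ref{Thm:RepdP}; the only genuinely new bookkeeping comes from the formal variables and from $S$, handled via the $L/R$ sesquilinearity relations of Lemma~\ref{20210917:lem1}(a) and their trilinear counterparts \eqref{Eq:TriSes}, \eqref{Eq:TriDer2} and \eqref{Eq:TriDer3}. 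Alternatively, and more conceptually, one can deduce the whole statement from Van den Bergh's Theorem~\ref{Thm:RepdP} together with the correspondences of Propositions~\ref{Prop:Corresp} and~\ref{prop:20210506}: pass from the double multiplicative Poisson vertex algebra on $\VV$ to the local lattice double Poisson algebra on $\VV$, apply Theorem~\ref{Thm:RepdP} to get a Poisson algebra on $\VV_N$, check that it is again local and lattice (with $S(a_{ij})=(S(a))_{ij}$), and verify that applying Proposition~\ref{Prop:Corresp} returns precisely the $\lambda$-bracket \eqref{Eq:relMPVA}; this realizes the commuting square alluded to in the introduction, at the cost of the same compatibility check.
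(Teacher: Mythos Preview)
Your approach is essentially the paper's.  The one place where your sketch understates the computation is the Jacobi step on generators: when you expand $\{a_{ij}{}_\lambda\{b_{kl}{}_\mu c_{uv}\}\}$ via the left Leibniz rule you get \emph{two} terms (one for each factor of $(b_nc)'_{ul}(b_nc)''_{kv}$), and likewise for the other two pieces of the Jacobiator, so there are six terms in all.  The paper organises them as a separate lemma (Lemma~\ref{Lem:RelJac}): the Jacobiator on generators equals
\[
\dgal{a_\lambda b_\mu c}_{uj,il,kv}\;-\;\dgal{b_\mu a_\lambda c}_{ul,kj,iv}\,,
\]
a difference of \emph{two} index-contracted triple brackets rather than the single one you describe, with the second written via the alternative form \eqref{Eq:Triple2} (hence skewsymmetry, through Lemma~\ref{Lem:Trick}, is used here and not only in the skewsymmetry check).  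Both halves vanish under \eqref{Eq:DMA3}, so your conclusion is correct, but the index bookkeeping is slightly more involved than ``the image of $\dgal{a_\lambda b_\mu c}$''.

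Your alternative route through Propositions~\ref{Prop:Corresp} and~\ref{prop:20210506} is exactly what the paper records afterwards as Corollary~\ref{Cor:CommuteRep}, deduced \emph{from} Theorem~\ref{Thm:MainRep} rather than used to prove it; making it an independent proof would require the same compatibility check you flag, which unwinds to essentially the same computation.
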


\begin{proof}
We begin by proving the first part, which is similar to \cite[Prop. 3.20]{DSKV}.
  
The operation $\br{-_\lambda-}$ given by equation \eqref{Eq:relMPVA} is defined on generators and it is extended uniquely to all elements of $\VV_N$ by the Leibniz rules \eqref{Eq:ML}--\eqref{Eq:MR}. To ensure that $\br{-_\lambda-}$ is well-defined, we need to show 
\begin{equation} \label{Eq:RepLeibn}
    \br{a_{ij\, \lambda}(bc)_{kl}}=\sum_{u=1}^N \br{a_{ij\, \lambda}b_{cu} c_{ul}}\,,
\end{equation}
and do the same with respect to the first entry.
To see that \eqref{Eq:RepLeibn} holds, we compute the left-hand side using  \eqref{Eq:DML} as follows : 
\begin{equation*}
    \begin{aligned}
\br{a_{ij\, \lambda}(bc)_{kl}}=&
\sum_{n \in \Z} (a_n bc)'_{kj} (a_n bc)''_{il} \lambda^n \\
=&
\sum_{n \in \Z} \Big[(a_n b)'_{kj} ((a_n b)''c)_{il} + (b(a_n c)')_{kj} (a_n c)''_{il}\Big] \lambda^n  \\
=&  \sum_{n \in \Z}\sum_{u=1}^N \left[ 
(a_nb)'_{kj}(a_nb)''_{iu} c_{ul}+b_{ku}(a_nc)'_{uj}(a_nc)''_{il}
\right] \lambda^n\,.
    \end{aligned}
\end{equation*}
The same result can easily be obtained for the right-hand side of  \eqref{Eq:RepLeibn} using the Leibniz rule. 

To get that  $\br{-_\lambda-}$ defined by \eqref{Eq:relMPVA}  is a multiplicative $\lambda$-bracket, it remains to check sesquilinearity \eqref{Eq:MA1}. This will follow if we can show that 
\begin{equation} \label{Eq:RepSesq}
\br{S(a_{ij})_\lambda b_{kl}}= \lambda^{-1}\br{a_{ij\,\lambda} b_{kl}}\,, \quad \br{a_{ij}{}_\lambda S(b_{kl})}= \lambda S(\br{a_{ij\,\lambda} b_{kl}})\,. 
\end{equation}
For the first identity in \eqref{Eq:RepSesq}, we have 
\begin{equation*}
\begin{aligned}
  \br{S(a_{ij})_\lambda b_{kl}}=&\br{S(a)_{ij\,\lambda}b_{kl}}
=\sum_{n \in \Z}(S(a)_nb)'_{kj} (S(a)_nb)''_{il}\lambda^n \\
=&\sum_{n \in \Z}\lambda^{-1}(a_nb)'_{kj} (a_nb)''_{il}\lambda^n 
=\lambda^{-1}\br{a_{ij\,\lambda} b_{kl}}\,, 
\end{aligned}
\end{equation*}
where we used \eqref{Eq:DMA1} for the third equality. The second identity in \eqref{Eq:RepSesq} is checked in the same way. 

If we have a double multiplicative Poisson vertex algebra, we use \eqref{Eq:DMA2} in the form 
\begin{equation*}
  \sum_{n \in \Z} (a_nb)' \otimes (a_nb)''\lambda^n
= - \sum_{n \in \Z} \lambda^{-n}S^{-n}\left((b_na)'' \otimes (b_na)'\right)
\end{equation*}
to get that 
\begin{equation*}
\begin{aligned}
  \br{a_{ij\,\lambda} b_{kl}}=&\sum_{n \in \Z} (a_nb)'_{kj} (a_nb)''_{il}\lambda^n
= - \sum_{n \in \Z} \lambda^{-n}S^{-n}\left((b_na)''_{kj} (b_na)'_{il}\right) \\
=& - \sum_{n \in \Z} \lambda^{-n}S^{-n}\left((b_na)'_{il}(b_na)''_{kj} \right) 
=-\restriction{\Big(}{x=S}\br{b_{kl\,\lambda^{-1}x^{-1}}a_{ij}}\Big)\,,
\end{aligned}
\end{equation*}
which gives that $\br{-_\lambda-}$ defined by \eqref{Eq:relMPVA} satisfies the skewsymmetry property \eqref{Eq:MA2}. Then, we can conclude because Jacobi identity \eqref{Eq:MA3} holds by Lemma \ref{Lem:RelJac} whenever \eqref{Eq:DMA3} does. 
\end{proof}

For the next lemma, we introduce some notations to go from $\VV^{\otimes 3}$ to $\VV_N$. 
For any $A= a'\otimes a''\otimes a'''\in \VV^{\otimes 3}$, and $1\leq i,j,k,l,m,n \leq N$, we define 
\begin{equation*}
  A_{ij,kl,mn}:=a'_{ij}\, a''_{kl}\, a'''_{mn}\,.
\end{equation*}
We extend this operation in the obvious way to associate $A_{ij,kl,mn}\in \VV_N[\lambda^{\pm 1},\mu^{\pm1}]$ to any $A \in \VV^{\otimes 3}[\lambda^{\pm 1},\mu^{\pm1}]$.

\begin{lemma} \label{Lem:RelJac}
  For any $a,b,c \in \VV$ and $1\leq i,j,k,l,u,v\leq N$, if $\dgal{-_\lambda-}$ is a double $\lambda$-bracket such that  \eqref{Eq:DMA2} holds, then we have that 
\begin{equation*}
  \br{a_{ij\,\lambda}\br{b_{kl\,\mu}c_{uv}}}-\br{b_{kl\,\lambda}\br{a_{ij\,\mu}c_{uv}}}
-\br{\br{a_{ij\,\lambda}b_{kl}}_{\lambda \mu}c_{uv}}
=\dgal{a_\lambda b_\mu c}_{uj,il,kv} - \dgal{b_\mu a_\lambda c}_{ul,kj,iv}\,,
\end{equation*}
where 
$\br{-_\lambda-}$ is defined by \eqref{Eq:relMPVA}, while  
$\dgal{-_{\lambda}-_{\mu}-}$ is given by \eqref{Eq:Triple}. 
\end{lemma}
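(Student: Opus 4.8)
The plan is to expand both sides directly in the representation algebra $\VV_N$, tracking how the index structure $(kj,il)$ in \eqref{Eq:relMPVA} propagates through nested brackets. The overall strategy mirrors \cite[\S3.7]{DSKV}: write each of the three terms on the left-hand side as a sum over repeated indices of matrix entries of the iterated double bracket, and then recognise the result as the pair of terms $\dgal{a_\lambda b_\mu c}_{uj,il,kv} - \dgal{b_\mu a_\lambda c}_{ul,kj,iv}$ once one unwinds the definitions \eqref{20210616:eq2a}, \eqref{20210616:eq2}, \eqref{Eq:Triple}. The skewsymmetry hypothesis \eqref{Eq:DMA2} is needed because the operation $\dgal{a'\otimes a''{}_\lambda b}_L$ in \eqref{20210616:eq2} involves $S$ acting on the leftmost tensor factor: it lets us rewrite $\dgal{\dgal{a_\lambda b}_{\lambda\mu}c}_L$ in a form whose representation-algebra incarnation has no spurious shift, exactly as in Lemma \ref{Lem:Trick}.

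First I would compute $\br{a_{ij\,\lambda}\br{b_{kl\,\mu}c_{uv}}}$. Using \eqref{Eq:relMPVA} twice, $\br{b_{kl\,\mu}c_{uv}} = \sum_m (b_m c)'_{ul}(b_m c)''_{kv}\,\mu^m$, and then applying $\br{a_{ij\,\lambda}-}$ via the left Leibniz rule \eqref{Eq:ML} distributes the bracket over the product of the two matrix entries $(b_mc)'_{ul}$ and $(b_mc)''_{kv}$. Each resulting term is $\sum_n$ of a product of three matrix entries of $\VV$, with one new summation index coming from the matrix multiplication $(a_n x)_{pq} = \sum_t \cdots$; collecting these and comparing with the definition of $\dgal{a_\lambda \dgal{b_\mu c}}_L$ through \eqref{20210616:eq2a} gives precisely the $L$-part contribution to $\dgal{a_\lambda b_\mu c}_{uj,il,kv}$. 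The second term $\br{b_{kl\,\lambda}\br{a_{ij\,\mu}c_{uv}}}$ is handled identically with the roles of $(a,\lambda,ij)$ and $(b,\mu,kl)$ swapped, yielding the $R$-type term; here one must be careful that the outer bracket acts on $\br{a_{ij\,\mu}c_{uv}}$ so that the correct tensor slot is differentiated, matching $\dgal{b_\mu\dgal{a_\lambda c}}_R$ in \eqref{20210616:eq2a}. Then I would expand $\br{\br{a_{ij\,\lambda}b_{kl}}_{\lambda\mu}c_{uv}}$: write $\br{a_{ij\,\lambda}b_{kl}} = \sum_n (a_nb)'_{kj}(a_nb)''_{il}\,\lambda^n$, apply $\br{-_{\lambda\mu}c_{uv}}$ using the right Leibniz rule \eqref{Eq:MR} (which introduces the substitution $x=S$ on one factor), and identify the outcome with $\dgal{\dgal{a_\lambda b}_{\lambda\mu}c}_L$ via \eqref{20210616:eq2}. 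Assembling the three pieces gives $\dgal{a_\lambda\dgal{b_\mu c}}_L - \dgal{b_\mu\dgal{a_\lambda c}}_R - \dgal{\dgal{a_\lambda b}_{\lambda\mu}c}_L$ evaluated on representations, but with index labels that need to be read off carefully — this is where the asymmetry between the two surviving terms $\dgal{a_\lambda b_\mu c}_{uj,il,kv}$ and $\dgal{b_\mu a_\lambda c}_{ul,kj,iv}$ arises.

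The main obstacle I expect is bookkeeping of the $N$ indices: because \eqref{Eq:relMPVA} pairs the ``column'' indices of the outputs with the ``row'' index of one generator and the ``column'' of the other in a crossed fashion ($kj$ and $il$ rather than $kl$ and $ij$), the nested expressions produce index contractions that do not visibly match $\dgal{a_\lambda b_\mu c}_{uj,il,kv}$ until one has very carefully expanded the definitions \eqref{20210616:eq2a}–\eqref{20210616:eq2} of $\dgal{-_\lambda-}_{L}$, $\dgal{-_\lambda-}_{R}$ acting on $\VV\otimes\VV$ and translated each $\otimes_1$ and $\ast_1$ into the corresponding matrix-entry contraction. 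A subtle point is the appearance of $S$ in \eqref{20210616:eq2} and in the right Leibniz rule \eqref{Eq:MR}: one must check that the $x=S$ substitutions on the $\VV$ side correspond exactly to the $S$-shifts on the $\lambda$-powers on the $\VV_N$ side, which is guaranteed by the sesquilinearity already established in the proof of Theorem \ref{Thm:MainRep} together with \eqref{Eq:DMA1b}. Once the dictionary between the two sides is pinned down, the identity is a matter of collecting like terms; I would organise the computation by treating the ``$L$-contribution'', ``$R$-contribution'', and ``double-bracket-of-double-bracket'' contribution separately and only combining at the end, exactly as the cited proof in \cite{DSKV} does.
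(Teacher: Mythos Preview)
Your overall approach---expand each of the three left-hand terms via the Leibniz rules and \eqref{Eq:relMPVA}, then match against the right-hand side---is exactly what the paper does. Two points of clarification will make the execution cleaner.

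First, there are no new summation indices coming from matrix multiplication. When you apply $\br{a_{ij\,\lambda}-}$ to $(b_mc)'_{ul}$ you use \eqref{Eq:relMPVA} directly on that single generator, obtaining $\sum_n (a_n(b_mc)')'_{uj}(a_n(b_mc)')''_{il}\lambda^n$; no internal contraction over $1\leq t\leq N$ appears.

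Second, and more substantively, your summary ``assembling the three pieces gives $\dgal{a_\lambda\dgal{b_\mu c}}_L - \dgal{b_\mu\dgal{a_\lambda c}}_R - \dgal{\dgal{a_\lambda b}_{\lambda\mu}c}_L$ evaluated on representations'' undercounts by half. Each of the three left-hand terms, after Leibniz, splits into \emph{two} pieces (the inner bracket is a product of two matrix entries), so you obtain six terms. These do not all land at the index placement $(uj,il,kv)$: three of them do, and constitute $\dgal{a_\lambda b_\mu c}_{uj,il,kv}$ via \eqref{Eq:Triple}, while the other three land at $(ul,kj,iv)$ and must be matched with $-\dgal{b_\mu a_\lambda c}_{ul,kj,iv}$. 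For that second match the paper uses the alternative expansion \eqref{Eq:Triple2} of the triple bracket---this is precisely where the skewsymmetry hypothesis \eqref{Eq:DMA2} (through Lemma \ref{Lem:Trick}) enters, making the ``$R$-piece'' of $\br{\br{a_{ij\,\lambda}b_{kl}}_{\lambda\mu}c_{uv}}$ recognisable as the third term of $-\dgal{b_\mu a_\lambda c}_{ul,kj,iv}$. Once you track this $3+3$ split explicitly, the matching is term-by-term and the argument is complete.
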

\begin{proof}
  Using the Leibniz rules for a multiplicative $\lambda$-bracket and the definition \eqref{Eq:relMPVA}, we have 
\begin{equation*}
\begin{aligned}
 &  \br{a_{ij\,\lambda}\br{b_{kl\,\mu}c_{uv}}}
=\sum_{q \in \Z} \br{a_{ij\,\lambda} (b_q c)'_{ul}(b_q c)''_{kv}} \mu^q \\
=& \sum_{p,q \in \Z} \Big[ (a_p (b_q c)')'_{uj} (a_p (b_q c)')''_{il}(b_q c)''_{kv}
 + (a_p (b_q c)'')'_{kj} (a_p (b_q c)'')''_{iv}(b_q c)'_{ul} \Big] \lambda^p \mu^q\,.
\end{aligned}
\end{equation*}
Similarly, $-\br{b_{kl\,\mu}\br{a_{ij\,\lambda}c_{uv}}} $ can be written as 
\begin{equation*}
\begin{aligned}
\sum_{p,q \in \Z} \Big[ - (b_q (a_p c)')'_{ul} (b_q (a_p c)')''_{kj}(a_p c)''_{iv}
-(b_q (a_p c)'')'_{il} (b_q (a_p c)'')''_{kv}(a_p c)'_{uj}\Big] \lambda^p \mu^q\,,
\end{aligned}
\end{equation*}
while $-\br{\br{a_{ij\,\lambda}b_{kl}}_{\lambda \mu}c_{uv}} $ can be written as 
\begin{equation*}
\begin{aligned}
 \sum_{p,q \in \Z} \Big[ - ((a_p b)'_q c)'_{uj} ((a_p b)'_q c)''_{kv} S^q((a_p b)''_{il})
- ((a_p b)''_q c)'_{ul} ((a_p b)''_q c)''_{iv} S^q((a_p b)'_{kj})
\Big] \lambda^p \mu^q\,,
\end{aligned}
\end{equation*}
so that we get six terms for the left-hand side. Meanwhile, we can use \eqref{Eq:Triple} to get $\dgal{a_\lambda b_\mu c}$, and we can write 
\begin{equation*}
  \begin{aligned}
    \dgal{a_\lambda b_\mu c}_{uj,il,kv}=&
\sum_{p,q \in \Z} (a_p (b_qc)')'_{uj} (a_p (b_qc)')''_{il} (b_q c)''_{kv}\lambda^p \mu^q \\
&-\sum_{p,q \in \Z}(a_pc)'_{uj}(b_q (a_p c)'')'_{il}(b_q (a_p c)'')''_{kv}\lambda^p \mu^q\\
&-\sum_{p,q \in \Z} 
((a_pb)'_q c)'_{uj} S^q((a_p b)'')_{il} ((a_pb)'_qc)_{kv}\lambda^{p+q} \mu^q\,.
  \end{aligned}
\end{equation*}
If we use \eqref{Eq:Triple2} to expand $\dgal{b_\mu a_\lambda  c}$, we can write 
\begin{equation*}
  \begin{aligned}
- \dgal{b_\mu a_\lambda c}_{ul,kj,iv}=&
-\sum_{p,q \in \Z} (b_q (a_pc)')'_{ul} (b_q (a_p c)')''_{kj} (a_p c)''_{iv}
\lambda^p \mu^q \\
&+\sum_{p,q \in \Z}  (b_qc)'_{ul}(a_p(b_qc)'')'_{kj}(a_p(b_qc)'')''_{iv}
 \lambda^p \mu^q\\
&-\sum_{p,q \in \Z} ((a_pb)''_qc)'_{ul}S^q((a_pb)')_{kj}((a_pb)''_qc)''_{iv}
\lambda^{p+q} \mu^q\,.
  \end{aligned}
\end{equation*}
It now suffices to see that the left- and right-hand sides coincide since $S(d_{mn})=S(d)_{mn}$ for any $d \in A$ and indices $1\leq m,n \leq N$. 
\end{proof}

\begin{example} \label{Ex:Rep-1}
Using $\VV$ from Example \ref{Ex:FreeA}, we get for $N \geq 1$ the representation algebra $\VV_N=\kk[u_{m,ij} \mid m\in \Z,\, 1\leq i,j\leq N]$ which is a multiplicative Poisson vertex algebra by Theorem \ref{Thm:MainRep}. The automorphism on $\VV_N$ is given by $S(u_{m,ij})=u_{m+1,ij}$, and the multiplicative $\lambda$-bracket satisfies 
$$%\begin{equation}
    \br{u_{m,ij}{}_\lambda u_{n,kl}}=\lambda^{n-m} \big(u_{n,kj} \delta_{il} - u_{n,il} \delta_{kj} \big)\,.
$$%\end{equation}
\end{example}

\begin{example}
Combining  Proposition \ref{Pr:class1} with $\VV=\mc R_1$ and Theorem \ref{Thm:MainRep}, we get for $N \geq 1$ that the representation algebra $\VV_N$  (as in Example \ref{Ex:Rep-1}) is a multiplicative Poisson vertex algebra for the multiplicative $\lambda$-bracket 
$$%\begin{equation}
    \br{u_{ij}{}_\lambda u_{kl}}=(u u_M)_{kj} (u_M u)_{il} \lambda^M - (u u_{-M})_{kj} (u_{-M} u)_{il} \lambda^{-M}\,,
$$%\end{equation}
where $u_{ij}:=u_{0,ij}$ and $M\geq 1$. 
For $M=N=1$, if we set $\mathtt{u}:=u_{11}$ we get that the \emph{commutative} polynomial algebra in one variable  $\kk[\mathtt{u}_m\mid m\in \Z]$ is equipped with the following  multiplicative $\lambda$-bracket  
\begin{equation} \label{Eq:Rep-Volt2}
    \br{\mathtt{u} {}_\lambda \mathtt{u}} =\mathtt{u}^2 \mathtt{u}_1^2 \lambda - \mathtt{u}^2 \mathtt{u}_{-1}^2 \lambda^{-1}\,.
\end{equation}
This can be seen as the ``square'' of the $\lambda$-bracket for the Volterra lattice \cite{DSKVWclass} given on $\kk[\mathtt{v}_s\mid s\in \Z]$ by 
$$\br{\mathtt{v}_\lambda \mathtt{v}}=\mathtt{v}\mathtt{v}_1 \lambda - \mathtt{v}\mathtt{v}_{-1}\lambda^{-1}.$$ 
Indeed,  we recover \eqref{Eq:Rep-Volt2} for $\mathtt{u}=\mathtt{v}^2$ up to a factor.
\end{example}

\begin{corollary} \label{Cor:CommuteRep}
The (non-commutative) correspondence between local lattice double Poisson algebras and double multiplicative Poisson vertex algebras from Proposition  \ref{prop:20210506} induces 
the (commutative) correspondence between local lattice Poisson algebras and multiplicative Poisson vertex algebras from Proposition  \ref{Prop:Corresp} on representation spaces. 
\end{corollary}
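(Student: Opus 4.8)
The plan is to show that the square formed by the four relevant constructions commutes, and to reduce the whole verification to the generators $a_{ij}$ of $\VV_N$. The four arrows involved are: (i) the passage from a double multiplicative Poisson vertex algebra on $\VV$ to a local lattice double Poisson algebra on $\VV$ via the multiplicative residue (Proposition \ref{prop:20210506}); (ii) the passage from a local lattice double Poisson algebra on $\VV$ to a local lattice Poisson algebra on $\VV_N$ via \eqref{Eq:relPA}, together with the induced automorphism $S(a_{ij})=(S(a))_{ij}$ (Theorem \ref{Thm:RepdP}); (iii) the passage from a double multiplicative Poisson vertex algebra on $\VV$ to a multiplicative Poisson vertex algebra on $\VV_N$ via \eqref{Eq:relMPVA} (Theorem \ref{Thm:MainRep}); and (iv) the passage from a multiplicative Poisson vertex algebra on $\VV_N$ to a local lattice Poisson algebra on $\VV_N$ via the multiplicative residue (Proposition \ref{Prop:Corresp}). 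One has to check that composing (i) then (ii) agrees with composing (iii) then (iv), and symmetrically for the reverse correspondences.

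For the forward direction, I would start from a double multiplicative $\lambda$-bracket on $\VV$, written $\dgal{a_\lambda b}=\sum_n((a_n b)'\otimes(a_n b)'')\lambda^n$. Applying (i), Proposition \ref{prop:20210506} gives the double Poisson bracket $\dgal{a,b}=\mres_\lambda\dgal{a_\lambda b}=(a_0 b)'\otimes(a_0 b)''$, and then (ii) produces on $\VV_N$ the skewsymmetric biderivation with $\br{a_{ij},b_{kl}}=\dgal{a,b}'_{kj}\dgal{a,b}''_{il}=(a_0 b)'_{kj}(a_0 b)''_{il}$. On the other hand, (iii) gives on $\VV_N$ the multiplicative $\lambda$-bracket $\br{a_{ij\,\lambda}b_{kl}}=\sum_n(a_n b)'_{kj}(a_n b)''_{il}\lambda^n$, whose multiplicative residue — which, by Proposition \ref{Prop:Corresp}, is exactly the Poisson bracket obtained via (iv) — equals $(a_0 b)'_{kj}(a_0 b)''_{il}$ on the generators. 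So the two Poisson brackets agree on the $a_{ij}$; since each is a biderivation, hence determined by its values there (uniqueness clause of Theorem \ref{Thm:RepdP}), they coincide on all of $\VV_N$, and the automorphisms match tautologically.

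For the reverse direction, I would start from a local lattice double Poisson bracket $\dgal{-,-}$ on $\VV$. Composing \eqref{Eq:DPtoDPV} with (iii) gives on $\VV_N$ the multiplicative $\lambda$-bracket $\br{a_{ij\,\lambda}b_{kl}}=\sum_n\dgal{S^n(a),b}'_{kj}\dgal{S^n(a),b}''_{il}\lambda^n$, obtained by reading off the $\lambda^n$-coefficient of $\dgal{a_\lambda b}=\sum_n\lambda^n\dgal{S^n(a),b}$. Going the other way, (ii) gives the Poisson bracket $\br{a_{ij},b_{kl}}=\dgal{a,b}'_{kj}\dgal{a,b}''_{il}$ on $\VV_N$, and then \eqref{eq:Corr-2} produces $\br{a_{ij\,\lambda}b_{kl}}=\sum_n\lambda^n\br{S^n(a_{ij}),b_{kl}}=\sum_n\lambda^n\br{(S^n(a))_{ij},b_{kl}}=\sum_n\lambda^n\dgal{S^n(a),b}'_{kj}\dgal{S^n(a),b}''_{il}$, using $S^n(a_{ij})=(S^n(a))_{ij}$. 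These two multiplicative $\lambda$-brackets on $\VV_N$ agree on the generators $a_{ij}$, and since a multiplicative $\lambda$-bracket is determined by its values on a generating set through the Leibniz rules \eqref{Eq:ML}--\eqref{Eq:MR} (which is precisely the extension used in Theorem \ref{Thm:MainRep}), they coincide everywhere; the automorphisms match as before.

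Since everything is a diagram chase, I do not expect a real obstacle, only careful bookkeeping with the indices and with the strong Sweedler notation $d'\otimes d''$. The one structural input that genuinely has to be used is that in both correspondences the resulting structure on $\VV_N$ is pinned down by its restriction to the generators $a_{ij}$ — the uniqueness clause of Theorem \ref{Thm:RepdP} on the Poisson side, and that of Theorem \ref{Thm:MainRep} on the multiplicative side — so that the verification may legitimately be performed on generators, where the two computations above make the two routes around the square coincide.
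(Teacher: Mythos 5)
Your proposal is correct and takes essentially the same route as the paper's proof: both verify the commutativity of the square by the same two chains of equalities on the generators $a_{ij}$, the only difference being that you make explicit the appeal to the uniqueness clauses of Theorems \ref{Thm:RepdP} and \ref{Thm:MainRep} to pass from generators to all of $\VV_N$, which the paper leaves implicit.
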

\begin{proof}
Fix $a,b\in \VV$ for $\VV$ a local lattice double Poisson algebra with double Poisson bracket $\dgal{-,-}$. By Proposition  \ref{prop:20210506}, $\VV$ is a double multiplicative Poisson vertex algebra  
with multiplicative $\lambda$-bracket given in \eqref{eq:Corr-2}. 

Using Van den Bergh's work \cite{VdB1}, \eqref{Eq:dbrRep} defines a Poisson bracket on $\VV_N$. It is easy to check that $\VV_N$ is a local lattice Poisson algebra by inducing $S$ from $\VV$ to $\VV_N$ as in \S\ref{sss:RepAlg}. 
Alternatively, we can use Theorem \ref{Thm:MainRep} to get a multiplicative $\lambda$-bracket on $\VV_N$ as follows. Recalling the first equation in \eqref{Eq:relMPVA}, we have
\begin{align*}
    \br{a_{ij}{}_\lambda b_{kl}}=& 
%\dgal{a_\lambda b}'_{kj} \dgal{a_\lambda b}''_{il} 
\sum_{n\in\mb Z}\lambda^n(a_nb)'_{kj}(a_nb)''_{il}\stackrel{\eqref{Eq:DPtoDPV}}{=}
\sum_{n\in \Z} \lambda^n \dgal{S^n(a),b}'_{kj} \dgal{S^n(a),b}''_{il} \\
=&\sum_{n\in \Z} \lambda^n \br{S^n(a)_{ij},b_{kl}} 
=\sum_{n\in \Z} \lambda^n \br{S^n(a_{ij}),b_{kl}}\,,
\end{align*}
in agreement with \eqref{eq:Corr-2}. 

Starting from the double multiplicative Poisson vertex algebra instead, we get in a similar fashion that the induced Poisson bracket on $\VV_N$ satisfies 
\begin{align*}
    \br{a_{ij},b_{kl}} = \dgal{a,b}'_{kj} \dgal{a,b}''_{il} 
    \stackrel{\eqref{eq:2.15}}{=}
    (a_0b)'_{kj}(a_0b)''_{il}
    =\mres_\lambda \sum_{n\in\mb Z}(a_nb)'_{kj}(a_nb)''_{il}\lambda^n
%    =\mres_\lambda \dgal{a_\lambda b}'_{kj} \dgal{a_\lambda b}''_{il}
    =\mres_\lambda \br{a_{ij}{}_\lambda b_{kl}}\,.
\end{align*}
This is consistent with \eqref{eq:Corr-1}.
\end{proof}
Corollary \ref{Cor:CommuteRep} can be summarised in terms of the commutative diagram depicted in Figure \ref{Fig:A}. 
\begin{figure}[h]
\centering
  \begin{tikzpicture}
 \node  (dPA) at (-3,1.5) {$(\VV,\dgal{-,-},S)$};
 \node  (mdPA) at (3,1.5) {$(\VV,\dgal{- _{\lambda}-},S)$};
  \node  (PA) at (-3,-1.5) {$(\VV_N,\br{-,-},S)$};
 \node  (mPA) at (3,-1.5) {$(\VV_N,\br{- _{\lambda}-},S)$};
\path[<->,>=angle 90,font=\small]  
   (dPA) edge node[above] {Prop. \ref{prop:20210506}}  (mdPA) ;
   \path[<->,>=angle 90,font=\small] 
   (PA) edge node[above] {Prop. \ref{Prop:Corresp}}  (mPA) ;
\path[->,>=angle 90]  
   (dPA) edge node[left] {\eqref{Eq:dbrRep}}  (PA) ;
\path[->,>=angle 90]  
   (mdPA) edge node[right] {\eqref{Eq:relMPVA}}  (mPA) ;
   \end{tikzpicture}
   \caption{}
   \label{Fig:A}
\end{figure}

%%%%%%%%%%%%%%%%%%%%%%%%%%%%%%%%%%%
%%%%%%%%%%%%%%%%%%%%%%%%%%%%%%%%%%%
%%%%%%%%%%%%%%%%%%%%%%%%%%%%%%%%%%%
%%%%%%%%%%% New Section %%%%%%%%%%%
%%%%%%%%%%%%%%%%%%%%%%%%%%%%%%%%%%%
%%%%%%%%%%%%%%%%%%%%%%%%%%%%%%%%%%%
%%%%%%%%%%%%%%%%%%%%%%%%%%%%%%%%%%%

\section{Connection to Integrable Systems}
\label{S:Integr}

It is shown in \cite{DSKV} that double Poisson vertex algebras provide a convenient framework to study non-commutative partial differential equations. In this section we provide``multiplicative versions''
of some of the results in \cite{DSKV} aimed at showing that double multiplicative Poisson vertex algebras provide a convenient framework
to study non-commutative differential-difference equations. Several examples are presented in Section \ref{ss:IS-ex}.

\subsection{The trace map and connection to Lie algebras}\label{S:Integr2}
Let $\mc V$ be a unital associative algebra endowed with an automorphism $S$
of infinite order. We denote by $\mult:\mc V\otimes\mc V\to\mc V$ the multiplication map.
We denote also by $[\mc V,\mc V]\subset\mc V$ the commutator subspace.
For $f\in\mc V$, we let $\tr(f)\in\mc V/[\mc V,\mc V]$ be the corresponding coset.
Furthermore, for $f\in\mc V$, we also let
$\tint f$ be the coset of $f\in\mc V$ in the quotient space
$$\mc F:=\mc V/([\mc V,\mc V]+(S-1)\mc V)\,.$$

Given a double multiplicative $\lambda$-bracket $\ldb-_\lambda-\rdb$ on $\mc V$ 
we define the following map
$\{-_{\lambda}-\}:\mc V\otimes \mc V\to \mc V[\lambda,\lambda^{-1}]$
by
\begin{equation}\label{20140707:eq4}
\{a_\lambda b\}=\mult\ldb a_{\lambda}b\rdb
\end{equation}
(the multiplication map
on $\mc V\otimes\mc V$ is extended to a multiplication map 
$\mult:\,(\mc V\otimes\mc V)[\lambda,\lambda^{-1}]\to\mc V[\lambda,\lambda^{-1}]$ 
in the obvious way),
and we also define the map
$\{-,-\}:\mc V\otimes \mc V\to \mc V$
by
\begin{equation}\label{20140707:eq5}
\{a,b\}=\mult\ldb a_{\lambda}b\rdb|_{\lambda=1}
\,.
\end{equation}

\begin{lemma}\label{prop:1bis}
If $\ldb-_\lambda-\rdb$ is a skewsymmetric double multiplicative $\lambda$-bracket on $\mc V$,
then the following identity
holds in $\mc V^{\otimes2}[\lambda^{\pm1},\mu^{\pm1}]$ ($a,b,c\in\mc V$):
\begin{align}
\begin{split}\label{eq:quasijacobi}
&\{a_\lambda\ldb b_\mu c\rdb\}
-\ldb b_\mu\{a_\lambda c\}\rdb
-\ldb\{a_\lambda b\}_{\lambda\mu} c\rdb
=(\mult\otimes1)\ldb a_\lambda b_\mu c\rdb
-(1\otimes \mult)\ldb b_\mu a_\lambda c\rdb\,.
\end{split}
\end{align}
where we set $\{a_\lambda b\otimes c\}=\{a_\lambda b\}\otimes c+b\otimes\{a_\lambda c\}$.
\end{lemma}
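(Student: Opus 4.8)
The plan is to prove the identity \eqref{eq:quasijacobi} by expanding both sides in terms of the double multiplicative $\lambda$-bracket and tracking carefully how the multiplication map $\mult$ interacts with the maps $\dgal{-_\lambda -}_L$, $\dgal{-_\lambda-}_R$ and with the triple bracket $\dgal{-_\lambda-_\mu-}$ from \eqref{Eq:Triple}. This is the multiplicative/difference analogue of Lemma 3.8 in \cite{DSKV}, so the proof should follow the same strategy: rewrite each of the three terms on the left-hand side of \eqref{eq:quasijacobi} using the definitions \eqref{20140707:eq4}, \eqref{20140707:eq5}, and \eqref{Eq:Triple}, and compare with the two terms on the right-hand side.

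First I would record the elementary compatibilities between $\mult$ and the operations introduced in Section \ref{S:dmPVA}. The key ones are: $\mult\circ(-)^\sigma = \mult$ on $\mc V^{\otimes2}$; the identity $\mult(\dgal{a_\lambda b}\bullet C) = \dgal{a_\lambda b}'\,C'\,\mult(C'')\dgal{a_\lambda b}''$ when suitably interpreted, i.e. the way $\bullet$ collapses under multiplication; and the compatibilities of $(\mult\otimes 1)$ and $(1\otimes\mult)$ with $\bullet_1,\bullet_2,\bullet_3$ and with $\dgal{-_\lambda-}_L$, $\dgal{-_\lambda-}_R$ as maps $\mc V^{\otimes3}\to\mc V^{\otimes2}$. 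Concretely, $(\mult\otimes1)\dgal{a_\lambda B}_L$ and $(1\otimes\mult)\dgal{a_\lambda B}_R$ should reduce, via the left Leibniz rule \eqref{Eq:DML}, to $\dgal{a_\lambda \mult B}$ once one sorts out which copy of $\mc V$ is being multiplied where — and the sesquilinearity Lemma \ref{20210917:lem1}(a) handles the shifts by $S$ that appear when an argument sits in the ``inside'' slot.

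Then I would expand $\{a_\lambda\dgal{b_\mu c}\}$: writing $\{a_\lambda b\otimes c\} = \{a_\lambda b\}\otimes c + b\otimes\{a_\lambda c\}$, this equals $\mult\dgal{a_\lambda\dgal{b_\mu c}'}\otimes\dgal{b_\mu c}'' + \dgal{b_\mu c}'\otimes\mult\dgal{a_\lambda\dgal{b_\mu c}''}$, which is precisely $(\mult\otimes1)\dgal{a_\lambda\dgal{b_\mu c}}_L + (1\otimes\mult)\dgal{a_\lambda\dgal{b_\mu c}}_R$. Similarly $\dgal{b_\mu\{a_\lambda c\}} = \dgal{b_\mu \mult\dgal{a_\lambda c}}$, and using the left Leibniz rule \eqref{Eq:DML} together with the bimodule compatibilities this opens up into a sum of two terms matching $(\mult\otimes1)\dgal{b_\mu\dgal{a_\lambda c}}_R$-type and $(1\otimes\mult)\dgal{b_\mu\dgal{a_\lambda c}}_R$-type expressions; the third term $\dgal{\{a_\lambda b\}_{\lambda\mu}c}$ is handled the same way via \eqref{Eq:DMR} or Lemma \ref{20210917:lem1}. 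After this expansion, the left-hand side of \eqref{eq:quasijacobi} becomes a combination of $(\mult\otimes 1)$ and $(1\otimes\mult)$ applied to $\dgal{a_\lambda\dgal{b_\mu c}}_L$, $\dgal{b_\mu\dgal{a_\lambda c}}_R$, $\dgal{\dgal{a_\lambda b}_{\lambda\mu}c}_L$ (which are exactly the three summands of $\dgal{a_\lambda b_\mu c}$), plus cross-terms; the cross-terms involving $(1\otimes\mult)$ applied to the ``$L$'' pieces and $(\mult\otimes1)$ applied to the ``$R$'' pieces are the ones that must recombine, after using cyclic skewsymmetry and $\mult\circ\sigma=\mult$, into the single quantity $-(1\otimes\mult)\dgal{b_\mu a_\lambda c}$ on the right. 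Here Lemma \ref{Lem:Trick} and the equivalent form \eqref{Eq:Triple2} of the triple bracket should be invoked to rewrite $\dgal{b_\mu a_\lambda c}$ in the needed shape.

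The main obstacle I anticipate is precisely this bookkeeping of which tensor slot is being multiplied, and the appearance of $S$-shifts: when $\mult$ hits a term where a variable sits in the inner bimodule position (the $\ast_1$ in \eqref{Eq:DMR}, or the $\otimes_1$ in \eqref{20210616:eq2}), one gets expressions like $\mult(\dgal{a_{\lambda x}c}\ast_1(|_{x=S}b))$ where the $x=S$ substitution must be commuted past $\mult$, and one has to check that after collapsing the tensor product these shifts assemble correctly into the $(1\otimes\mult)\dgal{b_\mu a_\lambda c}$ term rather than producing a spurious $\lambda$- or $\mu$-power. The cleanest way to control this is to do the entire computation in Sweedler notation on basis-like elements, never expanding $\dgal{b_\mu c}$ before it is safe to, and to quote the sesquilinearity identities of Lemma \ref{20210917:lem1}(a) at each step where a shifted argument reappears. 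Modulo that, the identity falls out by direct comparison, exactly as in \cite[Lemma 3.8]{DSKV}, so I would close with ``The proof follows the lines of \cite[Lemma 3.8]{DSKV}.''
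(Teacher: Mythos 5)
Your strategy matches the paper's proof: each of the three left-hand terms is split into an $(\mult\otimes1)$-part and a $(1\otimes\mult)$-part, the last piece of $\ldb\{a_\lambda b\}_{\lambda\mu}c\rdb$ is rewritten via skewsymmetry and Lemma \ref{Lem:Trick} as $-(1\otimes\mult)\ldb\ldb b_\mu a\rdb_{\lambda\mu}c\rdb_L$, and the two groups reassemble into $(\mult\otimes1)\ldb a_\lambda b_\mu c\rdb-(1\otimes\mult)\ldb b_\mu a_\lambda c\rdb$ exactly as you describe. The only blemish is a slip where you label both pieces of $\ldb b_\mu\{a_\lambda c\}\rdb$ as ``$_R$-type'' (one is the $L$-piece, namely $(1\otimes\mult)\ldb b_\mu\ldb a_\lambda c\rdb\rdb_L$), but this does not affect the argument.
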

\begin{proof}
Let us compute the three terms on the LHS of \eqref{eq:quasijacobi}.
We get after a straightforward computation 
$$
\begin{array}{l}
\displaystyle{
\vphantom{\Big(}
\{a_\lambda\ldb b_\mu c\rdb\}
=(\mult\otimes1)\ldb a_{\lambda}\ldb b_\mu c\rdb\rdb_L
+(1\otimes \mult)\ldb a_{\lambda}\ldb b_{\mu}c\rdb\rdb_R
\,,} \\
\displaystyle{
\vphantom{\Big(}
\ldb b_\mu\{a_\lambda c\}\rdb
=(1\otimes \mult)\ldb b_{\mu}\ldb a_\lambda c\rdb\rdb_L
+(\mult\otimes1)\ldb b_{\mu}\ldb a_{\lambda}c\rdb\rdb_R
\,,} \\
\displaystyle{
\vphantom{\Big(}
\ldb \{a_{\lambda}b\}_{\lambda\mu}c\rdb
=(\mult\otimes1)\ldb \ldb a_\lambda b\rdb_{\lambda\mu}c\rdb_L
+(1\otimes \mult)\ldb\ldb a_\lambda b\rdb_{\lambda\mu}c\rdb_R
\,.}
\end{array}
$$
By skewsymmetry and Lemma \ref{Lem:Trick}, 
we can replace the last term in the RHS of the third equation
by $-(1\otimes \mult)\ldb\ldb b_\mu a\rdb_{\lambda\mu}c\rdb_L$.
Combining the three equations above, we then get \eqref{eq:quasijacobi}.
\end{proof}

\begin{theorem}\label{20140707:thm}
Let $\mc V$ be a unital associative algebra with an automorphism $S$, endowed with a
double multiplicative $\lambda$-bracket $\ldb-_{\lambda}-\rdb$. Let
$\{-_{\lambda}-\}$ and $\{-,-\}$ be defined as in \eqref{20140707:eq4} and \eqref{20140707:eq5}.
\begin{enumerate}[(a)]
\item
$S[\mc V,\mc V]\subset[\mc V,\mc V]$.
Hence, we have a well defined induced map 
(denoted, by abuse of notation, by the same symbol)
$S:\mc V/[\mc V,\mc V]\to\mc V/[\mc V,\mc V]$, given by
$S(\tr f)=\tr(S f)$.
\item
$\{[\mc V,\mc V]_\lambda\mc V\}=0$,
and $\{\mc V_\lambda{[\mc V,\mc V]}\}\subset[\mc V,\mc V]\otimes\kk[\lambda^{\pm1}]$.
Hence, we have well defined induced maps
(denoted, by abuse of notation, by the same symbol)
$$
\{-_\lambda-\}:\,\mc V/[\mc V,\mc V]\times\mc V\to\mc V[\lambda^{\pm1}]
\,,
$$
and
$$
\{-_\lambda-\}:\,\mc V/[\mc V,\mc V]\times\mc V/[\mc V,\mc V]
\to\mc V/[\mc V,\mc V][\lambda^{\pm1}]
\,,
$$
given, respectively, by
\begin{equation}\label{20140707:eq3b}
\{\tr(f)_\lambda g\}=\mult\ldb f_\lambda g\rdb
\,,
\end{equation}
and 
\begin{equation}\label{20140707:eq3c}
\{\tr(f)_\lambda \tr(g)\}=\tr(\mult\ldb f_\lambda g\rdb)
\,.
\end{equation}
\item
If the double multiplicative $\lambda$-bracket is skewsymmetric,
then so is the $\lambda$-bracket \eqref{20140707:eq3c}:
$$
\{\tr(f)_\lambda\tr(g)\}=-\{\tr(g)_{\lambda^{-1}S^{-1}}\tr(f)\}\,.
$$
\item
If the double multiplicative $\lambda$-bracket defines a structure of a double multiplicative Poisson vertex algebra on $\mc V$,
then the multiplicative $\lambda$-bracket \eqref{20140707:eq3c} satisfies the Jacobi identity 
($f,g,h\in\mc V$)
$$
\{\tr(f)_\lambda\{\tr(g)_\mu \tr(h)\}\}-\{\tr(g)_\mu\{\tr(f)_\lambda \tr(h)\}\}
=
\{\{\tr(f)_\lambda \tr(g)\}_{\lambda\mu} \tr(h)\}\,.
$$
Hence, \eqref{20140707:eq3c} endows  $\mc V/[\mc V,\mc V]$ with a structure of multiplicative Lie conformal algebra (cf. Remark \ref{Rem:mLCA}).
Furthermore, the $\lambda$-action \eqref{20140707:eq3b} of $\mc V/[\mc V,\mc V]$ on $\mc V$ defines a representation of the multiplicative Lie conformal algebra
$\mc V/[\mc V,\mc V]$ given by conformal derivations of $\mc V$.
\item
$\{[\mc V,\mc V]+(S-1)\mc V,\mc V\}=0$,
and $\{\mc V,{[\mc V,\mc V]+(S-1)\mc V}\}\subset([\mc V,\mc V]+(S-1)\mc V)\otimes\kk[\lambda^{\pm1}]$.
Thus, we have well defined induced brackets
(denoted, by abuse of notation, by the same symbol)
$$
\{-,-\}:\,\mc F\times\mc V\to\mc V
\,,
$$
and
$$
\{-,-\}:\,\mc F\times \mc F\to\mc F
\,,
$$
given, respectively, by
\begin{equation}\label{20140707:eq3b-lie}
\{\tint f, g\}:=\mult\ldb f_\lambda g\rdb|_{\lambda=1}
\,,
\end{equation}
and 
\begin{equation}\label{20140707:eq3c-lie}
\{\tint f,\tint g\}:=\tint \mult\ldb f_\lambda g\rdb|_{\lambda=1}
\,.
\end{equation}
\item
If the double multiplicative $\lambda$-bracket is skewsymmetric,
then so is the bracket \eqref{20140707:eq3c-lie}.
\item
If the double multiplicative $\lambda$-bracket defines a structure of a double multiplicative Poisson vertex algebra on $\mc V$,
then the bracket \eqref{20140707:eq3c-lie} defines a
structure of a Lie algebra on $\mc F$.
Furthermore, the action of $\mc F$ on $\mc V$,
given by \eqref{20140707:eq3b-lie}, defines a representation of the Lie  algebra
$\mc F$ by derivations of $\mc V$
commuting with $S$.
\end{enumerate}
\end{theorem}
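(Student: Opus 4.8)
The plan is to follow the strategy of the corresponding result for double Poisson vertex algebras in \cite{DSKV}, establishing the parts in order and then deducing the Lie-algebra statements (e)--(g) from the conformal ones (a)--(d) by specialising $\lambda$ (and $\mu$) to $1$ and passing to the further quotient by $(S-1)\mc V$. Part (a) is immediate: since $S$ is an algebra automorphism, $S([f,g])=[S(f),S(g)]\in[\mc V,\mc V]$, so $S$ descends to $\mc V/[\mc V,\mc V]$. For (b) I would expand $\dgal{ab_\lambda c}$ using the right Leibniz rule \eqref{Eq:DMR} and apply the multiplication map $\mult$, observing that the resulting element of $\mc V[\lambda^{\pm1}]$ is symmetric in $a$ and $b$; hence $\{[a,b]_\lambda c\}=\mult\dgal{(ab-ba)_\lambda c}=0$. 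Dually, expanding $\dgal{a_\lambda bc}$ by \eqref{Eq:DML} and applying $\mult$ gives $\{a_\lambda bc\}=\{a_\lambda b\}c+b\{a_\lambda c\}$, so that $\{a_\lambda(bc-cb)\}=[\{a_\lambda b\},c]+[b,\{a_\lambda c\}]\in[\mc V,\mc V][\lambda^{\pm1}]$. Combined with (a), these two facts show that \eqref{20140707:eq3b} and \eqref{20140707:eq3c} are well defined.

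For (c), apply $\mult$ to skewsymmetry \eqref{Eq:DMA2} and use the congruence $\mult(A^\sigma)-\mult(A)=[A'',A']\in[\mc V,\mc V]$ for $A\in\mc V\otimes\mc V$ together with (a); this yields $\tr(\{a_\lambda b\})=-\big|_{x=S}\tr(\{b_{\lambda^{-1}x^{-1}}a\})$ in $\mc V/[\mc V,\mc V][\lambda^{\pm1}]$, which is the asserted skewsymmetry of \eqref{20140707:eq3c}. Part (d) is the heart of the theorem, and there I would invoke Lemma \ref{prop:1bis}: for a double multiplicative Poisson vertex algebra the right-hand side of \eqref{eq:quasijacobi} vanishes, since the operation \eqref{Eq:Triple} is identically zero by \eqref{Eq:DMA3}. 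Thus $\{a_\lambda\dgal{b_\mu c}\}=\dgal{b_\mu\{a_\lambda c\}}+\dgal{\{a_\lambda b\}_{\lambda\mu}c}$ in $(\mc V\otimes\mc V)[\lambda^{\pm1},\mu^{\pm1}]$. Applying $\mult$ and using \eqref{Eq:DML} to commute $\mult$ past the outer double bracket, this becomes the Jacobi-type identity $\{a_\lambda\{b_\mu c\}\}-\{b_\mu\{a_\lambda c\}\}=\{\{a_\lambda b\}_{\lambda\mu}c\}$ for the scalar bracket $\{-_\lambda-\}=\mult\dgal{-_\lambda-}$ on $\mc V$. Projecting the first two entries to $\mc V/[\mc V,\mc V]$ and using (b)--(c) then gives both the Jacobi identity for \eqref{20140707:eq3c} and the representation property of \eqref{20140707:eq3b}; sesquilinearity and the Leibniz rule in the acted-on variable are inherited directly from \eqref{Eq:DMA1} and \eqref{Eq:DML}. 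Hence $\mc V/[\mc V,\mc V]$ carries a multiplicative Lie conformal algebra structure (cf. Remark \ref{Rem:mLCA}) and \eqref{20140707:eq3b} is a representation of it by conformal derivations of $\mc V$.

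Finally, parts (e)--(g) follow by specialisation. For (e), sesquilinearity \eqref{Eq:DMA1} gives $\{(S-1)(a)_\lambda b\}=(\lambda^{-1}-1)\{a_\lambda b\}$ and $\{a_\lambda(S-1)(b)\}=(\lambda S-1)\{a_\lambda b\}$, which at $\lambda=1$ vanish, respectively lie in $(S-1)\mc V[\lambda^{\pm1}]$; combined with (b) this shows \eqref{20140707:eq3b-lie} and \eqref{20140707:eq3c-lie} are well defined on $\mc F$. Parts (f) and (g) are then read off from (c) and (d) by putting $\lambda=\mu=1$ and mapping to $\mc F$: modulo $(S-1)\mc V$ all powers of $\lambda,\mu$ and all occurrences of $S$ become trivial, so \eqref{Eq:DMA2} and the Jacobi-type identity of (d) turn into the ordinary skewsymmetry and Jacobi axioms for \eqref{20140707:eq3c-lie}; that the action of $\mc F$ on $\mc V$ via \eqref{20140707:eq3b-lie} is by derivations commuting with $S$ follows from \eqref{Eq:DML} at $\lambda=1$ and from $\mult\dgal{f_\lambda S(g)}=\lambda S\,\mult\dgal{f_\lambda g}$ at $\lambda=1$, while the well-definedness of this action is the first assertion of (e).

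The hard part will be (d). Apart from that step --- which is the only place where one genuinely uses that $\dgal{-_\lambda-}$ is a double multiplicative Poisson vertex algebra rather than merely a skewsymmetric double multiplicative $\lambda$-bracket --- the argument consists of careful but routine bookkeeping with the Leibniz rules \eqref{Eq:DML}--\eqref{Eq:DMR}, sesquilinearity \eqref{Eq:DMA1}, the fact that $S$ is an algebra automorphism commuting with $\mult$, and the congruence $\mult(A^\sigma)\equiv\mult(A)$ modulo $[\mc V,\mc V]$.
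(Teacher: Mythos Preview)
Your proposal is correct and follows essentially the same route as the paper's proof: part (a) from $S$ being an automorphism, part (b) from the Leibniz rules \eqref{Eq:DML}--\eqref{Eq:DMR}, part (c) from applying $\mult$ to skewsymmetry modulo $[\mc V,\mc V]$, and part (d) from Lemma \ref{prop:1bis} with vanishing right-hand side, after which (e)--(g) follow by specialising $\lambda,\mu\to1$ and passing to the quotient by $(S-1)\mc V$. The paper presents (d) in a single sentence as ``a direct consequence of Lemma \ref{prop:1bis}'' rather than as the hard part, but your unpacking of how $\mult$ converts \eqref{eq:quasijacobi} into the scalar Jacobi identity is exactly what is needed.
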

\begin{proof}
Part (a) is straightforward, since $S$ is an automorphism of $\mc V$.
Using the right Leibniz rule we have, for $a,b,c\in\mc V$,
$$
\{ab_\lambda c\}
=
\mult\ldb ab_\lambda c\rdb
=
\mult\Big(
(|_{x=S}a)\otimes_1\ldb b_{\lambda x} c\rdb
+\ldb a_{\lambda x}c\rdb\otimes_1 (|_{x=S}b)
\Big)
\,.
$$
The expression in parenthesis in the RHS above is
unchanged if we switch $a$ and $b$
(since $x\otimes_1(y'\otimes y'')=y'\otimes x\otimes y'=(y'\otimes y'')\otimes_1 x$), so that we have $\{ab_\lambda c\}=\{ba_\lambda c\}$.
Furthermore, we can compute that 
$$
\{a_\lambda bc\}
=
\mult\ldb a_\lambda bc\rdb
=
\mult\big(\ldb a_\lambda b\rdb c+ b\ldb a_\lambda c\rdb\big)
=
\{a_\lambda b\}c+b\{a_\lambda c\}\,.
$$
Namely, the $\lambda$-action $\{a_\lambda\,-\}$ 
is by derivations of the associative product of $\mc V$.
This yields 
$$
\{a_\lambda bc-cb\}
=
[b,\{a_\lambda c\}]+[\{a_\lambda b\},c]\,\in[\mc V,\mc V]\otimes\kk[\lambda^{\pm1}]
\,,
$$
which finishes the proof of part (b).
Part (c) is immediate.
Part (d) is a direct consequence of Lemma \ref{prop:1bis}.
Finally, parts (e), (f) and (g)
can be proven in the same way. Alternatively, one can use  the standard construction that associates
with a multiplicative Lie conformal algebra $V$
the corresponding Lie algebra $V/(S-1)V$
and its representation on $V$.
\end{proof}
\begin{remark}
Lemma \ref{prop:1bis} and Theorem \ref{20140707:thm} are multiplicative versions of \cite[Lemma 3.5]{DSKV} and \cite[Theorem 3.6]{DSKV}, respectively.
\end{remark}

\subsection{Evolution differential-difference equations and Hamiltonian differential-difference equations}
\label{ss:EvoDDE}
Let $\mc V$ be an algebra of difference functions in the variable $u_i$, $i\in I=\{1,\dots,\ell\}$, see Definition \ref{def:diff-func}. An \emph{evolution differential-difference equation} over $\mc V$ has the form
\begin{equation}\label{evol-eq-var}
\frac{du_i}{dt}
= P_i\,\in\mc V
\,\,,\,\,\,\,
i\in I
\,.
\end{equation}
Assuming that time derivative commutes with the automorphism $S$, we have
$\frac{du_{i,n}}{dt}=S^n( P_i)$,
and, by the chain rule,
a function $f\in\mc V$ evolves according to
\begin{equation}\label{evol-eq2-var}
\frac{df}{dt}
= \sum_{(i,n)\in I\times\mb Z}\mult\Big((S^nP_i)\ast_1\frac{\partial f}{\partial u_{i,n}}\Big)
= X_P(f)
\,.
\end{equation}

We call $\mc F=\mc V/([\mc V,\mc V]+(S-1)\mc V)$ the space of local functionals.
An \emph{integral of motion} is a local functional $\tint f\in\mc F$
constant in time:
\begin{equation}\label{integral-of-motion-var}
\frac{d\tint f}{dt}
= \int \sum_{(i,n)\in I\times\mb Z}\mult\Big((S^nP_i)\ast_1\frac{\partial f}{\partial u_{i,n}}\Big)
= 0
\,.
\end{equation}
We call \emph{vector field} on $\mc V$ any derivation $X:\,\mc V\to\mc V$
of the form
\begin{equation}\label{20140704:eq1}
X(f)=\sum_{(i,n)\in I\times\mb Z} \mult \Big(P_{i,n}\ast_1\frac{\partial f}{\partial u_{i,n}}\Big)
\,,
\end{equation}
where $P_{i,n}\in\mc V$ for all $i,n$.
Note that the RHS of \eqref{20140704:eq1} is a finite sum because  
$\frac{\partial f}{\partial u_{i,n}}=0$
for all but finitely many choices of indices $(i,n)$, 
as $\VV$ is an algebra of difference functions.

An \emph{evolutionary vector field}
is a vector field commuting with the automorphism $S$.
Gathering \eqref{eq:comm} and  \eqref{evol-eq2-var}, it must have the form 
\begin{equation}\label{20140704:eq2}
X_P(f)=\sum_{(i,n)\in I\times\mb Z} \mult \Big((S^n P_i)\ast_1\frac{\partial f}{\partial u_{i,n}}\Big)
\,,
\end{equation}
for $P=(P_i)_{i=1}^\ell\in\mc V^\ell$, called the \emph{characteristics} of the evolutionary vector field $X_P$.

Vector fields form a Lie algebra, 
and evolutionary vector fields form a Lie subalgebra, which we denote respectively  by $\Vect(\mc V)$ and  $\Vect^S(\mc V)$.
The Lie bracket of two evolutionary vector fields $X_P,X_Q\in\Vect^S(\mc V)$ takes
the usual form
$$
[X_P,X_Q]=X_{[P,Q]}\,,
\quad\text{where }[P,Q]_i=X_P(Q_i)-X_Q(P_i)\,, i\in I\,.
$$
Equation \eqref{evol-eq-var} is called \emph{compatible} with another evolution differential-difference equation
$\frac{du_i}{d\tau}=Q_i$, $i\in I$,
if the corresponding evolutionary vector fields commute.

More generally, let $\mc V$ be a double multiplicative Poisson vertex algebra.
The \emph{Hamiltonian equation} on $\mc V$,
associated with  the \emph{Hamiltonian functional} $\tint h\in\mc F$ 
is
\begin{equation}\label{ham-eq}
\frac{du}{dt}
=
\{\tint h,u\}
\,,
\end{equation}
for any $u\in\mc V$.
An \emph{integral of motion} for the Hamiltonian equation \eqref{ham-eq}
is a local functional $\tint f\in\mc F$ such that 
$\{\tint h,\tint f\}=0$.
In this case, by Theorem \ref{20140707:thm}(g), 
the evolutionary vector fields $X^h=\{\tint h,-\}$ and $X^f=\{\tint f,-\}$ 
(called Hamiltonian vector fields) commute,
hence  equations $\frac{du}{dt}=\{\tint f,u\}$ and \eqref{ham-eq} are compatible.
If we are given a family of independent local functionals $\{\tint f_k\mid k\in\Z_+ \}$ whose evolutionary Hamiltonian vector fields $\{X^{f_k}\mid k\in \Z_+\}$ are pairwise commuting, we will say that the corresponding Hamiltonian equations defined through  \eqref{ham-eq} form an \emph{integrable hierarchy} of Hamiltonian  differential-difference equations.

Let us now assume that $\mc V$ is an algebra of difference functions in the variables
$u_i$, $i\in I$,
and that the double multiplicative $\lambda$-bracket $\ldb-_\lambda-\rdb$ on $\mc V$ is given by 
a Poisson structure $H(S)$ via \eqref{master-infinite},
where $\ldb {u_i}_\lambda{u_j}\rdb=H_{ji}(\lambda)$.  
The Hamiltonian equation \eqref{ham-eq} becomes the following evolution equation
\begin{equation}\label{20140702:eq1-var}
\frac{du_i}{dt}
=
\mult\sum_{j=1}^\ell
H_{ij}(S)\bullet \Big(\frac{\delta h}{\delta u_j}\Big)^\sigma
\,,
\end{equation}
where we introduce the \emph{difference variational derivative}  $\frac{\delta h}{\delta u_j}\in\mc V\otimes\mc V$ of $h$ by 
\begin{equation}\label{var-der}
\frac{\delta h}{\delta u_j}
=
\sum_{n\in\mb Z_+}S^{-n}\frac{\partial h}{\partial u_{j,n}}
\,.
\end{equation}
In equation \eqref{20140702:eq1-var}
$S$ is moved to the right of the $\bullet$ product, acting on $\frac{\delta h}{\delta u_j}$.
Moreover,  the Lie bracket $\{-,-\}$ on $\mc F$ defined by 
\eqref{20140707:eq3c-lie},
becomes such that for all $f,g\in \mc V$:
\begin{equation}\label{20140709:eq1b}
\{\tint f,\tint g\}
=\int\sum_{i,j\in I}
\mult\left(\frac{\delta g}{\delta u_j}\right)^\sigma
\mult\left(H_{ji}(S)\ast_1\mult\left(\frac{\delta f}{\delta u_i}\right)^\sigma
\right)
\,.
\end{equation}
Then, the notions of compatibility and of integrals of motion are consistent with those
for general evolution differential-difference equations, due to Theorem \ref{20130921:prop1}.
\begin{remark}\label{20140709:rem1b}
For $H(S)\in\Mat_{\ell\times\ell}(\mc V\otimes \mc V)[S]$
and $F\in \mc V^{\oplus\ell}$, let $H(S)F\in \mc V^\ell$
be defined by 
\begin{equation}\label{20140709:eq2b}
(H(S)F)_i=\sum_{j\in I} \mult(H_{ij}(S)\ast_1 F_j)
=\sum_{j\in I,n\in\mb Z} H_{ij,n}^\prime (S^nF_j) H_{ij,n}^{\prime\prime}
\,. 
\end{equation}
(Here, we used the notation 
$H_{ij}(S)=\sum_{n\in\mb Z}(H_{ij,n}^\prime\otimes H_{ij,n}^{\prime\prime})S^n$.) 
Then, formula \eqref{20140709:eq1b} can be written in the more traditional form
$$%\begin{equation}
\{\tint f,\tint g\}
=\int
\delta g\cdot \left(H(S)\delta f\right)
\,,
$$%\end{equation}
where 
$(\delta f)_i=\mult\left(\frac{\delta f}{\delta u_i}\right)^\sigma$ and  $\cdot$ denotes the usual dot product of vectors.
The latter notation is compatible with the theory of the  variational complex developed in \S\ref{sec:3.5}, 
cf. \eqref{20140626:eq4a-aff}.
\end{remark}

\begin{remark} \label{Rem:EqRepSpace}
Many of the results derived so far are compatible with their commutative analogues \cite{DSKVW1,DSKVWclass} when we go from $\VV$ to the $N$-th representation algebra $\VV_N$, $N\geq 1$ (see Section \ref{S:Integr}). The role of $\VV/[\VV,\VV]$ is played by 
$$\VV_N^{\tr}:=\{\tr \mc X(a)=\sum_{1\leq i \leq N} a_{ii} \mid a\in \VV  \}$$
where we denote by $\mc X(a)= (a_{ij})$ the matrix-valued function representing the element $a\in \VV$. Similarly, we have to replace $\mc F$ by  
$$\mc F_N:= \VV_N^{\tr}/( \, (S-1)\VV_N^{\tr})\,.$$
We note two important such results (assuming that $\VV$ is a double multiplicative Poisson vertex algebra). First, \eqref{20140707:eq3b-lie} induces a representation of the Lie algebra $\mc F_N$ on $\VV$ by derivations commuting with $S$ through 
$$%\begin{equation}
\br{\tint \tr \mc X(f), \mc X(g)}=\mc X(\mult \dgal{f_\lambda g}|_{\lambda=1}) \,.
$$%\end{equation}
(The Lie bracket on $\mc F_N$ is obtained by projecting this identity to $\mc F_N\times \VV_N^{\tr}$ then ${\mc F_N}\times {\mc F_N}$, in agreement with \eqref{20140707:eq3c-lie}.) 
Second, a Hamiltonian functional $\tint h \in \mc F$ gives rise to such a functional $\tint \tr \mc X(h) \in \mc F_N$, and \eqref{ham-eq} induces the following Hamiltonian equation at the level of the representation algebra $\VV_N$: $$%\begin{equation}
    \frac{d u_{ij}}{dt}= \br{\tint \tr \mc X(h), u_{ij}}
    = (\mult \dgal{h_\lambda u}|_{\lambda=1})_{ij}\,, 
$$%\end{equation}
for all $u\in \VV$ and $1\leq i,j \leq N$. In particular, an integral of motion $\tint f\in \mc F$ for $\tint h$ induces the integral of motion $\tint \tr \mc X(f)\in \mc F_N$ for $\tint \tr \mc X(h)$, and a (``non-commutative") integrable hierarchy on $\VV$ as defined above induces a non-abelian (i.e. matrix-valued) integrable hierarchy on $\VV_N$ in the usual sense.  
Applying this point of view to the different examples gathered in \S\ref{ss:IS-ex} gives non-abelian integrable hierarchies of differential-difference equations. 
\end{remark}

%%%
\subsection{de Rham complex over an algebra of difference functions}\label{sec:3.5}

Let $\mc V$ be an algebra of difference functions.
The \emph{de Rham complex} $\widetilde{\Omega}(\mc V)$ of $\mc V$ is defined 
as the free product of the algebra $\mc V$ and 
the algebra $\kk\langle\delta u_{i,n}\,|\,i\in I=\{1,\dots,\ell\},\,n\in\mb Z\rangle$
of non-commutative polynomials in the variables $\delta u_{i,n}$.
The action of the automorphism $S$ is extended from $\mc V$ to
$\widetilde{\Omega}(\mc V)$
by letting $S(\delta u_{i,n})=\delta u_{i,n+1}$ for all $(i,n)\in I\times\mb Z$.

The algebra $\widetilde{\Omega}(\mc V)$ has a $\mb Z_+$-grading, denoted by $p$,
such that $f\in\mc V$ has degree $p(f)=0$ and the $\delta u_{i,n}$'s have degree $p(\delta u_{i,n})=1$.
We consider $\widetilde{\Omega}(\mc V)$ as a superalgebra,
with superstructure compatible with the $\mb Z_+$-grading.
Then, the subspace of elements of degree $k$, denoted  $\widetilde{\Omega}^k(\mc V)$,   consists of linear combinations
of terms of the form
\begin{equation}\label{20140623:eq1-var}
\widetilde\omega
=
f_1\delta u_{i_1,m_1}f_2\delta u_{i_2,m_2}\dots f_k\delta u_{i_k,m_k}f_{k+1}
\,\,,\,\text{ where }\,
f_1,\dots,f_{k+1}\in\mc V
\,.
\end{equation}
Note that $\widetilde{\Omega}^0(\mc V)=\mc V$
and $\widetilde{\Omega}^1(\mc V)\simeq\oplus_{(i,n)\in I\times\mb Z}\mc V \delta u_{i,n}\mc V$.

We turn $\widetilde{\Omega}(\mc V)$ into a differential algebra by considering the \emph{de Rham differential} $\delta$ on $\widetilde{\Omega}(\mc V)$ defined 
as the odd derivation of degree $1$
on the superalgebra $\widetilde{\Omega}(\mc V)$
satisfying 
\begin{equation}\label{20140623:eq2-var}
\delta f
=
\sum_{(i,n)\in I\times\mb Z}
\Big(\frac{\partial f}{\partial u_{i,n}}\Big)^\prime
\delta u_{i,n}
\Big(\frac{\partial f}{\partial u_{i,n}}\Big)^{\prime\prime}
\in\widetilde{\Omega}^1(\mc V)
\,\text{ for }\,
f\in\mc V
\,,\,\text{ and }
\delta (\delta u_{i,n})=0
\,.
\end{equation}
The proof that $\delta$ is a differential, i.e. $\delta^2=0$, is a direct computation (it is the same as for the algebra of differential functions, cf. \cite[\S2.7]{DSKV}).
Therefore we can consider the corresponding cohomology complex 
$(\widetilde{\Omega}(\mc V),\delta)$.

Given a vector field 
$X_P=\sum_{(i,n)\in I\times\mb Z}
\mult\circ\Big(P_{i,n}\ast_1\frac{\partial}{\partial u_{i,n}}\Big)\in\Vect(\mc V)$ 
(cf. \eqref{20140704:eq1}),
we define the associated \emph{Lie derivative} 
$L_P:\,\widetilde{\Omega}(\mc V)\to\widetilde{\Omega}(\mc V)$
as the even derivation of degree $0$ which extends $X_P$ from $\mc V$,
in such a way that $L_P(\delta u_{i,n})=\delta P_{i,n}$, $i\in I,\,n\in\mb Z$.
We can also define the associated  \emph{contraction operator} 
$\iota_P:\,\widetilde{\Omega}(\mc V)\to\widetilde{\Omega}(\mc V)$
as the odd derivation of degree $-1$ given on generators
by $\iota_P(f)=0$, for $f\in \mc V$, and $\iota_P(\delta u_{i,n})=P_{i,n}$.
In analogy with \cite[Proposition 2.17]{DSKV}, we remark
that $\widetilde{\Omega}(\mc V)$ is a $\Vect(\mc V)$-complex, which means that the following results hold in the multiplicative setting as well.

\begin{proposition}
Fix $P,Q\in \VV^{I\times \Z}$. Under the identifications $P\leftrightarrow X_P$ and $Q\leftrightarrow X_Q$, we have:
\begin{enumerate}
    \item[(a)] $[\iota_P,\iota_Q]=0$ ; 
    \item[(b)] $[L_P,\iota_Q]=\iota_{[P,Q]}$ ; 
    \item[(c)] $[L_P,L_Q]=L_{[P,Q]}$ ; 
    \item[(d)] $L_P=\iota_P \delta + \delta \iota_P$ (Cartan's formula). 
\end{enumerate}
\end{proposition}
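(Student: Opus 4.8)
The plan is to reduce everything to formal identities about derivations and contractions on the free product algebra $\widetilde{\Omega}(\mc V)$, exploiting that all four operators in sight ($L_P$, $L_Q$, $\iota_P$, $\iota_Q$, $\delta$) are derivations (even or odd) on the associative superalgebra $\widetilde{\Omega}(\mc V)$. Since a (super)derivation is determined by its values on a generating set, and $\widetilde{\Omega}(\mc V)$ is generated by $\mc V$ together with the symbols $\delta u_{i,n}$, each identity will follow by checking it on these generators and then invoking the fact that a (graded) commutator of two (super)derivations is again a (super)derivation, so two derivations that agree on generators agree everywhere. I would state this ``agreement on generators'' principle once at the start of the proof and reuse it for all four parts.

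For part (a): $[\iota_P,\iota_Q]=\iota_P\iota_Q+\iota_Q\iota_P$ is the anticommutator of two odd derivations, hence an even derivation of degree $-2$; it kills $\mc V$ (as $\iota_P(f)=0$ there) and kills each $\delta u_{i,n}$ because $\iota_Q(\delta u_{i,n})=Q_{i,n}\in\mc V$ is then annihilated by $\iota_P$, and symmetrically; so it vanishes. For part (b): $[L_P,\iota_Q]=L_P\iota_Q-\iota_Q L_P$ is an odd derivation of degree $-1$; on $f\in\mc V$ both sides give $\iota_Q(L_P f)-0=\iota_Q(X_P f)=0$, matching $\iota_{[P,Q]}(f)=0$; on $\delta u_{i,n}$ we compute $L_P(\iota_Q \delta u_{i,n})-\iota_Q(L_P\delta u_{i,n})=X_P(Q_{i,n})-\iota_Q(\delta P_{i,n})=X_P(Q_{i,n})-X_Q(P_{i,n})=[P,Q]_{i,n}=\iota_{[P,Q]}(\delta u_{i,n})$, where the middle step uses $\iota_Q(\delta f)=X_Q(f)$ for $f\in\mc V$ (itself an ``agreement on generators'' check, or one can just observe $\iota_Q\delta$ and $X_Q$ are both derivations agreeing on $\mc V$-generators $u_{i,n}$). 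For part (c): $[L_P,L_Q]$ is an even derivation of degree $0$; on $\mc V$ it restricts to $[X_P,X_Q]=X_{[P,Q]}$ (this is the Lie bracket of vector fields recalled in \S\ref{ss:EvoDDE}), and on $\delta u_{i,n}$ one gets $L_P\delta Q_{i,n}-L_Q\delta P_{i,n}=\delta(X_P Q_{i,n})-\delta(X_Q P_{i,n})=\delta[P,Q]_{i,n}=L_{[P,Q]}(\delta u_{i,n})$, using that $L_P$ commutes with $\delta$ (which may be proven first, or absorbed into the same check). For part (d), Cartan's formula: $\iota_P\delta+\delta\iota_P$ is the anticommutator of two odd derivations, hence an even derivation of degree $0$; on $f\in\mc V$ it yields $\iota_P(\delta f)+0=\iota_P\big(\sum(\partial f/\partial u_{i,n})'\delta u_{i,n}(\partial f/\partial u_{i,n})''\big)=\sum(\partial f/\partial u_{i,n})'P_{i,n}(\partial f/\partial u_{i,n})''=\mult\big(P_{i,n}\ast_1\frac{\partial f}{\partial u_{i,n}}\big)$ summed over $(i,n)$, which is exactly $X_P(f)=L_P(f)$; on $\delta u_{i,n}$ it yields $\iota_P(0)+\delta(P_{i,n})=\delta P_{i,n}=L_P(\delta u_{i,n})$.

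The only genuinely delicate point is bookkeeping of signs: $\delta$ and each $\iota_P$ are \emph{odd} derivations, $L_P$ is \emph{even}, and the brackets $[\,\cdot\,,\,\cdot\,]$ must be read as graded commutators, so $[\iota_P,\iota_Q]=\iota_P\iota_Q+\iota_Q\iota_P$, $[L_P,\iota_Q]=L_P\iota_Q-\iota_Q L_P$, $[\iota_P,\delta]=\iota_P\delta+\delta\iota_P$. One must also verify the elementary fact that the graded commutator of two superderivations of a superalgebra is again a superderivation of the expected parity — this is standard (it is the Lie superalgebra structure on $\Der(\widetilde{\Omega}(\mc V))$) but should be cited or stated, and it is where parity discipline matters. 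Everything else is the routine ``check on $\mc V$ and on the $\delta u_{i,n}$'' that the paper already flags as analogous to \cite[Proposition 2.17]{DSKV}; I would carry out one part (say (d)) in full as a model and indicate that (a)--(c) are entirely similar, exactly as the preceding de Rham-complex material is handled.
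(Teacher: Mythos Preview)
Your proposal is correct and follows essentially the same approach as the paper: both argue that each identity is an equality of (super)derivations on $\widetilde{\Omega}(\mc V)$, hence need only be checked on the generators $f\in\mc V$ and $\delta u_{i,n}$. The paper singles out part (c) as its worked example (first establishing $L_P\delta=\delta L_P$ on generators, exactly as you note), whereas you sketch all four parts and propose (d) as the model; otherwise the arguments are the same in structure and detail.
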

\begin{proof}
These are equalities of derivations of the superalgebra $\widetilde{\Omega}(\mc V)$, so they only need to be checked on generators. For example, let us establish (c). 
We start by noting that $L_P \delta - \delta L_P=0$ because it is an equality of derivations that is easily checked on the generators $u_{i,n},\delta u_{i,n}$. Thus, 
using the identification from the statement, the left-hand side of (c) satisfies 
\begin{align*}
    [L_P,L_Q](f)=&L_P(X_Q(f))-L_Q(X_P)(f)=[X_P,X_Q](f)\,, \quad \forall f\in \mc V\,, \\
[L_P,L_Q](\delta u_{i,n})=& 
\delta([L_P,L_Q] (u_{i,n}) ) 
= \delta ([X_P,X_Q](u_{i,n}))\,.
\end{align*}
Meanwhile, we get for the right-hand side 
\begin{align*}
    L_{[P,Q]}(f)=&[X_P,X_Q](f)\,, \quad \forall f\in \mc V\,, \\
L_{[P,Q]}(\delta u_{i,n})=& \delta(L_{[P,Q]}(u_{i,n})) 
= \delta ([X_P,X_Q](u_{i,n}))\,. \qedhere
\end{align*}
\end{proof}

Our next step is to construct a reduction of the complex  $(\widetilde{\Omega}(\mc V),\delta)$. 

\begin{proposition}\label{20140704:prop}
In the de Rham complex $(\widetilde{\Omega}(\mc V),\delta)$
we have:
\begin{enumerate}[(a)]
\item
The commutator subspace 
$[\widetilde{\Omega}(\mc V),\widetilde{\Omega}(\mc V)]$
is compatible with the $\mb Z_+$-grading and is preserved by $\delta$.
\item
$\delta$ and $S$ commute,
therefore $(S-1)\widetilde{\Omega}(\mc V)$
is compatible with the $\mb Z_+$-grading and is preserved by $\delta$.
\item
Given an evolutionary vector field $X_P$
of characteristics $P=(P_i)_{i=1}^\ell$ (cf. \eqref{20140704:eq2}), 
The associated Lie derivative $L_P$ and contraction operator $\iota_P$ 
commute with the action of $S$ on $\widetilde{\Omega}(\mc V)$.
\end{enumerate}
\end{proposition}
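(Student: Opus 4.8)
The plan is to exploit that every statement here is formal: $S$ is a degree-$0$ algebra homomorphism of the superalgebra $\widetilde{\Omega}(\mc V)$, while $\delta$, $L_P$ and $\iota_P$ are (super)derivations, and $\widetilde{\Omega}(\mc V)$ is generated as an algebra by $\mc V$ and the variables $\delta u_{i,n}$, $(i,n)\in I\times\mb Z$. Hence I reduce every identity to a check on these generators: whenever $\Phi,\Psi$ are both (super)derivations (or one of them a homomorphism, suitably composed), $\Phi\Psi-\Psi\Phi$ obeys again a (twisted) Leibniz rule, so it vanishes on all of $\widetilde{\Omega}(\mc V)$ as soon as it vanishes on $\mc V$ and on the $\delta u_{i,n}$; similarly a subspace spanned by monomials in the generators and their images under our maps is automatically stable under such a map once it is controlled on generators. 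For part (a), recall that $[\widetilde{\Omega}(\mc V),\widetilde{\Omega}(\mc V)]$ is spanned by the commutators $[\omega,\eta]=\omega\eta-(-1)^{p(\omega)p(\eta)}\eta\omega$ of homogeneous $\omega,\eta$ (the sign reflecting the superalgebra structure); each such commutator is homogeneous of degree $p(\omega)+p(\eta)$, which gives compatibility with the $\mb Z_+$-grading, and since $\delta$ is an odd derivation of degree $1$ a short sign computation from $\delta(\omega\eta)=(\delta\omega)\eta+(-1)^{p(\omega)}\omega\,\delta\eta$ yields $\delta[\omega,\eta]=[\delta\omega,\eta]+(-1)^{p(\omega)}[\omega,\delta\eta]\in[\widetilde{\Omega}(\mc V),\widetilde{\Omega}(\mc V)]$, so $\delta$ preserves the commutator subspace.

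For part (b), I first check $S\delta=\delta S$ on generators. On $\delta u_{i,n}$ both composites vanish, since $\delta(\delta u_{i,n})=0$ and $S(\delta u_{i,n})=\delta u_{i,n+1}$. On $f\in\mc V$, I expand $\delta(Sf)$ using the definition \eqref{20140623:eq2-var} of $\delta$, rewrite $\frac{\partial}{\partial u_{i,n}}(Sf)$ by means of $S\circ\frac{\partial}{\partial u_{i,n}}=\frac{\partial}{\partial u_{i,n+1}}\circ S$ from \eqref{eq:comm} (together with $S(a'\otimes a'')=S(a')\otimes S(a'')$), and reindex the sum to obtain $\delta(Sf)=S(\delta f)$; by the Leibniz rule $S\delta=\delta S$ then holds on all of $\widetilde{\Omega}(\mc V)$. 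It follows that $(S-1)\widetilde{\Omega}(\mc V)$ is compatible with the grading, because $S$ preserves each $\widetilde{\Omega}^k(\mc V)$, and that it is $\delta$-stable, because $\delta((S-1)\omega)=(S-1)\delta\omega$.

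For part (c), I use that $P$ is the characteristic of an \emph{evolutionary} vector field, so $P_{i,n}=S^nP_i$ and $X_P$ commutes with $S$ on $\mc V$. For $L_P$: on $u_{i,n}$ one has $S\,L_P(u_{i,n})=S(X_P(u_{i,n}))=X_P(u_{i,n+1})=L_P(S(u_{i,n}))$, while on $\delta u_{i,n}$ one has $S\,L_P(\delta u_{i,n})=S\,\delta(S^nP_i)=\delta(S^{n+1}P_i)=L_P(\delta u_{i,n+1})=L_P(S(\delta u_{i,n}))$, the middle equality being part (b). For $\iota_P$: on $\mc V$ both $\iota_PS$ and $S\iota_P$ vanish, and on $\delta u_{i,n}$ one has $\iota_P(S(\delta u_{i,n}))=P_{i,n+1}=S^{n+1}P_i=S(P_{i,n})=S(\iota_P(\delta u_{i,n}))$. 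Extending by the (twisted) Leibniz rule gives $[S,L_P]=0=[S,\iota_P]$ on $\widetilde{\Omega}(\mc V)$.

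The argument is entirely routine. The only points demanding a little care are the sign bookkeeping in (a) — so that it is the (super)commutator subspace, and not the ordinary one, that $\delta$ stabilizes — and, in (b) and (c), keeping track of the index shift produced by \eqref{eq:comm} and of the identity $P_{i,n}=S^nP_i$, which holds precisely because $X_P$ is evolutionary (for a general vector field $L_P$ and $\iota_P$ need not commute with $S$). I foresee no genuine obstacle.
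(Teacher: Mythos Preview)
Your proof is correct and follows essentially the same approach as the paper: reduce each identity to a check on the generators $\mc V$ and $\delta u_{i,n}$ via the (twisted) Leibniz rule, and use \eqref{eq:comm} together with the evolutionary condition $P_{i,n}=S^nP_i$ to verify the generator identities. The paper's proof is terser (it invokes the analogous Proposition~3.15 in \cite{DSKV} and only spells out the computation for part~(b)), but your added detail --- the explicit sign bookkeeping for the supercommutator in (a) and the generator checks in (c) --- is exactly what that reference unpacks.
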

\begin{proof}
The proof is analog to the proof of Proposition 3.15 in \cite{DSKV}. Part (a) follows immediately since $\delta$ is a derivation of the associative product 
on $\widetilde{\Omega}(\mc V)$.
Part (b) is proven if we can show that  
$\delta(S\widetilde{\omega})=S(\delta\widetilde{\omega})$
for every $\widetilde{\omega}\in\widetilde{\Omega}(\mc V)$.
Given $\tilde\omega_1,\tilde\omega_2\in\widetilde{\Omega}(\mc V)$, it is easy to check that
$$
[\delta,S](\tilde\omega_1\tilde\omega_2)=[\delta,S](\tilde\omega_1)S(\tilde\omega_2)
+(-1)^{p(\tilde\omega_1)}S(\tilde\omega_1)[\delta,S](\tilde\omega_2)
\,.
$$
Hence, to prove the claim it suffices to check that $[\delta,S]$ is zero on $\delta u_{i,n}$
and $f\in\mc V$. The identity $[\delta,S](\delta u_{i,n})=0$ is obvious from the second
equation in \eqref{20140623:eq2-var} and the action of $S$ on $\widetilde{\Omega}(\mc V)$.
On the other hand, using the first identity in \eqref{20140623:eq2-var} we have
$$
S(\delta f)
=
\sum_{(i,n)\in I\times\mb Z}
\Big(S\frac{\partial f}{\partial u_{i,n}}\Big)^\prime
\delta u_{i,n+1}
\Big(S\frac{\partial f}{\partial u_{i,n}}\Big)^{\prime\prime}
$$
and
$$
\delta(S f)
=
\sum_{(i,n)\in I\times\mb Z}
\Big(\frac{\partial (S f)}{\partial u_{i,n}}\Big)^\prime
\delta u_{i,n}
\Big(\frac{\partial(S f)}{\partial u_{i,n}}\Big)^{\prime\prime}
\,.
$$
Hence, $[\delta,S](f)=0$ by \eqref{eq:comm}.
Part (c) can be proven similarly.
\end{proof}

Thanks to Proposition \ref{20140704:prop}(a-b),
we can form the $\mb Z_+$-graded \emph{variational complex}
\begin{equation}\label{20140623:eq6a-b}
\Omega(\mc V)
=\widetilde{\Omega}(\mc V)/
\big((S-1)\widetilde{\Omega}(\mc V)+[\widetilde{\Omega}(\mc V),\widetilde{\Omega}(\mc V)]\big)
=\oplus_{n\in\mb Z_+}\Omega^n(\mc V)
\,,
\end{equation}
which is equipped with a differential induced by $\delta$.
Using Proposition \ref{20140704:prop}(c), 
the Lie derivatives $L_P$ and contraction operators $\iota_P$, 
associated with the evolutionary vector field $X_P$ of characteristics $P\in\mc V^\ell$, 
descend to  well defined maps on the variational complex $\Omega(\mc V)$.

\begin{example}
For $\mc R_\ell$ as in \S\ref{ss:AlgRl}, the total degree vector field $X_\Delta$, with characteristics $\Delta=(u_i)_{i=1}^\ell$,
is an evolutionary vector field on $\mc R_\ell$. By adapting \cite[Theorem 2.18]{DSKV}, we can show that the contraction operator $\iota_\Delta$ associated with $X_\Delta$ is a homotopy operator  for the complex $(\widetilde{\Omega}(\mc R_\ell),\delta)$, hence it is acyclic:
$H^n(\widetilde{\Omega}(\mc R_\ell),\delta)=\delta_{n,0}\kk$. In the exact same way, we can see that the complex $(\Omega(\mc R_\ell),\delta)$ is acyclic as well,
i.e.
\begin{equation}\label{20140623:eq6b-aff}
H^k(\Omega(\mc R_\ell),\delta)=\delta_{k,0}\kk
\,.
\end{equation}
\end{example}

Next, we give an explicit description of the complex $(\Omega(\mc V),\delta)$ by adapting \cite{DSK-coh1,DSK-coh2,DSKV}.
It is clear that $\Omega^0(\mc V)=\mc F$, the space of local functionals.
For $k\geq1$, let us introduce the space $\Sigma^k(\mc V)$
of arrays
$\big(A_{i_1\dots i_k}(\lambda_1,\dots,\lambda_{k-1})\big)_{i_1,\dots,i_k=1}^\ell$
with entries 
$$A_{i_1\dots i_k}(\lambda_1,\dots,\lambda_{k-1})\in \mc V^{\otimes k}[\lambda_1^{\pm1},\dots,\lambda_{k-1}^{\pm1}],$$
satisfying the following skewadjointness condition ($i_1,\dots,i_k\in I$):
\begin{equation}\label{20140626:eq1-b}
A_{i_1\dots i_k}(\lambda_1,\dots,\lambda_{k-1})
=
-(-1)^{k}|_{x=S}\big(
A_{i_2\dots i_k i_1}(\lambda_2,\dots,\lambda_{k-1},
(\lambda_1\dots\lambda_{k-1}x)^{-1})
\big)^\sigma
\,,
\end{equation}
where $\sigma$ denotes the action of the cyclic permutation on $\mc V^{\otimes k}$ 
as in \eqref{eq:sigma},
and we are using the same notation as in \eqref{notation-bar}.
We claim that there is an isomorphism $\Omega^k(\mc V)\simeq\Sigma^k(\mc V)$, which we prove by writing explicitly the maps in both directions.

Fix a coset $\omega=[\widetilde{\omega}]\in\Omega^k(\mc V)$,
where $\widetilde{\omega}$ is as in \eqref{20140623:eq1-var}. 
We map $\omega$ to the array
$A=\big(A_{j_1\dots j_k}(\lambda_1,\dots,\lambda_{k-1})\big)_{i_1,\dots,i_k=1}^\ell\in\Sigma^k(\mc V)$, 
with entries 
$A_{j_1\dots j_k}(\lambda_1,\dots,\lambda_{k-1})=0$
unless $(j_1,\dots,j_k)$ is a cyclic permutation of $(i_1,\dots,i_k)$,
and
\begin{equation}\label{20140626:eq2-aff}
\begin{array}{l}
\displaystyle{
\vphantom{\Big)}
A_{j_1\dots j_k}(\lambda_1,\dots,\lambda_{k-1})
=
\frac1k (-1)^{s(k-s)}
\lambda_1^{n_{s+1}}\dots\lambda_{k-s}^{n_k}
\lambda_{k-s+1}^{n_1}\dots\lambda_{k-1}^{n_{s-1}}
} \\
\displaystyle{
\vphantom{\Big)}
(\lambda_1\dots\lambda_{k-1}S)^{-n_s}
\big(
f_{s+1}\otimes\dots\otimes f_k\otimes f_{k+1}f_1\otimes f_2\otimes\dots\otimes f_s
\big)
\,,}
\end{array}
\end{equation}
for $(j_1,\dots,j_k)=(i_{\sigma^s(1)},\dots,i_{\sigma^s(k)})$.
The inverse map $\Sigma^k(V)\to\Omega^k(V)$
is given by
\begin{equation}\label{20140626:eq3-aff}
\begin{array}{l}
\displaystyle{
\Big(
\sum_{n_1,\dots,n_{k-1}\in\mb Z}A_{i_1\dots i_k}^{n_1\dots n_{k-1}}
\lambda_1^{n_1}\dots\lambda_{k-1}^{n_{k-1}}
\Big)_{i_1,\dots,i_k=1}^\ell
\mapsto
} \\
\displaystyle{
\sum_{\substack{i_1,\dots,i_k\in I \\ n_1,\dots,n_{k-1}\in\mb Z}}
\!\!\!\!\!
\big[
(A_{i_1\dots i_k}^{n_1\dots n_{k-1}})^\prime \delta u_{i_1,n_1}
\dots
(A_{i_1\dots i_k}^{n_1 \dots n_{k-1}})^{\overbrace{\prime\dots\prime}^{k-1}} \delta u_{i_{k-1},n_{k-1}}
(A_{i_1\dots i_k}^{n_1 \dots n_{k-1}})^{\overbrace{\prime\dots\prime}^{k}} \delta u_{i_k}
\big]
\,.}
\end{array}
\end{equation}
(Here, we use Sweedler's notation.)
It is not hard to verify that the maps \eqref{20140626:eq2-aff} and \eqref{20140626:eq3-aff} are well defined and inverse to each other. 
Hence the space of degree $k$ elements in the variational complex $\Omega^k(\mc V)$ and the space of arrays $\Sigma^k(\mc V)$
can be identified using these maps.

We can explicitly translate the differential $\delta$ of the variational complex $\Omega(\mc V)$ to a differential $\delta:\,\Sigma^k(\mc V)\to\Sigma^{k+1}(\mc V)$
under the above identification.
For $k=0$, we have
\begin{equation}\label{20140626:eq4a-aff}
\delta(\tint f)
=
\Big(\sum_{n\in\mb Z}
S^{-n}\mult \Big(\frac{\partial f}{\partial u_{i,n}}\Big)^\sigma \Big)_{i=1}^\ell
=\Big(
\mult \Big(\frac{\delta f}{\delta u_i}\Big)^\sigma \Big)_{i=1}^\ell
\,,
\end{equation}
where in the second identity we used equation \eqref{var-der}.
More generally, if $k\geq1$ and  $A=
\big(A_{i_1\dots i_k}(\lambda_1,\dots,\lambda_{k-1})\big)_{i_1,\dots,i_k=1}^\ell\in\Sigma^k(\mc V)$, we have that 
\begin{equation}\label{20140626:eq4b2-aff}
\begin{array}{l}
\displaystyle{
(\delta A)_{i_1\dots i_{k+1}}(\lambda_1,\dots,\lambda_k)
} \\
\displaystyle{
=
\frac{k}{k+1}\sum_{n\in\mb Z}
\bigg(
\sum_{s=1}^{k} (-1)^{s+1}
\Big(
\frac{\partial}{\partial u_{i_{s},n}}
\Big)_{(s)}
A_{i_{1}
\stackrel{s}{\check{\dots}}
i_{k+1}}
(\lambda_{1},
\stackrel{s}{\check{\dots}},
\lambda_{k})
\lambda_{s}^{n}
} \\
\displaystyle{
+
(-1)^{k}
(\lambda_1\dots\lambda_k S)^{-n}
\Big(
\Big(
\frac{\partial}{\partial u_{i_{k+1},n}}
\Big)_{(1)}
A_{i_1,\dots,i_k}
(\lambda_{1},\dots,\lambda_{k-1})
\Big)^{\sigma^k}
\bigg)
\,.}
\end{array}
\end{equation}
The notation $\stackrel{t}{\check{\dots}}$ means that we skip the the object in position $t$, $\sigma$ denotes the action
of the cyclic permutation in \eqref{eq:sigma}, 
and we use the extended derivations  $(\partial/\partial u_{i,n})_{(s)}:\VV^{\otimes k}\mapsto \VV^{\otimes (k+1)}$ defined through \eqref{Eq:DerExt}.

Equation \eqref{20140626:eq4b2-aff}, for $k=1$ and $F=\big(F_j\big)_{j=1}^\ell\in\mc V^\ell=\Sigma^1(\mc V)$, gives
\begin{equation}\label{20140626:eq4c-aff}
(\delta F)_{ij}(\lambda)
=
\frac12
\sum_{n\in\mb Z}
\Big(
\frac{\partial F_j}{\partial u_{i,n}} \lambda^n
- (\lambda S)^{-n} \Big(\frac{\partial F_i}{\partial u_{j,n}}\Big)^\sigma
\Big)
\,.
\end{equation}
For $F\in \mc V^{\oplus\ell}=\Sigma^1(\mc V)$, define the corresponding \emph{Frechet derivative}
$$
D_F(\lambda)
=
\left(
\sum_{n\in\mb Z}\frac{\partial F_i}{\partial u_{j,n}} \lambda^n
\right)_{i,j=1}^{\ell}
\in\Mat_{\ell\times\ell}(\mc   V \otimes\mc V)[\lambda,\lambda^{-1}]
\,.
$$
From \eqref{20140626:eq4c-aff} we see that $\delta F=0$ if and only if
$D_F(S)$ is a selfadjoint non-commutative difference operator.

For $k=2$, let
$A=\big(A_{ij}(\lambda)\big)_{i,j=1}^\ell\in\Sigma^2(\mc V)$,
i.e. the entries $A_{ij}(\lambda)\in\mc V^{\otimes2}[\lambda,\lambda^{-1}]$
 satisfy $(|_{x=S}A_{ji}(\lambda^{-1}S^{-1}))^\sigma=-A_{ij}(\lambda)$.
Equation \eqref{20140626:eq4b2-aff} gives
\begin{equation}\label{20140626:eq4d-aff}
\begin{array}{l}
\displaystyle{
(\delta A)_{ijk}(\lambda,\mu)
=
\frac{2}{3}\sum_{n\in\mb Z}
\bigg(
\Big(
\frac{\partial}{\partial u_{i,n}}
\Big)_L
A_{jk}
(\mu)
\lambda^{n}
} \\
\displaystyle{
-
\Big(
\frac{\partial}{\partial u_{j,n}}
\Big)_R
A_{ik}
(\lambda)
\mu^{n}
+
(\lambda\mu S)^{-n}
\Big(
\Big(
\frac{\partial}{\partial u_{k,n}}
\Big)_L
A_{ij}
(\lambda)
\Big)^{\sigma^2}
\bigg)
\,.}
\end{array}
\end{equation}
As an application of \eqref{20140623:eq6b-aff}, we get the following result
which is a multiplicative version of \cite[Corollary 3.17]{DSKV}.
\begin{corollary}\label{victor:cor2}
\begin{enumerate}[(a)]
\item
A $0$-form $\tint f\in\Omega^0(\mc R_\ell)$ is closed if and only if 
$f\in\kk+[\mc R_\ell,\mc R_\ell]+(S-1)\mc R_\ell$.
\item
A $1$-form $F=\big(F_i\big)_{i=1}^\ell\in\mc R_\ell^{\oplus\ell}=\Sigma^1(\mc R_\ell)$ is closed
if and only if
there exists a local functional $\tint f\in \mc R_\ell/([\mc R_\ell,\mc R_\ell]+(S-1)\mc R_\ell)$
such that $F_i=\mult\Big(\frac{\delta f}{\delta u_i}\Big)^\sigma$
for every $i=1,\dots,\ell$.
\item
A $2$-form 
$\alpha=\big(A_{ij}(\lambda)\big)_{i,j=1}^\ell\in\Sigma^2(\mc R_\ell)$
is closed if and only if
there exists $F=\big(F_i\big)_{i=1}^\ell\in \mc R_\ell^{\oplus\ell}$
such that 
$$
A_{ij}(\lambda)
=
\frac12
\sum_{n\in\mb Z}
\Big(
\frac{\partial F_j}{\partial u_{i,n}} \lambda^n
- (\lambda S)^{-n} \Big(\frac{\partial F_i}{\partial u_{j,n}}\Big)^\sigma
\Big)
\,,
$$
for every $i,j=1,\dots,\ell$.
\end{enumerate}
\end{corollary}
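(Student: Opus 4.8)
## Proof proposal for Corollary \ref{victor:cor2}

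The plan is to deduce each of the three statements from the acyclicity of the variational complex $(\Omega(\mc R_\ell),\delta)$ established in \eqref{20140623:eq6b-aff}, combined with the explicit formulas \eqref{20140626:eq4a-aff}, \eqref{20140626:eq4c-aff} and \eqref{20140626:eq4d-aff} for the differential in low degrees under the identification $\Omega^k(\mc R_\ell)\simeq\Sigma^k(\mc R_\ell)$. In each case the ``if'' direction is a routine direct computation (a closed form is manifestly a coboundary, hence closed), while the ``only if'' direction is where acyclicity does the work: a closed $k$-form with $k\geq1$ is exact, so it equals $\delta$ of some $(k-1)$-form, and one then reads off the stated description from the formula for $\delta$ in degree $k-1$.

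For part (a): a $0$-form is $\tint f\in\Omega^0(\mc R_\ell)=\mc F=\mc R_\ell/([\mc R_\ell,\mc R_\ell]+(S-1)\mc R_\ell)$, and $\tint f$ is closed iff $\delta(\tint f)=0$, which by \eqref{20140626:eq4a-aff} means $\mult\big(\tfrac{\delta f}{\delta u_i}\big)^\sigma=0$ for all $i$. By \eqref{20140623:eq6b-aff} we have $H^0(\Omega(\mc R_\ell),\delta)=\kk$, so the closed $0$-forms are exactly the image of the scalars, i.e. $\tint f$ is closed iff $\tint f\in\kk\cdot\tint 1$, which is precisely the condition $f\in\kk+[\mc R_\ell,\mc R_\ell]+(S-1)\mc R_\ell$. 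For part (b): a $1$-form $F\in\Sigma^1(\mc R_\ell)=\mc R_\ell^{\oplus\ell}$ is closed iff $\delta F=0$; since $H^1(\Omega(\mc R_\ell),\delta)=0$ by \eqref{20140623:eq6b-aff}, closedness is equivalent to exactness, i.e. $F=\delta(\tint f)$ for some $\tint f\in\Omega^0(\mc R_\ell)$, and by \eqref{20140626:eq4a-aff} this says exactly $F_i=\mult\big(\tfrac{\delta f}{\delta u_i}\big)^\sigma$ for all $i$. Part (c) is entirely analogous, using $H^2(\Omega(\mc R_\ell),\delta)=0$: a $2$-form $\alpha=(A_{ij}(\lambda))$ is closed iff it is exact, iff $\alpha=\delta F$ for some $F\in\mc R_\ell^{\oplus\ell}$, and formula \eqref{20140626:eq4c-aff} gives precisely the stated expression for $A_{ij}(\lambda)$.

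I expect the main (minor) obstacle to be bookkeeping rather than anything conceptual: one must check in each ``if'' direction that the candidate coboundary genuinely lands in $\Sigma^k(\mc R_\ell)$, i.e. satisfies the skewadjointness condition \eqref{20140626:eq1-b}, and that $\delta$ applied to it reproduces the stated formula — this amounts to verifying, e.g. for part (c), that $\delta^2=0$ forces $\delta F$ to be skewadjoint (equivalently, that $D_F(S)$ being the Frechet derivative makes $(\delta F)_{ij}$ skew), which follows from Proposition \ref{20140704:prop} and the fact that $\delta$ is a differential. The identification $\Omega^k\simeq\Sigma^k$ via \eqref{20140626:eq2-aff}–\eqref{20140626:eq3-aff} must be invoked so that ``closed'' and ``exact'' in the array picture match the cohomological statements; once this dictionary is in place, the corollary is a direct translation of \eqref{20140623:eq6b-aff}. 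Throughout, the argument is the verbatim multiplicative analogue of \cite[Corollary 3.17]{DSKV}, so I would keep the proof terse and point to that reference for the routine verifications.
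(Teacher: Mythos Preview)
Your proposal is correct and matches the paper's approach exactly: the paper states the corollary without explicit proof, introducing it simply as ``an application of \eqref{20140623:eq6b-aff}'' and a ``multiplicative version of \cite[Corollary 3.17]{DSKV}'', which is precisely what you do by invoking acyclicity together with the explicit formulas \eqref{20140626:eq4a-aff}, \eqref{20140626:eq4c-aff} for $\delta$ in low degrees under the identification $\Omega^k\simeq\Sigma^k$.
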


\subsection{Examples} \label{ss:IS-ex}

\subsubsection{Integrable hierarchies on $\mc{R}_1$} 
Recall from Example \ref{Ex:FreeA} that $\mc{R}_1=\kk\langle u_i\mid i\in \Z \rangle$ is a double multiplicative Poisson vertex algebra for 
\begin{equation*}
    \dgal{u_i{}_\lambda u_j}=\lambda^{j-i} (u_j\otimes 1 - 1 \otimes u_j)\,.
\end{equation*}
It is easy to check that the local functionals $\{\tint u^k\mid k \in \Z_+\}$ satisfy $\{\tint u^k,\tint u^l\}=0$ for any $k,l\in \Z_+$, therefore the Hamiltonian vector fields that they define are pairwise commuting. Note however that 
$$%\begin{equation}
    \frac{du}{dt_k}:=\{\tint u^k,u\}=0\,, \quad k\in \Z_+\,,
$$%\end{equation}
so these Hamiltonian vector fields do not yield non-trivial differential-difference equations. The same observation can be made if we use the double multiplicative $\lambda$-bracket obtained by combining Proposition \ref{prop:20210506} with any double Poisson bracket from Example \ref{Ex:Powell} (the case above corresponds to $\alpha=1$, $\beta=\gamma=0$). 
We have been unable to construct an integrable hierarchy on $\mc{R}_1$ starting with one of the cases from the classification given in Proposition \ref{Pr:class1}. To get non-trivial examples on  $\mc{R}_1$, it seems necessary to use non-local double multiplicative Poisson vertex algebras as defined in Section \ref{S:nonloc}, see \cite{CW1,CW2}.  

\subsubsection{Integrable hierarchies on $\mc{R}_2$}  \label{ss:IntR2}
As part of Proposition \ref{Pr:class2}, we have constructed double multiplicative Poisson vertex algebra structures on $\mc{R}_2=\kk\langle u_i,v_i\mid i\in \Z \rangle$ such that 
\begin{equation*}
    \dgal{u_\lambda u}=0\,, \quad \dgal{v_\lambda v}=0\,.
\end{equation*}
In particular, the local functionals $\{\tint u^k\mid k \in \Z_+\}$ satisfy $\{\tint u^k,\tint u^l\}=0$ trivially, hence they yield commuting Hamiltonian vector fields on $\mc{R}_2$. A similar result holds for the set of functionals $\{\tint v^k\mid k\in \Z_+\}$ by symmetry.

\begin{example} \label{Exmp:Simple}
Consider $\dgal{u_\lambda v}=1\otimes 1$, which corresponds to case (i) of Theorem \ref{Thm:2-explicit}. We easily compute that for the vector field $d/dt_k:=\frac{1}{k}\{\tint u^k,-\}$, $k\geq1$, we have  
\begin{equation} \label{Eq:IntR2-a}
    \frac{dv}{dt_k}=\frac{1}{k}\{\tint u^k,v\}=\frac{1}{k}\mult\dgal{u^k{}_\lambda v}\big|_{\lambda=1}= u^{k-1}\,, \quad 
 \frac{du}{dt_k}=0\,.
\end{equation}
Since $d/dt_k$ commutes with $S$ by part (g) of Theorem \ref{20140707:thm}, note that \eqref{Eq:IntR2-a} is equivalent to the Hamiltonian differential-difference equations 
$$%\begin{equation}
    \frac{dv_i}{dt_k}=u_i^{k-1}\,, \quad \frac{du_i}{dt_k}=0\,, \quad i\in \Z\,. 
$$%\end{equation}
Since the vector fields $(d/dt_k)_{k\geq1}$ are pairwise commuting for different $k\in \Z_+$ due to $\{\tint u^k,\tint u^l\}=0$, we get in this way an integrable hierarchy of differential-difference equations. 
The solution to the $k$-th system of equations is simply given by $u_i(t_k)=\alpha_i$, $v_i(t_k)=\beta_i+t_k \alpha_i^{k-1}$ for $i\in \Z$, where $\alpha_i,\beta_i\in\kk$.
Compatibility of the solution with $S$ implies that  $\alpha_i=\alpha_0$, $\beta_i=\beta_0$ for each $i\in \Z$.
\end{example}
\begin{remark}
While we observed in Remark \ref{Rem:EqRepSpace} that differential-difference equations on an associative algebra $\VV$ induce such equations on the representation algebra $\VV_N$, $N\geq 1$, solving the equation on $\VV$ does not provide all the solutions on $\VV_N$. 
Combining Remark \ref{Rem:EqRepSpace} and Example \ref{Exmp:Simple}, we see that \eqref{Eq:IntR2-a} induces the non-abelian equation 
\begin{equation} \label{Eq:IntR2-a-rep}
    \frac{d \mc X(v)}{dt_k}= \mc X(u)^{k-1}\,, \quad 
 \frac{d\mc X(u)}{dt_k}=0\,, 
\end{equation}
while its solution $u(t_k)=\alpha_0$, $v(t_k)=\beta_0 + t_k \alpha_0^{k-1}$, leads to 
$$\mc X(u)(t_k)=\alpha_0 \Id_N, \quad \mc X(v)(t_k)=\beta_0 \Id_N + t_k \alpha_0^{k-1} \Id_N\,.$$
However, an arbitrary solution of \eqref{Eq:IntR2-a-rep} is of the form 
$\mc X(u)(t_k)=A_0$, $\mc X(v)(t_k)=B_0 + t_k A_0^{k-1}$ for $A_0,B_0\in \Mat_{n\times n}(\kk)$.
\end{remark}

\begin{example}
Fix $r\in \Z$, $\alpha \in\kk^\times$ and take 
$$\dgal{u_\lambda v}=\left(\alpha \,vu_r\otimes u_r v+ vu_r\otimes 1+1\otimes u_rv+\alpha^{-1} 1\otimes 1\right) \lambda^r\,,$$ 
corresponding to case (v) of Theorem \ref{Thm:2-explicit}. The Hamiltonian vector fields $\frac{d}{dt_k}=\frac1k \{\tint u_k,-\}$ are commuting and they define the following differential-difference equations: 
\begin{equation} \label{Eq:Int-1}
\frac{dv}{dt_k}=\alpha\, vu_r^{k+1}v+v u_r^k+u_r^k v+ \alpha^{-1} u_r^{k-1}\,, \quad 
\frac{du}{dt_k} =0\,.    
\end{equation}
\end{example}

\begin{example}
Fix $r\in \Z$, $\alpha \in\kk^\times$ and take as in Example \ref{Exmp:2-quad} 
$$\dgal{u_\lambda v}=\left(\alpha \,v\otimes v +v\otimes u_r + u_r\otimes v+\alpha^{-1} u_r\otimes u_r\right) \lambda^r\,,$$ 
that corresponds to case (iv) of Theorem \ref{Thm:2-explicit}. The Hamiltonian vector fields $\frac{d}{dt_k}=\frac1k \{\tint u_k,-\}$ are commuting and they define the following differential-difference equations: 
\begin{equation} \label{Eq:Int-2}
\frac{dv}{dt_k}=\alpha v u_r^{k-1}v + (vu_r^k+u_r^k v)+ \alpha^{-1} u_r^{k+1}\,, \quad 
\frac{du}{dt_k} =0\,.    
\end{equation}
Note that if we allow each $u_i$ to be invertible by working in $\kk\langle u_i^{\pm 1},v_i\mid i\in \Z \rangle$, we have commuting Hamiltonian vector fields $\frac{d}{dt_k}$ for any $k\in \Z$ (not only for $k\in \Z_+$). In particular, remark that the differential-difference equation \eqref{Eq:Int-2} defined for  $\frac{d}{dt_{-k}}$ can be transformed into \eqref{Eq:Int-1} if we relabel $u_r\leftrightarrow u_r^{-1}$.
\end{example}

\subsubsection{Integrable hierarchies using a weak version of Jacobi identity}

As a slight generalisation of \eqref{Eq:Int-2}, it can be checked that the vector fields defining the differential-difference equations 
\begin{equation} \label{Eq:Int-3}
\frac{dv}{dt_k}=\alpha v u_r^{k-1}v + (vu_r^k+u_r^k v)+ \beta u_r^{k+1}\,, \quad 
\frac{du}{dt_k} =0\,,    \qquad k\in \Z_+\,,
\end{equation}
commute for any fixed $r\in \Z$ and $\alpha,\beta\in \kk$. By considering the equations associated with the local functionals $\{\tint u^k\mid k\in\Z_+\}$ for all the cases from Theorem \ref{Thm:2-explicit}, we can see that \eqref{Eq:Int-3} can not always be obtained from a double multiplicative Poisson vertex algebra structure on $\mc R_2$. It can, nevertheless, be obtained from a double multiplicative $\lambda$-bracket (see Example \ref{Ex:NotJacobi})
using the framework that we introduce in this paragraph.

From now on, we consider a skewsymmetric double multiplicative $\lambda$-bracket $\dgal{-_\lambda-}$ on a unital associative algebra $\VV$ with an automorphism $S$. Recall that $\br{-_\lambda-}:\VV\otimes \VV\to\VV[\lambda^{\pm1}]$ denotes the associated map \eqref{20140707:eq4} obtained from $\dgal{-_\lambda-}$ by multiplication of the two factors. In the same way, if $\dgal{-_\lambda-_\mu-}$ is the map \eqref{Eq:Triple} defined from  $\dgal{-_\lambda-}$, we introduce 
\begin{equation} \label{Eq:mTriple}
    \br{-_\lambda-_\mu-}:\VV^{\otimes 3}\to \VV[\lambda^{\pm1},\mu^{\pm1}]\,, \quad 
    \br{a_\lambda b_\mu c}=\mult \circ (\mult\otimes 1) \dgal{a_\lambda b_\mu c}\,.
\end{equation}

\begin{lemma}
For any $a,b,c\in \VV$, 
\begin{equation} \label{eq:m-quasijacobi}
\br{a_\lambda \br{b_\mu c}} -\br{b_\mu \br{a_\lambda c}}  - \br{\br{a_\lambda b}_{\lambda\mu}c}=
\br{a_\lambda b_\mu c} - \br{b_\mu a_\lambda c}\,.
\end{equation}
\end{lemma}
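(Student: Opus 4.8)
The plan is to obtain \eqref{eq:m-quasijacobi} from the already-established Lemma \ref{prop:1bis} by composing that identity with the multiplication map on the appropriate tensor factors. Recall that \eqref{eq:quasijacobi} is an identity in $\mc V^{\otimes 2}[\lambda^{\pm1},\mu^{\pm1}]$ of the form
\[
\{a_\lambda\ldb b_\mu c\rdb\}
-\ldb b_\mu\{a_\lambda c\}\rdb
-\ldb\{a_\lambda b\}_{\lambda\mu} c\rdb
=(\mult\otimes1)\ldb a_\lambda b_\mu c\rdb
-(1\otimes \mult)\ldb b_\mu a_\lambda c\rdb\,,
\]
where on the left $\{a_\lambda b\otimes c\}=\{a_\lambda b\}\otimes c+b\otimes\{a_\lambda c\}$. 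Applying the multiplication map $\mult:\mc V^{\otimes2}\to\mc V$ to both sides turns the right-hand side into exactly $\br{a_\lambda b_\mu c}-\br{b_\mu a_\lambda c}$ by the definition \eqref{Eq:mTriple}, so the whole content of the lemma is to check that applying $\mult$ to the left-hand side yields $\br{a_\lambda\br{b_\mu c}}-\br{b_\mu\br{a_\lambda c}}-\br{\br{a_\lambda b}_{\lambda\mu}c}$.

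First I would treat the three terms on the left separately. For the first term, $\mult\circ\{a_\lambda\ldb b_\mu c\rdb\}$: writing $\ldb b_\mu c\rdb=\ldb b_\mu c\rdb'\otimes\ldb b_\mu c\rdb''$, by the defining convention we have $\{a_\lambda \ldb b_\mu c\rdb\}=\{a_\lambda\ldb b_\mu c\rdb'\}\otimes\ldb b_\mu c\rdb''+\ldb b_\mu c\rdb'\otimes\{a_\lambda\ldb b_\mu c\rdb''\}$, and $\{a_\lambda -\}=\mult\ldb a_\lambda-\rdb$ by \eqref{20140707:eq4}; multiplying out and using the left Leibniz rule \eqref{Eq:DML} for $\ldb a_\lambda-\rdb$ followed by $\mult$ gives $\mult\ldb a_\lambda\, \mult\ldb b_\mu c\rdb\rdb=\br{a_\lambda\br{b_\mu c}}$. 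For the second term, $\mult\circ\ldb b_\mu \{a_\lambda c\}\rdb=\mult\ldb b_\mu\,\mult\ldb a_\lambda c\rdb\rdb=\br{b_\mu\br{a_\lambda c}}$ directly from \eqref{20140707:eq4} since $\{a_\lambda c\}=\mult\ldb a_\lambda c\rdb\in\mc V[\lambda^{\pm1}]$. For the third term, $\mult\circ\ldb\{a_\lambda b\}_{\lambda\mu}c\rdb=\mult\ldb\mult\ldb a_\lambda b\rdb_{\lambda\mu}c\rdb=\br{\br{a_\lambda b}_{\lambda\mu}c}$ in the same way. Summing the three contributions with the correct signs produces precisely the left-hand side of \eqref{eq:m-quasijacobi}.

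The only mild subtlety — and the step I would be most careful about — is bookkeeping the bimodule structures: in \eqref{eq:quasijacobi} the inner brackets $\ldb b_\mu\{a_\lambda c\}\rdb$ and $\ldb\{a_\lambda b\}_{\lambda\mu}c\rdb$ are taken with $\{a_\lambda c\}$, $\{a_\lambda b\}$ viewed as elements of $\mc V$ (not $\mc V\otimes\mc V$), so no $\ast_i$ or $\otimes_i$ operations intervene when one applies $\mult$, whereas the first term genuinely uses the left Leibniz rule in the form $\ldb a_\lambda xy\rdb=\ldb a_\lambda x\rdb y+x\ldb a_\lambda y\rdb$ and the compatibility $\mult(u\,A\,v)=u\,\mult(A)\,v$ of $\mult$ with the outer bimodule structure \eqref{eq:bimodule}. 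Once these compatibilities are spelled out the computation is entirely routine, so the proof is short: it is nothing more than post-composing Lemma \ref{prop:1bis} with $\mult$ and simplifying using \eqref{20140707:eq4}, \eqref{Eq:DML} and \eqref{Eq:mTriple}. I would end the proof with \qedhere or simply close the environment after displaying the three-term computation.
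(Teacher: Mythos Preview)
Your proposal is correct and follows exactly the same approach as the paper: apply $\mult$ to both sides of \eqref{eq:quasijacobi} and identify the terms using \eqref{20140707:eq4}, \eqref{Eq:DML} and \eqref{Eq:mTriple}. The paper's proof is a single sentence to this effect, whereas you have carefully unpacked the three left-hand terms; the only tacit ingredient you might make explicit for the right-hand side is that $\mult\circ(\mult\otimes1)=\mult\circ(1\otimes\mult)$ by associativity, so that the second term indeed equals $\br{b_\mu a_\lambda c}$.
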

\begin{proof}
It follows from \eqref{eq:quasijacobi} by applying the multiplication map $\mult$.
\end{proof}
Recall from \S\ref{ss:EvoDDE}, that for the local functional $\tint f\in\mc F$,
we denote by $X^f$ its associated Hamiltonian vector field. Recall also that $\mc F$ is
a Lie algebra with Lie bracket \eqref{20140707:eq3c-lie} satisfying
$\{\tint f,\tint g\}=\tint \{f,g\}$, where $f,g\in\mc V$, and $\{f,g\}$ is given by \eqref{20140707:eq5}.
\begin{lemma} \label{Lem:ComVF}
Let $\tint f,\tint h\in\mc F$. Then $[X^f,X^h]=X^{\{f,h\}}$ if and only if the derivation  
\begin{equation} \label{Eq:Dfh}
    \mathtt{D}_{f,h}:=\br{f_\lambda h_\mu -}\big|_{\lambda=\mu=1} - \br{h_\mu f_\lambda -}\big|_{\lambda=\mu=1}
\end{equation}
vanishes identically. 
\end{lemma}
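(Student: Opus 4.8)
The plan is to express both $[X^f,X^h]$ and $X^{\{f,h\}}$ as evolutionary vector fields and compare their characteristics, reducing the statement to the identity \eqref{eq:m-quasijacobi}. First I would recall that for a Hamiltonian functional $\tint g\in\mc F$, the associated Hamiltonian vector field acts by $X^g(a)=\{\tint g,a\}=\br{g_\lambda a}\big|_{\lambda=1}$ for every $a\in\mc V$, where $\br{-_\lambda-}$ is the bracket \eqref{20140707:eq4}; this uses Theorem \ref{20140707:thm}(e) and the fact that $X^g\in\Vect^S(\mc V)$ is a derivation commuting with $S$. Consequently, for any $a\in\mc V$,
\begin{equation*}
[X^f,X^h](a)=X^f(X^h(a))-X^h(X^f(a))=\br{f_\lambda\br{h_\mu a}}\big|_{\lambda=\mu=1}-\br{h_\mu\br{f_\lambda a}}\big|_{\lambda=\mu=1}\,,
\end{equation*}
while $X^{\{f,h\}}(a)=\br{\{f,h\}_{\nu}a}\big|_{\nu=1}=\br{\br{f_\lambda h}_{\lambda\mu}a}\big|_{\lambda=\mu=1}$, using the definition \eqref{20140707:eq5} of $\{f,h\}=\mult\dgal{f_\lambda h}\big|_{\lambda=1}$ together with the sesquilinearity of $\br{-_\lambda-}$ to recognize the specialization at $\nu=1$ as the specialization at $\lambda\mu=1$ (here care is needed: one must note that substituting $\lambda=\mu=1$ into $\br{\br{f_\lambda h}_{\lambda\mu}a}$ indeed yields $\br{\{f,h\}_1 a}$, since setting $\lambda=1$ inside the inner bracket produces $\{f,h\}$ and then $\lambda\mu=\mu$ is set to $1$).

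The second step is to subtract: $[X^f,X^h](a)-X^{\{f,h\}}(a)$ equals the left-hand side of \eqref{eq:m-quasijacobi} evaluated at $\lambda=\mu=1$, namely
\begin{equation*}
\Big(\br{f_\lambda\br{h_\mu a}}-\br{h_\mu\br{f_\lambda a}}-\br{\br{f_\lambda h}_{\lambda\mu}a}\Big)\Big|_{\lambda=\mu=1}\,.
\end{equation*}
By Lemma stating \eqref{eq:m-quasijacobi} (which holds since $\dgal{-_\lambda-}$ is skewsymmetric), this equals $\big(\br{f_\lambda h_\mu a}-\br{h_\mu f_\lambda a}\big)\big|_{\lambda=\mu=1}$, which is precisely $\mathtt{D}_{f,h}(a)$ by the definition \eqref{Eq:Dfh}. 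Since $a\in\mc V$ was arbitrary, $[X^f,X^h]-X^{\{f,h\}}=\mathtt{D}_{f,h}$ as maps $\mc V\to\mc V$. Therefore $[X^f,X^h]=X^{\{f,h\}}$ if and only if $\mathtt{D}_{f,h}$ vanishes identically, which is the claim.

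The only delicate points I anticipate are bookkeeping ones rather than conceptual: one must be careful that $X^h(a)=\br{h_\lambda a}|_{\lambda=1}$ genuinely lands in $\mc V$ and that iterating the bracket is legitimate (it is, since $\br{-_\lambda-}$ takes values in $\mc V[\lambda^{\pm1}]$ and $X^h$ is an honest derivation of $\mc V$), and that the specialization $\lambda=\mu=1$ commutes with all the manipulations — in particular, in the term $\br{\br{f_\lambda h}_{\lambda\mu}a}$ the variable $\lambda$ occurs both in the inner bracket and as part of the outer formal parameter $\lambda\mu$, so one should first expand $\br{f_\lambda h}=\sum_p c_p\lambda^p$ and only then specialize, to confirm that $\br{\br{f_\lambda h}_{\lambda\mu}a}|_{\lambda=\mu=1}=\sum_p\br{{c_p}_\nu a}|_{\nu=1}=\br{\{f,h\}_\nu a}|_{\nu=1}=X^{\{f,h\}}(a)$. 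These checks are routine given sesquilinearity \eqref{Eq:MA1} of the induced bracket and Theorem \ref{20140707:thm}, so the proof reduces entirely to invoking \eqref{eq:m-quasijacobi}.
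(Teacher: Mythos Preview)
Your proof is correct and follows essentially the same approach as the paper: both compute $[X^f,X^h](a)-X^{\{f,h\}}(a)$ by recognizing it as the left-hand side of \eqref{eq:m-quasijacobi} specialized at $\lambda=\mu=1$, then invoke that identity to identify the difference with $\mathtt{D}_{f,h}(a)$. The only minor addition in the paper's version is a one-line remark that $\mathtt{D}_{f,h}$ is indeed a derivation (via \eqref{Eq:TriDer}), which justifies the wording of the statement; in your argument this is implicit since $\mathtt{D}_{f,h}=[X^f,X^h]-X^{\{f,h\}}$ is a difference of derivations.
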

\begin{proof}
By \eqref{eq:m-quasijacobi}, we have for any $c\in \VV$ that 
\begin{align*}
[X^f,X^h](c)=&
\left( \br{f_\lambda \br{h_\mu c}} -\br{h_\mu \br{f_\lambda c}}\right) \big|_{\lambda=\mu=1} \\
=&\left(\br{\br{f_\lambda h}_{\lambda\mu}c}+  \br{f_\lambda h_\mu -} - \br{h_\mu f_\lambda -} \right) \big|_{\lambda=\mu=1}  \\
=&\br{\tint \br{f_\lambda h} \big|_{\lambda=1} ,c} + \mathtt{D}_{f,h}(c)
= X^{\{f,h\}}(c)+\mathtt{D}_{f,h}(c)\,.
\end{align*}
Thus $[X^f,X^h](c)=X^{\{f,h\}}(c)$ if and only if $\mathtt{D}_{f,h}(c)$, and we can conclude as $c\in \VV$ is arbitrary. The fact that $\mathtt{D}_{f,h}$ is a derivation easily follows from \eqref{Eq:TriDer}.  
\end{proof}

As a consequence of Lemma \ref{Lem:ComVF}, the two Hamiltonian vector fields $X^f,X^h$ commute whenever $\tint \br{f,h}=\br{\tint f,\tint h}=0$ and $\mathtt{D}_{f,h}=0$ for $\mathtt{D}_{f,h}$ defined through \eqref{Eq:Dfh}. 
In the case of a a double multiplicative Poisson vertex algebra, the operation  $\dgal{-_\lambda-_\mu-}$ \eqref{Eq:Triple} is identically vanishing, so for any $f,h\in \VV$ we have $\mathtt{D}_{f,h}=0$. Therefore we have $[X^f,X^h]=0$ whenever $\tint \br{f,h}=0$, as we already observed in \S\ref{ss:EvoDDE}. 

Building on the observation that we have just made, 
we can seek to construct commuting families of vector fields in the presence of a skewsymmetric double multiplicative $\lambda$-bracket that has some failure to satisfy Jacobi identity, i.e. when $\dgal{-_\lambda-_\mu-}$ is non-zero. 
This weaker notion has been identified in \cite{CW2} on $\VV=\R\langle u_{i,n}\mid i\in I, n\in \Z\rangle$ by studying bivector fields, see \cite[\S4.4]{CW2}. The main examples outside the class of double multiplicative Poisson vertex algebras that they investigated \cite[Sect.6]{CW2} are the double \emph{quasi-Poisson} brackets due to Van den Bergh \cite{VdB1}. Such double brackets have the property that  $\dgal{-_\lambda-_\mu-}\neq 0$ has a particular form that entails  the vanishing\footnote{This property was already known by Van den Bergh, see \cite[Proposition 5.1.2]{VdB1}.} of the associated map $\br{-_\lambda-_\mu-}$. Note that these are very special double multiplicative $\lambda$-brackets, because they have image in $\VV^{\otimes 2}$ and not in $\VV^{\otimes 2}[\lambda^{\pm1}]$. 
Below, we provide a new non-trivial example which is \emph{not} independent of $\lambda$.

\begin{example} \label{Ex:NotJacobi}
Fix $\alpha,\beta\in \kk$ and $r\in \Z$. 
Consider the skewsymmetric double multiplicative $\lambda$-bracket on $\mc{R}_2=\kk\langle u_i,v_i\mid i\in \Z \rangle$ given by 
$$\dgal{u_\lambda u}=0=\dgal{v_\lambda v}\,, \quad 
\dgal{u_\lambda v}=(v\otimes u_r + u_r \otimes v + \alpha v\otimes v + \beta u_r \otimes u_r)\lambda^r\,.$$
This operation does not satisfy Jacobi identity when $\alpha\beta\neq 1$ because 
$$\dgal{u_\lambda u_\mu v}= (1-\alpha\beta) (v\otimes u_r\otimes u_r - u_r\otimes u_r\otimes v) \, \lambda^r\mu^r\,.$$
Using this identity and \eqref{Eq:TriDer2}-\eqref{Eq:TriDer3}, we get that for any $M,N\geq 1$,
\begin{align*}
&\dgal{u^M{}_\lambda u^N{}_\mu v}\\
&=
\sum_{m=0}^{M-1}\sum_{n=0}^{N-1}
\left(\big|_{y=S}u^m\right)\ast_1 \left(\big|_{x=S}u^n\right)\ast_2 
\dgal{u_{\lambda y} u_{\mu x} v}
\ast_1 \left(\big|_{x=S}u^{N-n-1}\right) \ast_2 \left(\big|_{y=S}u^{M-m-1}\right)\\
&=\sum_{m=0}^{M-1}\sum_{n=0}^{N-1} 
(1-\alpha\beta)\left[vu_r^{M-m-1}\otimes u_r^{N-n+m}\otimes u_r^{n+1} 
- u_r^{M-m}\otimes u_r^{N-n+m}\otimes u_r^{n}v \right]\, \lambda^r\mu^r\,.
\end{align*}
Therefore, from \eqref{Eq:mTriple} we have $\{u^M{}_\lambda u^N{}_\mu v\}\big|_{\lambda=\mu=1}=(1-\alpha\beta)MN (vu_r^{M+N}-u_{r}^{M+N}v)$.
In particular, this implies that for any $k,l\in\Z_+$ the derivation $\mathtt{D}_{u^k,u^l}:\mc{R}_2\to \mc{R}_2$ defined through \eqref{Eq:Dfh} with $f=u^k,h=u^l$ vanishes on $v$. We trivially have $\mathtt{D}_{u^k,u^l}(u)=0$ as $\br{u_\lambda u}=0$, so that  $\mathtt{D}_{u^k,u^l}=0$ identically. As a consequence of Lemma \ref{Lem:ComVF}, we get that 
$$%\begin{equation}
    \left[\frac{d}{dt_k},\frac{d}{dt_l} \right]= \frac{1}{kl}\br{\tint \br{u^k,u^l},-}\,, \quad \text{ where }
    \frac{d}{dt_k}:=\frac{1}{k}\br{\tint u^k,-}\,.
$$%\end{equation}
As the local functionals  $\{\tint \frac1k u^k\mid k\in \Z_+\}$ are such that $\br{\tint u^k,\tint u^l}=0$, we thus obtain that their Hamiltonian vector fields  $\frac{d}{dt_k}$ pairwise commute. The associated differential-difference equations are given by \eqref{Eq:Int-3}.
\end{example}

%%%%%%%%%%%%%%%%%%%%%%%%%%%%%%%%%%%
%%%%%%%%%%%%%%%%%%%%%%%%%%%%%%%%%%%
%%%%%%%%%%%%%%%%%%%%%%%%%%%%%%%%%%%
%%%%%%%%%%% New Section %%%%%%%%%%%
%%%%%%%%%%%%%%%%%%%%%%%%%%%%%%%%%%%
%%%%%%%%%%%%%%%%%%%%%%%%%%%%%%%%%%%
%%%%%%%%%%%%%%%%%%%%%%%%%%%%%%%%%%%

\section{Non-local and rational double multiplicative Poisson vertex algebras} 
\label{S:nonloc}
In this section we formalize the theory of non-local and rational double multiplicative Poisson vertex algebras.
They play a crucial role in the context of non-commutative Hamiltonian differential-difference equations,
see \cite{CW1,CW2}.
The exposition follows \cite{DSKVW1} where the commutative case is treated.

\subsection{Non-local double multiplicative Poisson vertex algebras}\label{sec:7.1}
Let $\mc V$ be a unital associative algebra with an automorphism $S$.
We denote by $(\mc V\otimes\mc V)[[\lambda,\lambda^{-1}]]$
the space of bilateral series $\sum_{n\in\mb Z}a_n\lambda^n$, 
where $a_n\in\mc V\otimes\mc V$ for all $n\in\mb Z$.

Non-local double multiplicative $\lambda$-brackets differ from local ones just in replacing
in Definition \ref{def:DMlamBr} the algebra $(\mc V\otimes\mc V)[\lambda,\lambda^{-1}]$ by 
the vector space $(\mc V\otimes\mc V)[[\lambda,\lambda^{-1}]]$.
Note that in the non-local case, despite the fact that $(\mc V\otimes\mc V)[[\lambda,\lambda^{-1}]]$
is not an algebra, all axioms \eqref{Eq:DMA1}, \eqref{Eq:DML}, \eqref{Eq:DMR},
\eqref{Eq:DMA2} and \eqref{Eq:DMA3} still make perfect sense. Hence we have the
following definition.

\begin{definition}\label{def:nonlocal}
A \emph{non-local double multiplicative Poisson vertex algebra} is a unital associative algebra $\mc V$
endowed with an automorphism $S:\,\mc V\to\mc V$
and a non-local double multiplicative $\lambda$-bracket,
$\dgal{-\,_\lambda-}\,:\,\mc V\otimes\mc V\to(\mc V\otimes\mc V)[[\lambda,\lambda^{-1}]]$
satisfying sesquilinearity \eqref{Eq:DMA1}, Leibniz rules \eqref{Eq:DML} and \eqref{Eq:DMR},
skewsymmetry \eqref{Eq:DMA2} and Jacobi identity \eqref{Eq:DMA3}.
\end{definition}
Let $\mc V$ be an algebra of non-commutative difference functions in $\ell$ variables
$u_i$, $i\in I$. We call the space $\Mat_{\ell\times\ell}(\mc V\otimes\mc V)[[S,S^{-1}]]$, the space
of non-local difference operators. Note that this space is not an algebra with respect to the
product \eqref{eq:bullet-op}, and its elements do not act on $\mc V^\ell$ (such an action
would involve divergent sums, cf. \eqref{eq:action-diff}). However, if
$H(S)=(H_{i,j}(S))_{i,j\in I}\in\Mat_{\ell\times\ell}(\mc V\otimes\mc V)[[S,S^{-1}]]$,
then we can define a non-local double multiplicative $\lambda$-bracket on $\mc V$ using 
the Master Formula \eqref{master-infinite} with $\dgal{{u_i}_\lambda u_j}=H_{ji}(\lambda)$,
which makes sense also for bilateral series. One can check that Theorem \ref{20130921:prop1}
still holds in the non-local case.
\begin{theorem}\label{prop:master-nl}
Given an algebra of non-commutative difference functions $\mc V$ in $\ell$ variables $u_i$, $i\in I$,
and an $\ell\times\ell$ matrix 
$H(\lambda)=\big(H_{ij}(\lambda)\big)_{i,j=1}^\ell
\in\Mat_{\ell\times\ell}\mc V[[\lambda,\lambda^{-1}]]$,
the double multiplicative $\lambda$-bracket \eqref{master-infinite}
defines a structure of non-local double multiplicative Poisson vertex algebra on $\mc V$ 
if and only if
skewsymmetry and the Jacobi identity hold on the generators $u_i$.
In this case we call the matrix $H$
a non-local Poisson structure on $\mc V$.
\end{theorem}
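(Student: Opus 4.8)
\textbf{Proof proposal for Theorem \ref{prop:master-nl}.}

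The plan is to explain why the proof of Theorem \ref{20130921:prop1} carries over verbatim to the non-local setting, the only subtlety being that $(\mc V\otimes\mc V)[[\lambda,\lambda^{-1}]]$ is a bimodule over $(\mc V\otimes\mc V)[\lambda,\lambda^{-1}]$ but not an algebra. First I would observe that, given an arbitrary matrix $H(\lambda)=\big(H_{ij}(\lambda)\big)\in\Mat_{\ell\times\ell}\mc V[[\lambda,\lambda^{-1}]]$, the right-hand side of the Master Formula \eqref{master-infinite} still makes sense: for fixed $f,g\in\mc V$ only finitely many of the partial derivatives $\frac{\partial f}{\partial u_{i,m}}$ and $\frac{\partial g}{\partial u_{j,n}}$ are non-zero (because $\mc V$ is an algebra of difference functions), so the sum over $i,j,m,n$ is finite; each summand involves a product in $\mc V\otimes\mc V$ of the two partial derivatives with a single coefficient of $H_{ji}$, hence lies in $(\mc V\otimes\mc V)[[\lambda,\lambda^{-1}]]$, and no divergent sums occur. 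The resulting map $\dgal{-_\lambda-}$ is $\kk$-bilinear by construction.

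Next I would check that \eqref{master-infinite} satisfies sesquilinearity \eqref{Eq:DMA1} and the Leibniz rules \eqref{Eq:DML}, \eqref{Eq:DMR}: these computations are identical to the corresponding part of the proof of Theorem \ref{20130921:prop1}(b) (equivalently \cite[Th.\ 3.10]{DSKV}), since they manipulate only one coefficient of $H$ at a time and use the derivation properties \eqref{20140627:eq1} and the commutation \eqref{eq:comm} of $S$ with the partial derivatives — none of these steps multiply two bilateral series together. Thus \eqref{master-infinite} defines a non-local double multiplicative $\lambda$-bracket. Then, exactly as in Theorem \ref{20130921:prop1}(c), since the map \eqref{Eq:Triple} satisfies sesquilinearity and derivation rules in all three arguments (cf. \eqref{Eq:TriSes}, \eqref{Eq:TriDer2}, \eqref{Eq:TriDer3}) and skewsymmetry reduces to $\dgal{-_\lambda-}$ being skew-adjoint (Remark \ref{rem:skew}, which is purely formal and holds for bilateral series), it follows that skewsymmetry \eqref{Eq:DMA2} and the Jacobi identity \eqref{Eq:DMA3} hold on all of $\mc V$ if and only if they hold on the generators $u_i$. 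This gives both implications of the theorem, and the final sentence is just the definition of a non-local Poisson structure.

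The one place that genuinely requires care — and the step I expect to be the main obstacle — is verifying that every expression occurring in these arguments is well-defined as a bilateral series, i.e. that no step secretly relies on the product $\bullet$ of two elements of $(\mc V\otimes\mc V)[[S,S^{-1}]]$. In the Jacobi identity \eqref{Eq:DMA3}, the nested bracket $\dgal{a_\lambda\dgal{b_\mu c}}_L$ has an \emph{inner} bracket which is a bilateral series in $\mu$, but the outer bracket is applied to each coefficient separately (a bilateral series in $\lambda$ with coefficients that are themselves bilateral series in $\mu$), so one gets a well-defined element of $(\mc V\otimes\mc V)^{\otimes?}[[\lambda,\lambda^{-1}]][[\mu,\mu^{-1}]]$ rather than a divergent double sum; the term $\dgal{\dgal{a_\lambda b}_{\lambda\mu}c}_L$ substitutes $\lambda\mu$ into a bilateral series, which again makes sense coefficient-by-coefficient because $\dgal{a_\lambda b}$ has, for fixed $a,b$, coefficients in $\mc V\otimes\mc V$ and the outer bracket is linear. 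I would make this precise by working throughout in the space $(\mc V\otimes\mc V)^{\otimes(n-1)}[[\lambda_1^{\pm1},\dots]]$ with the convention that all operations act coefficientwise on the outermost formal variable, and noting that under this convention the reduction to generators used in Theorem \ref{20130921:prop1} is purely algebraic and insensitive to locality. Hence the proof is complete by the same argument as in the local case.
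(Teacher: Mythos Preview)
Your proposal is correct and follows the same approach as the paper, which simply asserts that ``one can check that Theorem \ref{20130921:prop1} still holds in the non-local case'' without giving any further detail. In fact you supply considerably more justification than the paper does, in particular the explicit check that the Master Formula and the nested brackets appearing in the Jacobi identity remain well-defined as bilateral series; this is exactly the right point to address, and your treatment of it is sound.
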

\begin{example}\label{Ex:non-local}
We can get examples of non-local double multiplicative Poisson vertex algebras
on an algebra of difference function in one variable $u$,
which generalize the $\lambda$-bracket given by \eqref{20210812:eq1}-\eqref{eq:g}.
Indeed, note that in the proof of Proposition \ref{20210812:prop1}, equation \eqref{20210812:eq2},
which is the Jacobi identity on generators, still holds if we assume
$r(\lambda)=\sum_{n\in\mb Z}r_n\lambda^n\in\kk[[\lambda,\lambda^{-1}]]$ and such that $r(\lambda)=-r(\lambda^{-1})$
in \eqref{20210812:eq1}.
Hence, for example,
the non-local multiplicative $\lambda$-bracket defined on $\mc V$ by letting
$$
\dgal{u_\lambda u}=(u\otimes u)\bullet r(\lambda S)(u\otimes u)
=\sum_{n\in\mb Z}r_n(u u_n\otimes u_nu)\lambda^n\,,
$$
and extended to $\mc V$ by the Master Formula \eqref{master-infinite}, defines a non-local
double multiplicative Poisson vertex algebra structure on $\mc V$.
On the other hand, the proof of the ``only if" part of Proposition \ref{20210812:prop1} does not generalize to the
non-local setting since it heavily relies on the fact that $r_n=0$ for every $n>N$ for some $N\in\mb Z_{>0}$.
\end{example}

%%%
\subsection{Pseudodifference operators}\label{sec:pseudodiff}
Let $\mc V$ be a unital associative algebra with an automorphism $S$.
The algebra of non-commutative difference operators $(\mc V\otimes\mc V)[S,S^{-1}]$ defined in
\S\ref{sec:Classif1} is $\mb Z$-graded by the powers of $S$ and can be completed either in the
positive or negative directions, giving rise to two algebras of
non-commutative pseudodifference operators:
$$
(\mc V\otimes\mc V)((S))=(\mc V\otimes\mc V)[[S]][S^{-1}]
\qquad\text{and}\qquad
(\mc V\otimes\mc V)((S^{-1}))=(\mc V\otimes\mc V)[[S^{-1}]][S]\,.
$$
In the sequel we will use the notation $(\mc V\otimes\mc V)((S^{\pm1}))$ to denote $(\mc V\otimes\mc V)((S))$ or $(\mc V\otimes\mc V)((S^{-1}))$ respectively, and it should not be confused with
$(\mc V\otimes\mc V)((S,S^{-1}))$.
Given a non-commutative pseudodifference operator $A(S)=\sum_na_nS^n\in(\mc V\otimes\mc V)((S^{\pm1}))$, its formal adjoint is (cf. \eqref{eq:adjoint})
\begin{equation}\label{eq:adjoint2}
A^*(S)=\sum_{n}S^{-n}\bullet a_n^{\sigma}\in(\mc V\otimes\mc V)((S^{\mp1}))\,,
\end{equation}
and its symbol is (cf. \eqref{eq:symb})
\begin{equation}\label{eq:symb2}
A(z)=\sum_na_nz^n\in(\mc V\otimes\mc V)((z^{\pm1})).
\end{equation}
Formulas \eqref{20210914:eq1} still make sense for non-commutative pseudodifference operators.

%%%
\subsection{Pseudodifference operators of rational type}

Let
$$
\kk(z)=\left\{\frac{p(z)}{q(z)}\mid p(z),q(z)\in\kk[z], q(z)\neq0\right\}
$$
denote the field of rational functions in the indeterminate $z$.
It can be embedded in both spaces of Laurent series $\kk((z))$ or $\kk((z^{-1}))$. Indeed, if
$q(z)=\sum_{n=M}^Nq_nz^n\in\kk[z]$, where $0\leq M\leq N$, is non-zero,
then we can factor it as
$$
q(z)=q_Mz^{M}\left(1+\sum_{n=M+1}^N\frac{q_n}{q_M}z^{n-M}\right)
$$
and expand $\frac{1}{q(z)}$, via geometric series expansion, as an element of $\kk((z))$,
or we can factor
$$
q(z)=q_Nz^{N}\left(1+\sum_{n=M}^{N-1}\frac{q_n}{q_N}z^{n-N}\right)
$$
and expand $\frac{1}{q(z)}$, via geometric series expansion, as an element of $\kk((z^{-1}))$. 
We denote by $\iota_{\pm}$ the resulting embedding of the field of rational functions into the space of 
Laurent series
\begin{equation}\label{20210914:eq3}
\iota_{\pm}:\kk(z)\hookrightarrow\kk((z^{\pm1}))
\,.
\end{equation}
For example, we have
\begin{equation}\label{20210914:eq4}
\iota_+\frac{1}{1-z}=\sum_{n\geq0}z^{n}\in\kk((z))
\,,
\qquad
\iota_-\frac{1}{1-z}=-\sum_{n\geq1}z^{-n}\in\kk((z^{-1}))
\,.
\end{equation}
From now on we will work with the algebra of pseudodifference operators
$(\mc V\otimes\mc V)((S))$ but all the definitions and results still hold if we replace it by
$(\mc V\otimes\mc V)((S^{-1}))$ and use $\iota_-$ in place of $\iota_+$.
Let $f_1,\dots f_{n+1}\in\mc V\otimes\mc V$ and $r_1(z),\dots,r_n(z)\in\kk(z)$,
using \eqref{eq:bullet-op} and the embedding $\iota_{+}$ defined in \eqref{20210914:eq3}
we define the following non-commutative pseudodifference operator
\begin{equation}\label{20210914:eq5}
f_1\iota_{+}r_1(S)\bullet f_2\iota_{+}r_2(S)\bullet \dots\bullet f_n\iota_{+}r_{n}(S)\bullet f_{n+1}
\in(\mc V\otimes\mc V)((S))
\,.
\end{equation}
For example, for $f,g\in\mc V\otimes\mc V$, we have, using \eqref{20210914:eq4},
$$
f\iota_+\frac{1}{1-S}\bullet g=\sum_{n\geq0}(f\bullet S^n(g))S^n\in(\mc V\otimes\mc V)((S))\,.
$$
\begin{definition}\label{20210914:def1}
A non-commutative pseudodifference operator of rational type with values in $\mc V$ is a linear combination of non-commutative pseudodifference operators of the form \eqref{20210914:eq5}. We denote by
$\mc Q(\mc V)\subset(\mc V\otimes\mc V)((S))$ the space of non-commutative
pseudodifference operators of rational type.
\end{definition}
It is clear from \eqref{20210914:eq5} and Definition \ref{20210914:def1} that $\mc Q(\mc V)$ is an algebra with respect to the product \eqref{eq:bullet-op}. Given a 
non-commutative pseudodifference operator of rational type
$$
A(S)=\sum f_1\iota_{+}r_1(S)\bullet f_2\iota_{+}r_2(S)\bullet \dots\bullet f_n\iota_{+}r_{n}(S)\bullet f_{n+1}
$$
we define its
adjoint $A^*(S)$ by
\begin{equation}\label{20210914:eq6}
A^*(S)=\sum f_{n+1}^{\sigma}\iota_{+}r_n(S^{-1})\bullet f_{n}^{\sigma}\iota_{+}r_{n-1}(S^{-1})\bullet
\dots\bullet f_{2}^{\sigma}\iota_{+}r_1(S^{-1})\bullet f_1^{\sigma}
\in(\mc V\otimes\mc V)((S))
\,.
\end{equation}
Note that \eqref{20210914:eq6} is an element of $\mc Q(\mc V)$ and does not coincide with the formal adjoint in the space
$(\mc V\otimes\mc V)((S))$ defined in \S\ref{sec:pseudodiff} even though, by an abuse of notation, we are denoting it with the same symbol. In fact, the adjoint of a pseudodifference operator
in $(\mc V\otimes\mc V)((S))$ is an element of $(\mc V\otimes\mc V)((S^{-1}))$,
see \eqref{eq:adjoint2}.

\begin{remark}
Pseudodifference operators of rational type may not be rewritten as the ratio of two difference operators
(that is, an expression of the form $A(S)\bullet B(S)^{-1}$, $A(S),B(S)\in(\mc V\otimes\mc V)[S]$).
Indeed, let us assume that $\mc V$ is a division ring. In general $\mc V\otimes\mc V$ is not a division ring and the
(non-commutative) field
of fractions of $(\mc V\otimes\mc V)[S]$ may not exist.
\end{remark}

%%%
\subsection{Rational double multiplicative Poisson vertex algebras}
Let $\mc V$ be a unital associative algebra with an automorphism $S$.
By a rational double multiplicative $\lambda$-bracket on $\mc V$, we mean a double multiplicative 
$\lambda$-bracket as in Definition \ref{def:DMlamBr} with the only difference that we assume 
\begin{equation}\label{20211013:eq1}
\dgal{a_\lambda b}=A_{ab}(\lambda)\in(\mc V\otimes \mc V)((\lambda))
\end{equation}
being the symbol of a pseudodifference operator of rational type $A_{ab}(S)\in\mc Q(\mc V)$, for every $a,b\in\mc V$.

\subsubsection{Definition}
In analogy with the vector space $(\VV\otimes \VV)((\lambda))$, we  introduce for $k\geq 1$
$$\VV^{\otimes k}((\lambda,\mu))
=\Big \{\sum_{m\geq M,n\geq N} a_{m,n} \lambda^m \mu^n \mid 
a_{m,n} \in \VV^{\otimes k}, M,N\in\mb Z\Big\}\,.$$
Let us remark that 
$$\VV^{\otimes k}((\lambda,\mu))=
\VV^{\otimes k}((\lambda))((\mu))\cap 
\VV^{\otimes k}((\mu))((\lambda))\,.$$
The following results will be used throughout this section.
\begin{lemma}\label{20210918:lem1}
Let $T_1,T_2$ be automorphisms of $\mc V$, and let $A(\lambda,\mu)\in(\mc V\otimes\mc V\otimes\mc V)((\lambda,\mu))$
and $B(\lambda,\mu)\in(\mc V\otimes\mc V)((\lambda,\mu))$. Then,
    $$
    A(\lambda T_1,\mu T_2)\bullet_{i}B(\lambda,\mu)\,,
    B(\lambda T_1,\mu T_2)\bullet_{i}A(\lambda,\mu)\in(\mc V\otimes\mc V\otimes\mc V)((\lambda,\mu))\,.
    $$
\end{lemma}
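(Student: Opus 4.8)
The plan is to reduce the claim to a statement about stability of the spaces $\mc V^{\otimes k}((\lambda,\mu))$ under the elementary operations involved, namely substitution of automorphisms into the formal variables and the bimodule actions $\bullet_i$. First I would recall the characterization $\mc V^{\otimes k}((\lambda,\mu))=\mc V^{\otimes k}((\lambda))((\mu))\cap\mc V^{\otimes k}((\mu))((\lambda))$ stated just above the lemma; this lets me check membership in the intersection by treating the two iterated-Laurent-series expansions separately. Thus it suffices to prove, say, that $A(\lambda T_1,\mu T_2)\bullet_i B(\lambda,\mu)$ lies in $\mc V^{\otimes3}((\lambda))((\mu))$, and symmetrically with $\lambda$ and $\mu$ interchanged; the other product $B(\lambda T_1,\mu T_2)\bullet_i A(\lambda,\mu)$ is handled the same way since $\bullet_i$ is a left (resp. right) action and $B$ now plays the role of the coefficient-providing factor.

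The key steps, in order: (1) Write $A(\lambda,\mu)=\sum_{m\ge M,\,n\ge N_m} a_{m,n}\lambda^m\mu^n$ with $a_{m,n}\in\mc V^{\otimes3}$; since $T_1,T_2$ act on each tensor factor without changing the monomial degrees, $A(\lambda T_1,\mu T_2)=\sum_{m\ge M,\,n} (T_1^m T_2^n a_{m,n})\lambda^m\mu^n$ has exactly the same support in $(m,n)$, hence still lies in $\mc V^{\otimes3}((\lambda,\mu))$. (2) Similarly $B(\lambda,\mu)=\sum_{p\ge P,\,q\ge Q_p} b_{p,q}\lambda^p\mu^q$ with $b_{p,q}\in\mc V^{\otimes2}$. (3) The product $\bullet_i$ is $\kk[\lambda^{\pm1},\mu^{\pm1}]$-bilinear and on monomials multiplies the $\lambda$- and $\mu$-degrees additively while combining the $\mc V^{\otimes3}$- and $\mc V^{\otimes2}$-coefficients via the bimodule action, which lands in $\mc V^{\otimes3}$. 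So the coefficient of $\lambda^r\mu^s$ in the product is $\sum_{m+p=r,\;n+q=s} (T_1^mT_2^n a_{m,n})\bullet_i b_{p,q}$. (4) One must check this sum is finite for each $(r,s)$ and that the total support is bounded below in $\lambda$-degree for every fixed $\mu$-degree: the $\lambda$-degree satisfies $m+p\ge M+P$, so it is bounded below uniformly; and for a fixed $r$ there are only finitely many splittings $m+p=r$ with $m\ge M$, $p\ge P$, and for each such splitting the inner sum over $n+q=s$ is finite because, with $m$ and $p$ fixed, $n\ge N_m$ and $q\ge Q_p$ force only finitely many choices. This shows the product lies in $\mc V^{\otimes3}((\lambda))((\mu))$. (5) Repeat the same bookkeeping with the roles of $\lambda$ and $\mu$ exchanged to get membership in $\mc V^{\otimes3}((\mu))((\lambda))$, and conclude by the intersection characterization.

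The main obstacle — really the only subtlety — is step (4): one has to be careful that $\mc V^{\otimes k}((\lambda,\mu))$ is \emph{not} simply a two-variable Laurent series space with a single lower bound, but the intersection of the two iterated completions, so the finiteness-of-coefficients argument has to be run once in each order. Once one is disciplined about which variable is being completed first, everything is a routine reindexing; no cancellation or algebraic identity beyond the additivity of degrees under $\bullet_i$ and the degree-preservation of $T_1,T_2$ is needed. I would present this as a short direct verification, emphasizing the two-order check, and remark that the identical argument applies verbatim with $((S^{-1}))$ and $\iota_-$ in place of $((S))$ and $\iota_+$.
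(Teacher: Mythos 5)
Your verification is correct, and since the paper dismisses this lemma with the single word ``Straightforward,'' your direct degree-bookkeeping is exactly the argument the authors leave to the reader. One small simplification: the paper's displayed definition of $\mc V^{\otimes k}((\lambda,\mu))$ already imposes \emph{uniform} lower bounds $m\geq M$ and $n\geq N$ on both exponents, so the product immediately has support in $\{r\geq M+P,\ s\geq N+Q\}$ with finitely many terms per coefficient, and the two-order intersection check (your step (5)) becomes unnecessary; also note that in the paper's convention (cf.\ \eqref{eq:bullet-op}) the automorphisms $T_1^mT_2^n$ act on the coefficients of the \emph{second} factor rather than on $a_{m,n}$, though this has no bearing on the membership claim.
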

\begin{proof}
Straightforward.
\end{proof}
\begin{lemma}\label{20210918:lem2}
Let $A(S)\in\mc Q(\mc V)$ be a pseudodifference operator of rational type. Then,
$$
\dgal{a_\lambda A(\mu)}_L\,,\dgal{a_\lambda A(\mu)}_R\,,\dgal{A(\lambda)_{\lambda\mu} a}_L
\in(\mc V\otimes\mc V\otimes\mc V)((\lambda,\mu))\,,
$$
for every $a\in\mc V$.
\end{lemma}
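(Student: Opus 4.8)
The statement asserts that if $A(S)\in\mc Q(\mc V)$ is a pseudodifference operator of rational type, then the three expressions $\dgal{a_\lambda A(\mu)}_L$, $\dgal{a_\lambda A(\mu)}_R$ and $\dgal{A(\lambda)_{\lambda\mu}a}_L$ all lie in $(\mc V\otimes\mc V\otimes\mc V)((\lambda,\mu))$, i.e. they are bilateral series in $\lambda$ and $\mu$ which are bounded below in each variable. The plan is to reduce to the generators of $\mc Q(\mc V)$ under the $\bullet$-product: by Definition \ref{20210914:def1}, $A(S)$ is a $\kk$-linear combination of operators of the form $f_1\iota_+r_1(S)\bullet\cdots\bullet f_n\iota_+r_n(S)\bullet f_{n+1}$ with $f_i\in\mc V\otimes\mc V$ and $r_i\in\kk(z)$; since the maps $\dgal{a_\lambda-}_L$, $\dgal{a_\lambda-}_R$ and $\dgal{-_{\lambda\mu}a}_L$ are $\kk$-linear, it suffices to treat a single such product, and the symbol of this product is (by the symbol formula \eqref{20210914:eq1}, now valid for rational-type pseudodifference operators) an element of $(\mc V\otimes\mc V)((\mu))$ of the form $A(\mu)=f_1\cdot R_1(\mu)\cdots$ obtained by repeated substitution.

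First I would establish the $L$-case for $\dgal{a_\lambda A(\mu)}_L$. Using the definition \eqref{20210616:eq2a}, $\dgal{a_\lambda (p'\otimes p'')}_L=\dgal{a_\lambda p'}\otimes p''$, together with the left Leibniz rule \eqref{Eq:DML} and sesquilinearity \eqref{Eq:DMA1}, one expands $\dgal{a_\lambda A(\mu)}_L$ by differentiating through the $\bullet$-product via Lemma \ref{20210917:lem1}(b). Each time the bracket hits one of the finitely many ``letters'' $f_i\in\mc V\otimes\mc V$, it produces a term $\dgal{a_\lambda f_i}$ which is a Laurent polynomial in $\lambda$ (since $\dgal{-_\lambda-}$ is a local double multiplicative $\lambda$-bracket, its image lies in $(\mc V\otimes\mc V)[\lambda^{\pm1}]$), while the surrounding rational factors $\iota_+r_j(\mu S)$ get shifted by powers of $\lambda$ coming from sesquilinearity; crucially the $\iota_+$-expansions remain one-sided in $\mu$. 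The key structural input is Lemma \ref{20210918:lem1}: each elementary step is of the shape $B(\lambda T_1,\mu T_2)\bullet_i A(\lambda,\mu)$ with $A\in(\mc V\otimes\mc V\otimes\mc V)((\lambda,\mu))$ and $B$ a Laurent polynomial (hence in $(\mc V\otimes\mc V)((\lambda,\mu))$ trivially), so the result stays in $(\mc V\otimes\mc V\otimes\mc V)((\lambda,\mu))$. An induction on $n$ (the number of rational factors) then closes the $L$-case. The $R$-case for $\dgal{a_\lambda A(\mu)}_R$ is entirely analogous, using the second identity in \eqref{20210616:eq2a} and the $\bullet_3$-version of Lemma \ref{20210917:lem1}(b).

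For $\dgal{A(\lambda)_{\lambda\mu}a}_L$ I would use the third identity in Lemma \ref{20210917:lem1}(b), namely $\dgal{B\bullet C_\lambda a}_L=\dgal{B_{\lambda x}a}_L\bullet_3(|_{x=S}C)+\dgal{C_{\lambda x}a}_L\bullet_1(|_{x=S}B^\sigma)$, to peel off the factors of the product defining $A$ one at a time; after substituting $\lambda\mapsto\lambda\mu$ and tracking the automorphism shifts $|_{x=S}$ applied to the remaining rational factors, one again reaches finitely many ``base cases'' $\dgal{f_i{}_{\lambda\mu x}a}_L$ which are Laurent polynomials in $\lambda\mu$, multiplied on the $\bullet_1$- or $\bullet_3$-side by $\iota_+$-expansions in $\mu$ (and $\lambda\mu$) that remain one-sided; Lemma \ref{20210918:lem1} again guarantees that each such $\bullet_i$-product of a Laurent polynomial with an element of $(\mc V^{\otimes3})((\lambda,\mu))$ stays in $(\mc V^{\otimes3})((\lambda,\mu))$. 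Induction on the number of factors finishes this case as well.

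\textbf{Main obstacle.} The routine content is purely bookkeeping, but the genuine subtlety is the interplay between the two one-sided completions: one must check that after all the substitutions $\lambda\mapsto\lambda S$, $\lambda\mapsto\lambda\mu$, etc., every $\iota_+$-expanded rational factor is still expanded \emph{in the same direction} and the shifts introduced never convert an $\mu$-power-series into a genuinely bilateral object unbounded below in $\mu$ — this is exactly why $(\mc V\otimes\mc V)((\lambda,\mu))=(\mc V^{\otimes2})((\lambda))((\mu))\cap(\mc V^{\otimes2})((\mu))((\lambda))$ is the right target and why Lemma \ref{20210918:lem1} (with its two automorphisms $T_1,T_2$ absorbing precisely these shifts) is the crucial tool. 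Once the inductive framework is set up, the verification that each step matches the hypotheses of Lemma \ref{20210918:lem1} is straightforward.
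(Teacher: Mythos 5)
Your proposal is correct and follows essentially the same route as the paper: reduce by linearity to a single product $f_1\iota_+r_1(S)\bullet\cdots\bullet f_n\iota_+r_n(S)\bullet f_{n+1}$, induct on the number of factors, peel off one factor at a time with Lemma \ref{20210917:lem1}(b), and invoke Lemma \ref{20210918:lem1} to keep each step inside $(\mc V\otimes\mc V\otimes\mc V)((\lambda,\mu))$. One small correction: in the setting of this lemma the ambient bracket is the \emph{rational} one of \eqref{20211013:eq1}, so $\dgal{a_\lambda f_i}$ lies in $(\mc V\otimes\mc V)((\lambda))$ rather than being a Laurent polynomial as you claim; this does not affect the argument, since boundedness below in $\lambda$ is all that is needed.
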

\begin{proof}
Recall from Definition \ref{20210914:def1} that $A(S)$ is a finite linear combination
of pseudodifference operators as in \eqref{20210914:eq5}. Hence, it suffices to prove the claim for
$$
A(S)=A_1(S)\bullet\dots\bullet A_n(S)\,,
$$
where $A_i=f_i\iota_+(r(S))$, $f_i\in\mc V\otimes\mc V$, $r_i(S)\in\kk(S)$, $i=1,\dots,n$.
We will prove that $\dgal{a_\lambda A(\mu)}_L\in(\mc V\otimes\mc V\otimes\mc V)((\lambda,\mu))$,
by induction on $n$.

For $n=1$ we have $\dgal{a_\lambda A(\mu)}_L=\dgal{a_\lambda f_1}\iota_+(r_1(\mu))$ which clearly lies in
$(\mc V\otimes\mc V)((\lambda,\mu))$. Let us now assume that $\dgal{a_\lambda A(\mu)}_L\in(\mc V\otimes\mc V\otimes\mc V)((\lambda,\mu))$, and let $f\in\mc V\otimes\mc V$, $r(S)\in\kk (S)$. Then, by Lemma \ref{20210917:lem1} we have
\begin{equation}\label{20210918:eq1}
    \dgal{ a_\lambda f\iota_+(r(\mu S))\bullet A(\mu)}_L=
    \dgal{a_\lambda f}\iota_+(r(\mu S))\bullet_1 A(\mu)
    +f\iota_+(r(\lambda\mu S))\bullet_2\dgal{ a_\lambda A(\mu)}_L
    \,.
\end{equation}
Note that, by the base  case we have that $\dgal{a_\lambda f}\iota_+(r(\mu))\in(\mc V\otimes\mc V\otimes\mc V)((\lambda,\mu))$. Moreover, 
$f\iota_+(r(\lambda\mu))\in(\mc V\otimes\mc V)((\lambda,\mu))$. Hence, the RHS of equation
\eqref{20210917:lem1} lies in $(\mc V\otimes\mc V\otimes\mc V)((\lambda,\mu))$ by Lemma \ref{20210918:lem1} and the inductive assumption.  The other claims can be proven similarly.
\end{proof}

We say that a rational double multiplicative $\lambda$-bracket as in \eqref{20211013:eq1}
is skewsymmetric if
\begin{equation}\label{20211013:eq2}
A_{ab}(\lambda)=-A_{ba}^*(\lambda)\,,
\end{equation}
for every $a,b\in\mc V$ (cf. Remark \ref{rem:skew}). In \eqref{20211013:eq2} we are using the formal adjoint
\eqref{20210914:eq6} in $\mc Q(\mc V)$. Hence, \eqref{20211013:eq2} is equivalent to an identity in the space $\mc Q(\mc V)$.

\begin{definition}\label{rdmPVA}
A rational double multiplicative Poisson vertex algebra is a unital associative algebra $\mc V$ endowed 
with an automorphism $S$ and  a rational double multiplicative $\lambda$-bracket
$\dgal{-_\lambda -}:\mc V\otimes \mc V \to (\mc V\otimes\mc V)((\lambda))$ 
(i.e. for every $a,b\in\mc V$,
$\dgal{a_\lambda b}$ is the symbol of a pseudodifference operator of rational type $A_{ab}(S)\in \mc Q(\mc V)$, cf. \eqref{20211013:eq1})
satisfying skewsymmetry \eqref{20211013:eq2} and Jacobi identity \eqref{Eq:DMA3}.
\end{definition}
Note that, if 
$\dgal{-_\lambda -}$ has values in $(\mc V\otimes\mc V)((\lambda))$ 
then
$$\dgal{a_\lambda\dgal{b_\mu c}}_L\in(\mc V\otimes\mc V\otimes\mc V)((\lambda))((\mu))\,,$$
while
$$\dgal{b_\mu\dgal{a_\mu c}}_R\in(\mc V\otimes\mc V\otimes\mc V)((\mu))((\lambda))\,.$$
Since
these two terms lie in different spaces, the Jacobi identity could not make sense.
However,
for a rational double multiplicative $\lambda$-bracket \eqref{20211013:eq1},
by Lemma \ref{20210918:lem2}, 
all three terms of the Jacobi identity 
lie in $(\mc V\otimes\mc V\otimes\mc V)((\lambda,\mu))$, hence it is well-defined.

Let $\mc V$ be an algebra of non-commutative difference functions in $\ell$ variables
$u_i$, $i\in I$. We call the space $\Mat_{\ell\times\ell}(\mc Q(\mc V))$, the space
of matrix pseudodifference operators of rational type.
If
$H(S)=(H_{i,j}(S))_{i,j\in I}\in\Mat_{\ell\times\ell}(\mc Q(\mc V))$,
then we can define a rational double multiplicative $\lambda$-bracket on $\mc V$ using 
the Master Formula \eqref{master-infinite} with $\dgal{{u_i}_\lambda u_j}=H_{ji}(\lambda)$
and get an analogue of Theorem \ref{20130921:prop1}.
\begin{theorem}\label{prop:master-rat}
Given an algebra of non-commutative difference functions $\mc V$ in $\ell$ variables $u_i$,
and an $\ell\times\ell$ matrix 
$H(\lambda)=\big(H_{ij}(\lambda)\big)_{i,j=1}^\ell
\in\Mat_{\ell\times\ell}(\mc Q(\mc V))$,
the double multiplicative $\lambda$-bracket \eqref{master-infinite}
defines a structure of rational double multiplicative Poisson vertex algebra on $\mc V$ 
if and only if
skewsymmetry and the Jacobi identity hold on the generators $u_i$.
In this case we call the matrix $H$
a Poisson structure of rational type on $\mc V$.
\end{theorem}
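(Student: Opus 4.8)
The plan is to follow the proof of Theorem~\ref{20130921:prop1} essentially line by line, the only genuinely new work being to certify that every intermediate expression stays of rational type; this is precisely what Lemmas~\ref{20210918:lem1} and~\ref{20210918:lem2} are designed to supply.

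First I would verify that the Master Formula \eqref{master-infinite} with $\dgal{{u_i}_\lambda u_j}=H_{ji}(\lambda)$ does define a rational double multiplicative $\lambda$-bracket on $\mc V$. Sesquilinearity \eqref{Eq:DMA1} and the Leibniz rules \eqref{Eq:DML}--\eqref{Eq:DMR} are checked exactly as in Theorem~\ref{20130921:prop1} (which follows \cite[Th.~3.10]{DSKV}): since $\mc V$ is an algebra of non-commutative difference functions, only finitely many terms of \eqref{master-infinite} are nonzero, so the sums are finite and the manipulations purely formal. To see that $\dgal{f_\lambda g}$ is again the symbol of an element of $\mc Q(\mc V)$ for all $f,g\in\mc V$, note that the right-hand side of \eqref{master-infinite} is a finite $\bullet$-product of the operators with symbols $H_{ji}$, which lie in $\mc Q(\mc V)$ by hypothesis, with the polynomial factors $\partial g/\partial u_{j,n}$, $(|_{x=S}\partial f/\partial u_{i,m})^\sigma\in\mc V\otimes\mc V$ and pure shifts $S^{\pm n}$; since $\mc Q(\mc V)$ is an algebra under $\bullet$ (as observed after Definition~\ref{20210914:def1}), the result lies in $\mc Q(\mc V)$. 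Hence $\dgal{-_\lambda-}$ takes values in $(\mc V\otimes\mc V)((\lambda))$ as required by Definition~\ref{rdmPVA}.

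The ``only if'' direction is then immediate, since a rational double multiplicative Poisson vertex algebra satisfies \eqref{20211013:eq2} and \eqref{Eq:DMA3} on all elements, in particular on the generators. For the ``if'' direction, assume skewsymmetry and the Jacobi identity hold on the $u_i$'s. For skewsymmetry I would first rephrase the hypothesis, via the rational analogue of Remark~\ref{rem:skew}, as $H^*(S)=-H(S)$ in $\Mat_{\ell\times\ell}(\mc Q(\mc V))$ with the adjoint \eqref{20210914:eq6}; substituting this into \eqref{master-infinite} and using \eqref{eq:adjoint-prod} and \eqref{eq:bullet-anti} as in the local case yields $\dgal{f_\lambda g}=-|_{x=S}\dgal{g_{\lambda^{-1}x^{-1}}f}^\sigma$ for all $f,g\in\mc V$, with every step taking place inside $\mc Q(\mc V)$. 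For the Jacobi identity I would work with the triple bracket $\dgal{-_\lambda-_\mu-}$ of \eqref{Eq:Triple}: by Lemma~\ref{20210918:lem2}, together with Lemma~\ref{20210918:lem1}, each of its three terms lies in $(\mc V\otimes\mc V\otimes\mc V)((\lambda,\mu))$, so the operation is well-defined, and the derivation and sesquilinearity identities \eqref{Eq:TriSes}, \eqref{Eq:TriDer}, \eqref{Eq:TriDer2}, \eqref{Eq:TriDer3} persist there. Consequently $\dgal{-_\lambda-_\mu-}$ is determined by its values on the $u_i$'s, so it vanishes identically by assumption, and \eqref{Eq:DMA3} holds on all of $\mc V$. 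Together with skewsymmetry, this gives the rational double multiplicative Poisson vertex algebra structure.

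The hard part is not the algebra, which is a transcription of Theorem~\ref{20130921:prop1}, but the well-definedness bookkeeping: one must check at each step that the $\bullet_i$-products, the adjoints, and the triple bracket never leave $\mc Q(\mc V)$, respectively $(\mc V\otimes\mc V\otimes\mc V)((\lambda,\mu))$, since in a larger completion the three Jacobi terms would land in pairwise incompatible spaces of bilateral series. Lemmas~\ref{20210918:lem1}--\ref{20210918:lem2} are tailored exactly to this point, and once they are in place the remainder of the argument reduces to the known local computation.
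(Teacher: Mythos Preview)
Your proposal is correct and follows essentially the same route as the paper's proof: both observe that the Master Formula \eqref{master-infinite} is a finite $\bullet$-product of rational-type operators (hence lands in $\mc Q(\mc V)$), and then defer to the argument of Theorem~\ref{20130921:prop1} for the rest. Your write-up is considerably more detailed---in particular you spell out the role of Lemmas~\ref{20210918:lem1}--\ref{20210918:lem2} in keeping the Jacobi terms inside $(\mc V\otimes\mc V\otimes\mc V)((\lambda,\mu))$---but the skeleton is the same; just be sure, when you write the skewsymmetry conclusion, to phrase it as $A_{fg}(\lambda)=-A_{gf}^*(\lambda)$ with the rational adjoint \eqref{20210914:eq6} rather than the formula with $|_{x=S}$, since the latter in its literal sense is the non-local version and would exit $(\mc V\otimes\mc V)((\lambda))$.
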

\begin{proof}
It suffices to show that $\dgal{f_\lambda g}$ defined by the RHS of
\eqref{master-infinite} is the symbol of a pseudodifference operator of rational type, for every
$f,g\in\mc V$. The rest is the same as in the proof of Theorem \ref{20130921:prop1}.
The claim follows since the RHS of \eqref{master-infinite} is the symbol of a finite sum of products of elements
in $\mc Q(\mc V)$ and $(\mc V\otimes\mc V)[S,S^{-1}]\subset\mc Q(\mc V)$.
\end{proof}
\begin{example}\label{20211002:exa1}
We have the analogue of the $\lambda$-bracket \eqref{20210812:eq1} (see also Example \ref{Ex:non-local})
in the rational case. Let $\mc R$ denote the algebra of non-commutative difference polynomials in one variable $u$ and let $r(\lambda)\in\kk(\lambda)$ be a rational function such that
$r(\lambda)=-r(\lambda^{-1})$. Then, consider the double multiplicative $\lambda$-bracket on $\mc R$
defined by 
$$
\dgal{u_\lambda u}=g\iota_+r(\lambda S)\bullet g^{\sigma}
\,,
$$
where $g=(\alpha u+\beta)\otimes(\alpha u+\beta)$, $\alpha,\beta\in\kk$. 
Skewsymmetry and Jacobi identity hold on generators (same proof as for
Proposition \ref{20210812:prop1}) and by Theorem \ref{prop:master-rat} we get a rational double multiplicative Poisson vertex algebra
structure on $\mc R$.
\end{example}

\begin{remark}
Let $\mc V$ be an algebra of non-commutative difference functions in $\ell$ variables. We introduce the following sets
\begin{align*}
&\loc(\mc V):=\{H(S)\in\Mat_{\ell\times\ell}\left((\mc V\otimes\mc V)[S,S^{-1}]\right)\mid H(S)\text{ local Poisson structure on }\mc V\}
\,,
\\
&\nonloc(\mc V):=\{H(S)\in\Mat_{\ell\times\ell}\left((\mc V\otimes\mc V)[[S,S^{-1}]]\right)\mid H(S)\text{ non-local Poisson structure on }\mc V\}
\,,
\\
&\rat(\mc V):=\{H(S)\in\Mat_{\ell\times\ell}\left(\mc Q(\mc V)\right)\mid H(S)\text{ Poisson structure of rational type on }\mc V\}
\,,
\end{align*}
(cf. Definition \ref{poisson-structure} and Theorems \ref{prop:master-nl} and \ref{prop:master-rat}).
We point out that despite there is the obvious inclusion $\Mat_{\ell\times\ell}\left(\mc Q(\mc V)\right)\subset \Mat_{\ell\times\ell}\left((\mc V\otimes\mc V)[[S,S^{-1}]]\right)$,
we have
$$
\rat(\mc V)\not\subset\nonloc(\mc V)
$$
in view of the difference in the skewsymmetry axiom in Definitions \ref{def:nonlocal} and \ref{rdmPVA}. As an example, let $\mc V$ be an algebra of 
non-commutative difference functions in one variable $u$. By Example \ref{20211002:exa1} we have that
\begin{equation}\label{20211011:eq1}
\dgal{u_\lambda u}=\iota_+\frac{1+\lambda}{1-\lambda}(1\otimes1)=1\otimes1+\sum_{n\geq1}2(1\otimes1)\lambda^n\,,
\end{equation}
defines a rational double multiplicative Poisson vertex algebra structure on $\mc V$, since $r(\lambda)=\frac{1+\lambda}{1-\lambda}$
satisfies $r(\lambda)=-r(\lambda^{-1})$. The latter condition is equivalent to the skew-symmetry axiom \eqref{20211013:eq2}. On the other hand, if we think of \eqref{20211011:eq1} as a non-local double multiplicative
$\lambda$-bracket, we can compute 
$$-|_{x=S}\dgal{u_{\lambda^{-1}x^{-1}} u}
=-1\otimes1-\sum_{n\geq1}2(1\otimes1)\lambda^{-n}\,,$$
which is clearly different from $\dgal{u_\lambda u}$ 
so that the skew-symmetry axiom \eqref{Eq:DMA2} does not hold in the non-local case. 
Finally, it is immediate to check that
$$
\rat(\mc V)\cap\nonloc(\mc V)=\loc(\mc V)
\,.
$$
\end{remark}

%%%
\subsubsection{Poisson structure of rational type for the non-commutative Narita-Itoh-Bogoyavlensky hierarchy}
Let $\mc R=\mc R_1$ be the algebra of non-commutative difference polynomials in one variable $u$.
For $a(z),b(z),c(z),d(z)\in\kk(z)$ we consider, using the symbol map \eqref{eq:symb2},
the following pseudodifference operator of rational type
\begin{equation}\label{eq:NIH}
\begin{split}
H(S)&=(1\otimes u)\iota_+a(S)\bullet(1\otimes u)+(1\otimes u)\iota_+b(S)\bullet(u\otimes1)
\\
&+(u\otimes 1)\iota_+c(S)\bullet(1\otimes u)+(u\otimes 1)\iota_+d(S)\bullet(u\otimes1)
\in\mc Q(\mc R)\,.
\end{split}
\end{equation}
By \eqref{20210914:eq6} we have
\begin{equation}\label{eq:NIH-adj}
\begin{split}
H^*(S)&=(u\otimes 1)\iota_+a(S^{-1})\bullet(u\otimes 1)+(1\otimes u)\iota_+b(S^{-1})\bullet(u\otimes1)
\\
&+(u\otimes 1)\iota_+c(S^{-1})\bullet(1\otimes u)+(1\otimes u)\iota_+d(S^{-1})\bullet(1\otimes u)
\,.
\end{split}
\end{equation}
Using the notation \eqref{notation-bar}, the skewsymmetry condition $H(S)=-H^*(S)$ is equivalent to the identity
\begin{equation}\label{eq3}
\begin{split}
&\left[\iota_+a(x)+\iota_+d(x^{-1})\right]\left(1\otimes (|_{x= S}u)u\right)
+\left[\iota_+d(y)+\iota_+a(y^{-1})\right]\left(u(|_{y=S} u)\otimes1\right)\\
&+\left[\iota_+b(x)+\iota_+c(y)+\iota_+b(x^{-1})+\iota_+c(y^{-1})
\right]\left((_{x=S}u)\otimes (|_{y=S}u)\right)=0\,.
\end{split}
\end{equation}
Since $1\otimes (|_{x= S}u)u$, $u(|_{y=S} u)\otimes1$ and $(_{x=S}u)\otimes (|_{y=S}u)$ are linearly independent, then the skewsymmetry 
of $H(S)$ is equivalent to the conditions
\begin{equation}\label{eq1}
d(z)=-a(z^{-1})\,,
\qquad
b(x)+b(x^{-1})=-c(y)-c(y^{-1})\,.
\end{equation}
Since the LHS of the second equation in \eqref{eq1} is independent of $y$ and the RHS is independent of $x$, then they need to be both equal to a constant $2\alpha\in\kk$.
Hence,
\begin{equation}\label{eq2}
b(z)=b_1(z)+\alpha\,,\quad b_1(z)=-b_1(z^{-1})\,,
\qquad c(z)=c_1(z)-\alpha\,,\quad c_1(z)=-c_1(z^{-1})
\,.
\end{equation}
Inserting the first condition in \eqref{eq1} and the conditions in \eqref{eq2} in the definition of $H(S)$ given in
\eqref{eq:NIH}, we see that $H(S)$ is skewadjoint if and only if it has the form
\begin{equation}\label{eq:NIH2}
\begin{split}
H(S)&=(1\otimes u)\iota_+a(S)\bullet(1\otimes u)+(1\otimes u)\iota_+b(S)\bullet(u\otimes1)
\\
&+(u\otimes 1)\iota_+c(S)\bullet(1\otimes u)-(u\otimes 1)\iota_+a(S^{-1})\bullet(u\otimes1)
\,,
\end{split}
\end{equation}
where $b(z)=-b(z^{-1})$ and $c(z)=-c(z^{-1})$.
\begin{theorem}\label{thm:NIH}
The pseudodifference operator $H(S)$ in \eqref{eq:NIH2} defines a
rational double multiplicative Poisson vertex algebra structure
on $\mc R$ if and only if for some $k\geq 1$ and $p\in \mb Z$, 
\begin{equation}\label{20211002:eq1}
\begin{split}
&a(z)=z^p a_1(z^k)\,, \qquad a_1(z):=\alpha \frac{1}{1-z}\,,  \\
&b(z)=c(z)=b_1(z^k)\,, \quad b_1(z)=\beta \frac{1+z}{1-z}\,,
\end{split} 
\end{equation}
where $\alpha,\beta\in\kk$ are such that $\alpha(2\beta+\alpha)=0$.
\end{theorem}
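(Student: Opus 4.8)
By Theorem~\ref{prop:master-rat}, the pseudodifference operator $H(S)$ in \eqref{eq:NIH2} defines a rational double multiplicative Poisson vertex algebra structure on $\mc R$ if and only if the Jacobi identity \eqref{Eq:DMA3} holds on the single generator $u$, where $\dgal{u_\lambda u}=H(\lambda)$ is the symbol of $H(S)$. (Skewsymmetry is already built into \eqref{eq:NIH2}.) So the whole proof reduces to analysing the single equation $\dgal{u_\lambda u_\mu u}=0$, with $\dgal{-_\lambda-_\mu-}$ given by \eqref{Eq:Triple}; this is the analogue, in the rational setting, of equations \eqref{20200713:eq2}--\eqref{20200713:eq2b} for a general scalar coefficient operator. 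The plan is: (1) compute $\dgal{u_\lambda u_\mu u}$ explicitly in terms of $a,b,c$ and the partial derivative $\frac{\partial}{\partial u}$ acting on the four building blocks $1\otimes u$, $u\otimes 1$ of $H(\lambda)$; (2) equate to zero and read off a system of functional equations for the rational functions $a(z),b(z),c(z)$; (3) solve that system.

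\textbf{Step 1: expanding the Jacobi expression.} Each term of $\dgal{u_\lambda u_\mu u}$ is computed from \eqref{Eq:Triple} together with Lemma~\ref{20210917:lem1} and the identities \eqref{20140305:eq1}, using the Master Formula \eqref{master-infinite}, exactly as in the proof of Proposition~\ref{20210812:prop1}. The key simplification is that $H(\lambda)$ is a sum of terms of the shape $(A)\iota_+ r(\lambda S)\bullet (B)$ with $A,B\in\{1\otimes u,\,u\otimes 1\}$, and $\frac{\partial}{\partial u}$ applied to these is a constant tensor ($1\otimes 1$ in one slot), so $\left(\frac{\partial}{\partial u}\right)_{L/R}$ produces very rigid $\VV^{\otimes 3}$-valued expressions. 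One expands $\dgal{u_\lambda\dgal{u_\mu u}}_L$, $\dgal{u_\mu\dgal{u_\lambda u}}_R$ and $\dgal{\dgal{u_\lambda u}_{\lambda\mu}u}_L$, uses the $\bullet_i$-action relations from Lemma~\ref{lemma:bullet-i}, and collects the result as a sum over the possible ``words'' in $u$ occupying the three tensor slots (of the form $u\otimes u\otimes u$, $u^2\otimes u\otimes 1$, etc., possibly shifted by powers of $S$ encoded through $\iota_+$). Because only four distinct products $a,b,c$ and $d=-a(z^{-1})$ appear, the vanishing of $\dgal{u_\lambda u_\mu u}$ splits into a handful of scalar functional equations among $\iota_+a$, $\iota_+b$, $\iota_+c$, with arguments of the form $z$, $zw$, $z^{-1}$, etc.

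\textbf{Step 2--3: solving the functional system.} I expect the cleanest route is to track the ``extreme'' tensor monomials (those cubic in $u$ with $u$ in every slot, analogous to looking at top-degree coefficients as in the proofs of Proposition~\ref{20210812:prop1} and Proposition~\ref{Pr:class1}); these should force $b(z)=c(z)$ and an equation like $a(z)\,a(w)=a(zw)\cdot(\text{something})$ together with a compatibility relation tying $a$ to $b$. Solving $b(z)=-b(z^{-1})$ together with such a multiplicative-type equation among rational functions gives $b(z)=\beta\frac{1+z^k}{1-z^k}$ composed with a monomial substitution (the only rational solutions of $f(z)f(w)=f(zw)$-type relations subject to the symmetry $z\mapsto z^{-1}$ are of this form), and likewise $a_1(z)=\alpha\frac{1}{1-z}$ up to the substitution $z\mapsto z^k$ and an overall prefactor $z^p$ coming from the freedom in $d(z)=-a(z^{-1})$. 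The leftover cross-terms then yield the scalar constraint $\alpha(2\beta+\alpha)=0$. I would finally check the converse direction by substituting \eqref{20211002:eq1} back and verifying $\dgal{u_\lambda u_\mu u}=0$, which is a direct (if lengthy) computation.

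\textbf{Main obstacle.} The genuine difficulty is bookkeeping in Step~1: making sure the three Jacobi terms are expanded in a \emph{common} space $(\mc R^{\otimes 3})((\lambda,\mu))$ so that coefficients may legitimately be compared --- this is where Lemma~\ref{20210918:lem2} is essential, since naively the three terms live in $((\lambda))((\mu))$ versus $((\mu))((\lambda))$ --- and then correctly classifying the rational solutions of the resulting functional equations. The algebra itself is routine but voluminous; the conceptual care is entirely in the $\iota_+$-expansions and in arguing that the listed family exhausts all solutions (no exotic rational function slips through), which one handles by the same ``look at the highest/lowest power'' arguments already used repeatedly in Section~\ref{Sec:Classif}.
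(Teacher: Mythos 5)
Your plan follows the same route as the paper: reduce to the Jacobi identity on the generator via Theorem \ref{prop:master-rat} (skewsymmetry being built into \eqref{eq:NIH2}), expand the three Jacobi terms with the Master Formula and Lemma \ref{20210917:lem1}, separate by linearly independent tensor monomials in $\mc R^{\otimes 3}$, and solve the resulting functional equations for $\iota_+a,\iota_+b,\iota_+c$ by Laurent-coefficient arguments; you also correctly identify Lemma \ref{20210918:lem2} as the point that makes the comparison of the three Jacobi terms legitimate. However, two of your expected shortcuts do not match what actually happens, and they are where the real work lies. First, the coefficient of $u\otimes u\otimes u$ does \emph{not} force $b=c$: it yields an identity whose left side involves only $b$ and is independent of $\mu$ while the right side involves only $c$ and is independent of $\lambda$, so both sides must equal a common constant $\gamma$, giving the \emph{same} quadratic functional equation $\left(\iota_+b(z)+\iota_+b(w)\right)\iota_+b(zw)-\iota_+b(z)\iota_+b(w)=\gamma$ for $b$ and for $c$ separately. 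Each of $b,c$ is then of the form $\beta\frac{1+z^k}{1-z^k}$, $\beta\frac{1+z^{\tilde k}}{1-z^{\tilde k}}$ with a priori different exponents; the equality $b=c$ (and $\tilde k=k$) only emerges at the very end by equating the two expressions for $a$ produced by the two cross-term conditions.

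Second, the functional equations are not of the multiplicative type $f(z)f(w)=f(zw)$, and there is no off-the-shelf classification to invoke: solving $\left(R(z)+R(w)\right)R(zw)-R(z)R(w)=\gamma$ and $\left(R(z)+R(w)\right)Q(zw)+Q(z)Q(w)=0$ in $\kk((z,w))$ requires a genuine induction on Laurent coefficients (first ruling out a nonzero lowest-order term of negative or positive degree, then showing the nonzero coefficients sit on an arithmetic progression $k\mb Z_{\geq0}$ and are all equal), which is the content of the paper's Lemma \ref{20210920:lem1} and is what produces the constraint $\alpha(2\beta+\alpha)=0$. Your ``highest/lowest power'' heuristic is the right instinct, but as written the classification of solutions is asserted rather than proved, and this is the one step of the theorem that cannot be dismissed as routine bookkeeping.
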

Before proving Theorem \ref{thm:NIH} we need some preliminary results.
Let $H(S)$ be as in \eqref{eq:NIH2} and let us define a double multiplicative $\lambda$-bracket on $\mc R$ by setting
\begin{equation}\label{NIH-lambda}
\begin{split}
\dgal{ u_\lambda u}&=H(\lambda)=(1\otimes u)\iota_+a(\lambda S)\bullet(1\otimes u)+(1\otimes u)\iota_+b(\lambda S)\bullet(u\otimes1)
\\
&+(u\otimes 1)\iota_+c(\lambda S)\bullet(1\otimes u)-(u\otimes 1)\iota_+a(\lambda^{-1} S^{-1})\bullet(u\otimes1)\in(\mc R\otimes\mc R)((\lambda))\,,
\end{split}
\end{equation}
and extending to $\mc R$ by the Master Formula \eqref{master-infinite}.
\begin{proposition}\label{20210920:prop1}
Jacobi identity on generators holds for the double multiplicative $\lambda$-bracket \eqref{NIH-lambda} if and only if the rational functions
$a(z),b(z),c(z)\in\kk(z)$, with $b(z^{-1})=-b(z)$ and $c(z^{-1})=-c(z)$, satisfy
\begin{equation}\label{conditions}
\begin{split}
&\left(\iota_+b(z)+\iota_+ b(w)\right)\iota_+b(zw)-\iota_+ b(z)\iota_+b(w)=\gamma\,,
\\
&\left(\iota_+c(z)+\iota_+ c(w)\right)\iota_+c(zw)-\iota_+ c(z)\iota_+c(w)=\gamma\,,
\\
&\left(\iota_+b(z)+\iota_+ b(w)\right)\iota_+a(zw)+\iota_+ a(z)\iota_+a(w)=0\,,
\\
&\left(\iota_+c(z)+\iota_+ c(w)\right)\iota_+a(zw)+\iota_+ a(z)\iota_+a(w)=0\,,
\end{split}
\end{equation}
for some  $\gamma\in\kk$.
\end{proposition}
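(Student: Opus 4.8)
The plan is to verify the Jacobi identity \eqref{Eq:DMA3} on the single generator $u$ of $\mc R$, that is, to compute the triple bracket $\dgal{u_\lambda u_\mu u}$ of \eqref{Eq:Triple} for the $\lambda$-bracket \eqref{NIH-lambda} and to show it vanishes precisely when \eqref{conditions} holds. By the rational analogue of Theorem \ref{20130921:prop1} (Theorem \ref{prop:master-rat}), the Jacobi identity on generators is still governed by the bilateral-series version of equation \eqref{20200713:eq2b}, read in $(\mc R^{\otimes 3})((\lambda,\mu))$ — the correct ambient space by Lemma \ref{20210918:lem2} — with the Laurent-series coefficients of $\dgal{u_\lambda u}=H(\lambda)$ substituted for the $f_k$. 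Expanding the $\bullet$-products \eqref{eq:bullet-op} in \eqref{NIH-lambda}, and writing $\iota_+a(z)=\sum_n a_n z^n$, $\iota_+b(z)=\sum_n b_n z^n$, $\iota_+c(z)=\sum_n c_n z^n$, one finds
$$\dgal{u_\lambda u}=\sum_{n\in\mb Z}\lambda^n\big(a_n\,1\otimes u_n u+b_n\,u_n\otimes u+c_n\,u\otimes u_n+d_n\,u u_n\otimes 1\big),\qquad d_n=-a_{-n},$$
so the coefficient $f_n$ of $\lambda^n$ involves only $u_0$ and $u_n$, and $\frac{\partial f_n}{\partial u_i}$ is nonzero only for $i\in\{0,n\}$.

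Next I would plug these $f_n$ into \eqref{20200713:eq2b}, expand all the $\bullet_1,\bullet_2,\bullet_3$-actions and the cyclic permutations $\sigma,\sigma^2$, and organise the outcome by its tensor content. Each surviving term is a $\kk$-multiple of a ``shifted tensor template'' — the three-slot analogue of the three linearly independent vectors $1\otimes(|_{x=S}u)u$, $u(|_{y=S}u)\otimes 1$, $(|_{x=S}u)\otimes(|_{y=S}u)$ that already governed the skewsymmetry analysis \eqref{eq3}--\eqref{eq1} — multiplied by a product of the series $\iota_+a,\iota_+b,\iota_+c$ evaluated at the frequencies $z:=\lambda S$, $w:=\mu S$ and $zw=\lambda\mu S$ carried by the three terms of \eqref{20200713:eq2b}. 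It is convenient first to use the skewsymmetry of $\dgal{u_\lambda u}$ (equivalently \eqref{eq1}), together with Lemma \ref{Lem:Trick} and the symmetry \eqref{Eq:TriSkew} of the triple bracket, to cut down to a small collection of templates. Since words in the $u_i$ are linearly independent in $\mc R^{\otimes 3}=\kk\langle u_i\rangle^{\otimes 3}$, vanishing of $\dgal{u_\lambda u_\mu u}$ is equivalent to vanishing of the scalar coefficient of each template, and carrying this out yields exactly the four functional identities \eqref{conditions}. The constant $\gamma$ enters because two of the coefficient equations differ only by a term independent of $w$ (resp. of $z$), which must therefore be a scalar — the same mechanism that extracted \eqref{eq2} from \eqref{eq1}. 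For the converse, if \eqref{conditions} holds then substituting back makes every template coefficient vanish, so $\dgal{u_\lambda u_\mu u}=0$.

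The hard part is purely computational: tracking the $\bullet_i$-products, the permutations, the $S$-shifts acting on the $u$'s sitting inside the templates, and the $\iota_+$-expansions, all while remaining inside the algebra $\mc Q(\mc R)$ of rational-type pseudodifference operators rather than a mere Laurent completion — which is precisely what Lemmas \ref{20210918:lem1} and \ref{20210918:lem2} ensure and what is needed for the Jacobi identity to make sense at all. Once the expansion is grouped by template, matching it against \eqref{conditions} is a routine verification, entirely parallel to the polynomial computation in the proof of Proposition \ref{20210812:prop1}.
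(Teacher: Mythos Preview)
Your proposal is correct and follows essentially the same route as the paper: expand the three terms of the Jacobi identity on the generator $u$, sort the result by linearly independent ``tensor templates'' in $\mc R^{\otimes 3}$ (the paper uses the seven templates $u\otimes u\otimes u$, $1\otimes u^2\otimes u$, $u^2\otimes 1\otimes u$, $u\otimes u^2\otimes 1$, $u^2\otimes u\otimes 1$, $1\otimes u\otimes u^2$, $u\otimes 1\otimes u^2$), and read off the scalar constraints on $a,b,c$; your explanation of why $\gamma$ appears --- that the $u\otimes u\otimes u$ coefficient splits into a piece depending only on $(\lambda,x,y)$ and a piece depending only on $(\mu,y,z)$, forcing both to be a constant --- is exactly the paper's argument. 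The only difference is presentational: the paper computes the three Jacobi terms directly from sesquilinearity and the Leibniz rules (producing the $48$ enumerated summands it then cancels and groups), whereas you propose plugging the coefficients $f_n$ into the pre-packaged identity \eqref{20200713:eq2b}; these are equivalent bookkeeping choices and lead to the same template analysis.
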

\begin{proof}
For convenience in the computations, let us use notation \eqref{notation-bar}
to rewrite \eqref{NIH-lambda} as
\begin{equation}\label{NIH-lambda2}
\begin{split}
\dgal{ u_\lambda u}&=\iota_+a(\lambda x)\left(1\otimes (|_{x=S}u)u\right)
+\iota_+b(\lambda x)\left((|_{x=S}u)\otimes u\right)
\\&
+\iota_+c(\lambda y)\left(u\otimes (|_{y=S}u)\right)
-\iota_+a(\lambda^{-1} y^{-1})\left(u(|_{y=S}u)\otimes 1\right)
\,.
\end{split}
\end{equation}
We start by computing explicitly the three terms appearing in the Jacobi identity
\begin{equation}\label{tocheck=0}
\dgal{u_\lambda \dgal{u_\mu u}}_L
-\dgal{u_\mu \dgal{u_\lambda u}}_R=\dgal{\dgal{u_\lambda u}_{\lambda\mu}u}_L
\,.
\end{equation}
By a long but straightforward computation, using the first equation in \eqref{20210616:eq2a}, sesquilinearity \eqref{Eq:DMA1}, the left Leibniz rule \eqref{Eq:DML} and \eqref{NIH-lambda2},  we get
\begin{align}
&\dgal{u_\lambda\dgal{u_\mu u}}_L\notag
\\
&=\iota_+a(\lambda x)\iota_+b(\lambda\mu x y)
\big(1\otimes(|_{x=S}u)(|_{y=S}u)\otimes u\big)
\label{A1}
\\
&+\iota_+b(\lambda x)\iota_+b(\lambda\mu xy)
\big((|_{x=S}u)\otimes (|_{y=S}u)\otimes u\big)
\label{A2}
\\
&+\iota_+b(\lambda \mu xy)\iota_+c(\lambda y)
\big((|_{x=S}u)\otimes(|_{y=S}u)\otimes u\big)
\label{A3}
\\
&-\iota_+a(\lambda^{-1} y^{-1})\iota_+b(\lambda\mu x y)
\big((|_{x=S}u)(|_{y=S}u)\otimes 1\otimes u\big)
\label{A4}
\\
&+\iota_+a(\lambda x)\iota_+c(\mu z)
\big(1\otimes (|_{x=S}u)u\otimes (|_{z=S}u)\big)
\label{A5}
\\
&+\iota_+b(\lambda x)\iota_+c(\mu z)
\big((|_{x=S}u)\otimes u\otimes (|_{z=S}u)\big)
\label{A6}
\\
&+\iota_+c(\lambda y)\iota_+c(\mu z)
\big(u\otimes (|_{y=S}u)\otimes (|_{z=S}u)\big)
\label{A7}
\\
&-\iota_+a(\lambda^{-1} y^{-1})\iota_+c(\mu z)
\big(u(|_{y=S}u)\otimes 1\otimes (|_{z=S}u)\big)
\label{A8}
\\
&-\iota_+a(\lambda x)\iota_+a(\mu^{-1}z^{-1})
\big(1\otimes(|_{x=S}u)u(|_{z=S}u)\otimes1\big)
\label{A9}
\\
&-\iota_{+}a(\mu^{-1}z^{-1})\iota_{+}b(\lambda x)
\big((|_{x=S}u)\otimes u(|_{z=S}u)\otimes 1\big)
\label{A10}
\\
&-\iota_+a(\mu^{-1}z^{-1})\iota_{+}c(\lambda y)
\big(u\otimes(|_{y=S}u)(|_{z=S}u)\otimes1\big)
\label{A11}
\\
&+\iota_+a(\lambda ^{-1}y^{-1})\iota_+a(\mu^{-1}z^{-1})
\big(u(|_{y=S}u)\otimes (|_{z=S}u)\otimes 1\big)
\label{A12}
\\
&-\iota_+a(\lambda^{-1}\mu^{-1}y^{-1}z^{-1})\iota_+a(\lambda y)
\big(u\otimes (|_{y=S}u)(|_{z=S}u)\otimes 1\big)
\label{A13}
\\
&-\iota_+a(\lambda^{-1} \mu^{-1} y^{-1}z^{-1})\iota_+b(\lambda y)
\big(u(|_{y=S}u)\otimes (|_{z=S}u)\otimes 1\big)
\label{A14}
\\
&-\iota_+a(\lambda^{-1} \mu^{-1} y^{-1}z^{-1})\iota_+c(\lambda z)
\big(u(|_{y=S}u)\otimes(|_{z=S}u)\otimes 1\big)
\label{A15}
\\
&+\iota_+a(\lambda^{-1} \mu^{-1} y^{-1}z^{-1})\iota_+a(\lambda^{-1} z^{-1})
\big(u(|_{y=S}u)(|_{z=S}u)\otimes 1\otimes1\big)\,.
\label{A16}
\end{align}
Similarly, but using the second equation in \eqref{20210616:eq2a} instead, we get
\begin{align}
&\dgal{u_\mu\dgal{u_\lambda u}}_R\notag
\\
&=\iota_+a(\lambda\mu xy)\iota_+a(\mu x)
\big(1\otimes 1\otimes(|_{x=S}u)(|_{y=S}u) u\big)
\label{B1}
\\
&+\iota_+a(\lambda\mu xy)\iota_+b(\mu x)
\big(1\otimes(|_{x=S}u)\otimes (|_{y=S}u)u\big)
\label{B2}
\\
&+\iota_+a(\lambda\mu xy)\iota_+c(\mu y)
\big(1\otimes (|_{x=S}u)\otimes (|_{y=S}u)u\big)
\label{B3}
\\
&-\iota_+a(\lambda\mu xy)\iota_+a(\mu^{-1} y^{-1})
\big(1\otimes (|_{x=S}u)(|_{y=S}u)\otimes u\big)
\label{B4}
\\
&+\iota_+a(\lambda x)\iota_+a(\mu y)
\big(1\otimes(|_{x=S}u)\otimes (|_{y=S}u) u\big)
\label{B5}
\\
&+\iota_+a(\lambda x)\iota_+b(\mu y)
\big(1\otimes(|_{x=S}u)(|_{y=S}u)\otimes u\big)
\label{B6}
\\
&+\iota_+a(\lambda x)\iota_+c(\mu z)
\big(1\otimes (|_{x=S}u)u\otimes (|_{z=S}u)\big)
\label{B7}
\\
&-\iota_+a(\lambda x)\iota_+a(\mu^{-1} z^{-1})
\big(1\otimes(|_{x=S}u)u(|_{z=S}u)\otimes 1\big)
\label{B8}
\\
&+\iota_+a(\mu y)\iota_+b(\lambda x)
\big((|_{x=S}u)\otimes1\otimes (|_{y=S}u)u\big)
\label{B9}
\\
&+\iota_+b(\lambda x)\iota_+b(\mu y)
\big((|_{x=S}u)\otimes(|_{y=S}u)\otimes u\big)
\label{B10}
\\
&+\iota_+b(\lambda x)\iota_+c(\mu z)
\big((|_{x=S}u)\otimes u\otimes (|_{z=S}u)\big)
\label{B11}
\\
&-\iota_+a(\mu^{-1}z^{-1})\iota_+b(\lambda x)
\big((|_{x=S}u)\otimes u(|_{z=S}u)\otimes 1\big)
\label{B12}
\\
&+\iota_+a(\mu y)\iota_+c(\lambda\mu yz)
\big(u\otimes 1\otimes (|_{y=S}u)(|_{z=S}u)\big)
\label{B13}
\\
&+\iota_+b(\mu y)\iota_+c(\lambda\mu yz)
\big(u\otimes (|_{y=S}u)\otimes (|_{z=S}u)\big)
\label{B14}
\\
&+\iota_+c(\lambda \mu yz)\iota_+c(\mu z)
\big(u\otimes (|_{y=S}u)\otimes (|_{z=S}u)\big)
\label{B15}
\\
&-\iota_+a(\mu^{-1}z^{-1})\iota_+c(\lambda\mu yz)
\big(u\otimes (|_{y=S}u)(|_{z=S}u)\otimes 1\big)
\label{B16}
\,.
\end{align}
Finally, using \eqref{20210616:eq2}, sesquilinearity \eqref{Eq:DMA1}, the right Leibniz rule \eqref{Eq:DMR} and \eqref{NIH-lambda2} we get
\begin{align}
&\dgal{\dgal{u_\lambda u}_{\lambda \mu} u}_L\notag
\\
&=\iota_+a(\lambda \mu x y)\iota_+b(\mu^{-1}x^{-1})
\big(1\otimes (|_{x=S}u)\otimes (|_{y=S}u)u\big)
\label{C1}
\\
&+\iota_+b(\lambda\mu x y)\iota_+b(\mu^{-1} y^{-1})
\big((|_{x=S}u)\otimes (|_{y=S}u)\otimes u\big)
\label{C2}
\\
&+\iota_+b(\mu^{-1} y^{-1})\iota_+c(\lambda\mu y z )
\big(u\otimes (|_{y=S}u)\otimes (|_{z=S}u)\big)
\label{C3}
\\
&-\iota_+a(\lambda^{-1}\mu^{-1} y^{-1} z^{-1})\iota_+b(\mu^{-1} z^{-1} )
\big(u(|_{y=S}u)\otimes (|_{z=S}u)\otimes 1\big)
\label{C4}
\\
&+\iota_+a(\lambda\mu xy)\iota_+c(\lambda x)
\big(1\otimes(|_{x=S}u)\otimes(|_{y=S}u)u\big)
\label{C5}
\\
&+\iota_+b(\lambda \mu xy)\iota_+c(\lambda y)
\big((|_{x=S}u)\otimes(|_{y=S}u)\otimes u\big)
\label{C6}
\\
&+\iota_+c(\lambda\mu yz)\iota_+c(\lambda y)
\big(u\otimes(|_{y=S}u)\otimes (|_{z=S}u)\big)
\label{C7}
\\
&-\iota_+a(\lambda^{-1}\mu^{-1} y^{-1}z^{-1})\iota_+c(\lambda z)
\big(u(|_{y=S}u)\otimes (|_{z=S}u)\otimes 1\big)
\label{C8}
\\
&-\iota_+a(\lambda\mu xy)\iota_+a(\lambda^{-1} x^{-1})
\big((|_{x=S}u)\otimes 1\otimes (|_{y=S}u)u\big)
\label{C9}
\\
&-\iota_+a(\lambda^{-1}y^{-1})\iota_+b(\lambda\mu xy)
\big((|_{x=S}u)(|_{y=S}u)\otimes 1\otimes u\big)
\label{C10}
\\
&-\iota_+a(\lambda^{-1}y^{-1})\iota_+c(\lambda\mu y z)
\big(u(|_{y=S}u)\otimes1\otimes(|_{z=S}u)\big)
\label{C11}
\\
&+\iota_+a(\lambda^{-1} \mu^{-1} y^{-1}z^{-1})\iota_+a(\lambda^{-1}z^{-1})
\big(u(|_{y=S}u)(|_{z=S}u)\otimes 1\otimes1\big)
\label{C12}
\\
&-\iota_+a(\lambda\mu xy)\iota_+a(\mu x)
\big(1\otimes 1\otimes(|_{x=S}u)(|_{y=S}u) u\big)
\label{C13}
\\
&-\iota_+a(\mu y)\iota_+b(\lambda \mu xy)
\big((|_{x=S}u)\otimes1\otimes(|_{y=S}u)u\big)
\label{C14}
\\
&-\iota_+a(\mu y)\iota_+c(\lambda\mu yz)
\big(u\otimes 1\otimes (|_{y=S}u)(|_{z=S}u)\big)
\label{C15}
\\
&+\iota_+a(\mu z)\iota_+a(\lambda^{-1}\mu^{-1}y^{-1}z^{-1})
\big(u(|_{y=S}u)\otimes 1\otimes (|_{z=S}u)\big)\,.
\label{C16}
\end{align}
The following terms cancel in the Jacobi identity \eqref{tocheck=0}:
\begin{align*}
\eqref{A3}-\eqref{C6}=0
\,,\quad
&\eqref{A4}-\eqref{C10}=0
\,,\quad
\eqref{A5}-\eqref{B7}=0\,,
\quad
\eqref{A6}-\eqref{B11}=0\,,
\\
\eqref{A9}-\eqref{B8}=0\,,
\quad
&\eqref{A10}-\eqref{B12}=0\,,
\quad
\eqref{A15}-\eqref{C8}=0\,,
\quad
\eqref{A16}-\eqref{C12}=0\,,
\\
&\eqref{B1}+\eqref{C13}=0\,,
\quad
\eqref{B13}+\eqref{C15}=0\,,
\end{align*}
and using the fact that $b(z)=-b(z^{-1})$ we have also the cancellation
$$
\eqref{B2}+\eqref{C1}=0\,,
\quad
\eqref{B14}+\eqref{C3}=0\,.
$$
Next, observe that equation \eqref{tocheck=0} can be understood as an identity in
the space $\mc V\otimes\mc V\otimes\mc V$ with coefficients in
$\kk[x^{\pm1},y^{\pm1},z^{\pm1}]((\lambda,\mu))$. Since the elements
$u\otimes u\otimes u$, $1\otimes u^2\otimes u$, $u^2\otimes 1 \otimes u$,
$u\otimes u^2\otimes 1$, $ u^2\otimes u\otimes 1$, $1\otimes u\otimes u^2$
and 
$u\otimes 1\otimes u^2$ are linearly independent, the Jacobi identity \eqref{tocheck=0} holds if and only if each coefficient of these elements
vanishes, leading to seven further identities that we want to prove being equivalent
to the four conditions \eqref{conditions}.

Collecting the terms \eqref{A2}, \eqref{A7},
\eqref{B10}, \eqref{B15}, \eqref{C2} and \eqref{C7}
acting on $u\otimes u \otimes u$, and using the fact that $b(z)=-b(z^{-1})$
we then get the following identity
\begin{equation}\label{X1}
\begin{split}
   & \Big(
    \iota_+b(\lambda x)\iota_+b(\lambda\mu xy)
    +\iota_+b(\mu y)\iota_+b(\lambda\mu x y)
    -\iota_+b(\lambda x)\iota_+b(\mu y)
    \Big)
        \big((|_{x=S}u)\otimes(|_{y=S}u)\otimes u\big)
    \\
    &
    =\Big(
    \iota_+c(\lambda y)\iota_+c(\lambda\mu yz)
    +\iota_+c(\mu z)\iota_+c(\lambda \mu yz)
    -\iota_+c(\lambda y)\iota_+c(\mu z)\Big)
       \big(u\otimes (|_{y=S}u)\otimes(|_{z=S}u)\big)
\,.
\end{split}
\end{equation}
Note that the LHS of \eqref{X1} is independent of $z$, hence of $\mu$, while the RHS is independent of $x$, hence of $\lambda$. This forces both sides to be a
constant multiple of $u\otimes u \otimes u$. This condition is equivalent to the first two conditions in \eqref{conditions}.

Next, collecting the terms \eqref{A1}, \eqref{B4},
and \eqref{B6}
acting on $1\otimes u^2 \otimes u$
%, and using the fact that $b(z)=-b(z^{-1})$
we get the identity
\begin{equation}\label{X2}
\begin{split}
   & \Big(
    \iota_+a(\lambda x)\iota_+b(\lambda\mu x y)
    +\iota_+a(\lambda\mu xy)\iota_+a(\mu^{-1} y^{-1})
    -\iota_+a(\lambda x)\iota_+b(\mu y)
    \Big)
    \big(1\otimes(|_{x=S}u)(|_{y=S}u)\otimes u\big)
    =0
    \,.
\end{split}
\end{equation}
Using the fact that $b(z)=-b(z^{-1})$ the identity
\eqref{X2} is equivalent to the third condition in \eqref{conditions}.
We get the same condition looking at the coefficient of $u^2\otimes u\otimes 1$
and $u\otimes 1\otimes u^2$.

Finally, collecting the terms \eqref{A8}, \eqref{C11},
and \eqref{C16}
acting on $u^2\otimes 1\otimes u$
%, and using the fact that $b(z)=-b(z^{-1})$
we get the identity
\begin{equation}\label{X3}
\begin{split}
   & \Big(
\iota_+a(\lambda^{-1} y^{-1})\iota_+c(\mu z)
-\iota_+a(\lambda^{-1}y^{-1})\iota_+c(\lambda\mu y z)
\\
&+\iota_+a(\mu z)\iota_+a(\lambda^{-1}\mu^{-1}y^{-1}z^{-1})
\Big)
\big(u(|_{y=S}u)\otimes 1\otimes (|_{z=S}u)\big)
=0
    \,.
\end{split}
\end{equation}
Using the fact that $c(z)=-c(z^{-1})$ the identity
\eqref{X3} is equivalent to the fourth condition in \eqref{conditions}.
We get the same condition by looking at the coefficient of $1\otimes u\otimes u^2$
and $u\otimes u^2\otimes 1$. This concludes the proof.
\end{proof}
\begin{lemma}\label{20210920:lem1}
Let $R(z),Q(z)\in\kk((z))$.
\begin{enumerate}[(a)]
    \item Let $\gamma\in\kk$. Then, $R(z)$ satisfies the equation (in $\kk((z,w))$)
\begin{equation}\label{tosolve1}
(R(z)+R(w))R(zw)-R(z)R(w)=\gamma\,,
\end{equation}
    if and only if $R(z)=\beta$ or $R(z)=R_1(z^k)$ for $k\geq1$ and 
    \begin{equation}\label{20210920:eq1}
    R_1(z)=\beta\big(1+2\sum_{n\geq1}z^n\big)
    =\beta\, \iota_+\frac{1+z}{1-z}
    \,,
    \end{equation}
    where $\beta^2=\gamma$.
    \item Let $R(z)=R_1(z^k)$ for some $k\geq1$ and $R_1(z)$ as in \eqref{20210920:eq1}. Then $Q(z)$, satisfies the equation (in $\kk((z,w))$)
\begin{equation}\label{tosolve2}
    (R(z)+R(w))Q(zw)+Q(z)Q(w)=0\,,
\end{equation}
    if and only if 
\begin{equation}\label{20210920:eq2}
    Q(z)=\alpha\sum_{n\geq 0}z^{nk+p}=\iota_+\frac{\alpha z^p}{1-z^k}
    \,,\quad p\in\mb Z\,,
\end{equation}
    where $\alpha,\beta\in\kk$ satisfy $\alpha(2\beta+\alpha)=0$.
\end{enumerate}
\end{lemma}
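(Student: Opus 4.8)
\textbf{Proof plan for Lemma \ref{20210920:lem1}.}

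The plan is to treat the two functional equations \eqref{tosolve1} and \eqref{tosolve2} as identities in $\kk((z,w))$ between power series in two variables, extract constraints by comparing coefficients, and solve them explicitly. For part (a), I would first observe that constant solutions $R(z)=\beta$ with $\beta^2=\gamma$ obviously work, so assume $R$ is non-constant. Writing $R(z)=\sum_{n}r_nz^n$ (with a lower bound on the indices since $R\in\kk((z))$), the key computational step is to set $w=1$ in \eqref{tosolve1}, which yields $(R(z)+R(1))R(z)-R(z)R(1)=\gamma$, i.e. $R(z)^2=\gamma$ as formal power series — but this forces $R$ constant unless $R(1)$ is not a well-defined substitution, so more care is needed: one cannot naively substitute $w=1$ in a Laurent series. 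Instead I would work order by order. Let $p$ be the smallest index with $r_p\neq 0$. Comparing the lowest-order terms in $z$ (with $w$ fixed) on both sides of \eqref{tosolve1} shows that the structure is rigid: I expect to deduce first that $R$ must in fact be a genuine power series in non-negative powers (lower bound $0$), then that $r_0=\beta$ with $\beta^2=\gamma$, and then by induction on $n$ that the coefficients are determined. The cleanest route is to substitute the ansatz $R(z)=\beta\,\iota_+\frac{1+z}{1-z}=\beta(1+2\sum_{n\geq1}z^n)$ and verify it satisfies \eqref{tosolve1}, which is a short geometric-series manipulation, and then prove uniqueness (up to the substitution $z\mapsto z^k$) by showing that if $d=\gcd\{n>0: r_n\neq0\}$ then $R(z)=R_1(z^d)$ and $R_1$ is forced to be \eqref{20210920:eq1}. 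The $z\mapsto z^k$ freedom arises precisely because \eqref{tosolve1} is multiplicative in the sense that it only constrains $R$ through products $zw$, $z$, $w$, so any rescaling of the grading preserves solutions.

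For part (b), with $R(z)=R_1(z^k)$ fixed as in \eqref{20210920:eq1}, equation \eqref{tosolve2} is now \emph{linear-plus-quadratic} in the unknown $Q$. I would again proceed by first guessing the solution $Q(z)=\iota_+\frac{\alpha z^p}{1-z^k}=\alpha\sum_{n\geq0}z^{nk+p}$ and checking directly that it satisfies \eqref{tosolve2}: substituting, $(R(z)+R(w))Q(zw)$ and $Q(z)Q(w)$ both become explicit double geometric series in $z^k,w^k$, and the identity reduces to the scalar relation $\alpha(2\beta+\alpha)=0$ after collecting terms (the condition $\beta^2=\gamma$ from part (a) is also used implicitly). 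For the converse, suppose $Q$ solves \eqref{tosolve2}. If $\alpha+2\beta=0$ is forced then $Q$ as above works; otherwise one must show $Q=0$ is the only option, or more precisely that $Q$ must have the stated monomial-supported form. The argument: \eqref{tosolve2} says $Q(zw)=-\frac{Q(z)Q(w)}{R(z)+R(w)}$, and since $R(z)+R(w)=\beta\big(\iota_+\frac{1+z^k}{1-z^k}+\iota_+\frac{1+w^k}{1-w^k}\big)=2\beta\,\iota_+\frac{1-z^kw^k}{(1-z^k)(1-w^k)}$, the right-hand side becomes $-\frac{(1-z^k)(1-w^k)}{2\beta(1-z^kw^k)}Q(z)Q(w)$; matching this against $Q(zw)$ and using the factorization structure forces $Q(z)=\frac{c\,z^p}{1-z^k}$ for some constant $c$ and integer $p$ (the denominator $1-z^k$ must appear to cancel the numerator factors consistently), and then plugging back in gives $c(2\beta+c)=0$, i.e. $c=\alpha$ with $\alpha(2\beta+\alpha)=0$.

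The main obstacle I anticipate is making the coefficient-comparison arguments rigorous without drowning in indices — in particular, justifying that solutions of \eqref{tosolve1} must be actual power series (bounded below at $0$) rather than genuinely bilateral Laurent series, and pinning down the $z\mapsto z^k$ ambiguity cleanly. The trick for the former is to look at the lowest-degree term: if $R=\sum_{n\geq p}r_nz^n$ with $r_p\neq0$ and $p<0$, the term $R(z)R(w)$ has a pole of order $2|p|$ in the combined degree while $(R(z)+R(w))R(zw)$ has the wrong homogeneity balance, giving a contradiction unless $p=0$; this should be spelled out carefully. For the $z^k$ ambiguity, I would phrase it as: the support of $R-r_0$ is contained in $k\mb Z_{>0}$ where $k$ is the gcd of the support, and then \eqref{tosolve1} restricted to the sublattice is exactly the $k=1$ case. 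Everything else is a routine geometric-series verification, so the write-up should be kept short: state the ansatz, verify it, and dispatch uniqueness by the gcd/lowest-order argument.
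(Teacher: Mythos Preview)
Your plan for part (a) is essentially the paper's own approach: the paper also writes $R(z)=\sum_{n\geq N}r_nz^n$, rules out $N<0$ and $N>0$ by looking at the coefficients of $z^{2N}w^N$ and $z^Nw^N$ respectively (your ``lowest-degree homogeneity'' argument), obtains $r_0^2=\gamma$ from the constant term, and then uses the coefficients of $z^kw^k$ and $z^kw^{nk+h}$ to pin down the $r_n$'s inductively. Your gcd/sublattice phrasing is a fine repackaging of exactly that induction.

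For part (b) your route diverges from the paper's. The paper proceeds by direct coefficient extraction from the expanded equation $2\beta\big(1+\sum_{m\geq1}z^{mk}+\sum_{n\geq1}w^{nk}\big)\sum_{\ell\geq0}q_\ell z^\ell w^\ell+\sum_{m,n\geq 0}q_mq_nz^mw^n=0$, reading off conditions at $z^\ell w^\ell$, $z^{h+nk}w^0$, and $z^{nk}w^0$. Your idea is instead to use the closed form $R(z)+R(w)=2\beta\,\iota_+\frac{1-(zw)^k}{(1-z^k)(1-w^k)}$ and rewrite the equation multiplicatively. This works, and in fact becomes very clean once you set $P(z):=(1-z^k)Q(z)$: the equation collapses to $-2\beta P(zw)=P(z)P(w)$, and comparing coefficients of $z^mw^n$ immediately gives $p_mp_n=0$ for $m\neq n$, forcing $P$ to be a single monomial $cz^p$, hence $Q(z)=\frac{cz^p}{1-z^k}$ with $c(c+2\beta)=0$. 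I would spell this reduction out explicitly rather than invoking ``factorization structure,'' which as written is too vague to count as an argument.

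One genuine omission: your division by $R(z)+R(w)$ presupposes $\beta\neq0$. When $\beta=0$ you have $R=0$ and \eqref{tosolve2} reads $Q(z)Q(w)=0$, forcing $Q=0$ (i.e.\ $\alpha=0$), which is consistent with $\alpha(2\beta+\alpha)=0$; this trivial case should be recorded separately before you invert.
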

\begin{proof}
For part (a) it is straightforward to check that $R(z)=\beta$ or $R(z)$ as in
\eqref{20210920:eq1}, with $\beta^2=\gamma$, solve \eqref{tosolve1}.
On the other hand, let $N\in\mb Z$ and let us write $R(z)=\sum_{n\geq N}r_nz^n$, with $r_N\neq0$. Then, \eqref{tosolve1} becomes
\begin{equation}\label{tosolve1b}
\sum_{m\geq2N,n\geq N}r_{m-n}r_nz^mw^n
+\sum_{m\geq N,n\geq 2N}r_{n-m}r_mz^mw^n
-\sum_{n,m\geq N}r_mr_nz^mw^n=\gamma\,.
\end{equation}
If $N<0$, then equating the coefficient of $z^{2N}w^N$ in both sides of
\eqref{tosolve1b} we get $r^2_N=0$, hence $r_N=0$, which is a contradiction. If $N>0$,
equating the coefficient of $z^Nw^N$ in both sides of \eqref{tosolve2} we get again $r_N^2=0$, which leads to a contradiction. Hence, it remains to consider the case
$N=0$. By equating the coefficient of $z^0w^0$ in both sides of \eqref{tosolve2}
we get
$r_0^2=\gamma$, which leads to $r_0=\beta$ with $\beta^2=\gamma$. By equating the coefficient of $z^kw^k$, $k>0$, in both sides of \eqref{tosolve2} we get
$$
(2\beta-r_k)r_k=0\,.
$$
Hence, $r_k=2\beta$ or $r_k=0$, for $k\geq1$. 
Let $k\geq 1$ be the smallest integer such that $r_k=2\beta\neq 0$ but $r_h=0$ for all $1\leq h<k$. 
We claim that for each $n\geq1$, $r_{nk}=r_k$ while $r_{nk+h}=0$ for $1\leq h<k$. This is shown by induction. Looking at the coefficient of $z^k w^{nk}$ in \eqref{tosolve2}, we get $r_k r_{(n-1)k} - r_k r_{nk}=0$ which yields the first equality. For the second, we look at the coefficient of $z^k w^{nk+h}$ in \eqref{tosolve2} which gives $r_k r_{(n-1)k+h}-r_k r_{nk+h}=0$. 

Hence, either $r_n=0$ for every $n\geq1$, thus $R(z)=\beta$, or the non-zero terms are  $r_{nk}=2\beta$, for every $n\geq 1$ and some $k\geq1$, which gives that $R(z)=R_1(z^k)$ for $R_1(z)$ as in \eqref{20210920:eq1}. This proves part (a).

For part (b), it is straightforward to check that $Q(z)$ in \eqref{20210920:eq2} solves \eqref{tosolve2}. 
Moreover, if $Q(z)$ is a solution to \eqref{tosolve2}, then $z^pQ(z)$ is also
a solution, for every $p\in\mb Z$. Hence, discarding the trivial solution $Q(z)=0$, we are left to seek solutions
of the form $Q(z)=\sum_{n\geq0}q_nz^n$ with $q_0\neq 0$. Using \eqref{20210920:eq1} to expand $R(z)=R_1(z^k)$, we rewrite \eqref{tosolve2} as
\begin{equation}\label{tosolve2b}
2\beta\big(1+\sum_{m\geq1}z^{mk}+\sum_{n\geq1}w^{nk}\big)
\sum_{\ell\geq0}q_\ell z^\ell w^\ell
+\sum_{n,m\geq 0}q_mq_nz^mw^n=0\,.
\end{equation}
Equating the coefficient of $z^\ell w^\ell$, $\ell\geq0$, in both sides of \eqref{tosolve2b} we get
\begin{equation}\label{Eq:solv3}
    2\beta q_\ell+q_\ell^2=0\,.
\end{equation}
%In particular, we can write $q_0=\alpha$, where $\alpha\in\kk\setminus \{0\}$ is such that $\alpha(2\beta+\alpha)=0$. 
Next, looking at the coefficient of $z^{h+nk}w^0$, for $1\leq h <k$ and $n\geq1$, we find that 
$q_{h+nk}q_0=0$. Since $q_0\neq 0$ by assumption, this yields 
\begin{equation} \label{Eq:solv4}
    q_j=0 \quad \text{ if }\quad j \notin k\mb Z_{\geq 0}\,.
\end{equation}
Finally, if we look at the coefficient of $z^{nk}w^{0}$, $n\geq 1$,  we get
$q_0(2\beta+q_{nk})=0$. Together with the condition \eqref{Eq:solv3}, we get that $q_0 (q_0-q_{nk})=0$, hence $q_{nk}=q_0$.  
Combining this identity with \eqref{Eq:solv4}, we see that $q_\ell=0$ except if $\ell$ is a multiple of $k$, in which case $q_{nk}=q_0$. 
Due to \eqref{Eq:solv3}, we see by adding the trivial solution $Q(z)=0$ that we can write $q_0=\alpha$, where $\alpha\in\kk$ is such that $\alpha(2\beta+\alpha)=0$.
This concludes the proof of part (b).
\end{proof}
\begin{proof}[Proof of Theorem \ref{thm:NIH}]
By Theorem \ref{prop:master-rat} we need to show that the double multiplicative $\lambda$-bracket defined by \eqref{NIH-lambda} satisfies skewsymmetry and Jacobi identity on generators.
Skewsymmetry holds since, by construction, $H(S)=-H^*(S)$. By Proposition \ref{20210920:prop1} Jacobi identity holds on generators if and only if
the four conditions in \eqref{conditions} are satisfied. By Lemma \ref{20210920:lem1}(a) and the fact that $b(z)=-b(z^{-1})$,
$c(z)=-c(z^{-1})$, the first two conditions in \eqref{conditions} give that
$$
b(z)=\beta\frac{1+z^k}{1-z^k}\,, \quad 
c(z)=\beta\frac{1+z^{\tilde{k}}}{1-z^{\tilde{k}}}\,,
$$
for some $\beta\in\kk$ and $k,\tilde{k}\geq 1$. 
%Hence, the third and fourth equations in \eqref{conditions} gives the same condition. 
By Lemma \ref{20210920:lem1}(b) , 
the third equation in \eqref{conditions} is satisfied if and only if
$$
a(z)=\frac{\alpha z^p}{1-z^k}\,,\quad p\in\mb Z\,,
$$
where $\alpha\in\kk$ is such that $\alpha(2\beta+\alpha)=0$. 
Similarly, the fourth equation in \eqref{conditions} is satisfied if and only if
$$
a(z)=\frac{\tilde{\alpha} z^{\tilde{p}}}{1-z^{\tilde{k}}}\,,\quad {\tilde{p}}\in\mb Z\,,
$$
where $\tilde{\alpha}\in\kk$ is such that $\tilde{\alpha}(2\beta+\tilde{\alpha})=0$. Equating both forms for $a(z)$ yields that ${\tilde{k}}=k$, ${\tilde{p}}=p$, and ${\tilde{\alpha}}=\alpha$, which concludes the proof.
\end{proof}
\begin{remark}\label{Rem:HamCW}
Let us introduce the following notation (motivated by \eqref{eq:action-diff}): $\mathrm{r}_u=1\otimes u$, $\mathrm{l}_u=u\otimes 1$, $\mathrm{c}_u=\mathrm{l}_u-\mathrm{r}_u$ and $\mathrm{a}_u=\mathrm{l}_u+\mathrm{r}_u$. 
By Theorem \ref{thm:NIH} with $\alpha=-1$, $\beta=1/2$, $k=1$ and $p=q+1$ for $q\geq1$, we have that the pseudodifference operator of rational type
\begin{equation}\label{casatiwang}
\begin{split}
H(S)&=-\mathrm{r}_u\iota_+\frac{S^{q+1}}{1-S}\bullet\mathrm{r}_u
+\frac12\mathrm{r}_u\iota_+\frac{1+S}{1-S}\bullet\mathrm{l}_u
+\frac12\mathrm{l}_u\iota_+\frac{1+S}{1-S}\bullet\mathrm{r}_u
-\mathrm{l}_u\iota_+\frac{S^{-q}}{1-S}\bullet\mathrm{l}_u
\\
&=\sum_{i=1}^q\left(\mathrm{r}_u S^i\bullet \mathrm{r}_u
-\mathrm{l}_u S^{-i}\bullet \mathrm{l}_u\right)
-\frac12\mathrm{a}_u\bullet\mathrm{c}_u
-\frac12\mathrm{c}_u\iota_+\frac{1+S}{1-S}\bullet\mathrm{c}_u
\,,
\end{split}
\end{equation}
defines a Poisson structure of rational type on $\mc R$. The operator $H(S)$ in \eqref{casatiwang} appeared in \cite{CW2} (without the use of the embedding $\iota_+$) and it is called
the non-local Poisson structure of the non-commutative Narita-Itoh-Bogoyavlensky lattice hierarchy.
However, note that $H(S)$ does not define a non-local Poisson structure on $\mc R$ in the sense of Theorem \ref{prop:master-nl}. Indeed, let us replace in \eqref{eq:NIH2}, the Laurent series
$\iota_+ a(z)$, $\iota_+ b(z)$ and $\iota_+ c(z)$ by bilateral series $A(z),B(z),C(z)\in\kk[[z,z^{-1}]]$ such that $B(z)=-B(z^{-1})$, $C(z)=-C(z^{-1})$.
The same computations as in the proof of Proposition \ref{20210920:prop1} show that $H(S)$ is a non-local Poisson structure if and only if conditions \eqref{conditions}, obtained by replacing $\iota_+ a(z)$, $\iota_+ b(z)$ and $\iota_+ c(z)$ with $A(z),B(z),C(z)$, hold. It is
not hard to check that the only solution to those equations is then $A(z)=B(z)=C(z)=0$.
\end{remark}

\begin{remark}
Motivated by the works \cite{EKV17,EKV19} on a modification of the commutative Narita-Itoh-Bogoyavlensky lattice hierarchy, it is natural to ask whether there exists a rational Poisson structure with constant
coefficient $K(S)=\iota_+ r(S)(1\otimes1)$, $r(z)\in\kk(z)$, compatible with the rational Poisson structure $H(S)$ from Theorem \ref{thm:NIH}. Compatibility means that $H(S)+K(S)$ is a rational Poisson structure as well. By Example \ref{20211002:exa1}, $K(S)$ is a rational Poisson structure if and only if
$r(z)=-r(z^{-1})$. Let $\dgal{u_\lambda u}_H=H(\lambda)$ be as in \eqref{NIH-lambda2}, and let us set $\dgal{u_\lambda u}_K=K(\lambda)=\iota_+r(\lambda)(1\otimes1)$. Then,
by Theorem \ref{prop:master-rat}, $H(S)+K(S)$ is a rational
Poisson structure if and only if $\dgal{u_\lambda u}=\dgal{u_\lambda u}_H+\dgal{u_\lambda u}_K$
defines a rational double multiplicative Poisson vertex algebra. To check this, it suffices to verify
that Jacobi identity \eqref{tocheck=0} holds. Using the fact that $H(S)$ and $K(S)$ are rational
Poisson structures this reduces to the condition
\begin{equation}\label{tocheck=0bis}
\dgal{u_\lambda \dgal{u_\mu u}_H}_{K,L}
-\dgal{u_\mu \dgal{u_\lambda u}_H}_{K,R}=\dgal{{\dgal{u_\lambda u}_H}_{\lambda\mu}u}_{K,L}
\,.
\end{equation}
Equation \eqref{tocheck=0bis} is equivalent to the following three equations for the rational function $r(z)$ (we omit the details of the computations):
\begin{equation}\label{conditions-rem}
\begin{split}
&\left(\iota_+b(z x)-\iota_+ b(w x)\right)\iota_+r(z w x)
+\iota_+ a(z x)\iota_+r(w)+\iota_+ a(w^{-1}x^{-1})\iota_+r(z)
=0\,,
\\
&\left(\iota_+b(z x)+\iota_+ b( z w)\right)\iota_+r(w)
-\iota_+ a(z w)\iota_+r(z)+\iota_+ a(z x)\iota_+r(z w x)
=0\,,
\\
&\left(\iota_+b(z x)+\iota_+ b(z w)\right)\iota_+r(w)
+\iota_+ a(z^{-1} w^{-1})\iota_+r(z)-\iota_+ a(z^{-1}x^{-1})\iota_+r(z w x)
=0\,,
\end{split}
\end{equation}
where $a(z)$ and $b(z)$ are as in \eqref{20211002:eq1}. 
Let us show that these conditions imply $r(z)=0$, i.e. there is no compatibility between the two Poisson structures. (We explain the case when $a(z)\neq0$, and we leave to the reader the case with $a(z)=0$, $b(z)\neq 0$.)   
Subtracting the second and third equations in \eqref{conditions-rem},
and using \eqref{20211002:eq1} we get that $r(z)$ should satisfy the identity
\begin{equation}\label{20211002:eq2}
(1-(zw)^{k-2p})\iota_+a(zw)\iota_+r(z)=(1-(zx)^{k-2p})\iota_+a(zx)\iota_+r(z w x)
\,.
\end{equation}
Note that the LHS of \eqref{20211002:eq2} does not depend on $x$. Hence, we can set $x=w$ in the RHS of \eqref{20211002:eq2} and get that $r(z)$ satisfies $\iota_+r(z)=\iota_+r(zw^2)$ whenever $k\neq 2p$. This forces 
$r(z)=\gamma$, for some $\gamma\in\kk$. Since $r(z)=-r(z^{-1})$, we then have $r(z)=0$.
When $k=2p$ (which is a positive even integer by Theorem \ref{thm:NIH}), the first equation in \eqref{conditions-rem} for $w=x$ becomes 
$$%\begin{equation}
\iota_+
\frac{\alpha ((zx)^{p}-(zx)^{-p})}{1-(zx)^{2p}}
\iota_+r(z)=0\,,
$$%\end{equation}
after using the form of $a(z)$ given in \eqref{20211002:eq1}. Since the first term does not vanish, we must have $r(z)=0$. 
\end{remark}

%%%%%%%%%%%%%%%%%%%%%%%%%%%%%%%%%%%
%%%%%%%%%%%%%%%%%%%%%%%%%%%%%%%%%%%
%%%%%%%%%%%%%%%%%%%%%%%%%%%%%%%%%%%
%%%%%%%%%%% BIBLIO  %%%%%%%%%%%%%%%
%%%%%%%%%%%%%%%%%%%%%%%%%%%%%%%%%%%
%%%%%%%%%%%%%%%%%%%%%%%%%%%%%%%%%%%
%%%%%%%%%%%%%%%%%%%%%%%%%%%%%%%%%%%

%\section{DOCUMENT ENDS HERE}  
  
\end{document}